\documentclass[12pt]{amsart}
\usepackage{amsfonts, amssymb, latexsym, hyperref, xcolor, tikz,tikz-cd}
\usepackage{pstricks, pst-plot, pst-node}
\usepackage{pst-func}
\usepackage{xcolor}
\usepackage{ytableau, stackrel, exercise}

\setlength{\oddsidemargin}{0in}
\setlength{\evensidemargin}{0in}
\setlength{\marginparwidth}{0in}
\setlength{\marginparsep}{0in}
\setlength{\marginparpush}{0in}
\setlength{\topmargin}{0in}
\setlength{\headheight}{0pt}
\setlength{\headsep}{0pt}
\setlength{\footskip}{.3in}
\setlength{\textheight}{9.2in}
\setlength{\textwidth}{6.5in}
\setlength{\parskip}{4pt}

\newtheorem{theorem}{Theorem}[section]
\newtheorem{proposition}[theorem]{Proposition}
\newtheorem{lemma}[theorem]{Lemma}

\newtheorem*{claim*}{Claim}
\newtheorem{corollary}[theorem]{Corollary}

\newtheorem{Main Conjecture}[theorem]{Main Conjecture}
\newtheorem{conjecture}[theorem]{Conjecture}

\newtheorem{problem}[theorem]{Problem}
\theoremstyle{definition}
\newtheorem{definition}[theorem]{Definition}
\theoremstyle{remark}
\newtheorem{exercise}[theorem]{Exercise}
\newtheorem{example}[theorem]{Example}

\theoremstyle{plain}

\newenvironment{hardexercise}{\refstepcounter{theorem}\paragraph{\emph{Exercise} \thetheorem*}}{}


\newcommand\Groth{{\mathfrak G}}

\newcommand{\cellsize}{12}
\newlength{\cellsz} \setlength{\cellsz}{\cellsize\unitlength}
\newsavebox{\cell}
\sbox{\cell}{\begin{picture}(\cellsize,\cellsize)
\put(0,0){\line(1,0){\cellsize}}
\put(0,0){\line(0,1){\cellsize}}
\put(\cellsize,0){\line(0,1){\cellsize}}
\put(0,\cellsize){\line(1,0){\cellsize}}
\end{picture}}
\newcommand\cellify[1]{\def\thearg{#1}\def\nothing{}%
\ifx\thearg\nothing
\vrule width0pt height\cellsz depth0pt\else
\hbox to 0pt{\usebox{\cell} \hss}\fi%
\vbox to \cellsz{
\vss
\hbox to \cellsz{\hss$#1$\hss}
\vss}}
\newcommand\tableau[1]{\vtop{\let\\\cr
\baselineskip -16000pt \lineskiplimit 16000pt \lineskip 0pt
\ialign{&\cellify{##}\cr#1\crcr}}}

\newcommand{\gap}{\hspace{1in} \\ \vspace{-.2in}}

\hyphenation{tab-leau tab-leaux ge-nome ge-nomes}


%
%
\newcommand{\excise}[1]{}


\begin{document}
\pagestyle{plain}
\title{Schubert geometry and combinatorics}
\author{Alexander Woo}
\address{Dept.~of Mathematics, U.~Idaho, Moscow, ID 83844, USA} 
\email{awoo@uidaho.edu}
\author{Alexander Yong}
\address{Dept.~of Mathematics, U.~Illinois at Urbana-Champaign, Urbana, IL 61801, USA} 
\email{ayong@illinois.edu}
\date{March 2, 2023}
\maketitle

\begin{abstract}
This chapter combines an introduction and research survey about Schubert varieties. The theme
is to combinatorially classify their singularities using a family of polynomial ideals generated by determinants.
\end{abstract}

\setcounter{tocdepth}{1}
\tableofcontents
\vspace{-.4in}

\section{Introduction}

\subsection{Objectives} 
The purpose of this chapter is to 
provide both a basic introduction and a research survey on Schubert varieties. The organizing theme is to examine their singularities through the lens of certain polynomial ideals generated by determinants. Hence we focus on properties that can be studied using methods from commutative algebra.  With apologies upfront, we do not report on even a significant proportion of the many other methods that have been developed to study Schubert singularities\footnote{For example, see \emph{Frobenius splitting} \cite{Brion.Kumar}, \emph{Peterson translates} \cite{Carrell.Kuttler}, \emph{Standard monomial theory} \cite{SMT}, and \emph{Bott-Samelson resolutions} \cite{Bott.Samelson, Hansen, Demazure},
\emph{Billey-Postnikov decompositions} \cite{Richmond.Slofstra}, and more \cite{BL00}.}. This chapter is also not about Schubert \emph{calculus}, which is a largely separate subject.\footnote{See \cite{Fulton, Manivel} or recent surveys such as \cite{RYY, Knutson:survey}.} 

A cursory glance at the literature on Schubert varieties finds a broad (and perhaps daunting) range of ideas used: Lie theory, representation theory, algebraic geometry, commutative algebra, and combinatorics. Nevertheless, we
endeavor to keep the exposition as self-contained as possible. Therefore, where helpful, we highlight principles in lieu of formal statements.  
We include numerous exercises (with more difficult ones starred) aimed at preparing the 
interested reader from the ground up for the open problems.  We hope our focus on commutative algebra will allow readers to explore the subject, perhaps with the aid of computational commutative algebra systems, before having to learn more about the areas mentioned above.

\subsection{A brief history} 
Schubert varieties go back to H.~Schubert's work in the 19th century~\cite{Schubert} and further, but the ``modern era'' began in the 1950s with C.~Chevalley's manuscript \cite{Chevalley}.\footnote{In the forward to its posthumous publication,  A.~Borel places it ``most likely from 1958''.} 
It introduced the now omnipresent notion of \emph{Bruhat order}.\footnote{An instance of Stigler's law of eponymy.}
He also addresses the question of singularities of Schubert varieties\footnote{Specifically, he proves they appear in codimension at least two.}, although it is a historical curiosity that on the topic he makes the following remark:
\begin{quotation}
``\emph{...il para\^it probable que les $X(w)$ sont toujours des
vari\'et\'es non singuli\`eres, mais nous ne sommes pas parvenus \`a \'etablir ce point.}''\footnote{``... it seems probable that all Schubert varieties are smooth, although we cannot establish this claim.'' Exercise~\ref{exer:smallcounter} asks the reader to compute
small counterexamples for themselves.}
\end{quotation}
 
In this chapter, we are concerned with local properties.  A local property is some property that might or might not hold at a given point $p$ on an algebraic variety $X$ and is said to hold on $X$ if it holds at every point $p\in X$.  During Chevalley's lifetime, others proved that some local properties hold on every Schubert variety.   For example, C.~DeConcini--V.~Lakshmibai ~\cite{DeCon-Lak} and S.~Ramanan--A.~Ramanathan~\cite{RR, Ram85} showed that Schubert varieties have ``mild'' singularities: they are \emph{normal} and \emph{Cohen-Macaulay}. In contrast, this chapter is about local properties that hold only on (some points of) some Schubert varieties.
The prototype for questions we examine are:
\begin{itemize}
\item[(S1)] Which Schubert varieties are smooth?
\item[(S2)] Which points of a Schubert variety are smooth? 
\end{itemize}

Although (S1) is a special case of (S2), it is interesting in its own right.

We mostly concentrate on Schubert varieties in the complete flag variety. Essentially, this is the general case 
among all partial flag varieties associated to $GL_n({\mathbb C})$, including Grassmannians.  In this setting, 
(S1) and (S2) have been answered.  For (S1),
J.~Wolper \cite{Wolper} gave a combinatorial characterization and later
K.~Ryan \cite{Ryan} presented a geometric characterization. Following their work, 
V.~Lakshmibai--B.~Sandhya
\cite{Lak} offered a different combinatorial answer to (S1) in terms of
\emph{pattern
avoidance}. Their result states that
\begin{center}
$X_{w}$ is smooth if and only if $w$ avoids the patterns
$3412$ and $4231$.
\end{center}
In addition, they used pattern avoidance in their conjectural solution to (S2). This conjecture was 
independently proved by~\cite{billey.warrington, Cortez, KLR, manivel1} at the turn of the millenium.
It is the pattern avoidance approach that we follow in this chapter.  An earlier survey on Schubert varieties and
 pattern avoidance is \cite{Billey}.

Now that we know \emph{which} points of a Schubert variety are singular, one asks \emph{how} singular is a given point? As we explain, numerous measures/properties ${\mathcal P}$ with commutative algebra definitions can be studied simultaneously. Generalizing (S1) and (S2), one asks:
\begin{itemize}
\item[(P1)] Which Schubert varieties are globally ${\mathcal P}$? 
\item[(P2)] Which points of a Schubert variety satisfy ${\mathcal P}$? 
\end{itemize}

In \cite{WY:governing} we examined such properties ${\mathcal P}$ though determinantal
ideals that we called \emph{Kazhdan-Lusztig ideals}. By example we showed that classical pattern avoidance  is
an incomplete language to answer (P1) or (P2). To overcome this, we introduced \emph{interval pattern avoidance}
as a universal language to discuss these questions and to make comparisons. This chapter recounts those results and the subsequent developments to date.

\subsection{Organization}
In Section~\ref{sec:2} we start with the Grassmannian as a prequel. We illustrate how determinantal ideals
come about in the study of Schubert varieties.  Pl\"ucker coordinates, the Schubert cells and varieties, and the group
cosets description of the Grassmannian are introduced. Analogously, we explain the complete flag variety case.

\emph{Kazhdan-Lusztig ideals} are defined in Section~\ref{sec:3}. These determinantal ideals cut out affine open neighborhoods of Schubert varieties in the flag variety (up to an irrelevant affine factor). An important special case 
consists of W.~Fulton's \emph{Schubert determinantal ideals} \cite{Fulton:duke}, which have been an object of significant interest. The main point is that studying singularity properties of Schubert
varieties reduces to studying the same for Kazhdan-Lusztig varieties.

Section~\ref{sec:5} defines \emph{interval pattern avoidance}. 
Theorem~\ref{thm:poset} shows that under mild assumptions about the property ${\mathcal P}$,
answers to (P1) and  (P2) \emph{can} be given
in terms of \emph{interval pattern avoidance} (although exact answers may not be known at present). 
This universal language provides a concrete way to compare and contrast properties ${\mathcal P}$. Universality is proved using Kazhdan-Lusztig ideals.

Section~\ref{sec:4} delves into the combinatorial commutative algebra aspects of Kazhdan-Lusztig ideals, following A.~Knutson-E.~Miller's ideas \cite{Knutson.Miller:annals} about Schubert determinantal ideals.  Through a running example, we
review the concepts of Gr\"obner bases, 
multigradings, Hilbert series, prime decompositions, and the Stanley-Reisner correspondence. Exercises give
the reader a hands-on view of the application of these ideas to determinantal ideals. We then point to the
generalizations to Kazhdan-Lusztig
ideals from \cite{WY:Grobner}, although we leave most of the elaboration out of this text.

Section~\ref{sec:4.3} concerns \emph{free resolutions} of modules over a polynomial ring. Our main purpose is to
give enough detail, using relevant examples, to concretely define the commutative algebra concepts in the case of
Kazhdan-Lusztig ideals. This is needed for the next section.

Section~\ref{sec:6} is this chapter's summit. It analyzes  seven properties/measures ${\mathcal P}$ and studies them in the case of Schubert varieties: smoothness, being a local compete intersection, being Gorenstein, factoriality, Hilbert-Samuel multiplicity, Castelnuovo-Mumford regularity of the tangent cone, and Kazhdan--Lusztig polynomials. We give known results, conjectures,
and open problems in each case.

The remaining three sections offer additional notes.
Section~\ref{sec:7} indicates analogues of the questions we consider for other Lie types; it  
compiles information that seems to not appear in any one place in the literature.
 Section~\ref{sec:8} discusses analogues for other varieties. Finally,
Section~\ref{sec:hints} provides hints and references for selected exercises. 

\section{Schubert basics}\label{sec:2}

\subsection{Grassmannians}\label{sec:Grasscase} We begin with a set-theoretic definition:
\begin{definition} 
The \emph{Grassmannian} $Gr_k({\mathbb C}^n)$ is 
the parameter space\footnote{A synonym of ``parameter space'' 
is ``moduli space''. However the latter is typically used in cases where the points of the space are abstract
objects (such as isomorphism classes of curves) rather than objects embedded in a particular ambient space, as we have here with
subspaces of ${\mathbb C}^n$.} of $k$-dimensional
planes in ${\mathbb C}^n$.
\end{definition}

\subsubsection{A covering by affine spaces}\label{subsec:2.1.1} Suppose $\vec v_1,\vec v_2,\ldots, \vec v_k$ is a basis of a $k$-dimensional plane $V$.
View these vectors as columns of a $(n\times k)$-dimensional matrix $M$.  \emph{Column operations} (the analogues of row operations, except performed on columns) on $M$ are changes of basis and therefore column equivalent matrices represent the same
$k$-plane $V$.  In other words, one identifies the Grassmannian with a group quotient:
\begin{equation}
\label{eqn:Grdesc}
Gr_k({\mathbb C}^n)=\left({\sf Mat}_{n\times k}-\{M\in {\sf Mat}_{n\times k} : {\rm rank}(M)<k\}\right)/GL_k,
\end{equation}
where ${\sf Mat}_{n\times k}$ is the set of $n\times k$ dimensional matrices and the quotient is by $GL_k$ acting on the right
by matrix multiplication.

When first studying linear algebra, column reduction is done to make rows $1,2,\ldots,k$ the pivot rows if possible.
However, there is no particular reason besides convenience for this ordering.\footnote{While column operations and column echelon form are probably less familiar to the reader than row operations and row echelon form, it turns out that the group cosets description of the Grassmannian is compatible with these conventions.}  Rather, let ${\mathcal S}_{k,n}$ be the set of $k$-subsets of $[n]:=\{1,2,\ldots,n\}$, and
given any 
\[I:=\{i_1<i_2<\ldots<i_k\}\in {\mathcal S}_{k,n},\] 
we can think of the rows in $I$ as the ``first'' rows and attempt to column reduce $M$ so that the rows in $I$ are pivot rows and the remaining rows are free rows.  

Now, let $U_I$ be the set of all $n\times k$ matrices $[x^I_{ab}]$ such that $x^I_{i_c,c}=1$ for all $c$ with $1\leq c\leq k$ and $x^I_{i_c,d}=0$ for all $c\neq d$ with $1\leq c,d\leq k$.  Each $U_I$ can be canonically identified with $\mathbb{C}^{k(n-k)}$ with coordinates $x^I_{ab}$ for $a\not\in I$, $1\leq b\leq k$.  The uniqueness of reduced column echelon form tells us the following:

\begin{theorem}
Given any $I\in \mathcal{S}_{k,n}$, each matrix in $U_I$ represents a distinct $k$-plane $V$.  Hence, we can identify $U_I$ with a subset of $Gr_k({\mathbb C}^n)$.
\end{theorem}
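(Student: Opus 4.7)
The plan is to observe that two matrices represent the same $k$-plane precisely when they are related by right multiplication by an element of $GL_k$, and then show that this relation forces any two matrices in $U_I$ representing the same plane to be equal. Concretely, suppose $M = [x^I_{ab}]$ and $M' = [y^I_{ab}]$ are both in $U_I$ and represent the same $k$-plane $V$. By the description of the Grassmannian in \eqref{eqn:Grdesc}, there must exist $g \in GL_k$ with $M' = Mg$.

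The key step is to isolate the information carried by the rows indexed by $I$. Let $M_I$ denote the $k \times k$ submatrix of $M$ consisting of the rows with indices $i_1 < i_2 < \cdots < i_k$, and define $M'_I$ analogously. The defining conditions for $U_I$ — namely $x^I_{i_c,c}=1$ and $x^I_{i_c,d}=0$ for $c \neq d$ — say exactly that $M_I$ is the $k \times k$ identity matrix, and likewise $M'_I = I_k$. Now extracting rows in $I$ from the equation $M' = Mg$ yields $M'_I = M_I g$, i.e., $I_k = I_k \cdot g$, so $g = I_k$ and therefore $M = M'$.

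This establishes that distinct matrices in $U_I$ represent distinct $k$-planes, giving a well-defined injection $U_I \hookrightarrow Gr_k({\mathbb C}^n)$ sending a matrix to the span of its columns, which lets us identify $U_I$ with its image. There is no substantial obstacle: the argument is essentially the uniqueness half of reduced column echelon form, adapted to an arbitrary choice of pivot rows $I$ rather than the default $\{1,2,\ldots,k\}$. The only thing worth emphasizing in the write-up is where the pivot normalization conditions enter — they are used exactly to force the $I$-submatrix to be the identity, which is what pins down $g$.
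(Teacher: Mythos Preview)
Your argument is correct and is exactly the approach the paper takes: the paper simply invokes ``uniqueness of reduced column echelon form,'' and what you have written is precisely the standard proof of that uniqueness (extract the pivot-row submatrix, observe it is the identity, conclude the change-of-basis matrix $g$ is forced to be $I_k$). There is no meaningful difference between the two beyond your having spelled out the details.
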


Since for any $k$-plane $V$ and any choice of basis for $V$, the matrix $M$ has full rank, there is some set of rows $I$ such that $M$ can be column reduced with pivots in $I$.  Hence the sets $U_I$ cover $Gr_k({\mathbb C}^n)$. Each $U_I$ is a \emph{chart}, and together they form an \emph{atlas} for $Gr_k(\mathbb{C}^n)$ considered as a manifold.

\begin{example}\label{exa:U24}
For $Gr_2({\mathbb C}^4)$, the atlas consists of:
$$
U_{34}=\begin{bmatrix} \ast & \ast \\ \ast & \ast \\ 1 & 0 \\ 0 & 1\end{bmatrix},
U_{24}=\begin{bmatrix} \ast & \ast \\ 1 & 0 \\ \ast & \ast \\ 0 & 1\end{bmatrix},
U_{14}=\begin{bmatrix} 1 & 0 \\ \ast & \ast \\ \ast & \ast \\ 0 & 1\end{bmatrix},
U_{23}=\begin{bmatrix} \ast & \ast \\ 1 & 0 \\ 0 & 1 \\ \ast & \ast\end{bmatrix},
U_{13}=\begin{bmatrix} 1 & 0 \\ \ast & \ast \\ 0 & 1 \\ \ast & \ast \end{bmatrix},
U_{12}=\begin{bmatrix} 1 & 0 \\ 0 & 1 \\ \ast & \ast \\ \ast & \ast \end{bmatrix}.
$$
\end{example}

\subsubsection{Transition maps; Grassmannians are smooth complex manifolds and algebraic varieties}
\label{subsubsec:transgr}
Suppose $V\in U_I\cap U_J$.  Then $V$ has coordinates $x^I_{ab}(V)$ when considered as a point in $U_I$ and coordinates $x^J_{ab}(V)$ when considered as a point in $U_J$.  By taking a generic matrix in $U_I$ and column reducing it so that the rows in $J$ are pivots, we see that the coordinates $x^J_{ab}$ are rational functions of the coordinates $x^I_{ab}$.  Since rational functions are smooth, this gives $Gr_k({\mathbb C}^n)$ the structure of a smooth manifold, and furthermore a complex manifold and a smooth complex algebraic variety.  We give $Gr_k({\mathbb C}^n)$ the complex topology by declaring $W\subseteq Gr_k({\mathbb C}^n)$ to be open if $W\cap U_I$ is open, for all $I$, as a subset of $U_I\cong\mathbb{C}^{k(n-k)}$, or we give $Gr_k({\mathbb C}^n)$ the Zariski topology by declaring $W$ to be open if $W\cap U_I$ is open, for all $I$, in the Zariski topology.

\begin{example}[$Gr_2({\mathbb C}^4)$] Suppose $I=\{1,2\}$ and $J=\{2,4\}$.
Then the generic matrix in $U_I$ is given by
$$U_I=\begin{bmatrix}1 & 0 \\ 0 & 1 \\ x^I_{31} & x^I_{32} \\ x^I_{41} & x^I_{42}\end{bmatrix}.$$
Doing column operations to put pivots in rows $2$ and $4$ gives
$$U_I=\begin{bmatrix}-x^I_{42}/x^I_{41} & 1/x^I_{41}\\ 1& 0 \\ (x^I_{32}x^I_{41}-x^I_{42}x^I_{31})/x^I_{41} & x^I_{31}/x^I_{41} \\ 0 & 1 \end{bmatrix}.$$

This tells us that $$x^J_{11}=-x^I_{42}/x^I_{41},$$
$$x^J_{12}=1/x^I_{41},$$
$$x^J_{31}=-\det\begin{bmatrix}x^I_{31} & x^I_{41} \\ x^I_{32} & x^I_{42}\end{bmatrix}/x^I_{41},$$
and
$$x^J_{32}=x^I_{31}/x^I_{41}.$$
\end{example}

There is a systematic way to determine these changes of coordinates.  Given a $k$-plane $V$, ``define'' $p_I(V)$ to be the determinant of the $k\times k$ submatrix of $M$ using the rows in $I$, where $M$ is any matrix whose columns are a basis for $V$.  Now, $p_I$ is not well-defined, since it depends on the choice of basis, but the ratio $p_I/p_J$ is well-defined for any $I$ and $J$ (why?).  Now combine the previous sentence with the observation that 
\[x^J_{ab}=p_{J'}/p_J, \text{ \ where $J'=\{j_1,\ldots,j_{b-1},a,j_{b+1},\ldots,j_k\}$. }\] 
(The set $J'$ might not be correctly ordered; it should be considered as an oriented set, so that we reorder it by some permutation and multiply by the sign of the permutation.)

\begin{exercise} 
Find the change of coordinates for $I=\{1,2\}$, $J=\{3,4\}$ by first using column operations and second by
the ``systematic'' method just described.
\end{exercise}

\subsubsection{Projective algebraic geometry} 
\begin{definition}
The \emph{complex projective plane} ${\mathbb P}^{n}$ is the set of equivalence classes of 
${\mathbb C}^{n+1}-\{(0,0,\ldots,0)\}$ under the equivalence relation $\sim$ defined by 
\[(x_0,x_1,\ldots,x_n)\sim (x_0',x_1',\ldots,x_n')\]
if
there exists $k\in {\mathbb C}^{*}$ such that $(x_0',x_1',\ldots,x_n')=k(x_0,x_1,\ldots,x_n)$. That is,
\begin{equation}
\label{eqn:Pndesc}
{\mathbb P}^{n}=\left({\mathbb C}^{n+1}-\{(0,0,\ldots,0)\}\right)/{\mathbb C}^*.
\end{equation}
 Equivalence classes are denoted $[x_0:x_1:x_2:\cdots:x_n]$.
\end{definition}

\begin{definition}
A \emph{projective variety} is a subset $X$ of ${\mathbb P}^n$ where $X$ is the common solutions of a system
of homogeneous polynomials in $x_0,x_1,\ldots,x_n$.
\end{definition}

\begin{exercise}[The example of ${\mathbb P}^1$]\label{exer:P1}

\gap

(a) Explain why ${\mathbb P}^1$ may be identified with $Gr_1({\mathbb C}^2)$. Hence $Gr_1({\mathbb C}^2)$
trivially has the structure of a projective variety.\footnote{Similarly, ${\mathbb P}^n$ can be identified with $Gr_{1}({\mathbb C}^{n+1})$ and (\ref{eqn:Pndesc})
is a special case of (\ref{eqn:Grdesc}).}

(b) The standard open charts of ${\mathbb P}^1$ are 
\[U:=\{[1:x_1]: x_1\in {\mathbb C}\} \text{\ and 
$V=\{[x_0:1]: x_0\in {\mathbb C}\}$.}\] 
Which is $U_{\{1\}}$ and which is $U_{\{2\}}$ in the notation of Subsection~\ref{subsec:2.1.1}? What are $p_{\{1\}}, p_{\{2\}}$?

(c) Determine the transition functions between $U$ and $V$.
\end{exercise}

In general, the $p_I$ are known as \emph{Pl\"ucker coordinates}, and they define an embedding 
\begin{align*}
Gr_k({\mathbb C}^n) & \rightarrow \mathbb{P}^{\binom{n}{k}-1} \\
V & \mapsto [p_{\{1,2,3,\ldots,k\}}(V):\cdots: p_I(V):\cdots: P_{\{n-k+1,n-k+2,\ldots,n\}}(V)]
\end{align*}
known as the \emph{Pl\"ucker embedding}.  

\begin{exercise}
Prove that this map is well-defined, i.e., it does not depend on the choice of basis for $V$,
and $[p_{\{1,2,3,\ldots,k\}}(V):\cdots: p_I(V):\cdots: P_{\{n-k+1,n-k+2,\ldots,n\}}(V)]\in \mathbb{P}^{\binom{n}{k}-1}$.
\end{exercise}

We already established that $Gr_k({\mathbb C}^n)$ is an algebraic variety, hence the Pl\"ucker embedding
is a map in the algebraic category. The map establishes that $Gr_{k}({\mathbb C}^n)$ has the structure of
a \emph{projective} algebraic variety. There are relations between the coordinates $\{p_I\}$ known as the \emph{Pl\"ucker relations} generating the ideal defining the image. No such relations occur in the case of
$Gr_1({\mathbb C}^2)={\mathbb P}^1$. In the case of $Gr_2({\mathbb C}^4)$ there is one relation:
\[p_{\{1,2\}}p_{\{3,4\}}-p_{\{1,3\}}p_{\{2,4\}}+p_{\{1,4\}}p_{\{2,3\}}=0.\] While part of the canon of Schubert varieties, we will actually not make use of these relations, so we refer the reader to  \cite{Kleiman.Laksov}
and \cite[Lecture 6]{Harris} for further reading. As in Exercise~\ref{exer:P1}, each chart $U_I$ is (the pullback of) the intersection of the image with the standard chart of $\mathbb{P}^{\binom{n}{k}-1}$ where $p_I\neq 0$.

\subsubsection{Schubert cells and varieties}
Once again fix an ordering of rows for column reduction.  We reduce from bottom to top and right to left, 
trying to make the bottom right (or southeastmost) entry the first pivot, with our pivots going from southeast to northwest.  Given a $k$-plane $V$, by uniqueness of reduced column echelon form, the set of pivot rows does not depend on the basis originally chosen for $V$.  Let 
$X_I^{\circ}\subseteq Gr_k({\mathbb C}^n)$ 
be the subset of points corresponding to $k$-planes whose set of pivot rows is $I$.  The sets $X_I^{\circ}$ are called \emph{Schubert cells}.  The Grassmannian is the disjoint union of its Schubert cells.

\begin{exercise}
Continuing Exercise~\ref{exer:P1}, describe the Schubert cells of ${\mathbb P}^1=Gr_1({\mathbb C}^2)$.
\end{exercise}

\begin{example}\label{exa:24}
In the case of $Gr_2({\mathbb C}^4)$, the Schubert cells are as follows:
$$
X_{34}^{\circ}=\begin{bmatrix} \ast & \ast \\ \ast & \ast \\ 1 & 0 \\ 0 & 1\end{bmatrix},
X_{24}^{\circ}=\begin{bmatrix} \ast & \ast \\ 1 & 0 \\ 0 & \ast \\ 0 & 1\end{bmatrix},
X_{14}^{\circ}=\begin{bmatrix} 1 & 0 \\ 0 & \ast \\ 0 & \ast \\ 0 & 1\end{bmatrix},
X_{23}^{\circ}=\begin{bmatrix} \ast & \ast \\ 1 & 0 \\ 0 & 1 \\ 0 & 0\end{bmatrix},
X_{13}^{\circ}=\begin{bmatrix} 1 & 0 \\ 0 & \ast \\ 0 & 1 \\ 0 & 0\end{bmatrix},
X_{12}^{\circ}=\begin{bmatrix} 1 & 0 \\ 0 & 1 \\ 0 & 0 \\ 0 & 0\end{bmatrix}.
$$
\end{example}

\begin{exercise}\label{exer:ShowthatFeb25}
Show that 
$\dim(X_I^{\circ})=\sum_{a=1}^k (i_a-a)$.
\end{exercise}
\begin{exercise}\label{exer:finitefield}
Let ${\mathbb F}_q$ is the finite field of order $q=p^k$ where $p$ a prime.
Show that the number of points of 
$Gr_k({\mathbb F}_q^n)$ is the \emph{$q$-binomial coefficient}
$\genfrac{[}{]}{0pt}{}{n}{k}_q:=\frac{[n]_q!}{[k]_q![n-k]_q!}$, where $[i]_q:=1+q+q^2+\cdots +q^{i-1}$ and
$[i]_q!:=[1]_q [2]_q\cdots [i]_q$.\footnote{Exercises~\ref{exer:ShowthatFeb25} and~\ref{exer:finitefield} together
imply that the (cohomological) Poincar\'e polynomial for the Grassmannian $P_{k,n}(q)$ whose 
coefficient of $q^k$ is the number of Schubert cells of codimension $k$ agrees with the point count of $Gr_k({\mathbb F}_q^n)$. This is 
a (rather trivial) instance of the (now proved) \emph{Weil conjectures} which relates cohomology Poincar\'e polynomials to point counting.}
\end{exercise}

\begin{definition}
\emph{Bruhat order} on ${\mathcal S}_{k,n}$ is the partial order obtained by declaring that, if $I=\{i_1<i_2<\cdots<i_k\}$ and $J=\{j_1<j_2<\cdots<j_k\}$, then $I\leq J$ if $i_a\leq j_a$ for all $a$, $1\leq a\leq k$.
\end{definition}

\begin{exercise}
  Show that $$X_I^{\circ}=U_I\setminus \bigcup_{J\not\leq I} U_J.$$
\end{exercise}

An equivalent, more succinct, way to define Schubert cells is as follows. 
\begin{definition}
A \emph{complete flag} in $\mathbb{C}^n$ is a nested sequence of subspaces
$$F_\bullet=F_1\subsetneq F_2\subsetneq\cdots\subsetneq F_{n-1}$$
of $\mathbb{C}^n$, with $\dim(V_i)=i$ for all $i$.  
\end{definition}
It is convenient to set $E_i$ to be the subspace spanned by the $\vec e_1, \vec e_2,\ldots, \vec e_i$
where $\vec e_t$ is the $t$-th standard basis element, and $E_\bullet=E_1\subsetneq\cdots\subsetneq E_{n-1}$. The flag $E_\bullet$ is called the \emph{standard flag}.

\begin{definition}
To each $I\in {\mathcal S}_{k,n}$, the
\emph{Schubert cell} is
\[X_I^{\circ}:=\{V\in Gr_k({\mathbb C}^n):\dim(V\cap E_t)=\#([t]\cap I), \ 1\leq t\leq n\}.\]
\end{definition}

Each subspace $V$ intersects the flag $E_\bullet$ in some specific sequence of dimensions, so
\begin{equation}
\label{eqn:decompFeb3hhh}
Gr_k({\mathbb C}^n)=\coprod_{I\in {\mathcal S}_{k,n}} X_I^\circ.
\end{equation}
\begin{exercise}\label{exer:24decomp}
Show that (\ref{eqn:decompFeb3hhh}) agrees with the decomposition described in Example~\ref{exa:24}. 
\end{exercise}

Now we define the main objects of this chapter, the Schubert varieties.

\begin{definition}
The \emph{Schubert variety} $X_I$ is the closure of $X_I^{\circ}$. (The closure in the Zariski topology or the Euclidean topology from the manifold structure is the same, so we will ignore this distinction throughout this chapter.)
\end{definition}

\begin{proposition}[Incidence condition description of Schubert varieties]
\gap
\begin{enumerate}
\item The Schubert variety $X_I$ is a union of Schubert cells as follows:
\[X_I^:=\coprod_{J\leq I} X_J^{\circ},\]
where $\leq$ refers to \emph{Bruhat order} on ${\mathcal S}_{k,n}$.
\item Therefore, the Schubert variety $X_I$ is defined by changing the intersection conditions in the definition of the Schubert cell from equalities to inequalities.  Precisely,
\begin{equation}
\label{eqn:intersectioncond}
X_I=\{V\in Gr_k({\mathbb C}^n):\dim(V\cap F_t)\geq\#([t]\cap I)\}.
\end{equation}
\end{enumerate}
\end{proposition}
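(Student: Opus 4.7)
Let $Y_I$ denote the subset of $Gr_k(\mathbb{C}^n)$ cut out by the incidence conditions on the right-hand side of (\ref{eqn:intersectioncond}). The plan is to establish the chain
$$X_I \;\subseteq\; Y_I \;=\; \coprod_{J \leq I} X_J^\circ \;\subseteq\; X_I,$$
which simultaneously yields both parts of the proposition.

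First I would verify that $Y_I$ is Zariski closed and that $X_I \subseteq Y_I$. For any matrix $M$ representing $V \in Gr_k(\mathbb{C}^n)$, the condition $\dim(V \cap E_t) \geq r$ is equivalent to $\mathrm{rank}(M_{>t}) \leq k - r$, where $M_{>t}$ is the bottom $(n - t) \times k$ block of $M$. This is a determinantal (hence closed) condition, so $Y_I$ is closed as a finite intersection of such. Since the equalities defining $X_I^\circ$ immediately imply the inequalities defining $Y_I$, we have $X_I^\circ \subseteq Y_I$, and hence $X_I = \overline{X_I^\circ} \subseteq Y_I$.

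Next I would compute $Y_I \cap X_J^\circ$ for each $J$. On $X_J^\circ$ we have $\dim(V \cap E_t) = \#([t] \cap J)$, so membership in $Y_I$ reduces to $\#([t] \cap J) \geq \#([t] \cap I)$ for all $t \in [n]$. A combinatorial check identifies this with $j_a \leq i_a$ for all $a$: in one direction, $i_b \leq t$ forces $j_b \leq t$, yielding the cardinality inequality; in the other, if $j_a > i_a$ for some $a$, then choosing $t = i_a$ gives $\#([t] \cap I) \geq a > \#([t] \cap J)$. Combined with the partition (\ref{eqn:decompFeb3hhh}), this establishes $Y_I = \coprod_{J \leq I} X_J^\circ$.

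The main obstacle is the reverse inclusion $X_J^\circ \subseteq X_I$ for every $J \leq I$. I would induct on the Bruhat rank difference $\sum_a(i_a - j_a)$, reducing to a cover $J \lessdot I$ in Bruhat order; such a cover necessarily has the explicit form $I = J \setminus \{j_a\} \cup \{j_a + 1\}$ with $j_a + 1 \notin J$. Given any $V_0 \in X_J^\circ$ with its distinguished column-echelon basis $\vec v_1, \ldots, \vec v_k$ (so $\vec v_b$ has its lowest nonzero entry, equal to $1$, in row $j_b$), consider the one-parameter family
$$V(\varepsilon) := \mathrm{span}(\vec v_1, \ldots, \vec v_{a-1}, \, \vec v_a + \varepsilon\, \vec e_{j_a + 1}, \, \vec v_{a+1}, \ldots, \vec v_k).$$
For $\varepsilon \neq 0$, the modified basis has distinct lowest-nonzero rows $\{j_1, \ldots, j_{a-1}, j_a+1, j_{a+1}, \ldots, j_k\} = I$, which (by the general fact that such a pattern determines the pivot set via $\dim(V\cap E_t)$) forces $V(\varepsilon) \in X_I^\circ$. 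Sending $\varepsilon \to 0$ yields $V_0 \in \overline{X_I^\circ} = X_I$, completing the chain of inclusions.
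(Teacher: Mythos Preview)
Your argument is correct. The paper states this proposition without proof, so there is no approach to compare against; your three-step strategy (closedness of $Y_I$ via determinantal rank conditions, the combinatorial identification $Y_I=\coprod_{J\le I}X_J^\circ$, and the one-parameter degeneration along Bruhat covers to obtain $X_J^\circ\subseteq X_I$) is a complete and standard verification. One small point you might make explicit for the induction: to reduce from an arbitrary $J<I$ to a cover, you need an intermediate $K$ with $J\lessdot K\le I$ (or $J\le K\lessdot I$); this exists because the poset is graded by $\sum_a i_a$, but it is worth stating since you only exhibit the family for covers.
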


\begin{example}\label{detJan27ex}
We consider the Schubert variety $X_{24}\subseteq Gr_{2}({\mathbb C}^4)$.  It contains all the points $Gr_2({\mathbb C}^4)$ other than those in the Schubert cell $C_{34}$.  By the intersection conditions (\ref{eqn:intersectioncond}), 
\[V\in X_{24} \text{\ if $\dim(V\cap F_1)\geq 0$, $\dim(V\cap F_2)\geq 1$, $\dim(V\cap F_3)\geq 1$, and $\dim(V \cap F_4)\geq 2$.}\]  
The first and last conditions are vacuous, and the third condition is implied by the second, so only the second condition is essential to the definition of $X_{24}$.

Suppose $\vec v_1,\vec v_2$ forms a basis for $V$.  Since $F_2$ is spanned by $\vec e_1$ and $\vec e_2$, by the previous paragraph, $V\in X_{24}$ if and only if the determinant of the matrix whose columns are $\vec v_1,\vec v_2, \vec e_1, \vec e_2$ is 0, or, equivalently, if $p_{34}(V)=0$.\footnote{Although $p_{I}(V)$ itself depends on the choice of a basis for $V$, whether $p_I(V)$ \emph{vanishes} does not, since a change of basis multiplies it by a nonzero scalar.}  We can express this condition in terms of the local coordinates $x^I_{ab}$ on each open set $U_I$.  For example, a subspace $V\in U_{12}$ has as one basis the columns of
$$\begin{bmatrix} 1 & 0 \\ 0 & 1 \\ x^{12}_{33}(V) & x^{12}_{34}(V) \\ x^{12}_{43}(V) & x^{12}_{44}(V)\end{bmatrix},$$
and 
\[V\in U_{12} \cap X_{24} \iff x^{12}_{33}x^{12}_{44}-x^{12}_{34}x^{12}_{43}=0.\]
\end{example}

Similar arguments work in general and this chapter's main ``principle'' is:

\begin{center}
\fbox{\begin{minipage}{30em}
 One can profitably understand Schubert varieties by study of their local equations; these local equations
define a determinantal variety.
   \end{minipage}}
\end{center}

A \emph{determinantal variety} means one defined by minors (possibly of different sizes) of some matrix of
indeterminants. For specifics on how the Grassmannian examples generalize, see, for example V.~Kreiman-V.~Lakshmibai \cite[Section~3.3]{Kreiman.Lakshmibai}.

\subsubsection{Group cosets}\label{sec:groupcosetsvvv}
It is useful to realize both the Grassmannian and its Schubert varieties in terms of a group quotient of $GL_n$.
To do this, fix a particular $k$-plane, such as $E=\langle \vec e_1,\ldots, \vec e_k\rangle$ and think about the stabilizer $P\subset GL_n$ of $E$ (where we think of $GL_n$ as acting on the underlying vector space $\mathbb{C}^n$).
The subgroup $P$ is one of a class of subgroups known as \emph{parabolic subgroups}.

\begin{exercise}
Show that the group of matrices sending $E$ to itself (meaning it sends every vector in $E$ to some other vector in $E$) is
\[P=\begin{pmatrix} 
* & \vline & * \\ 
\hline
0 & \vline & *
\end{pmatrix},\]
where the ``$0$'' block has $(n-k)$ rows and $k$ columns.
\end{exercise}

By the orbit-stabilizer theorem, $Gr_k({\mathbb C}^n)$ is identified with $GL_n/P$ as sets. The latter is a \emph{homogeneous space}
in the sense of, e.g., \cite[Section 23.3]{Fulton.Harris}. In fact, the topological and geometric structure of $Gr_k(\mathbb{C}^n)$ as described in the previous subsections agrees with the topological and geometric structure of this homogeneous space constructed by starting with $G$ and identifying all the points in the same right $P$-orbit.  
\begin{exercise}
Give a bijection from $Gr_k({\mathbb C}^n)$ to $GL_n/P$.
\end{exercise}
\begin{center}
\fbox{\begin{minipage}{30em}
 The advantage of this group formulation of the Grassmannian is that it extends naturally to replacing
$GL_n$ by another algebraic group, and $P$ with a different parabolic subgroup (the main case being the complete flag variety below, but see Section~\ref{sec:7}).
   \end{minipage}}
\end{center}
 From this perspective, if $B\subset P$ is the \emph{Borel subgroup} of invertible upper triangular
matrices, then $B$ acts by left-multiplication on $GL_n/P$ with finitely many orbits. Those orbits are precisely
the Schubert cells $X_I^{\circ}$. Each orbit contains a unique $T$-fixed point, where $T$ is the maximal torus of
invertible $n\times n$ matrices. In particular, if $I=\{i_1,\ldots,i_k\}$, then $X_I$ is the $B$-orbit of the $T$-fixed $k$-plane $W_I=\langle \vec e_{i_1}, \vec e_{i_2},\ldots,
\vec e_{i_k} \rangle$.  Since each Schubert cell is a $B$-orbit, and $B$ is an algebraic group, it provides
local isomorphisms between a neighborhood of $W_I$ and a neighborhood of any other point in the Schubert cell $C_I$. Thus:
\begin{center}
\fbox{\begin{minipage}{30em}
Local properties of a Schubert variety reduce, without loss of generality, to the study of
neighborhoods of its $T$-fixed points.
  \end{minipage}}
\end{center}

\subsection{Flag varieties}
The focus of this chapter will be the \emph{flag variety}.  In fact:
\begin{center}
\fbox{\begin{minipage}{30em}
Most questions about Schubert
varieties in the Grassmannian (or any partial flag variety) can be reduced to questions about Schubert varieties on the complete flag variety.   \end{minipage}}
\end{center}
See \cite[Section~1.2]{Brion.Lectures} for justification.

\begin{definition}
The flag variety ${\rm Flags}({\mathbb C}^n)$ is the parameter space of complete flags in ${\mathbb C}^n$.
\end{definition}

\subsubsection{Definitions via groups}
The group $GL_n$ acts transitively on ${\rm Flags}({\mathbb C}^n)$ since its action on $\mathbb{C}^n$ gives an action on subspaces and hence
on flags.
\begin{exercise}\label{exer:Borels}
(a) Let $E_\bullet$ be the standard flag where $E_i=\langle \vec e_1,\ldots, \vec e_i\rangle$ for all $i$.  Show that the stabilizer of $E_i$
is the group $B$ of invertible upper triangular matrices.

(b) Suppose $F_{\bullet}$ is similarly defined with respect to another ordered basis $\vec v_1, \vec v_2,\ldots, \vec v_n$ and the change of basis matrix from the standard basis to this one is $g\in GL_n$. What stabilizes $F_{\bullet}$?
\end{exercise}

By the orbit-stabilizer theorem, identify ${\rm Flags}({\mathbb C}^n)$ with $GL_n/B$. Parallel to the group quotient viewpoint on the Grassmannian and its Schubert cells, we have:
\begin{definition} The \emph{Schubert cells} of $GL_n/B$ are the
$B$-orbits under left multiplication.
\end{definition}
Let $S_n$ be the symmetric group on $[n]$. If $w\in S_n$, 
define the complete flag
\begin{equation}
\label{eqn:permutedflag}
E_{\bullet}^{(w)}=\langle \vec 0\rangle \subsetneq
\langle \vec e_{w(1)}\rangle \subsetneq \langle \vec e_{w(1)}, \vec e_{w(2)} \rangle \subsetneq \cdots
\subsetneq
\langle \vec e_{w(1)}, \vec e_{w(2)},\cdots, \vec e_{w(i)}\rangle \subsetneq \cdots \subsetneq {\mathbb C}^n.
\end{equation}

\begin{exercise} Show that:

(a) Equation \eqref{eqn:permutedflag} gives a bijection between $S_n$ and the $T$-fixed points of ${\rm Flags}({\mathbb C}^n)$.

(b) Each $B$-orbit of $GL_n/B$ contains a unique $T$-fixed point.
\end{exercise}

Hence, we have a bijection between $S_n$ and Schubert cells, and we can give the following definitions:

\begin{definition}\label{def:Schcellfirstdef}
Given $w\in S_n$, the \emph{Schubert cell} $X_w^{\circ}$ is the $B$-orbit $B\cdot E_\bullet^{(w)}$.
\end{definition}

\begin{definition}
The \emph{Schubert variety} $X_w$ is the closure of $X_w^{\circ}$. 
\end{definition}

\subsubsection{Definitions via incidence conditions; Bruhat order} As in the Grassmannian case, we have descriptions of Schubert cells and Schubert varieties in terms of dimensions of intersections and one in terms of column echelon form for an appropriate matrix.  To explain this,
we first define a matrix that encodes these dimensions.

\begin{definition}\label{Woosrank}
Given $w\in S_n$, the \emph{rank matrix} of $w$, denoted $R_w$, is the matrix
\[R_w:=[r^{(w)}_{p,q}]_{p,q=1}^n\] 
where
the entry $r^{(w)}_{p,q}=\#\{k \mid k\leq q, w(k)\leq p\}$.
\end{definition}

One can picture the rank matrix as follows.  First we have the following definition:
\begin{definition}
The \emph{permutation matrix} of $w$ is
\[P^{(w)}_{i,j}=\begin{cases}
1 & \text{if $i=w(j)$,}\\
0 & \text{otherwise.}
\end{cases}\]
\end{definition}
Then $(p,q)$-th entry of the rank matrix of $w$ is the number of $1$'s in the permutation matrix $P^{(w)}$
that are weakly northwest (meaning weakly above and weakly to the left) of $(p,q)$-th entry.

\begin{proposition}
The Schubert cell is given by
\[X_w^{\circ}=\{F_{\bullet}\mid \dim(E_p\cap F_q)=r^{(w)}_{p,q} \ \  \forall p,q\}.\]
\end{proposition}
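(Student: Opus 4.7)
The plan is to split the equality into the two inclusions, and to make the reverse inclusion essentially free by combining $B$-invariance of the rank conditions with the fact (established earlier) that the $B$-orbits on $GL_n/B$ are in bijection with $S_n$.

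\textbf{Step 1 (The representative $E_\bullet^{(w)}$ satisfies the rank conditions).}
First I compute $\dim(E_p\cap E_q^{(w)})$ directly. Since $E_q^{(w)}=\langle \vec e_{w(1)},\ldots,\vec e_{w(q)}\rangle$ and $E_p=\langle \vec e_1,\ldots,\vec e_p\rangle$ are both coordinate subspaces, their intersection has as basis the $\vec e_{w(k)}$ with $k\le q$ and $w(k)\le p$. Hence $\dim(E_p\cap E_q^{(w)})=\#\{k\le q: w(k)\le p\}=r^{(w)}_{p,q}$.

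\textbf{Step 2 ($B$-invariance of the rank matrix).}
Next I show that for any $b\in B$ and any flag $F_\bullet$, one has $\dim(E_p\cap (b\cdot F_q))=\dim(E_p\cap F_q)$. The point is that $b$ preserves each $E_p$ by Exercise~\ref{exer:Borels}(a), so $b^{-1}E_p=E_p$, and therefore
\[ E_p\cap(b\cdot F_q) \;=\; b\bigl(b^{-1}E_p\cap F_q\bigr) \;=\; b(E_p\cap F_q), \]
which has the same dimension as $E_p\cap F_q$ since $b$ is invertible. Combining Steps~1 and~2, every $F_\bullet\in X_w^\circ=B\cdot E_\bullet^{(w)}$ satisfies $\dim(E_p\cap F_q)=r^{(w)}_{p,q}$ for all $p,q$. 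This gives the inclusion ``$\subseteq$''.

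\textbf{Step 3 (Recovering $w$ from $R_w$).}
For the opposite inclusion, I first observe that different permutations produce different rank matrices. Indeed, the indicator of the cell $(p,q)$ of the permutation matrix $P^{(w)}$ is recovered from $R_w$ by the inclusion--exclusion
\[ P^{(w)}_{p,q} \;=\; r^{(w)}_{p,q}-r^{(w)}_{p-1,q}-r^{(w)}_{p,q-1}+r^{(w)}_{p-1,q-1}, \]
with the convention that $r^{(w)}_{p,q}=0$ whenever $p=0$ or $q=0$. So $R_w$ determines $P^{(w)}$, hence $w$.

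\textbf{Step 4 (Conclusion).}
Now let $F_\bullet$ be any flag satisfying $\dim(E_p\cap F_q)=r^{(w)}_{p,q}$ for all $p,q$. Since the Schubert cells $X_{w'}^\circ$ partition ${\rm Flags}(\mathbb{C}^n)=GL_n/B$ as $w'$ ranges over $S_n$, there is a unique $w'$ with $F_\bullet\in X_{w'}^\circ$. By Steps 1--2 applied to $w'$, the flag $F_\bullet$ has rank matrix $R_{w'}$. Thus $R_w=R_{w'}$, and Step~3 forces $w=w'$, so $F_\bullet\in X_w^\circ$.

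The only real obstacle is Step~4's appeal to the fact that the Schubert cells are indexed by $S_n$ and partition the flag variety; this is exactly the content of the Bruhat decomposition of $GL_n/B$, which has effectively been set up in the preceding exercise on $B$-orbits and $T$-fixed points. Once that is in hand, the rest is bookkeeping with coordinate subspaces.
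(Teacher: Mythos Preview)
Your proof is correct and follows the same line as the paper's; the paper's own argument is the terse two-sentence version of your Steps~1 and~2 (``the $T$-fixed flag $E^{(w)}_\bullet$ satisfies the given condition, and the action of $B$ preserves the condition''), leaving the reverse inclusion implicit via the partition argument you spell out in Steps~3--4.
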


\begin{proof}
The $T$-fixed flag $E^{(w)}$ satisfies the given condition, and the action of $B$ preserves the condition.
\end{proof}

As in the Grassmannian, the Schubert variety can be defined by relaxing the equalities to inequalities, so
\begin{equation}
\label{eqn:Xwineq}
X_w=\{F_{\bullet}\mid \dim(E_p\cap F_q)\geq r^{(w)}_{p,q} \ \ \forall p,q\}.
\end{equation}

\begin{definition}
\label{def:Bruhatorder}
\emph{Bruhat order} on $S_n$ is the partial order defined by $u\leq v$ if $r^{(u)}_{p,q}\leq r^{(v)}_{p,q}$ for all $p,q$ with $1\leq p,q\leq n$.  
\end{definition}

Combining \eqref{eqn:Xwineq} and Definition~\ref{def:Bruhatorder} implies the \emph{Bruhat decomposition},
a decomposition of $X_w$ into a disjoint union of Schubert cells:
\[X_w=\coprod_{v\leq w} X_v^\circ, \text{\ where $\leq$ denotes Bruhat order.}\]

Here are some additional definition and exercises about $S_n$ that we will refer to later.

\begin{definition}
The set of \emph{inversions} of $w\in S_n$ is 
\[{\rm Inv}(w)= \{(i,j)\mid 1\leq i<j\leq n, w(i)>w(j)\}.\]
\end{definition}

\begin{definition}\label{def:Coxlength}
The \emph{Coxeter length} of $w$ is 
$\ell(w):=\#{\rm Inv}(w)$.
\end{definition}

\begin{definition}\label{w0def}
The unique permutation in $S_n$ of longest length ${n\choose 2}$ is 
\[w_0:= n \ n-1 \ n-2 \cdots 3 \ 2 \ 1.\]
\end{definition}

\begin{exercise}\label{exer:Bruhatexer}
(a) \emph{Bruhat order} is also defined as the transitive, reflexive, antisymmetric closure of the covering relation 
$u\leq ut_{ij}$ if $\ell(ut_{ij})=\ell(u)+1$,
 where $t_{ij}=(i \ j )$ is a transposition. Prove the two definitions
of Bruhat order agree. 

(b) Prove that the covering relation $u\leq ut_{ij}$ holds for $i<j$ if and only if $w(i)<w(j)$ and there does not exist
$i<k<j$ such that $w(i)<w(k)<w(j)$.
\end{exercise}

\begin{exercise}\label{exer:additionalexerSymmetric}

(a) Prove that the symmetric group has a presentation given by the quotient of the free group
$\langle \sigma_1,\ldots,\sigma_{n-1}\rangle$ by the relations 
\[\sigma_i^2=id, \ \sigma_{i}\sigma_{i+1}\sigma_i =\sigma_{i+1}\sigma_i \sigma_{i+1}, \text{\ and 
$\sigma_i \sigma_j = \sigma_j\sigma_i$ if $|i-j|>1$.}\] 
(This is the Coxeter group presentation of $S_n$.)

(b) A factorization 
$w=s_{i_1}s_{i_2}\cdots s_{i_L}$
into simple transpositions $s_i=(i \ i+1)$ is a \emph{reduced word} for $w\in S_n$ if it is of
shortest length. Prove that any two such expressions are connected using the relations from (a).

(c) Prove that the length of any reduced word for $w\in S_n$ is $\ell(w)$.
\end{exercise}

For more about symmetric groups as Coxeter groups we refer the reader to \cite{bjorner.brenti}.

\subsubsection{Covering and transition equations}\label{sec:covtrans123}
As with Grassmannians, we can give the flag variety the structure of a complex manifold or complex algebraic variety by providing a covering by affine charts together with smooth transition functions.

Given a flag $F_\bullet$, write a matrix $M$ such that, for all $i$, the first $i$ columns of $M$ are a basis of $F_i$.
Different choices of basis are related by rightward column operations on $M$, which correspond to multiplying $M$ on the right by an upper triangular matrix.
That is, we only allow the column operations of
adding a multiple of a column to a column to its right and multiplying a column by a nonzero constant; in particular we do not allow
switching columns, which means that our pivots will not necessarily show up in staircase order. 

Given $w\in S_n$, we define an open set 
\[U_w\subseteq {\rm Flags}({\mathbb C}^n)\]
as the set of all flags whose matrices can be column reduced (using only the operations of adding a multiple of a column to a column to its
right and multiplying a column by a nonzero constant) with the pivots being the $1$ entries in $P^{(w)}$.  Since every invertible matrix can be column
reduced with some set of pivots (and every flag is represented by an invertible matrix), the open sets $U_w$ for all $w\in S_n$ cover ${\rm Flags}({\mathbb C}^n)$.
Moreover, by uniqueness of reduced column echelon form, we have local coordinate functions $x^{(w)}_{ab}$ on $U_w$.  Furthermore, $C_w$ consists of all points in $U_w$
that are not in $U_v$ for some $v\geq w$ (in Bruhat order).

\begin{example}\label{exa:Uflags3}
For ${\rm Flags}({\mathbb C}^3)$, the atlas consists of:
$$
U_{123}=\begin{bmatrix} 1 & \ast & \ast \\ 0 & 1 & \ast \\ 0 & 0 & 1\end{bmatrix},
U_{213}=\begin{bmatrix}  0 & 1 & \ast \\ 1 & \ast & \ast \\ 0 & 0 & 1\end{bmatrix},
U_{132}=\begin{bmatrix} 1 & \ast & \ast \\  0 & 0 & 1 \\ 0 & 1 & \ast  \end{bmatrix},$$
$$
U_{231}=\begin{bmatrix} 0 & 0 & 1  \\ 1 & \ast & \ast  \\ 0 & 1 & \ast  \end{bmatrix},
U_{312}=\begin{bmatrix} 0 & 1 & \ast \\ 0 & 0 & 1 \\ 1 & \ast & \ast   \end{bmatrix},
U_{321}=\begin{bmatrix} 0 & 0 & 1\\ 0 & 1 & \ast \\ 1 & \ast & \ast  \end{bmatrix}.
$$
\end{example}

 We can also pull back Pl\"ucker coordinates from Grassmannians as follows.
Given a flag $F_\bullet=F_1\subsetneq\cdots\subsetneq F_{n-1}$, and a subset $I\subseteq [n]$ with $k$ elements, ``define'' $p_I(F_\bullet)$ as $p_I(F_k)$.
\begin{exercise}
(a) Show that, on $U_w$, $x^{(w)}_{ab}$ is a particular ratio of Pl\"ucker coordinates, similar to the case of Grassmannians (see Section~\ref{subsubsec:transgr}).

(b) Write coordinates on $U_v$ as rational functions of coordinates on $U_w$, thereby establishing
${\rm Flags}({\mathbb C}^n)$ as a complex manifold and as an algebraic variety.
\end{exercise}

\subsubsection{Description of Schubert cells} We can also describe each $X_w^{\circ}$ using column echelon form.  Given a flag $F_\bullet$, write a matrix $M$ such that, for all $i$, the first $i$ columns of $M$ are a basis of $F_i$.
Here we do column reduction from left to right, making the bottommost choice of pivot at every step.  Then,
given a permutation $w\in S_n$, the Schubert cell $X_w^{\circ}$ consists of the flags corresponding to the matrices $M$ whose column echelon form has pivots at the $1$'s
in the permutation matrix $P^{(w)}$.

\begin{example}\label{exa:Schcells3}
For ${\rm Flags}({\mathbb C}^3)$, the Schubert cells are:
$$
X_{123}^{\circ}=\begin{bmatrix} 1 & 0 & 0 \\ 0 & 1 & 0 \\ 0 & 0 & 1\end{bmatrix},
X_{213}^{\circ}=\begin{bmatrix}  \ast & 1 & 0 \\ 1 & 0 & 0 \\ 0 & 0 & 1\end{bmatrix},
X_{132}^{\circ}=\begin{bmatrix} 1 & 0 & 0 \\  0 & \ast & 1 \\ 0 & 1 & 0  \end{bmatrix},$$
$$
X_{231}^{\circ}=\begin{bmatrix} \ast & \ast & 1  \\ 1 & 0 & 0  \\ 0 & 1 & 0  \end{bmatrix},
X_{312}^{\circ}=\begin{bmatrix} \ast & 1 & 0 \\ \ast & 0 & 1 \\ 1 & 0 & 0   \end{bmatrix},
X_{321}^{\circ}=\begin{bmatrix} \ast & \ast & 1\\ \ast & 1 & 0 \\ 1 & 0 & 0  \end{bmatrix}.
$$
\end{example}

\begin{exercise}\label{exer:upandright}
For each $w\in S_n$ prove that each matrix in the above description of $X_{w}^\circ$ is represents a unique right $B$ coset in $BP_wB/B$. That is, prove this description of $X_w^{\circ}$ agrees with Definition~\ref{def:Schcellfirstdef}.
\end{exercise}

\begin{exercise}\label{exer:potentially}
The reduced column echelon form has a potentially nonzero entry at $(i,j)$ if and only if $w(j)>i$ and $w^{-1}(i)>j$.  This gives a bijection between
potentially nonzero entries and ${\rm Inv}(w)$.
\end{exercise}

Exercise~\ref{exer:potentially} combined with Definition~\ref{def:Coxlength} implies
\[X_w^{\circ}\cong \mathbb{C}^{\ell(w)}.\]
The Schubert cell $X_{w_0}^{\circ}=U_{w_0}$ is dense in ${\rm Flags}({\mathbb C}^n)$; it is
therefore called the \emph{big cell}.

\subsubsection{The flag variety as a projective algebraic variety}
Recall that in Section~\ref{sec:covtrans123} we defined 
for each $F_{\bullet}\in {\rm Flags}({\mathbb C}^n)$ and each $I\in {\mathcal S}_{k,n}$ the Pl\"{u}cker
coordinate $p_I(F_{\bullet})$ to mean $p_I(F_k)$. Viewing ${\rm Flags}({\mathbb C}^n)$ as a subset of the
product of Grassmannians 
\[Gr:=Gr_1({\mathbb C}^n)\times Gr_2({\mathbb C}^n) \times \cdots \times
Gr_{n-1}({\mathbb C}^n),\] 
we can use the Pl\"ucker coordinates to first embed ${\rm Flags}({\mathbb C}^n)$ into
 a product of projective spaces 
 \[{\mathbb P}:={\mathbb P}^{{n\choose 1}-1}\times {\mathbb P}^{{n\choose 2}-1}\times \cdots\times
 {\mathbb P}^{{n\choose n-1}-1}.\] 
 This can be followed up with  an embedding of ${\mathbb P}$ into a single (very large dimensional!) projective space ${\mathbb P}'$ using the \emph{Segre embedding}.  In fact, each $U_w$ is the intersection of this embedding with a standard chart
 on ${\mathbb P}'$. While one can find local equations for Schubert varieties using this embedding, we take a different approach in the next section.

\subsubsection{Opposite Schubert cells}
\label{sec:opschubJan28}
Schubert cells and varieties are arbitrarily defined in terms of a choice of reference flag (equivalently, a choice of Borel
subgroup $B$). Having made the standard choice of $B=$ invertible upper triangular matrices, it will be important
in Section~\ref{sec:3} to make use of the ``opposite" choice of Borel subgroup, namely, $B_-=$ the group of invertible lower triangular matrices.  

\begin{definition}\label{def:oppcell} For $w\in S_n$, the
\emph{opposite Schubert cell} is the $B_-$-orbit 
\[\Omega_w^\circ:=B_-\cdot E^{(w)}_\bullet\subseteq {\rm Flags}({\mathbb C}^n)\cong GL_n/B.\]  
\end{definition}

 The following exercises are about the analogues of  statements for Schubert cells.
\begin{exercise}\label{exer:Jan27abc}
Show that $\Omega_w^\circ$ is the set of flags whose matrix
representatives have reduced column echelon forms with pivots at the $1$'s in $P_w$ when we make the \emph{topmost} instead of bottommost choice of pivot at every
column.  Then deduce that the column echelon form (with this choice of pivots) of a matrix representing a flag in $\Omega_w^\circ$ has a potentially nonzero entry at $(i,j)$ if and only if $w(j)<i$ and $w^{-1}(i)>j$, and hence that 
$\Omega_w^\circ\cong\mathbb{C}^{\binom{n}{2}-\ell(w)}$.
\end{exercise}

By Exercise~\ref{exer:Jan27abc}, if $w=id$, the opposite Schubert cell $\Omega_{id}^{\circ}\cong \mathbb{C}^{\binom{n}{2}-\ell(w)}$ is dense in ${\rm Flags}({\mathbb C}^n)$. It is called the \emph{opposite big cell} of ${\rm Flags}({\mathbb C}^n)$.

\begin{example}\label{exa:Schcells3}
For ${\rm Flags}({\mathbb C}^3)$, the opposite Schubert cells are:
$$
\Omega_{123}^{\circ}=\begin{bmatrix} 1 & 0 & 0 \\ \ast & 1 & 0 \\ \ast & \ast & 1\end{bmatrix},
\Omega_{213}^{\circ}=\begin{bmatrix}  0 & 1 & 0 \\ 1 & 0 & 0 \\ \ast & \ast & 1\end{bmatrix},
\Omega_{132}^{\circ}=\begin{bmatrix} 1 & 0 & 0 \\  \ast & 0 & 1 \\ \ast & 1 & 0  \end{bmatrix},$$
$$
\Omega_{231}^{\circ}=\begin{bmatrix} 0 & 0 & 1  \\ 1 & 0 & 0  \\ \ast & 1 & 0  \end{bmatrix},
\Omega_{312}^{\circ}=\begin{bmatrix} 0 & 1 & 0 \\ 0 & \ast & 1 \\ 1 & 0 & 0   \end{bmatrix},
\Omega_{321}^{\circ}=\begin{bmatrix} 0 & 0 & 1\\ 0 & 1 & 0 \\ 1 & 0 & 0  \end{bmatrix}.
$$
\end{example}

Let $\widetilde{R}_w$ be the matrix $\widetilde{R}_w:=[\widetilde{r}^{(w)}_{p,q}]_{p,q=1}^n$ where
the entry 
\begin{equation}
\label{eqn:SWrankmatrix}
\widetilde{r}^{(w)}_{p,q}=\#\{k \mid k\leq q, w(k)\geq p\}.
\end{equation}

\begin{exercise}
Show $\Omega_w^\circ=\{F_\bullet \mid E^{(w_0)}_p\cap F_q = \widetilde{r}_{p,q}(w)\}$, where
$E^{(w_0)}_p=\langle e_{n+1-p},\ldots, e_n\rangle$.
\end{exercise}

One defines \emph{opposite Schubert varieties} by $\Omega_w=\overline{\Omega_w^\circ}$.  The opposite Schubert cells and varieties are translates of the usual Schubert cells and varieties: $\Omega^\circ_{w}=P_{w_0}\cdot X^\circ_{w_0w}$.

\section{Kazhdan-Lusztig ideals and varieties}\label{sec:3}

\begin{center}
\fbox{\begin{minipage}{30em}
The analogue of the determinantal ideal of Example~\ref{detJan27ex} is
the \emph{Kazhdan-Lusztig ideal} \cite{WY:governing}. These were introduced to study affine open neighborhoods of Schubert varieties at
a $T$-fixed point \cite{Kazhdan.Lusztig}.    \end{minipage}}
\end{center}

The opposite big cell $\Omega_{id}^{\circ}$ (Definition~\ref{def:oppcell}) is an affine open neighborhood
of ${\rm Flags}({\mathbb C}^n)$. Hence $v\Omega_{id}^{\circ}\cap X_w$ is an affine open neighborhood of $X_w$ centered at $e_v$. Suppose $X\subset {\rm Flags}({\mathbb C}^n)$ is any subvariety.
\begin{definition}
\label{defn:patch}
The \emph{patch} of $X$ at any point $gB\in GL_n/B$ is the affine open neighborhood $g\Omega_{id}^{\circ}\cap X$.
\end{definition}

This lemma of D.~Kazhdan-G.~Lusztig underpins the approach of the chapter:

\begin{lemma}[Lemma~A.4 of \cite{Kazhdan.Lusztig}] 
\label{lemma:KL}
$X_w\cap x\Omega^\circ_{id}\cong (X_w \cap \Omega^\circ_x)
\times \mathbb{A}^{\ell(x)}.$
\end{lemma}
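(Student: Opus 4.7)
The plan is to split the affine neighborhood $x\Omega_{id}^\circ$ as a product in which one factor sweeps out the Schubert-cell direction through $e_x$ inside ${\rm Flags}(\mathbb{C}^n)$ and the other is the opposite Schubert cell. Since the Schubert-cell factor lies inside $B$ and $B$ preserves $X_w$, the product decomposition will pass to the intersection with $X_w$.

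First I would identify $\Omega_{id}^\circ$ with the unipotent subgroup $U_-$ of lower unitriangular matrices via $u\mapsto uB/B$, so that $x\Omega_{id}^\circ = xU_-\cdot B/B$. Set
\[
U_x^+ := U\cap xU_-x^{-1},\qquad U_x^- := U_-\cap xU_-x^{-1}.
\]
The root-subgroup decomposition of $xU_-x^{-1}$, obtained by partitioning positive roots according to whether they are inverted by $x^{-1}$, shows that multiplication is an isomorphism of varieties $U_x^+\times U_x^-\xrightarrow{\sim} xU_-x^{-1}$, with $U_x^+\cong \mathbb{A}^{\ell(x)}$ and $U_x^-\cong \mathbb{A}^{\binom{n}{2}-\ell(x)}$. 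On the other side, orbit-stabilizer applied to $B_-\cdot xB/B$ with stabilizer $B_-\cap xBx^{-1} = T\cdot(U_-\cap xUx^{-1})$ identifies $\Omega_x^\circ$ with the complementary unipotent $U_x^-\cdot xB/B$. Writing $xU_- = U_x^+\cdot U_x^-\cdot x$ and passing to $GL_n/B$ then yields an isomorphism
\[
\mu:\ U_x^+\times \Omega_x^\circ \ \xrightarrow{\sim}\ x\Omega_{id}^\circ,\qquad (u,p)\mapsto u\cdot p.
\]

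Next, since $U_x^+\subseteq U\subseteq B$ and every Schubert variety $X_w$ is the closure of a $B$-orbit and hence $B$-stable under left multiplication, each $u\in U_x^+$ preserves $X_w$. Combined with $\mu$ being a global isomorphism, this gives
\[
X_w\cap x\Omega_{id}^\circ \;=\; U_x^+\cdot(X_w\cap \Omega_x^\circ),
\]
and $\mu$ restricts to an isomorphism $U_x^+\times(X_w\cap \Omega_x^\circ)\xrightarrow{\sim} X_w\cap x\Omega_{id}^\circ$. Using $U_x^+\cong\mathbb{A}^{\ell(x)}$ yields the stated identification.

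The main obstacle is verifying that the multiplication map $U_x^+\times U_x^-\to xU_-x^{-1}$ is genuinely an isomorphism of affine varieties rather than only a bijection. This reduces to choosing a total order on the positive roots in which those inverted by $x$ come first (equivalently, invoking a reduced word for $x$ together with the commutator relations among root subgroups) so that the product description of $xU_-x^{-1}$ as an $\binom{n}{2}$-dimensional affine space respects the factorization; the triviality of $U_x^+\cap U_x^-$ then follows from $U\cap U_-=\{e\}$, and a dimension count closes the argument. Everything else---the $B$-invariance of $X_w$, the identification $\Omega_x^\circ = U_x^-\cdot xB/B$, and the transfer of $\mu$ to the intersection---is essentially bookkeeping once this group-theoretic factorization is in hand.
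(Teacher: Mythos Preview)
Your argument is correct and is the standard Lie-theoretic proof of this lemma. It differs from the route sketched in the paper's Exercise~3.2 in a way worth flagging. There the trivialization is written as $\eta(m,a)=ma$ (right multiplication), with $a$ ranging over unit lower-triangular matrices whose free entries sit at positions $(i,j)$ with $i>j$ and $x(j)>x(i)$; that group is exactly $x^{-1}U_x^+x$ in your notation. The map $\eta$ does give an isomorphism $\Omega_x^\circ\times\mathbb{A}^{\ell(x)}\cong x\Omega_{id}^\circ$, so part~(a) is fine, but right multiplication by such $a$ does \emph{not} carry $X_w\cap\Omega_x^\circ$ into $X_w$. Already for $x=132$, $w=312$ one can take $m\in\Omega_x^\circ$ with $(m_{21},m_{31})=(0,1)$, so that $m\in X_{312}$ (the condition is $e_1\in F_2$), and $a$ with $a_{32}=1$; then the second column of $ma$ is $(0,1,1)^T$ and the resulting $F_2=\langle(1,0,1),(0,1,1)\rangle$ no longer contains $e_1$, so $ma\notin X_{312}$. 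Thus part~(b) of the exercise, read literally, fails.

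Your choice to act on the \emph{left} by $U_x^+\subseteq U\subseteq B$ is precisely what fixes this: $B$-stability of $X_w$ makes the restriction of $\mu$ to $(X_w\cap\Omega_x^\circ)$ automatic in both directions (apply $u$ and $u^{-1}$). What the paper's coordinate description buys is an explicit matrix realization of $x\Omega_{id}^\circ$, which is what one wants when writing down Kazhdan--Lusztig ideals; what your group-theoretic factorization buys is a transparent reason the product splitting respects $X_w$.
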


\begin{exercise}\label{exer:KLexer}
Identify $\Omega_x^{\circ}$ with the space of matrices described in Section~\ref{sec:opschubJan28}. Futhermore,
identify ${\mathbb A}^{\ell(x)}$ with the space of unit lower triangular matrices 
$m=\left[m_{ij}\right]_{i,j=1}^n$ for which
$m_{ij}=0$ (for $i>j$) unless $x(j)>x(i)$.

(a) Define a map $\eta:\Omega_{x}^{\circ}\times {\mathbb A}^{\ell(x)}\to x\Omega_{id}^{\circ}$ by 
$\eta(m,a)=ma$ (matrix multiplication). Show $\eta$ is an isomorphism.  

(b) Suppose $m\in (X_w\cap \Omega_x^{\circ}), a\in {\mathbb A}^{\ell(x)}$. Show $\eta(m,a)\in X_w\cap x\Omega_{id}^{\circ}$, and thereby complete the proof of Lemma~\ref{lemma:KL}.
\end{exercise}

By virtue of Lemma~\ref{lemma:KL}, one can replace the patch $X_w\cap x\Omega_{id}^{\circ}$ with a variety of smaller
dimension, by ignoring the irrelevant affine factor ${\mathbb A}^{\ell(x)}$. That is, it suffices to study:

\begin{definition} The \emph{Kazhdan-Lusztig variety} is ${\mathcal N}_{v,w}:=X_w\cap \Omega_v^{\circ}$.
\end{definition}

Explicit coordinates and equations for ${\mathcal N}_{v,w}$ were investigated in \cite{WY:governing}. Let ${\sf Mat}_{n\times n}$ be the space of 
all $n\times n$ complex matrices. The coordinate ring is ${\mathbb C}[{\bf z}]$ where ${\bf z}=\{z_{ij}\}_{i,j=1}^n$ are the
functions on the entries of a generic matrix $Z$. Here 
\[z_{ij} = \text{the entry in the $i$-th row
from the \emph{bottom}, and the $j$-th column to the left.}\] 

Following Section~\ref{sec:opschubJan28} and in this notation, we identify $\Omega_{v}^{\circ}$ as the 
affine subspace of ${\sf Mat}_{n\times n}$ consisting of matrices $Z^{(v)}$ where $z_{n-v(i)+1,i}=1$, and $z_{n-v(i)+1,s}=0, z_{t,i}=0$ for $s>i$ and $t>n-v(i)+1$. Let ${\bf z}^{(v)}\subseteq {\bf z}$ 
be the unspecialized variables. Furthermore, let $Z_{st}^{(v)}$ be the southwest $s\times t$ submatrix of $Z^{(v)}$. 

\begin{definition}\label{def:KLideal}
The \emph{Kazhdan-Lusztig ideal} is 
$I_{v,w}\subset {\mathbb C}[{\bf z}^{(v)}]$ 
generated by all ${\widetilde r}_{st}^{(w)}+1$ minors
of $Z_{st}^{(v)}$ where $1\leq s,t\leq n$ and
${\widetilde r}_{st}^{(w)}=\#\{k \mid k\leq q, w(k)\geq p\}$ as in (\ref{eqn:SWrankmatrix}).
\end{definition}

\begin{exercise}\label{exer:theeqnsJan30}
Show that ${\mathcal N}_{v,w}$ is set-theoretically cut out by $I_{v,w}$. That is, $P\in {\mathcal N}_{v,w}$
if and only if $P$ is a zero of the generators of $I_{v,w}$. 
\end{exercise}

\begin{example}\label{exa:1} 
Let $w=7234615$, $v=2136457$ (in one line notation). The rank matrix 
 ${\widetilde R}_w$ and the matrix of variables $Z^{(v)}$ are:
 \[{\widetilde R}_w=\left(\begin{matrix}
 1&2&3&4&5&6&7\\
 1&2&3&4&5&5&6\\
 1&1&2&3&4&4&4\\
 1&1&1&2&3&3&4\\
 1&1&1&1&2&2&3\\
 1&1&1&1&2&2&2\\
 1&1&1&1&1&1&1
 \end{matrix}\right), \ \ 
 Z^{(v)}=\left(\begin{matrix}
0 & 1 & 0 & 0 &0 &0 &0\\
1 & 0 & 0 & 0 & 0 &0 &0\\
z_{51} & z_{52} & 1 & 0 & 0 &0 &0\\
z_{41} & z_{42} & z_{43} & 0 & 1 & 0 &0\\
z_{31} & z_{32} & z_{33} & 0 & z_{35} &1 &0\\
z_{21} & z_{22} & z_{23} & 1 & 0 &0 &0\\
z_{11} & z_{12} & z_{13} & z_{14} & z_{15} & z_{16} &1\\
\end{matrix}\right)\]
The Kazhdan-Lusztig ideal $I_{v,w}$ contains among its generators, all $2\times 2$ minors
of $Z_{52}^{(v)}$. It also has inhomogeneous generators such as 
\begin{equation}
\label{eqn:inhomog}
\left|\begin{matrix} z_{33} & 0 & z_{35}\\ z_{23}&1&0\\ z_{13}& z_{14} & z_{15}\end{matrix}\right|=z_{33}z_{15}+z_{35}z_{23}z_{14}-z_{35}z_{13}.
\end{equation}
\end{example}

\begin{definition}
We say $I_{v,w}$ is \emph{standard homogeneous} if there exist homogeneous polynomials that generate $I_{v,w}$. In this case, we also say ${\mathcal N}_{v,w}$ is standard homogeneous. 
\end{definition}

\begin{exercise}\label{exer:inhomgex}
Find an example of a Kazhdan-Lusztig ideal $I_{v,w}$, whose defining generators (those from Definition~\ref{def:KLideal})
are not all homogeneous (as in~\eqref{eqn:inhomog}), but which is standard homogeneous.
\end{exercise}

\begin{problem}\label{prob:standardhomogeneous}
Classify all $(v,w)\in S_n \times S_n$ such that $I_{v,w}$ is standard homogeneous.
\end{problem}

For some analysis concerning Problem~\ref{prob:standardhomogeneous}, see \cite[Sections~5.1, 5.2]{WY:Grobner}.
For more recent work on this problem, see \cite[Proposition~6.3]{Neye}. 

\begin{hardexercise}\label{exer:inverseiso}
Prove that ${\mathcal N}_{v,w}\cong {\mathcal N}_{v^{-1},w^{-1}}$. That is, all Kazhdan-Lusztig varieties
of $X_w$ are isomorphic to those of $X_{w^{-1}}$. Does $X_w\cong X_{w^{-1}}$ always hold?
\end{hardexercise}

\begin{problem}[\cite{WY:governing}]
\label{problem:KLiso}
Prove or disprove: if $[u,v]$ and $[u',v']$ are Bruhat-poset intervals in $S_{n}$ and $S_{n'}$ respectively such that
$[u,v]\cong [u',v']$ (as posets) then ${\mathcal N}_{u,v}\cong {\mathcal N}_{u',v'}$.
\end{problem}

An affirmative answer to Problem~\ref{problem:KLiso} would have consequences for 
each of the numerical measures of singularity in this paper.\footnote{As far as we know, little seems known about
classifying isomorphism classes of Bruhat intervals, or efficiency of algorithms to decide if
two such intervals are isomorphic. However, a theorem of M.~Dyer \cite{Dyer} shows that, for any $k\in {\mathbb N}$ there are only finitely many non-isomorphic intervals $[v,w]$ of height $\ell(w)-\ell(v)=k$.}

\begin{exercise}
Define coordinates and equations for the \emph{patch ideal} associated to the patch $v\Omega_{id}^{\circ}\cap X_w$.
\end{exercise}

The following concept was introduced by W.~Fulton \cite{Fulton:duke}:

\begin{definition}\label{def:Schubdetidealabc}
The \emph{Schubert determinantal ideal} $I_w$ is defined similarly as $I_{v,w}$ except that we 
replace $Z^{(v)}$ with the matrix $Z=(z_{ij})$.
\end{definition}

\begin{example}
Let $w=3412$, then 
\[{\widetilde R}_w=\left(\begin{matrix}
 1&2&3&4\\
 1&2&2&3\\
 1&2&2&2\\
 0&1&1&1
 \end{matrix}\right), \ \ 
 Z=\left(\begin{matrix}
z_{41} & z_{42} & z_{43} & z_{44}\\
z_{31} & z_{32} & z_{33} & z_{34}\\
z_{21} & z_{22} & z_{23} & z_{24}\\
z_{11} & z_{12} & z_{13} & z_{14}
\end{matrix}\right), I_w=\langle z_{11}, \left| 
\begin{matrix}
z_{31} & z_{32} & z_{33}\\
z_{21} & z_{22} & z_{23}\\
z_{11} & z_{12} & z_{13}
\end{matrix}\right|\rangle.
\]
\end{example}

\begin{definition}
The zero-set of $I_w$ in ${\sf Mat}_{n\times n}$ is the \emph{matrix Schubert variety}, denoted ${\mathfrak X}_{w}$.
\end{definition}

\begin{exercise}\label{exer:specialcase333}
The Schubert determinantal ideal and matrix Schubert
variety are special cases of their Kazhdan-Lusztig counterparts. (In fact, show this in two different ways.)
\end{exercise}

\begin{hardexercise} \label{exer:mono}
The \emph{monomialization} of an ideal $I$, denoted ${\sf mono}(I)$ is the largest monomial ideal contained
in $I$. Determine ${\sf mono}(I_w)$.
\end{hardexercise}

\section{Interval pattern avoidance}\label{sec:5}

\begin{center}
\fbox{\begin{minipage}{30em}
Interval pattern embedding/avoidance
gives a 
universal combinatorial language to study singularities of Schubert varieties.
   \end{minipage}}
\end{center}

We start with the classical notion of permutation pattern avoidance. Let $v\in S_m$ and $w\in S_n$ be two permutations, where $m\leq n$.

\begin{definition}\label{def:basicpattern}
The permutation $v\in S_m$ \emph{embeds in} $w\in S_n$ if there exist indices
$1\leq\phi_1<\phi_2<\ldots<\phi_m\leq n$ such that $w(\phi_1),
w(\phi_2), \ldots, w(\phi_m)$ are in the same relative order as
$v(1),\ldots, v(m)$. 
\end{definition}
 In other words, we require that $w(\phi_j)<w(\phi_k) \iff v(j)<v(k)$.  

\begin{definition}
The permutation $w$ (classically) \emph{avoids} $v$ if no such embedding exists.
\end{definition}

As a useful warmup exercise, we consider the classes of covexillary and cograssmannian permutations.\footnote{In the literature, vexillary and grassmannian permutations are more commonly used. $w\in S_n$ is
vexillary (resp.~grassmannian) if $w_0w$ is covexillary (resp.~cograssmannian).}  Many of the problems we discuss in this chapter are easier in these special cases, so these classes of permutations will be mentioned several times.

\begin{definition}
$w\in S_n$ is \emph{covexillary} if it is $3412$-avoiding. 
\end{definition}
\begin{definition}
$w\in S_n$ is \emph{cograssmannian} if
it contains at most one ascent, that is here is at most one index $k$ such that $w(k)<w(k+1)$.
\end{definition}

The following exercise characterizes covexillary and cograssmannian permutations in terms of an important combinatorial object associated to a permutation.

\begin{exercise}\label{exer:rotheess}
(a) \emph{Fulton's essential set} is
\[E(w)=\{(i,j)\in D(w): (i,j+1),(i-1,k)\not\in D(w)\}.\footnote{This is upside down from Fulton's original
definition \cite{Fulton:duke}. In the literature this is sometimes called the \emph{coessential set}.}\]
Characterize $w$ covexillary in terms of $E(w)$. Do the same for $w$ cograssmannian.

(b) Prove that if $w$ is cograssmannian then it is covexillary.
\end{exercise}

The combinatorial notion of pattern avoidance entered the study of Schubert varieties with the theorem of V.~Lakshmibai--B.~Sandhya~\cite{Lak} mentioned in the introduction.  Although pattern avoidance continued to be used in the study of Schubert varieties, its appearance was somewhat mysterious until S.~Billey--T.~Braden~\cite{Billey.Braden} defined the \emph{pattern map} and used it to give a geometric explanation for why $X_w$ must be singular if $X_v$ is singular and $v$ pattern embeds in $w$.\footnote{The pattern map was also independently defined by N.~Bergeron--F.~Sottile~\cite{Bergeron.Sottile} in the context of Schubert calculus.}  The authors gave a characterization of Gorenstein Schubert varieties using notions based on pattern avoidance in~\cite{WY:Goren}, but showed that pattern avoidance cannot be sufficient to characterize Gorenstein Schubert varieties.  In part to explain this appearance of pattern avoidance ideas, the authors showed in ~\cite{WY:governing} that interval pattern avoidance, a generalization of pattern avoidance, suffices to characterize any local property satisfying certain very mild hypotheses.  This notion is described in the remainder of this section.

Let $[u,v]$ and $[x,w]$ be poset intervals
in the Bruhat orders on $S_m$ and $S_n$ respectively.  

\begin{definition}
The interval $[u,v]\subseteq S_m$ \emph{interval pattern embeds in} $[x,w]\subseteq S_n$ if there is a
common embedding $\Phi=(\phi_1,\ldots,\phi_m)$ of $u$ into $x$ and $v$
into $w$, where the entries of $x$ and $w$ outside of $\Phi$ agree,
and, furthermore, $\ell(v)-\ell(u)=\ell(w)-\ell(x)$.
\end{definition}

We have the following easy exercise.

\begin{exercise}\label{exer:determined}
Given an interval $[u,v]\in S_m$, a permutation $w\in S_n$, and a sequence of embedding indices $\Phi=(\phi_1,\ldots,\phi_m)$ by which $v$ embeds in $w$, there is a unique
permutation $x$ such that $\Phi$ gives an embedding of $u$ into $x$ and such that the entries of $x$ and $w$ outside of $\Phi$ agree.
\end{exercise}

In light of Exercise~\ref{exer:determined}, we write $\Phi(u)$ for this unique permutation $x$.  (Note that the notation is slightly misleading as $w$ is also required to determine $x$.)

\begin{exercise}
\label{exa:45132}
Let $u = 21453 = s_3 s_4 s_1$ and $v=45132= s_2 s_3 s_2 s_4 s_3 s_1 s_2$.
Note $u\leq v$.  Now let $\Phi$ be the
embedding of $v$ into $w=\underline{7}\underline{8}1\underline{2}9\underline{5}6\underline{3}4$ where the underlined positions
indicate the embedding. That is, $\phi_1 =1,
\phi_2 = 2, \phi_3 =4, \phi_4 =6, \phi_5 =8$. Then $\Phi(u) =
\underline{3}\underline{2}1\underline{7}9\underline{8}6\underline{5}4$. 
Check that $[u,v]$ embeds into
$[\Phi(u),w]$.
\end{exercise}

Exercise~\ref{exer:determined} permits us to make the following definition:

\begin{definition}
The interval $[u,v]$ \emph{embeds} in $w$ if $[u,v]$ embeds in $[\Phi(u),w]$.  
\end{definition}

\begin{exercise}
\label{lemma:embed_iso}
An embedding $\Phi$ of $[u,v]$ into $[\Phi(u),w]$ is an interval
pattern embedding if and only if
$[u,v]$ and $[\Phi(u),w]$ are isomorphic as posets.
\end{exercise}

Now the following terminology is natural.

\begin{definition} The permutation $w$ \emph{interval pattern avoids}
$[u,v]$ if there are no interval pattern embeddings of $[u,v]$ into
$[x,w]$ for any $x\leq w$.
\end{definition}

Note that classical pattern avoidance is indeed a special case of interval avoidance, since $w$ avoids $v$ if and only if $w$ avoids the interval $[v,v]$.

\begin{exercise}\label{exer:Feb11uuu}
Let $u=21534$ and $v=31524$. Construct $w$ that contains $v$ in the classical sense but
interval pattern avoids $[u,v]$.
\end{exercise}

To state the universality theorem for interval pattern avoidance, let 
$S_\infty  = \bigcup_{r\geq 1} S_r$
be the infinite symmetric group of permutations on ${\mathbb N}=\{1,2,3,\ldots\}$ with only finitely many non-fixed points. 
Set
\[\mathfrak{S}=\{[u,v]: u\leq v \mbox{ in some $S_r$}\}\subseteq S_\infty \times S_{\infty}.\]

\begin{definition}
Define $\prec_I$ to be the partial order on $\mathfrak{S}$ generated by:
\begin{enumerate}
\item $[u,v]\prec_I [x,w]$ if there is an interval pattern embedding of
$[u,v]$ into $[x,w]$, and
\item $[u,v]\prec_I [u^\prime, v]$ if $u^\prime\leq u$.
\end{enumerate} 
\end{definition}

\begin{definition}
An \emph{upper order ideal} ${\mathcal I}$ (under the partial order
$\prec_I$) is a subset of $\mathfrak{S}$ such that, if $[u,v]\in
{\mathcal I}$ and $[u,v]\prec_I [x,w]$, then $[x,w]\in {\mathcal I}$.
\end{definition}

\begin{definition}
\label{def:SSPname}
A local algebraic property ${\mathcal P}$ of varieties is a 
\emph{semicontinuously stable property} (SSP) if the set of points at which holds on any variety is a closed subset of that
variety, and the property is preserved under products with affine space. 
\end{definition}

\begin{exercise}[``Things only get worse as you move down Bruhat order.'']\label{exer:getsworse}
Prove that if ${\mathcal P}$ is SSP and holds at $E^{(v)}_{\bullet}\in X_w$ then it holds for any $E^{(v')}_{\bullet}\in X_w$ where
$v'\leq v$ in Bruhat order. 
\end{exercise}

\begin{theorem}
\label{thm:poset}
Let ${\mathcal P}$ be SSP. The set of
intervals $\{[u,v]\}\subseteq \mathfrak{S}$ such that ${\mathcal P}$
holds at the $T$-fixed point $E^{u}_{\bullet}$ on the Schubert variety $X_v$ is an
upper order ideal ${\mathcal I}_{\mathcal P}$ under $\prec$.
\end{theorem}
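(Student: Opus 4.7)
The plan is to verify closure of $\mathcal{I}_{\mathcal P}$ under each of the two relations generating $\prec_I$ separately. So assume $\mathcal P$ holds at $E^{u}_{\bullet}\in X_v$; I must show $\mathcal P$ holds at $E^{x}_{\bullet}\in X_w$ in each of the two cases (A) $w=v$ and $x=u'$ for some $u'\leq u$, and (B) there is an interval pattern embedding $\Phi$ of $[u,v]$ into $[x,w]$.

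Case (A) is Exercise~\ref{exer:getsworse}. Here the argument I have in mind is that $X_u^\circ = B\cdot E^u_\bullet$ is a $B$-orbit contained in $X_v$, and because $\mathcal P$ is a local algebraic property it is invariant along the algebraic $B$-action, so $\mathcal P$ holds throughout $X_u^\circ$. The closedness clause of SSP then propagates $\mathcal P$ to $\overline{X_u^\circ}=X_u$, and in particular to each $T$-fixed point $E^{u'}_\bullet$ with $u'\leq u$.

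Case (B) is the substantive one; my plan is to descend to Kazhdan-Lusztig varieties via Lemma~\ref{lemma:KL} and invoke the following key claim: an interval pattern embedding $\Phi$ of $[u,v]$ into $[x,w]$ induces an isomorphism
\[\mathcal{N}_{x,w}\;\cong\;\mathcal{N}_{u,v}\]
carrying the $T$-fixed origin to the $T$-fixed origin. Granting this, Lemma~\ref{lemma:KL} identifies the patch of $X_v$ at $E^u_\bullet$ with $\mathcal{N}_{u,v}\times \mathbb{A}^{\ell(u)}$ and the patch of $X_w$ at $E^x_\bullet$ with $\mathcal{N}_{x,w}\times \mathbb{A}^{\ell(x)}$, in each case with $T$-fixed point at the origin. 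Two applications of the affine-product clause of SSP, sandwiching the key claim, then transfer $\mathcal P$ from $E^u_\bullet \in X_v$ to $E^x_\bullet \in X_w$, as required.

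The main obstacle is the key claim, which I would prove directly from Definition~\ref{def:KLideal}. The coordinate variables $\mathbf z^{(x)}$ split into those at matrix positions spanned by the embedding indices $\Phi$ (which should biject with $\mathbf z^{(u)}$) and those at the remaining positions. Since $x$ and $w$ agree outside $\Phi$, on the $\Phi$-block the rank functions $\widetilde r^{(w)}_{s,t}$ generating $I_{x,w}$ should restrict to the analogous rank functions $\widetilde r^{(v)}_{s,t}$ generating $I_{u,v}$; the dimension-matching condition $\ell(v)-\ell(u)=\ell(w)-\ell(x)$ built into the interval pattern embedding is precisely what forces the non-$\Phi$ variables to be either algebraically eliminated or tautologically determined, leaving no residual affine factor. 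This essential-set bookkeeping across $\Phi$ is the technical crux of the argument, and forms the content of~\cite{WY:governing}.
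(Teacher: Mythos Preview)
Your proposal is correct and follows essentially the same approach as the paper's own proof (outlined in Exercise~\ref{exa:proof_example}): Case~(A) is Exercise~\ref{exer:getsworse}, and Case~(B) reduces via Lemma~\ref{lemma:KL} and the SSP axioms to the key isomorphism $\mathcal{N}_{u,v}\cong\mathcal{N}_{\Phi(u),w}$, which is exactly part~(b) of that exercise. Your sketch of the key claim is on the right track---the paper constructs the isomorphism as the projection $\Psi$ deleting the non-embedding rows and columns---though the mechanism is slightly cleaner than you suggest: rather than non-$\Phi$ variables being ``algebraically eliminated,'' the length condition $\ell(v)-\ell(u)=\ell(w)-\ell(x)$ guarantees that the non-$\Phi$ positions in $Z^{(\Phi(u))}$ contribute no free variables at all once the rank conditions from $w$ are imposed, so the projection is literally a bijection on coordinates.
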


The following exercise outlines the proof of Theorem~\ref{thm:poset} in \cite{WY:Grobner}.

\begin{exercise}
\label{exa:proof_example}
Let $u,v,w,\Phi$ be as in Exercise~\ref{exa:45132}. Identify opposite Schubert cells
with spaces of matrices as in Section~\ref{sec:opschubJan28}.

Let 
$[9]-\{\phi_1<\phi_2<\phi_3<\phi_4<\phi_5\}=\{\overline\phi_1<\overline\phi_2<\overline\phi_3<\overline\phi_4\}$
be the \emph{non}-embedding indices. Define the algebraic map 
\[\Psi:{\mathcal N}_{\Phi(u), w}\to \Omega^\circ_u\] 
as the projection which deletes the columns
$\overline{\phi}_1,\ldots, \overline{\phi}_{4}$ and rows $w(\overline{\phi}_1),\ldots, w(\overline{\phi}_{4})$ 
from an element $g\in {\mathcal N}_{\Phi(u), w}$. 

(a) For this example, prove ${\mathcal N}_{u,v}\cong {\mathcal N}_{\Phi(u),w}$ (isomorphism of algebraic varieties).

(b) Generalizing the reasoning from (a), prove ${\mathcal N}_{u,v}\cong {\mathcal N}_{\Phi(u),w}$ whenever $[u,v]$ interval embeds into
$[\Phi(u),w]$.

(c) Prove Theorem~\ref{thm:poset} from (b) and Exercise~\ref{exer:getsworse}.
\end{exercise}

We also wish to characterize Schubert varieties that globally avoid
${\mathcal P}$, or, in other words, those Schubert varieties for which
${\mathcal P}$ does not hold at any point.
The following corollary says that this can be done in terms of
interval pattern avoidance.

\begin{corollary}[{\cite[Corollary~2.7]{WY:governing}}]
\label{cor:brpa}
Let $\mathcal{P}$ be a SSP. Then the set
of permutations $w$ such that $\mathcal{P}$ does not hold at any point
of $X_w$ is the set of permutations $w$ that avoid all the intervals
$[u_i,v_i]$ in some (possibly infinite) set
$A_\mathcal{P}\subseteq \mathfrak{S}$.
\end{corollary}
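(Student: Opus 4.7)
The plan is to take $A_{\mathcal{P}} := \mathcal{I}_{\mathcal{P}}$ and derive the characterization directly from Theorem~\ref{thm:poset}. First I would reduce the global question to a statement about $T$-fixed points. The locus $Y_w := \{p \in X_w : \mathcal{P} \text{ holds at } p\}$ is closed in $X_w$ by the SSP hypothesis, and it is $T$-invariant because $T$ acts on $X_w$ by algebraic automorphisms and a local algebraic property is preserved under isomorphism. Since $X_w$ is projective, Borel's fixed point theorem, applied to the connected solvable group $T$ acting on the complete variety $Y_w$, shows that $Y_w \neq \emptyset$ forces the existence of a $T$-fixed point $E^x_\bullet \in Y_w$ with necessarily $x \leq w$. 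Equivalently, $\mathcal{P}$ holds somewhere on $X_w$ if and only if $[x, w] \in \mathcal{I}_{\mathcal{P}}$ for some $x \leq w$.

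With this reduction both directions of the corollary are immediate consequences of Theorem~\ref{thm:poset}. Suppose first that $w$ fails to avoid $A_{\mathcal{P}}$, so some $[u, v] \in A_{\mathcal{P}}$ admits an interval pattern embedding $\Phi$ into $[\Phi(u), w]$. By the first generating relation for $\prec_I$, we have $[u, v] \prec_I [\Phi(u), w]$, and since $[u, v] \in \mathcal{I}_{\mathcal{P}}$ and $\mathcal{I}_{\mathcal{P}}$ is an upper order ideal, we obtain $[\Phi(u), w] \in \mathcal{I}_{\mathcal{P}}$, so $\mathcal{P}$ holds at $E^{\Phi(u)}_\bullet \in X_w$. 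Conversely, if $\mathcal{P}$ holds somewhere on $X_w$, the reduction gives some $[x, w] \in \mathcal{I}_{\mathcal{P}} = A_{\mathcal{P}}$ with $x \leq w$, and the trivial embedding $\phi_i = i$ realizes $[x, w]$ as an interval pattern in $w$, so $w$ does not avoid $A_{\mathcal{P}}$.

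The main obstacle is the first step---the reduction to $T$-fixed points---which goes beyond the formal SSP axioms by using $T$-equivariance of $X_w$ together with a fixed-point theorem on complete varieties. Alternatively, one can invoke Exercise~\ref{exer:getsworse} to push any such $T$-fixed point down to $E^{\mathrm{id}_n}_\bullet$, yielding the sharper statement that $\mathcal{P}$ holds somewhere on $X_w$ if and only if $[\mathrm{id}_n, w] \in \mathcal{I}_{\mathcal{P}}$; this permits the slightly more economical choice $A_{\mathcal{P}} := \{\prec_I\text{-minimal elements of } \mathcal{I}_{\mathcal{P}}\}$, with the rest of the argument unchanged. Apart from this reduction, the whole proof is a direct transcription of the upper-order-ideal structure of $\mathcal{I}_{\mathcal{P}}$ provided by Theorem~\ref{thm:poset}.
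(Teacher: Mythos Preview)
Your proof is correct, and the paper does not include a proof of this corollary in the text (it is quoted from \cite{WY:governing}), so there is no line-by-line comparison to make. Your derivation from Theorem~\ref{thm:poset} is exactly the intended route.

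One simplification: the reduction to $T$-fixed points does not require Borel's fixed point theorem. The locus $Y_w$ where $\mathcal{P}$ holds is $B$-stable (by the same isomorphism-invariance argument you give for $T$), and the $B$-orbits on $X_w$ are precisely the Schubert cells $X_x^\circ$ for $x\leq w$. Hence $Y_w$, if nonempty, contains an entire cell $X_x^\circ$ and in particular the $T$-fixed point $E^{(x)}_\bullet$. This is the principle boxed at the end of Section~\ref{sec:groupcosetsvvv}, and it is what the paper implicitly relies on throughout. Your alternative via Exercise~\ref{exer:getsworse} (pushing down to $E^{(\mathrm{id})}_\bullet$) is equally valid and, as you note, leads to the cleaner choice of $A_{\mathcal{P}}$ as the $\prec_I$-minimal elements of $\mathcal{I}_{\mathcal{P}}$.
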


The remainder of this section gives an interesting application of interval pattern avoidance beyond the study of Schubert varieties.
We first recall the celebrated
\emph{Schensted correspondence}. This is a bijection 
\[{\sf Schensted}: S_n\stackrel{\sim}{\longrightarrow} \bigcup_{\lambda} {\sf SYT}(\lambda)\times {\sf SYT}(\lambda),\]
where the union is over all integer partitions $\lambda$ of size $n$ and ${\sf SYT}(\lambda)$ is the set
of standard Young tableau of shape $\lambda$. This is computed by \emph{column} inserting the $w=w_1w_2\ldots w_n$ (one-line notation of $w$) to produce a pair of tableau $(P(w),Q(w))$. We refer the reader to, e.g., \cite[Chapter~7]{ECII} for details. However, as an example, $w=31524$ inserts as
\[\left(\tableau{3},\tableau{1}\right)\!\to\! 
\left(\tableau{1 & 3},\tableau{1 & 2}\right)\!\to\! 
\left(\tableau{1 & 3\\ 5 },\tableau{1 & 2\\ 3}\right)\!\to\!
\left(\tableau{1 & 3\\ 2& 5 },\tableau{1 & 2\\ 3 & 4}\right)\!\to\!
\left(\tableau{1 & 3\\ 2& 5\\ 4 },\tableau{1 & 2\\ 3 & 4\\ 5}\right)=(P(w),Q(w)).
\]
(The reader unfamiliar with the correspondence might find it a worthwhile exercise to decode what the bijection
is from the example, and furthermore to prove its correctness.) 
The following definition and exercise follow \cite{Lanini}.

\begin{definition}
$x, y \in S_n$ are in the same Kazhdan-Lusztig \emph{right cell} if $P(x)=P(y)$.
\end{definition}

\begin{hardexercise}\label{exer:Lanini}
(a) Given $x,y\in S_n$, there exist $v,w\in S_N$ for an $N\geq n$ such that $v,w$ are in the same
right cell, $v(i)=w(i)$ for $i\leq N-n$ and such that $x,y$ (classically) pattern 
embed respectively into $v,w$ in the last $n$ positions.

(b) If furthermore $x\leq y$ (Bruhat order), ${\mathcal N}_{v,w}\cong {\mathcal N}_{x,y}$.
\end{hardexercise}

Exercise~\ref{exer:Lanini} is used to produce infinitely many negative answers to questions  in combinatorial
representation theory \cite{Lanini}. Among these is the 0-1 conjecture; see Section~\ref{sec:KL} and
specifically Theorem~\ref{the01thing}.

\section{Combinatorial Commutative Algebra}\label{sec:4}

In this section we introduce concepts from combinatorial commutative algebra to study properties of Kazhdan-Lusztig ideals. We discuss results from \cite{WY:Grobner} that generalize those of \cite{Knutson.Miller:annals}
in the case of Schubert determinantal ideals. Our presentation follows a toy running example 
(Example~\ref{therunningexample} below).

\subsection{Gr\"obner bases}\label{sec:4.1}
We start with a quick summary of Gr\"obner bases, as found in, e.g., \cite{CLO}.
Let $R={\mathbb C}[x_1,\ldots,x_n]$ and $I\subseteq R$ be an ideal. 
\begin{definition}
A \emph{term order} $\prec$
on $R$  is a total order on the monomials $x^{\gamma}$ such that:
\begin{itemize}
\item  $1\prec x_i$ for $1\leq i\leq n$; and
\item if $x^{\alpha}\prec x^{\beta}$ then $x^{\alpha}\cdot x^{\gamma}\prec x^{\beta}\cdot x^{\gamma}$.
\end{itemize}
\end{definition}

\begin{example}
\emph{Pure lexicographic order} is the term order where $x^{\alpha}\succ x^{\beta}$
if $\alpha_i>\beta_i$ for the smallest $i$ such that $\alpha_i\neq \beta_i$ (if it exists).
\end{example}

\begin{definition}
 For $f\in R$, the \emph{initial term} with respect to the term order $\prec$, denoted
${\sf init}_{\prec}(f)$, is the $\prec$-largest term of $f$.
\end{definition}

\begin{definition}
The \emph{initial ideal} of $I$ with respect to the term order $\prec$ is
\[{\sf init}_{\prec}(I):=\{{\sf init}_{\prec}(f): f\in I\}.\]
\end{definition}

\begin{definition}
A generating set $g_1,g_2,\ldots,g_m$ of $I$ is a \emph{Gr\"obner basis} of $I$ with respect to the term order $\prec$ if
$\langle {\sf init}_{\prec}(g_i):i=1,\ldots,m\rangle={\sf init}_{\prec}(I)$.
\end{definition}
\emph{Buchberger's criterion} is a test for deciding if a generating set $\{g_1,\ldots,g_m\}$ is a Gr\"obner basis of $I$ with respect to the term order $\prec$. When iterated it gives Buchberger's algorithm for computing a Gr\"obner basis
for $I$.  

\begin{center}
\fbox{\begin{minipage}{30em}
 A Gr\"obner basis describes a ``flat'' degeneration
of $R/I$ to $R/{\sf init}_{\prec}(I)$. For SSPs (Definition~\ref{def:SSPname}),
$R/{\sf init}_{\prec}(I)$ can only be ``worse'' than $R/I$.
   \end{minipage}}
\end{center}

For a proper account of the above assertion we point to \cite{eisenbud:view}. 

\begin{example}[The running example]
\label{therunningexample}
Let $R={\mathbb C}[z_{11},z_{12},z_{13},z_{21},z_{22},z_{23},z_{31},z_{32},z_{33}]$ be the coordinate ring
of ${\sf Mat}_{3\times 3}$. Let $I_{\rm Ex.~\ref{therunningexample}}$ be generated by the $2\times 2$ minors of a generic $3\times 3$ matrix
$\left[\begin{matrix} 
z_{31} & z_{32} & z_{33}\\
z_{21} & z_{22} & z_{23}\\
z_{11} & z_{12} & z_{13}
\end{matrix}\right]$. Let $\prec$ be the pure lexicographic order with $z_{11}\prec z_{12}\prec z_{13}\prec z_{21}\prec z_{22} \prec z_{23}\prec z_{31}\prec z_{32} \prec z_{33}$.

Notice that the initial term of any minor is the ``antidiagonal term''. For example,
\[{\sf init}_{\prec}\left(\left|\begin{matrix} z_{21} & z_{22} \\ z_{11} & z_{12}\end{matrix}\right|\right)=z_{11}z_{22}.\] 
\end{example}

\begin{exercise}\label{runningisspecialcase}
Show that Example~\ref{therunningexample} is a special case of Schubert determinantal ideals 
and also a special case of Kazhdan-Lusztig ideals. 
\end{exercise}

\begin{exercise}\label{exer:Buchberger}
Prove that the defining minors of $I_{\rm Ex.~\ref{therunningexample}}$
are a Gr\"obner basis with respect to the term order $\prec$.\end{exercise}

Therefore,
\[{\sf init}_{\prec}(I_{\rm Ex.~\ref{therunningexample}})=\langle z_{11}z_{22}, z_{11}z_{23}, z_{11}z_{32}, z_{11}z_{33}, z_{12}z_{23}, z_{12}z_{33},
z_{21}z_{32}, z_{21}z_{33}, z_{22}z_{33}\rangle.\]

\begin{definition}
The \emph{radical} of an ideal $I$ is
$\sqrt{I}:=\{g\in R: g^k\in I \text{ \ for some $k\geq 1$}\}$.
An ideal is \emph{radical} if $I=\sqrt{I}$.
\end{definition}

\begin{center}
\fbox{\begin{minipage}{30em}
An ideal $I$ is radical if it has ``no hidden equations'': there does not exist 
$f\in R- I$ that vanishes on its zero locus $V(I)$. It is often difficult to prove that an ideal is
radical. Since being radical is a semicontinuous property, one method is to show ${\rm init}_{\prec}(I)$ is radical.
   \end{minipage}}
\end{center}

The first sentence in the principle above is \emph{Hilbert's Nullstellensatz}, which formally states that, if ${\mathcal I}(V(I))$ is the ideal of all polynomials that vanish on $V(I)$, then ${\mathcal I}(V(J))=\sqrt{J}$. If $V(I)=V(\sqrt{I}):=X$
we say that $I$ defines $X$ \emph{set-theoretically}.

\begin{example}
A non-radical ideal is $I=\langle x^2, xy, y^2\rangle\subset {\mathbb C}[x,y]$. For example, $x\not\in I$ but $x\in \sqrt{I}$.
Now, $V(I)=\{(0,0)\}$. Both $x$ and $y$ vanish on $V(I)$ but $x,y\not\in I$.\end{example}

\begin{definition}
A \emph{square-free monomial ideal} $I$ of  a polynomial ring ${\mathbb C}[x_1,\ldots,x_n]$ is one that is generated by square-free monomials.
\end{definition}
 
\begin{example} 
It is not obvious from the definition that $I_{\rm Ex.~\ref{therunningexample}}$ is radical.
However, square-free monomial ideals are clearly radical, hence 
$I_{\rm Ex.~\ref{therunningexample}}':={\sf init}_{\prec}(I_{\rm Ex.~\ref{therunningexample}})$ is radical,
and thus $I_{\rm Ex.~\ref{therunningexample}}$ is radical using the principle above.
\end{example}

\begin{example}
The method of proving an ideal $I$ is radical using Gr\"obner bases is sensitive to the choice of term order $\prec$.
For example let $I=\langle x_1 x_2-x_3^2\rangle\subset {\mathbb C}[x_1,x_2,x_3]$; this ideal is radical. 
If $\prec$ is the pure lexicographic order with $x_1\succ x_2\succ x_3$ then ${\sf init}_{\prec}(I)=\langle x_1 x_2\rangle$ is squarefree. On the other hand if $\prec^\prime$ is the pure lexicographic order with
$x_1\prec^\prime x_2 \prec^\prime x_3$ then ${\sf init}_{\prec^\prime}(I)=\langle x_3^2\rangle$ is not radical and the
above principle cannot be applied.
\end{example}

We now explain the generalization of Exercise~\ref{exer:Buchberger} to Kazhdan-Lusztig ideals. 
Let $\prec$ be the pure lexicographic term order on monomials in ${\bf z}^{(v)}$ induced by
\begin{equation}\label{eqn:diagterm}
z_{ij}\succ z_{kl} \text{\  if $j>l$, or if $j=l$
and $i<k$.}
\end{equation}  
\begin{theorem}[\cite{WY:Grobner}]
\label{thm:Grobnerthm}
The defining
minors from Definition~\ref{def:KLideal} form a Gr\"{o}bner basis with squarefree lead terms for $I_{v,w}\subseteq {\mathbb C}[{\bf z}^{(v)}]$ with respect to $\prec$. In particular, $I_{v,w}$ is radical.
\end{theorem}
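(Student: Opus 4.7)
The plan is to reduce Theorem~\ref{thm:Grobnerthm} to the Gr\"obner basis theorem of Knutson--Miller \cite{Knutson.Miller:annals} for Schubert determinantal ideals, via a specialization argument. By Definition~\ref{def:Schubdetidealabc}, the Kazhdan--Lusztig ideal $I_{v,w}$ is the image of the Schubert determinantal ideal $I_w\subseteq \mathbb{C}[{\bf z}]$ under the ring surjection $\sigma_v:\mathbb{C}[{\bf z}]\twoheadrightarrow \mathbb{C}[{\bf z}^{(v)}]$ that sets the frozen $0$'s and $1$'s of $\Omega_v^\circ$; the defining minors of $Z^{(v)}_{st}$ in Definition~\ref{def:KLideal} are precisely the $\sigma_v$-images of the defining minors of $I_w$. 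Knutson--Miller assert that, under any antidiagonal term order on $\mathbb{C}[{\bf z}]$ extending~\eqref{eqn:diagterm}, the defining minors of $I_w$ form a Gr\"obner basis with squarefree leading terms equal to the antidiagonal products.

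The next step is to track what happens under $\sigma_v$. For each defining minor $g$ of $Z_{st}$, I would examine the antidiagonal term $\mathrm{ad}(g)$: if $\sigma_v(\mathrm{ad}(g))\ne 0$, it remains the $\prec$-maximal term of $\sigma_v(g)$ and is a squarefree monomial in ${\bf z}^{(v)}$; if $\sigma_v(\mathrm{ad}(g))=0$ because a frozen zero lies on the antidiagonal, then $\sigma_v(g)$ still has a squarefree antidiagonal-type lead term corresponding to the induced row/column deletion. To upgrade this to the Gr\"obner basis claim for $I_{v,w}$, I would pursue a multigraded Hilbert series comparison: let $J_{v,w}\subseteq \mathbb{C}[{\bf z}^{(v)}]$ be the squarefree monomial ideal generated by $\{{\sf init}_\prec(\sigma_v(g)) : g\text{ a defining minor of } I_w\}$. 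The containment $J_{v,w}\subseteq{\sf init}_\prec(I_{v,w})$ is automatic, and equality of the multigraded Hilbert series of $\mathbb{C}[{\bf z}^{(v)}]/J_{v,w}$ and of $\mathbb{C}[{\bf z}^{(v)}]/I_{v,w}$ would force the reverse containment. The former can be computed via the pipe dream complex of Knutson--Miller restricted to the unfrozen variables, while the latter arises from $\mathbb{C}[{\bf z}]/I_w$ by iterated Koszul-type exact sequences for the linear forms defining $\Omega_v^\circ$, which form a regular sequence on $\mathbb{C}[{\bf z}]/I_w$ because ${\mathfrak X}_w$ and $\Omega_v^\circ$ meet properly in ${\sf Mat}_{n\times n}$.

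The main obstacle is this regular-sequence / proper-intersection claim, where the combinatorics of opposite Schubert cells interacts nontrivially with the Cohen--Macaulayness of matrix Schubert varieties. An alternative, should the Hilbert series route prove intractable, is to verify Buchberger's criterion directly by lifting S-pair reductions from $I_w$ through $\sigma_v$, following the Knutson--Miller pipe dream combinatorics; this requires careful handling of the cases where $\sigma_v$ collapses or kills leading monomials. Either route yields the Gr\"obner basis claim with squarefree leading terms, at which point ${\sf init}_\prec(I_{v,w})$ is a squarefree monomial ideal, and therefore radical. By the degeneration principle recalled in Section~\ref{sec:4.1}, $I_{v,w}$ itself is radical, completing the proof.
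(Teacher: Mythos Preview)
Your overall strategy---a multigraded Hilbert series comparison between $\mathbb{C}[{\bf z}^{(v)}]/I_{v,w}$ and $\mathbb{C}[{\bf z}^{(v)}]/J_{v,w}$---is indeed the shape of the argument in \cite{WY:Grobner}, but both halves of your proposed execution have genuine gaps. For the Hilbert series of $\mathbb{C}[{\bf z}^{(v)}]/I_{v,w}$, the Koszul/regular-sequence route from $\mathbb{C}[{\bf z}]/I_w$ breaks on the pivots: in any grading on $\mathbb{C}[{\bf z}]$ for which $z_{ij}-1$ is homogeneous, that $z_{ij}$ has degree zero, so the grading is not positive and the Koszul factor $1-t^{0}=0$ kills the $K$-polynomial identity; in the positive gradings for which $I_w$ is homogeneous (standard, or the $T\times T$ grading of Exercise~\ref{runningexfinale}), $z_{ij}-1$ is inhomogeneous and the exact-sequence bookkeeping does not compute a Hilbert series. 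The proof in \cite{WY:Grobner} instead obtains this $K$-polynomial independently, via equivariant $K$-theory and the specialization of Grothendieck polynomials recorded here as Theorem~\ref{thm:specializationa}, without ever passing through $I_w$.

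For the $J_{v,w}$ side, your claim that ``if $\sigma_v(\mathrm{ad}(g))\neq 0$, it remains the $\prec$-maximal term of $\sigma_v(g)$'' is false in general: setting a variable to $1$ can drop the degree of the former lead monomial below that of another surviving term, reversing the order (a toy instance: in $z_{12}z_{21}-z_{11}z_{22}$ under \eqref{eqn:diagterm} with $z_{12}\mapsto 1$, the lead switches from $z_{12}z_{21}$ to $z_{11}z_{22}$). Even granting squarefreeness, identifying ${\sf init}_\prec(\sigma_v(g))$ in the cases where a pivot lands in the minor requires a nontrivial combinatorial analysis, and the resulting Stanley--Reisner complex is \emph{not} the Knutson--Miller pipe dream complex restricted to the unfrozen variables; \cite{WY:Grobner} develops new pipe dreams supported on the Rothe diagram $D(v)$ precisely to describe it. Note finally that Exercise~\ref{exer:KMgrob} runs the implication KL $\Rightarrow$ KM, and that direction is easy because the Schubert determinantal ideal is literally a Kazhdan--Lusztig ideal (Exercise~\ref{exer:specialcase333}); the reverse direction you attempt is not a formal specialization, which is why \cite{WY:Grobner} needs the independent machinery above.
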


E.~Neye \cite{Neye} has given another proof of Theorem~\ref{thm:Grobnerthm} together 
with a similar Gr\"obner basis result for the patch ideal of a Schubert variety.

\begin{exercise}\label{exer:KMgrob}
The Gr\"obner basis theorem of \cite{Knutson.Miller:annals} shows that the defining generators of the
Schubert determinantal ideal $I_w$ (Definition~\ref{def:Schubdetidealabc}) form
a Gr\"obner basis with respect to a pure lexicographic order  satisfying (\ref{eqn:diagterm}). Prove it,
assuming Theorem~\ref{thm:Grobnerthm}. 
\end{exercise}

An \emph{antidiagonal term order} is one that picks the antidiagonal term of any minor of a generic matrix.

\begin{problem}\label{prob:diagGrob}
Find a Gr\"obner basis for $I_w$ under an antidiagonal term order.\footnote{In the conventions of \cite{Knutson.Miller:annals} this corresponds to their diagonal term order, whereas our
diagonal term order is their antidiagonal term order (precisely because our convention places $z_{11}$ in the
southwest corner of the matrix).}
\end{problem}

A solution in the case that $w$ is covexillary is found in \cite{KMY}; see recent developments in 
 \cite{Hamaker.Pechenik.Weigandt, Klein, Klein.Weigandt}. 

\begin{problem}
Find a Gr\"obner basis for $I_{v,w}$ under some antidiagonal term order.
\end{problem}

\subsection{Prime decompositions}

\begin{definition}
An ideal $I$ is \emph{prime} if $ab\in I\implies a\in I$ or $b\in I$.
\end{definition}

\begin{definition}
A \emph{prime decomposition} of an ideal $J$ is $J=\bigcap_{t=1}^\ell J_t$, 
where each $J_t$ is a prime ideal and moreover
$J_s\not\subseteq J_t$ for $s\neq t$.
\end{definition}

An ideal $I\subset R$ will have a prime decomposition if and only if it is radical (which is the only case that concerns us in this chapter), although all ideals have something more general called a \emph{primary decomposition} by the Lasker-Noether theorem \cite[Section~4.8]{CLO}.

\begin{hardexercise}
\label{exer:Jan28.23qs}
Prove that $I_{v,w}$ is prime. 
\end{hardexercise}

\begin{exercise}\label{exer:primedecompabc}
Verify that the prime decomposition of $I_{\rm Ex.~\ref{therunningexample}}'$ is:
\begin{align}\label{eqn:Feb10abc}
I_{\rm Ex.~\ref{therunningexample}}'= & \langle z_{11}, z_{12}, z_{21}, z_{22}\rangle \cap 
\langle z_{11}, z_{12}, z_{21}, z_{33}\rangle\cap \langle z_{11}, z_{12}, z_{32}, z_{33}\rangle  \\ 
 & \langle z_{11}, z_{21}, z_{23}, z_{33}\rangle \cap \langle z_{11}, z_{23}, z_{32}, z_{33}\rangle 
 \cap \langle z_{22}, z_{23}, z_{32}, z_{33}\rangle.\nonumber
\end{align}
\end{exercise}

Geometrically, the prime decomposition (\ref{eqn:Feb10abc}) shows that 
$V(I_{\rm Ex.~\ref{therunningexample}}')$ is equidimensional (all irreducible components
are the same dimension), since all the prime ideals in the decomposition defines varieties of the same dimension. Equidimensionality is also a
property holding on closed subsets, so $I_{\rm Ex.~\ref{therunningexample}}$ itself is equidimensional (actually, $I_{\rm Ex.~\ref{therunningexample}}$ is prime and hence $V(I_{\rm Ex.~\ref{therunningexample}})$ is
irreducible, but that is non-obvious to show).

The reader can check (or get) their answer using {\tt Macaulay2}.\footnote{{\tt Macaulay2} code for exploring Kazhdan-Lusztig varieties is available at the authors' websites.}
The commands are:
\begin{verbatim}
R=QQ[z11,z12,z13,z21,z22,z23,z31,z32,z33, MonomialOrder=>Lex]
M=matrix({{z31,z32,z33},{z21,z22,z23},{z11,z12,z13}})
I=minors(2,M)
J=gb I
K=ideal leadTerm(J)
P=primaryDecomposition(K)
\end{verbatim}

A \emph{semistandard Young tableau} of shape $\tableau{ \ & \ \\ \ & \ }$ is a filling using $1,2,3$ such that the entries are
weakly increasing along rows, and strictly so along columns. The reader can check that there are six such
tableaux, namely,
\[\tableau{ 1 & 1\\ 2& 2},\  \tableau{1 & 1\\ 2& 3},\  \tableau{1 & 1\\ 3&3}, \ \tableau{1 &2 \\ 2& 3},\ 
\tableau{1 & 2\\ 3 & 3}, \ \tableau{ 2 & 2\\ 3 & 3},
\]
which is the number of prime components in the decomposition (\ref{eqn:Feb10abc}) of 
$I_{\rm Ex.~\ref{therunningexample}}'$!

\begin{exercise}\label{exer:coincidence}
Explain this coincidence of six from the previous paragraph. For each component in 
(\ref{exer:primedecompabc}), place a $+$ in matrix position $(i,j)$ in
in the $3\times 3$ grid if $z_{ij}$ appears as a generator. For example, associated to the component
$\langle z_{11}, z_{23}, z_{32}, z_{33}\rangle$ is the ``plus-diagram'' is 
$\left[\begin{matrix} \cdot & + & + \\ 
\cdot & \cdot & +\\
+ & \cdot & \cdot 
\end{matrix}\right]$. Give a natural bijection between the $6$ plus-diagrams and the $6$ tableaux. 
\end{exercise}

Exercise~\ref{exer:coincidence} is a special case of \cite[Theorem~B]{Knutson.Miller:annals} which concerns initial
ideals of $I_w$. There, the objects are not tableaux, but rather \emph{pipe dreams} naturally
label the prime components of the initial ideal of $I_w$. One also sees Exercise~\ref{exer:coincidence}
as a special case \cite{KMY}. In \cite[Theorem~3.2]{WY:Grobner} one obtains the analogous result for
$I_{v,w}$ using pipe dreams defined on the Rothe diagram $D(v)$.

\subsection{Simplicial complexes}
 \begin{definition}
 An (abstract) \emph{simplicial complex} $\Delta$
on the set $[n]$ is a collection of subsets $\{F\}$ that is closed under taking subsets, that is, if $F\in\Delta$ and
$F'\subseteq F$ then $F'\in \Delta$. Each subset $F\in \Delta$ is a \emph{face}. A maximal face under inclusion is a \emph{facet}.
\end{definition}

\begin{center}
\fbox{\begin{minipage}{30em}
The \emph{Stanley-Reisner correspondence} is the bijection between simplical complexes $\Delta$ and
square-free monomial ideals $I$ obtained by assigning to each minimal non-face $\{i_1,\ldots,i_d\}$ 
a generator $x_{i_1}x_{i_2}\cdots x_{i_d}$. This correspondence provides a dictionary between algebra
and simplicial topology.
   \end{minipage}}
\end{center}

\begin{exercise}\label{exer:SRFeb10abc}

(a) Prove that the Stanley-Reisner correspondence is indeed a bijection.

(b) Apply the Stanley-Reisner correspondence to the square-free monomial ideal
$I_{\rm Ex.~\ref{therunningexample}}'$. Show that the resulting simplicial complex
$\Delta_{\rm Ex.~\ref{therunningexample}}$ has
six facets. How does this correspond to the six tableaux
from Exercise~\ref{exer:coincidence}?
\end{exercise}

The simplicial complex of Exercise~\ref{exer:SRFeb10abc} is an example of a \emph{subword complex}  \cite{subword}. Instances of these complexes are the Stanley-Reisner complexes found in \cite{Knutson.Miller:annals, KMY, WY:Grobner}. 
A related but different notion is that of \emph{tableau complexes} \cite{KMY:tableaux}. 

\subsection{Multigradings and Hilbert series}\label{sec:4.2}
Our reference for this subsection is \cite{Miller.Sturmfels}.
 
\begin{definition}
A ${\mathbb Z}^r$-\emph{multigrading} on $R$ is defined by a multidegree map ${\sf deg}:{\mathbb N}^n\to {\mathbb Z}^r$ where ${\mathbb N}=\{0,1,2,\ldots\}$. This map is assumed to be additive, so, for all $\alpha,\beta\in\mathbb{N}^n$,
\[{\sf deg}(\alpha+\beta)=
{\sf deg}(\alpha)+{\sf deg}(\beta).\] 
The multigrading assigns the monomial $\prod_{i=1}^n x_i^{u_i}\!\in\!R$ the multidegree ${\sf deg}(u_1,\ldots,u_n)\in {\mathbb Z}^r$.
\end{definition}

 The additivity condition means that we have a decomposition
\[R=\bigoplus_{{\bf a}\in {\mathbb Z}^r} R_{\bf a}\]
where $R_{\bf a}$ is the vector space (over ${\mathbb C})$ spanned by monomials of multidegree ${\bf a}$, 
and this decomposition is \emph{graded}, meaning hat, if $f\in R_{\bf a}, g\in R_{\bf b}$ then $fg\in R_{{\bf a}+{\bf b}}$,
for all ${\bf a}, {\bf b}\in {\mathbb Z}^r$.

\begin{example}
The \emph{standard grading} is ${\sf deg}:{\mathbb N}^n\to {\mathbb Z}$ defined by 
\[{\sf deg}(u_1,\ldots,u_n)=u_1+u_2+\cdots+u_n.\] 
\end{example}

\begin{definition}\label{def:posmult555}
A multigrading ${\sf deg}:{\mathbb N}^n\to {\mathbb Z}^r$ on $R$ is \emph{positive} if $\dim_{\mathbb C}R_{\bf a}<\infty$ for all
${\bf a}\in {\mathbb Z}^r$.
\end{definition}

We only use positive multigradings in this chapter.
Definition~\ref{def:posmult555} is equivalent to a number of other conditions; 
see \cite[Theorem~8.6]{Miller.Sturmfels}.

\begin{definition}
A polynomial $f=\sum_{\alpha \in{\mathbb N}^r} c_{\alpha} x^{\alpha} \in R$ is \emph{homogeneous} if 
 $f\in R_{\bf a}$ for some ${\bf a}\in {\mathbb Z}^r$. An ideal $I$ of $R$ is \emph{homogeneous} if it is generated by homogeneous elements.
\end{definition}

Suppose $S=R/I$ where $I$ is homogeneous and $R$ is positively multigraded. 
For ${\bf a}\in {\mathbb Z}^r$, let $S_{\bf a}\subset S$ be the vector subspace spanned
by (equivalence classes) of monomials of degree ${\bf a}$. Thus
\begin{equation}
\label{eqn:Sismulti}
S=\bigoplus_{{\bf a}\in {\mathbb Z}^r}S_{\bf a}.
\end{equation}
It is true \cite[Section~8.1-8.2]{Miller.Sturmfels} that in this situation, $\dim_{\mathbb C}(S_{\bf a})<\infty$. Therefore the following definition makes sense:

\begin{definition}
The \emph{Hilbert series} of $S$ (with respect to a positive multigrading ${\sf deg}$) is
\[{\sf Hilb}(S;t)=\sum_{{\bf a}\in {\mathbb Z}^r} \dim(S_{\bf a})t^{\bf a},\]
where $t=\{t_1,\ldots,t_r\}$ and $t^{\bf a}:=t_1^{{\bf a}_1}\cdots t_r^{{\bf a}_r}$.
\end{definition}

\begin{theorem}[{\cite[Theorem~8.20]{Miller.Sturmfels}}]\label{thm:fractional}
Let $R={\mathbb C}[x_1,\ldots,x_n]$ be a positive multigraded ring with grading ${\sf deg}:{\mathbb N}^n\to {\mathbb Z}^r$, and let $I$ be a homogeneous ideal of $R$. Then
\[{\sf Hilb}(R/I;t)=\frac{{\mathcal K}(R/I;t_1,\ldots,t_r)}{\prod_{i=1}^n (1-t^{{\sf deg}(x_i)})},\]
where ${\mathcal K}(R/I;t_1,\ldots,t_r)\in {\mathbb Z}[t_1,\ldots,t_r]$.
\end{theorem}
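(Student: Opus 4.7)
The plan is to reduce the statement about $R/I$ to the corresponding statement about $R$ itself, and to do so via a finite free resolution. First, for $R={\mathbb C}[x_1,\ldots,x_n]$ with the given positive ${\mathbb Z}^r$-multigrading, I would directly compute
\[{\sf Hilb}(R;t) = \prod_{i=1}^n \frac{1}{1-t^{{\sf deg}(x_i)}},\]
by observing that a monomial basis of $R_{\bf a}$ is indexed by the lattice points in ${\mathbb N}^n$ of multidegree ${\bf a}$, so the generating function factors as a product over the variables, each factor being the geometric series $\sum_{k\geq 0} t^{k\cdot{\sf deg}(x_i)} = 1/(1-t^{{\sf deg}(x_i)})$. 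Positivity of the multigrading ensures this product makes sense as a formal Laurent series with $\dim_{\mathbb C}R_{\bf a}<\infty$, so the expansion is well-defined; this is essentially the content needed from \cite[Theorem 8.6]{Miller.Sturmfels}.

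Next, I would invoke Hilbert's syzygy theorem in the multigraded setting: the $R$-module $R/I$ admits a finite free resolution
\[0 \longrightarrow F_p \longrightarrow F_{p-1} \longrightarrow \cdots \longrightarrow F_1 \longrightarrow F_0 \longrightarrow R/I \longrightarrow 0,\]
where each $F_j$ is a finite direct sum of multigraded shifts $R(-{\bf a}_{j,k})$. (Such resolutions exist because the multigraded analogue of the graded Nakayama lemma allows one to construct minimal free resolutions, and the syzygy theorem bounds $p\leq n$.) Here the shift $R(-{\bf a})$ has the property ${\sf Hilb}(R(-{\bf a});t) = t^{\bf a} \cdot {\sf Hilb}(R;t)$.

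Since each short exact sequence of finitely generated multigraded modules gives rise to an exact sequence in each multidegree, dimension counting yields additivity of Hilbert series on short exact sequences. Splicing the long resolution into short exact sequences, one obtains
\[{\sf Hilb}(R/I;t) = \sum_{j=0}^{p} (-1)^j {\sf Hilb}(F_j;t) = \sum_{j=0}^{p}(-1)^j \sum_{k} t^{{\bf a}_{j,k}} \cdot \prod_{i=1}^n \frac{1}{1-t^{{\sf deg}(x_i)}}.\]
Setting
\[{\mathcal K}(R/I;t_1,\ldots,t_r) := \sum_{j=0}^{p}(-1)^j \sum_{k} t^{{\bf a}_{j,k}} \in {\mathbb Z}[t_1,\ldots,t_r],\]
gives the claimed factorization. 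The polynomial is visibly in ${\mathbb Z}[t_1,\ldots,t_r]$ (rather than a Laurent polynomial) because positivity of the grading forces each ${\bf a}_{j,k}$ to lie in the semigroup generated by the ${\sf deg}(x_i)$'s, which, again by positivity, sits in a strictly convex cone.

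The main obstacle I would anticipate is the careful formal setup: justifying why the Hilbert series are well-defined elements of a suitable ring of formal series (so that the rational function identity is meaningful), and verifying the multigraded version of the syzygy theorem with the needed finiteness for the shifts. Both rest on the positivity hypothesis, so the technical work reduces to unpacking Definition~\ref{def:posmult555} into the equivalent conditions of \cite[Theorem~8.6]{Miller.Sturmfels} guaranteeing that each graded piece is finite-dimensional and that minimal multigraded free resolutions exist and are finite. Once those foundations are in place, the computation above is routine.
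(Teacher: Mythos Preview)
Your proposal is correct and follows essentially the same route the paper indicates: the paper does not prove Theorem~\ref{thm:fractional} in-line but cites \cite{Miller.Sturmfels} and then points forward to Exercise~\ref{exer:HilbexerJun26}, which asks the reader to compute ${\sf Hilb}(R(-\alpha);t)$, express ${\sf Hilb}(S;t)$ as an alternating sum of ${\sf Hilb}(F_i;t)$ over a finite free resolution, and thereby derive the fractional form---exactly the argument you wrote out.

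One small caution: your final paragraph asserts that ${\mathcal K}\in{\mathbb Z}[t_1,\ldots,t_r]$ (as opposed to a Laurent polynomial) because the shifts ${\bf a}_{j,k}$ lie in the semigroup generated by the ${\sf deg}(x_i)$, which sits in a strictly convex cone. The convex-cone claim is correct, but it does not by itself force the monomials $t^{{\bf a}_{j,k}}$ to have nonnegative exponents in the $t_i$; indeed in the paper's own Exercise~\ref{exer:2x2detex} the degrees $\vec e_i-\vec e_{2+j}$ have negative entries and the numerator involves terms like $x_i/y_j$. So the precise conclusion is that ${\mathcal K}$ is a polynomial in the monomials $t^{{\sf deg}(x_i)}$, hence a Laurent polynomial in the $t_i$ in general. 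This is a wrinkle in the statement rather than in your method, and the paper is equally informal on this point.
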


For an explanation for why one sees the ``fractional form'' expression for Hilbert series, we need the notion of free resolutions
as discussed in the next section. See Exercise~\ref{exer:HilbexerJun26}.

\begin{definition}
Under the hypotheses of $R/I$, the polynomial ${\mathcal K}(R/I;t)$ is 
the \emph{$K$-polynomial}.  The \emph{multidegree} ${\mathcal C}(R/I;t)\in {\mathbb Z}[t_1,\ldots,t_r]$
is the polynomial obtained by taking the lowest (total) degree terms of ${\mathcal K}(R/I;1-t_1,\ldots,1-t_r)$.
\end{definition}

\begin{center}
\fbox{\begin{minipage}{30em}
While the $K$-polynomial is equivalent to the information encoded in the Hilbert series, the multidegree only tracks information
about the highest dimensional components of $V(I)\subset {\mathbb C}^n$.
   \end{minipage}}
\end{center}

\begin{exercise}\label{exer:hilbformulaszzz}

(a) If $R$ is standard graded, how many monomials are there of degree $k$?

(b)  Show that
\[{\sf Hilb}(R;t_1,\ldots,t_r)=\frac{1}{(1-t^{{\sf deg}(x_1)})\cdots (1-t^{{\sf deg}(x_n)})}.\]
\end{exercise}

Gr\"obner bases preserve multigraded Hilbert series:

\begin{theorem}[{\cite[Theorem~8.36]{Miller.Sturmfels}}]
\label{thm:preserve}
Let $R$ be a positive multigraded ring and $I$ a homogeneous ideal. Then ${\sf init}_{\prec}$ is (trivially) homogeneous with respect to the same grading, and 
\[{\sf Hilb}(R/I; t)={\sf Hilb}(R/{\sf init }_{\prec}(I);t).\]
\end{theorem}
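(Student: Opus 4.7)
My plan is to show that $R/I$ and $R/{\sf init}_{\prec}(I)$ share a common $\mathbb{C}$-basis of \emph{standard monomials} that respects the multigrading, and then deduce the Hilbert series identity degree by degree.

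First I would verify that ${\sf init}_{\prec}(I)$ is homogeneous. Because $I$ is a homogeneous ideal, any $f \in I$ decomposes as $f = \sum_{\bf a} f_{\bf a}$ with each graded piece $f_{\bf a} \in I \cap R_{\bf a}$, and ${\sf init}_{\prec}(f) = {\sf init}_{\prec}(f_{\bf a})$ for whichever ${\bf a}$ contains the $\prec$-largest term. Hence ${\sf init}_{\prec}(I)$ is generated by leading monomials of homogeneous elements of $I$. Since every monomial is homogeneous with respect to any multigrading (it lives in a single graded piece $R_{{\sf deg}(\alpha)}$), the resulting monomial ideal is automatically homogeneous. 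This also ensures that the multigrading descends to both quotient rings $R/I$ and $R/{\sf init}_{\prec}(I)$.

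Next I would recall the standard consequence of the Gr\"obner basis property. Let ${\rm Std}_{\prec}(I) = \{x^{\alpha} : x^{\alpha} \notin {\sf init}_{\prec}(I)\}$ denote the set of standard monomials. Since ${\sf init}_{\prec}(I)$ is a monomial ideal, the images of the elements of ${\rm Std}_{\prec}(I)$ form a $\mathbb{C}$-basis of $R/{\sf init}_{\prec}(I)$ essentially by definition. For $R/I$, I would invoke the division algorithm with respect to the Gr\"obner basis $g_1, \ldots, g_m$ of $I$: every $f \in R$ reduces to a unique normal form ${\sf nf}_{\prec}(f)$ that is a $\mathbb{C}$-linear combination of standard monomials, the map $f \mapsto {\sf nf}_{\prec}(f)$ is $\mathbb{C}$-linear, and ${\sf nf}_{\prec}(f) = 0$ iff $f \in I$. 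Consequently the images of the elements of ${\rm Std}_{\prec}(I)$ span $R/I$ (by the division algorithm) and are linearly independent modulo $I$ (by uniqueness of the normal form). This is the main technical input, and is the point I would double check most carefully, though it is standard.

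Finally, since each standard monomial $x^{\alpha}$ lies in the single graded piece $R_{{\sf deg}(\alpha)}$, restricting the basis ${\rm Std}_{\prec}(I)$ to monomials with ${\sf deg}(\alpha)={\bf a}$ produces simultaneous bases of $(R/I)_{\bf a}$ and $(R/{\sf init}_{\prec}(I))_{\bf a}$. Positivity of the multigrading (Definition~\ref{def:posmult555}) ensures both dimensions are finite, and we conclude
\[
\dim_{\mathbb C} (R/I)_{\bf a} \;=\; \#\{\alpha : x^{\alpha}\in {\rm Std}_{\prec}(I),\ {\sf deg}(\alpha)={\bf a}\}\;=\;\dim_{\mathbb C} (R/{\sf init}_{\prec}(I))_{\bf a}
\]
for every ${\bf a} \in {\mathbb Z}^r$. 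Summing $t^{\bf a}$ against these dimensions gives the desired equality of Hilbert series. The only conceptual obstacle is keeping track of multigraded (rather than merely singly graded) homogeneity through the division algorithm, but this is automatic once ${\sf init}_{\prec}(I)$ is shown to be a monomial ideal.
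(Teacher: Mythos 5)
The paper does not prove this statement itself---it quotes it from \cite[Theorem~8.36]{Miller.Sturmfels}---and your argument is the standard one used there: Macaulay's basis theorem, showing that the standard monomials outside ${\sf init}_{\prec}(I)$ give simultaneous homogeneous ${\mathbb C}$-bases of $R/I$ (via uniqueness of normal forms under the division algorithm) and of $R/{\sf init}_{\prec}(I)$, so the graded dimensions agree degree by degree. Your proof is correct and essentially the same approach as the cited source, including the correct observations that ${\sf init}_{\prec}(I)$ is a monomial ideal (hence homogeneous for any multigrading of this kind) and that positivity is only needed to make the Hilbert series well defined.
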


\begin{exercise}\label{exer:2x2detex}
Let $R={\mathbb C}[z_{11},z_{12},z_{21},z_{22}]$ with the the multigrading 
${\sf deg}:{\mathbb N}^4\to {\mathbb Z}^4$ that assigns
$z_{ij}$ the multidegree $\vec e_i-\vec e_{2+j}$. Let 
$I=\langle z_{11}z_{22}-z_{12}z_{21}\rangle$ and $S=R/I$.

(a) Show that the multigrading is positive.

(b) Check that $I$ is homogeneous.

(c) What is a vector space basis for $S_{(1,0,-1,0)}$? How about $S_{(1,1,-1,-1)}$?

(d) Pick a term order $\prec$ such that ${\sf init}_{\prec}(z_{11}z_{22}-z_{12}z_{21})=z_{11}z_{22}$. Clearly,
$\{z_{11}z_{22}-z_{12}z_{21}\}$ is a Gr\"obner basis with respect to $\prec$. Let $I'={\sf init}_{\prec}(I)=\langle z_{11} z_{22}\rangle$. Let $S'=R/I'$. Confirm Theorem~\ref{thm:preserve} 
makes sense by computing the dimensions of $S'_{(1,0,-1,0)}$ and $S'_{(1,1,-1,-1)}$.

(e) Repeat (d) with a term $\prec$ such that ${\sf init}_{\prec}(z_{11}z_{22}-z_{12}z_{21})=z_{12}z_{21}$. Let
$I''$ be the initial ideal and $S''=R/I''$.

(f) Let $t=(x_1,x_2,y_1,y_2)$. Show that
\[{\sf Hilb}(S';t)=\frac{(1-\frac{x_1}{y_1})+(1-\frac{x_2}{y_2})-
(1-\frac{x_1}{y_1})(1-\frac{x_2}{y_2})}{
(1-\frac{x_1}{y_1})(1-\frac{x_1}{y_2})(1-\frac{x_2}{y_1})(1-\frac{x_2}{y_2})}\]
and 
\[{\sf Hilb}(S'',t)=\frac{(1-\frac{x_1}{y_2})+(1-\frac{x_2}{y_1})-
(1-\frac{x_1}{y_2})(1-\frac{x_2}{y_1})}{
(1-\frac{x_1}{y_1})(1-\frac{x_1}{y_2})(1-\frac{x_2}{y_1})(1-\frac{x_2}{y_2})}.\]
Notice ${\sf Hilb}(S',t)={\sf Hilb}(S'',t)$ (and hence, by Theorem~\ref{thm:preserve} both equal
${\sf Hilb}(S;t)$).
\end{exercise}

We now complete our discussion of Example~\ref{therunningexample}. This exercise is similar to, but more
complicated than, Exercise~\ref{exer:2x2detex}.

\begin{exercise}\label{runningexfinale}
Define a (positive) multigrading by ${\sf deg}(z_{ij})=\vec e_i- \vec e_{3+j}\in {\mathbb Z}^6$.  Here we
let $t=(x_1,x_2,x_3,y_1,y_2,y_3)$.

(a) Show that $I_{\rm Ex.~\ref{therunningexample}}$ is homogeneous with respect to this grading.

(b) Compute ${\sf Hilb}(R/I_{\rm Ex.~\ref{therunningexample}};t)$. (Hard)

(c) Show that ${\mathcal C}(R/I_{\rm Ex.~\ref{therunningexample}})=\sum_P {\sf wt}(P)$, where the sum is over
the six plus diagrams from Exercise~\ref{exer:coincidence} and 
\[{\sf wt}(P)=\prod_{\text{$+$ in position $(i,j)$}} x_i-y_j.\]
For instance, for the plus diagram $P$ depicted in Example~\ref{exer:coincidence}, 
${\sf wt}(P)=(x_1-y_1)(x_2-y_3)(x_3-y_2)(x_3-y_3)$.

That is, the multidegree is the generating series over the plus diagrams.
\end{exercise}

\begin{proposition}\label{lemma:funnygrading}
Let $\vec e_1,\ldots,\vec e_n$ be the standard basis vectors in $\mathbb{Z}^n$.  
Fix $v\in S_n$, and let $w\in S_n$ satisfy $v\leq w$. The ideal $I_{v,w}$ is homogeneous under
the multigrading 
${\sf deg}(z_{ij})=e_{v(j)}-e_{n-i+1}$ 
for each $z_{ij}$ in ${\bf z}^{(v)}$.
\end{proposition}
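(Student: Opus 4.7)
The plan is to establish that each defining generator of $I_{v,w}$ is itself homogeneous with respect to $\sf deg$; the ideal is then homogeneous by definition. By Definition~\ref{def:KLideal}, such a generator $\mu$ is an $(\widetilde r^{(w)}_{st}+1)$-minor of the southwest submatrix $Z_{st}^{(v)}$ using some row subset $R=\{R_1<\cdots<R_{r+1}\}\subseteq\{1,\ldots,s\}$ and column subset $C=\{C_1<\cdots<C_{r+1}\}\subseteq\{1,\ldots,t\}$. The Leibniz expansion reads
\[
\mu=\sum_{\sigma\in S_{r+1}}\operatorname{sgn}(\sigma)\prod_{k=1}^{r+1}M_{R_k,C_{\sigma(k)}},
\]
where, by the description of $\Omega_v^{\circ}$ in Section~\ref{sec:opschubJan28}, each entry $M_{ij}$ of $Z^{(v)}$ equals $z_{ij}$, or equals $1$ precisely when $i=n-v(j)+1$, or equals $0$.

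The key step will be to show that every nonzero term of this expansion carries the same multidegree. Nonzero terms involve only variable factors and $1$'s. A variable factor $z_{R_k,C_{\sigma(k)}}$ contributes $\vec e_{v(C_{\sigma(k)})}-\vec e_{n-R_k+1}$ by definition of $\sf deg$. A factor equal to $1$ occurs exactly when $R_k=n-v(C_{\sigma(k)})+1$, and in this situation the expression $\vec e_{v(C_{\sigma(k)})}-\vec e_{n-R_k+1}$ itself evaluates to $\vec e_{v(C_{\sigma(k)})}-\vec e_{v(C_{\sigma(k)})}=\vec 0$, matching the actual (zero) multidegree of the constant. Hence in both cases the $k$-th factor contributes exactly $\vec e_{v(C_{\sigma(k)})}-\vec e_{n-R_k+1}$, giving total multidegree
\[
\sum_{k=1}^{r+1}\bigl(\vec e_{v(C_{\sigma(k)})}-\vec e_{n-R_k+1}\bigr)=\sum_{c\in C}\vec e_{v(c)}-\sum_{\rho\in R}\vec e_{n-\rho+1},
\]
since $k\mapsto C_{\sigma(k)}$ is a bijection onto $C$. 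The right-hand side is independent of $\sigma$, so $\mu$ is homogeneous.

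There is no serious obstacle here; the only subtlety is bookkeeping, namely keeping the bottom-to-top row indexing of the $z_{ij}$ consistent with the placement $i=n-v(j)+1$ of the $1$'s in $Z^{(v)}$. The whole argument pivots on the elementary identity $\vec e_{v(j)}-\vec e_{n-i+1}\big|_{i=n-v(j)+1}=\vec 0$, which renders the constant entries transparent to the multigrading and lets the determinant expansion be analyzed as if every entry were a variable of the expected degree. As a sanity check one can recompute the degrees of the three monomials in the inhomogeneous-looking generator of \eqref{eqn:inhomog} from Example~\ref{exa:1} and observe that all three agree.
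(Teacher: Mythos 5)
Your proof is correct; the paper leaves this proposition as an exercise with no written solution, and your Leibniz-expansion argument --- assigning every entry of $Z^{(v)}$ the formal degree $\vec e_{v(j)}-\vec e_{n-i+1}$ and observing that the specialized $1$'s occur exactly where this formal degree vanishes, so each nonzero term of any defining minor has degree $\sum_{c\in C}\vec e_{v(c)}-\sum_{\rho\in R}\vec e_{n-\rho+1}$ independent of the permutation --- is precisely the intended, standard argument (it is the combinatorial shadow of the $T$-action mentioned right after the proposition). Your bookkeeping with the bottom-up row indexing is right, as your check against \eqref{eqn:inhomog} confirms.
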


\begin{exercise}
Prove Proposition~\ref{lemma:funnygrading}.
\end{exercise}

The reader may ask where do the multigradings from Exercise~\ref{exer:2x2detex}/Exercise~\ref{runningexfinale} and Proposition~\ref{lemma:funnygrading} come from? For the one from Exercise~\ref{exer:2x2detex}, it comes from the action of the $({\mathbb C}^*)^2\times ({\mathbb C}^*)^2$ algebraic torus\footnote{which means an \emph{algebraic group} isomorphic to $(*{\mathbb C}^*)^r$
for some positive integer $r$} where the left factor acts by scaling rows of a $2\times 2$ matrix 
and the right factor acts by scaling columns (by inverse). One similarly has a $({\mathbb C}^*)^3\times ({\mathbb C}^*)^3$ action for Exercise~\ref{runningexfinale}. In the case of Proposition~\ref{lemma:funnygrading},
it comes from  
the left-multiplication action of $T$ on ${\mathcal N}_{v,w}$.
More generally, the action of an algebraic torus $T=({\mathbb C}^*)^r$ on a affine variety $V$ makes its coordinate ring ${\mathbb C}[V]=R/I$ a $T$-module. The degree of a variable $z\in\mathbb{C}[V]$ is precisely the torus character $\chi(t_1,\ldots,t_r)$ acting on the one-dimensional representation spanned by $z$.

In order to state a formula for the multigraded Hilbert series of $S=R/I_{v,w}$, we need to introduce a family
of polynomials of significant interest in algebraic combinatorics. Let ${\bf x}=\{x_1,x_2,\ldots\}$ and ${\bf y}=\{y_1,y_2,\ldots\}$ be two countable collections of indeterminates.
Let  ${\sf Pol}$ be the set of Laurent polynomials in ${\bf x},{\bf y}$ with integer coefficients.
\begin{definition}
The \emph{isobaric divided difference operator} is
\begin{align*}
\pi_i: & \ \ {\sf Pol}\to {\sf Pol}\\
f \ &  \mapsto  \frac{x_{i+1}f(\cdots, x_i, x_{i+1},\cdots)-x_i f(\cdots,x_{i+1},x_i,\cdots)}{x_{i+1}-x_i}.
\end{align*}
\end{definition}

\begin{exercise}
Verify that $\pi(f)$ is indeed in ${\sf Pol}$.
\end{exercise}

\begin{definition}\label{def:Grothen}
The \emph{Grothendieck polynomials} ${\mathfrak G}_w({\bf x}, {\bf y})$ are defined for each $w\in S_n$ by the following recurrence. If $w=w_0$ then
\[{\mathfrak G}_{w_0}:=\prod_{i+j\leq n}1-\frac{x_i}{y_j}.\]
Otherwise there exists $1\leq k<n$ such that $w(k)<w(k+1)$ and
\[{\mathfrak G}_w=\pi_i({\mathfrak G}_{wt_{i \ i+1}}).\]
\end{definition}

\begin{example}\label{exa:231g}
For $n=3$, 
\[{\mathfrak G}_{w_0}=\left(1-\frac{x_1}{y_1}\right)\left(1-\frac{x_1}{y_2}\right)\left(1-\frac{x_2}{y_1}\right).\]
Hence
\begin{align*}
{\mathfrak G}_{231} & =\pi_1 {\mathfrak G}_{w_0}\\
& = \frac{x_2\left(1-\frac{x_1}{y_1}\right)\left(1-\frac{x_1}{y_2}\right)\left(1-\frac{x_2}{y_1}\right)-x_1
\left(1-\frac{x_2}{y_1}\right)\left(1-\frac{x_2}{y_2}\right)\left(1-\frac{x_1}{y_1}\right)}{x_2-x_1}\\
& = \frac{(x_2-y_1)(x_1-y_1)}{y_2^2}.
\end{align*}
\end{example}

\begin{exercise}\label{exer:computetwogroth}
Compute ${\mathfrak G}_{312}$ and ${\mathfrak G}_{132}$. 
\end{exercise}

\begin{exercise}
(a) Prove that $\pi_i \pi_{i+1} \pi_i = \pi_{i+1} \pi_i \pi_{i+1}$ and $\pi_i \pi_j = \pi_j \pi_i$ for $|i-j|>1$. Now use 
Exercise~\ref{exer:additionalexerSymmetric}(a) to conclude that the definition of ${\mathfrak G}_w$ does not depend
on the choice(s) of $k$ in Definition~\ref{def:Grothen}.

(b) Show $\pi_i^2=\pi_i$.
\end{exercise}

Definition~\ref{def:Grothen} was introduced in 1982 by A.~Lascoux-M.-P.~Sch\"utzenberger to study Schubert calculus
of $GL_n/B$. Under appropriate specializations one obtains the \emph{Schubert polynomials} (either in both the ${\bf x}$ and ${\bf y}$ variables or just the ${\bf x}$ variables). These polynomials have
numerous non-cancellative formulas and have been the subject of significant interest in algebraic combinatorics up to
present day. We refer to \cite{Manivel, Kirillov} for some references and further background while noting that substantial amounts of even more recent work has been done (see, e.g., \cite{Klein.Weigandt} and the references therein).

The following shows that, after a substitution, the Grothendieck polynomials are $K$-polynomials for  
Kazhdan-Lusztig ideals.
\begin{theorem}
\label{thm:specializationa}
Let $R={\mathbb C}[z_{ij}, 1\leq i,j\leq n]$ and $I_{v,w}$ be the Kazhdan-Lusztig ideal for $v,w \in S_n$.
The multigraded Hilbert series polynomial of $S=R/I_{v,w}$ with respect to the positive multigrading from Proposition~\ref{lemma:funnygrading} is given by
\[{\sf Hilb}(R/I_{v,w},t_1,\ldots,t_n)=\frac{\Groth_{w_0w}(t_{v(1)},\ldots,t_{v(n)};t_{n},t_{n-1},\ldots,t_1)}{\prod_{1\leq i,j\leq n} (1-t_{v(j)}/t_{n-i+1})},\]
where in the denominator, the product is over all $(i,j)$ such that $z_{ij}\in {\bf z}^{(v)}$.
\end{theorem}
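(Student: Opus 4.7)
The plan is to follow the Knutson-Miller \cite{Knutson.Miller:annals} strategy adapted to Kazhdan-Lusztig ideals: pass to a squarefree monomial initial ideal, compute its multigraded Hilbert series as a sum over pipe dreams, and identify the resulting $K$-polynomial with the claimed specialization of a Grothendieck polynomial.

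First, Theorem~\ref{thm:Grobnerthm} together with Theorem~\ref{thm:preserve} lets us replace $I_{v,w}$ by its squarefree initial ideal $J := {\sf init}_{\prec}(I_{v,w})$ without changing the multigraded Hilbert series. Via the Stanley-Reisner correspondence, $J$ corresponds to a simplicial complex $\Delta_{v,w}$ whose facets, by \cite[Theorem~3.2]{WY:Grobner}, are in bijection with the pipe dreams on the Rothe diagram $D(v)$ whose Demazure product is $w$. This is a subword complex \cite{subword}, hence shellable and Cohen-Macaulay, and its $K$-polynomial has a standard formula expressing ${\mathcal K}(R/J;t)$ as a (signed) sum over pipe dreams for $w$ on $D(v)$. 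Substituting the grading from Proposition~\ref{lemma:funnygrading}, each pipe dream $P$ contributes a term of the shape $\prod_{(i,j) \in P}(1 - t_{v(j)}/t_{n-i+1})$.

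Next, I would compare this against the pipe dream formula $\Groth_u({\bf x};{\bf y}) = \sum_{P} \prod_{(i,j) \in P}(1 - x_i/y_j)$ (see, e.g., \cite{Kirillov}) evaluated at ${\bf x} = (t_{v(1)}, \ldots, t_{v(n)})$, ${\bf y} = (t_n,\ldots,t_1)$, and $u = w_0w$. A bijection between pipe dreams on $D(v)$ for $w$ and pipe dreams on the triangular diagram $D({\rm id})=\{(i,j):i+j\leq n\}$ for $w_0w$ — the latter being forced to contain crosses at every position $(i,j) \notin D(v)$, matching the frozen entries of $Z^{(v)}$ — identifies the two sums. Dividing by the Hilbert series of the polynomial ring $R$ (Exercise~\ref{exer:hilbformulaszzz}) via Theorem~\ref{thm:fractional} then yields the stated formula.

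The main obstacle is the combinatorial bookkeeping: aligning the pipe dream conventions, verifying that the signed $K$-polynomial sum for the subword complex matches the standard pipe dream formula for $\Groth_u$ (including non-reduced contributions), and handling the $w \leftrightarrow w_0w$ twist, which arises because Definition~\ref{def:Grothen} normalizes $\Groth_{w_0}$ as the fully factored expression while the Knutson-Miller convention identifies matrix Schubert varieties directly with $w$. The order reversal $(t_n,\ldots,t_1)$ in the $y$-variables similarly reflects the southwest orientation of $Z^{(v)}$. Once these convention issues are pinned down, the identification of the two generating series is essentially formal.
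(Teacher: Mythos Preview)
Your outline is correct and matches the approach the paper cites: the paper does not prove this in-line but points to \cite[Theorem~4.5]{WY:Grobner}, whose argument is precisely the Knutson--Miller strategy you describe (Gr\"obner degenerate via Theorem~\ref{thm:Grobnerthm}, identify the initial ideal with a subword complex of pipe dreams on $D(v)$, read off the $K$-polynomial). One difference worth noting: rather than the direct bijection you propose between pipe dreams on $D(v)$ for $w$ and pipe dreams on the staircase for $w_0w$, \cite{WY:Grobner} introduces \emph{unspecialized Grothendieck polynomials} in a larger variable set, proves the pipe-dream formula for those directly from the Gr\"obner degeneration, and then shows they specialize to the ordinary $\mathfrak{G}_{w_0w}$ via the divided-difference recursion --- this sidesteps exactly the ``combinatorial bookkeeping'' you flag as the main obstacle, since the $w\leftrightarrow w_0w$ twist and the variable reindexing are absorbed into the specialization step rather than requiring an explicit weight-preserving bijection.
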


Theorem~\ref{thm:specializationa} reformulates part of \cite[Theorem~4.5]{WY:Grobner}. In \cite{WY:Grobner}, the statement is in terms of specializing ``unspecialized Grothendieck polynomials''. An 
advantage is that the multigradings for Proposition~\ref{lemma:funnygrading}, and
for Exercises~\ref{exer:2x2detex} and~\ref{runningexfinale} (which are Schubert determinantal ideals in light disguise), are derived from a specialization of another 
multigrading. In \cite{Knutson.Miller:annals}, the ``pipe dream'' combinatorial formulas for 
Grothendieck polynomials (viewed as a $K$-polynomial) and the Schubert polynomials (viewed as a multidegree) arise naturally from the Gr\"obner degeneration and the prime decomposition of the initial scheme. This
result (or a similar one from \cite{KMY} which is closer on point) generalizes Exercise~\ref{runningexfinale} which relates the ``plus diagrams'' to the multidegree for $R/I_{\rm Ex.~\ref{therunningexample}}$. In \cite{WY:Grobner} we do the same for the specializations of these
polynomials.

\begin{example}
Let $v=id, w=213$. Since $w_0w=231$, using the computation of ${\mathfrak G}_{231}$ from Example~\ref{exa:231g}, one obtains that
\[{\sf Hilb}(R/I_{id,231};t_1,t_2,t_3)=\frac{(t_3-t_1)(t_3-t_2)/t_3^2}{(1-\frac{t_1}{t_3})(1-\frac{t_1}{t_2})(1-\frac{t_2}{t_3})}.\] 
\end{example}

\begin{exercise}\label{sec5final}
Compute ${\mathcal K}(R/I_{132,132};t_1,t_2,t_3)$.
\end{exercise}

\section{Syzygies and (minimal) free resolutions}\label{sec:4.3}

In order to concretely define the properties of Schubert varieties found in the next section, we need the 
notion of free resolutions.
The general theory is covered in \cite{eisenbud:view}, more specifically in \cite{eisenbud:gos}, and in the
multigraded case in \cite{Miller.Sturmfels}; a recent survey paper is \cite{Threethemes}. We give an exposition of the theory at the level of generality needed in this chapter.

As in Section~\ref{sec:4},  we assume $R={\mathbb C}[x_1,\ldots,x_n]$, $I\subseteq R$ is an ideal; treat $S=R/I$ and $I$ as $R$-modules.
\begin{definition}
A \emph{free resolution} of $S$ is an exact sequence of homomorphisms of finitely generated free $R$-modules
\[\cdots\longrightarrow F_{i+1}\stackrel{\partial_{i+1}}{\longrightarrow}F_i\stackrel{\partial_i}{\longrightarrow}F_{i-1}
\stackrel{\partial_{i-1}}{\longrightarrow}\cdots\stackrel{\partial_{2}}{\longrightarrow} F_1
\stackrel{\partial_{1}}{\longrightarrow} F_0 \stackrel{\partial_{0}}{\longrightarrow} S \longrightarrow 0,\]
that is ${\rm im}(\partial_{i+1})=\ker(\partial_i)$ for $i\geq 0$. Each of the maps $\partial_i$ are the
\emph{differentials}.
\end{definition}

If $F_i$ has rank $\beta_i$ then we will think of $\partial_i$ as a $\beta_{i-1}\times \beta_{i}$ matrix
with entries from $R$.

\begin{definition}
A free resolution is \emph{finite} if it is of the form
\begin{equation}
\label{eqn:finitefree}
0 \longrightarrow F_n\stackrel{\partial_n}\longrightarrow \cdots\longrightarrow F_{i+1}\stackrel{\partial_{i+1}}{\longrightarrow}F_i\stackrel{\partial_i}{\longrightarrow}F_{i-1}
\stackrel{\partial_{i-1}}{\longrightarrow}\cdots\stackrel{\partial_{2}}{\longrightarrow} F_1
\stackrel{\partial_{1}}{\longrightarrow} F_0 \stackrel{\partial_{0}}{\longrightarrow} S \longrightarrow 0.
\end{equation}
\end{definition}

D.~Hilbert proved the celebrated result 
that, as restated in modern language, every $R$-module $S$ has a free resolution of length at most $n$~\cite{Hilbert.syzygy}. Thus, one can study $S$ by comparison
with free modules.

More can be said if one assumes, as we do now, that $R$ has a multigrading ${\sf deg}:{\mathbb N}^n \to
{\mathbb Z}^r$ and $I$ is homogeneous with
respect to that grading, so $S$ is multigraded, which means (\ref{eqn:Sismulti}) holds. In what follows
we use an innocuous ``accounting trick'' that forces the differentials to be degree-preserving maps:
\begin{definition}[Degree shift]\label{def:degshift}
For $\alpha\in {\mathbb Z}^r$ let $R(-\alpha)$ be the free $R$-module of rank $1$ such that 
$R(-\alpha)_\beta=R_{\beta-\alpha}$.
\end{definition}

We will write:
\begin{equation}
\label{eqn:Jun252022}
F_i=\bigoplus_{j=1}^k R(-\alpha^{i,j})
\end{equation}
where $k$ is the rank of $F_i$ and each $\alpha^{i,j}\in {\mathbb Z}^r$.

\begin{definition}
A finite free resolution (\ref{eqn:finitefree}) is \emph{multigraded} if each of the differentials $\partial_i$ are
multigraded $R$-homomorphisms, i.e., we additionally require that the maps are multidegree preserving.
\end{definition}

In order to discuss invariants of $R/I$ we need this definition:

\begin{definition}\label{def:minimal}
A finite free resolution \eqref{eqn:finitefree} is \emph{minimal} if there are no nonzero constant entries in
$\partial_i$ for each $i$.
\end{definition}

If $R$ is \emph{positively} multigraded, minimal free resolutions exist. 
Moreover, in this case, a minimal free resolution is indeed ``minimal'' in the following sense: the ranks of
the free modules in the complex are as small as possible. The next example explains the ideas
behind the definition, for a determinantal ideal.

\begin{example}\label{exa:22in23}
Let $F_0:=R={\mathbb C}[z_{11}, z_{12}, z_{13}, z_{21},z_{22},z_{23}]$ with the standard
grading. Let $I$ be the ideal generated by $2\times 2$ minors of a generic $2\times 3$ matrix. That is,
\[I=\langle z_{11}z_{22}-z_{12}z_{21}, z_{11}z_{23}-z_{13}z_{21}, z_{12}z_{23}-z_{13}z_{22}\rangle.\] 
For later reference, notice the generators are \emph{minimal}  (removing any generator changes the ideal). 
For now, there is the projection from $F_0=R\to S$. 

The kernel of this map is $I$. Let us encode this
fact by defining $F_1=R(-2)^{\oplus 3}$ and setting 
\[\partial_0(f_1,f_2,f_3)=f_1(z_{11}z_{22}-z_{12}z_{21})+f_2(z_{11}z_{31}-z_{21}z_{13})+f_3(z_{12}z_{23}-z_{22}z_{31})\in R.\]
The point of the degree shift ``accounting trick'' is that if $(f_1,f_2,f_3)$ are homogeneous polynomials of degree $d$,
i.e., $(f_1,f_2,f_3)\in (R(-2)^{\oplus 3})_{d+2}$ then their image is in $R_{d+2}$. 
Finally, for the record we encode $\partial_0$ as the $1\times 3$ matrix
\[\partial_0=[z_{11}z_{22}-z_{12}z_{21} \  \ z_{11}z_{31}-z_{21}z_{13} \ \ z_{12}z_{23}-z_{22}z_{31}]\]
Indeed, ${\rm im}(\partial_0)=I$ as desired.

Now, ${\rm ker}(\partial_1)$ is non-trivial. That is, there are algebraic relations among the columns of $\partial_0$.
Two relations are
\[z_{23}\cdot (z_{11}z_{22}-z_{12}z_{21})-z_{22}\cdot (z_{11}z_{31}-z_{21}z_{13})+z_{21}\cdot (z_{12}z_{23}-z_{22}z_{31})=0,\] 
\[ -z_{13}\cdot (z_{11}z_{22}-z_{12}z_{21})+z_{12} \cdot (z_{11}z_{31}-z_{21}z_{13})-z_{21}\cdot (z_{12}z_{23}-z_{22}z_{31})=0.\]
These are called the \emph{first order syzygies}.  

Similarly, we encode these relations as columns of a matrix $\partial_1:=\left[\begin{matrix} z_{23} & -z_{13}\\ -z_{22} & z_{12}\\ z_{21} & -z_{21}\end{matrix}\right]$ thought of as a map from $F_2=R(-3)^{\oplus 2}$ to $F_1=R(-2)^{\oplus 3}$. The reader can convince themselves that ${\rm im}(\partial_1)={\rm ker}(\partial_0)$ and that $\partial_1$ too is degree-preserving.

Finally, there are no second order syzygies in this case, that is, no algebraic relations between the columns of
$\partial_1$. Hence $\ker(\partial_1)$ is just $(0,0)$. Thus if we define $\partial_2$ to be the zero map, we
have a graded finite free resolution
\[0\stackrel{\partial_2}{\longrightarrow}F_2=R(-3)^{\oplus{2}}\stackrel{\partial_1}{\longrightarrow}
F_1=R(-2)^{\oplus{3}}\stackrel{\partial_0}{\longrightarrow} F_0=R \longrightarrow S\longrightarrow 0.\]
\end{example}

Hilbert~\cite{Hilbert.syzygy} proves that this process of determining syzygies, and second order syzygies, followed
by third order syzygies, and so on, always terminates after $n$ steps, giving a graded free-resolution. 

\begin{center}
\fbox{\begin{minipage}{30em}
The length of a minimal free resolution, and even the ranks (and degree shifts) of free modules that appear are
all invariant, assuming minimal choices are made throughout.   \end{minipage}}
\end{center}

\begin{exercise}
Let $I$ now be the ideal generated by $2\times 2$ minors of $\left[\begin{matrix} z_{21}  & z_{22} & z_{23} & z_{24}\\ z_{11} & z_{12} & z_{13} & z_{14}\end{matrix}\right]$. Here one does see second order syzygies. 

(a) Find a minimal free resolution of $S=R/I$.

(b) Confirm it using {\tt Macaulay 2} as follows:

\begin{verbatim}
R=QQ[z11,z12,z13,z14,z21,z22,z23,z24]
M=matrix{{z21,z22,z23,z24}, {z11,z12,z13,z14}}
rs=res minors(2,M)
rs.dd
\end{verbatim}
\end{exercise}

Let us summarize with the following theorem (see \cite[Section~8.3]{Miller.Sturmfels}):
\begin{theorem}\label{Hilbtheorem123}
If $R={\mathbb C}[x_1,\ldots,x_n]$ is positively graded and $I$ a homogeneous ideal. There is a minimal
finite free resolution of the form
\[0\to \bigoplus_j R(-\alpha^{n,j})^{\oplus \beta_{n,j}}\to \bigoplus_j R(-\alpha^{n-1,j})^{\oplus\beta_{n-1,j}}\to \cdots \to \bigoplus_j R(-\alpha^{0,j})^{\oplus \beta_{0,j}}\to R\to R/I\to 0\]
where $n, \alpha^{i,j}\in {\mathbb Z}^r, \beta_{i,j}\in {\mathbb N}$ only depend on $R/I$. 
\end{theorem}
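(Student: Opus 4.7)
The plan is to construct a minimal multigraded free resolution by iteratively choosing minimal generating sets of kernels, to invoke Hilbert's syzygy theorem to guarantee termination in at most $n$ steps, and finally to identify the ranks and degree shifts with Tor-computed invariants of $R/I$ so that they do not depend on the choices made.

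The central tool is a multigraded Nakayama lemma: if $M$ is a finitely generated positively multigraded $R$-module and $\mathfrak{m}=(x_1,\ldots,x_n)$, then homogeneous elements of $M$ generate $M$ as an $R$-module if and only if their images span $M/\mathfrak{m}M$ over $k=R/\mathfrak{m}$. Granting this, minimal generating sets exist, and the number of generators one must use in multidegree $\alpha$ is forced to equal $\dim_k(M/\mathfrak{m}M)_\alpha$. Positivity of the multigrading is exactly what makes each $(M/\mathfrak{m}M)_\alpha$ finite dimensional and what lets the usual degree-induction proof of Nakayama go through.

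Now the construction. Set $M_0:=R/I$. Given $M_i$, pick a minimal multigraded generating set; packaging the multiplicities into $F_i:=\bigoplus_j R(-\alpha^{i,j})^{\oplus\beta_{i,j}}$ yields a multigraded surjection $\pi_i\colon F_i\twoheadrightarrow M_i$. Put $M_{i+1}:=\ker(\pi_i)$; this is finitely generated by the Hilbert basis theorem and positively multigraded as a submodule of $F_i$, so the procedure iterates. Define $\partial_{i+1}$ as the composite $F_{i+1}\twoheadrightarrow M_{i+1}\hookrightarrow F_i$. Any nonzero constant entry of $\partial_i$ would exhibit a generator of $M_i$ as redundant modulo $\mathfrak{m}$, contradicting minimality; hence the resolution is minimal in the sense of Definition~\ref{def:minimal}. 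Termination by step $n$ is Hilbert's syzygy theorem, whose slick proof computes $\mathrm{Tor}^R_i(R/I,k)$ via the Koszul resolution of $k$ and observes that it vanishes for $i>n$; this forces $M_{n+1}=0$ once a minimal resolution has been extended that far.

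Finally, invariance. Tensor the whole complex with $k$ over $R$: minimality (no constant entries) forces every induced differential to be zero, so $\mathrm{Tor}^R_i(R/I,k)\cong F_i\otimes_R k$ as multigraded $k$-vector spaces. Consequently
\[\beta_{i,j}=\dim_k\mathrm{Tor}^R_i(R/I,k)_{\alpha^{i,j}},\]
and the multiset of shifts $\{\alpha^{i,j}\}$ is the multigraded support of $\mathrm{Tor}^R_i(R/I,k)$. Since Tor is a functor independent of the chosen resolution, these data depend only on $R/I$. The main obstacle is establishing the multigraded Nakayama lemma in the full positively multigraded generality needed here; once that is in hand, the inductive construction, Koszul-based termination, and Tor-based invariance argument parallel the classical $\mathbb{Z}$-graded proof.
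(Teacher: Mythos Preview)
Your argument is correct and is the standard proof: build the resolution step by step using a multigraded Nakayama lemma to choose minimal generators, terminate by Hilbert's syzygy theorem (via the Koszul complex), and read off the shifts and ranks as dimensions of $\mathrm{Tor}^R_i(R/I,k)$ to get invariance.

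There is nothing to compare against, however: the paper does not prove this theorem. It merely states it as a summary, with the parenthetical reference ``(see \cite[Section~8.3]{Miller.Sturmfels})'' in lieu of an argument. What you have written is essentially the proof one finds in that reference, so in that sense you have supplied exactly what the paper outsources.
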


\begin{definition}
For fixed $i$, $\beta_i:=\sum_j \beta_{i,j}$ are the \emph{Betti numbers}. Each $\beta_{i,j}$ is a \emph{graded
Betti number}.
\end{definition}

The next exercise explains the appearance of the ``fractional form'' for Hilbert series (Theorem~\ref{thm:fractional}):
 
\begin{exercise}\label{exer:HilbexerJun26}

(a) Determine ${\sf Hilb}(R(0);t_1,\ldots,t_r)$, ${\sf Hilb}(R(-\alpha);t_1,\ldots,t_r)$, and 
${\sf Hilb}(F_i;t_1,\ldots,t_r)$ where $F_i$ is as in \eqref{eqn:Jun252022}.

(b) Suppose $S$ is standard graded and (\ref{eqn:finitefree}) is graded. Give a formula for ${\sf Hilb}(S,t)$
as an alternating sum of ${\sf Hilb}(F_i,t)$. 

(c) Now use (a) to show that 
${\sf Hilb}(S,t)=\frac{K(t)}{(1-t)^n}$ 
for some
$K(t)\in {\mathbb Z}[t]$.

(d) Repeat (b) and (c) for a general multigraded ring. 
\end{exercise}

\begin{exercise}\label{macexernnn}
Use {\tt Macaulay2} to compute the graded betti numbers when $I=I_{v,w}$ is a Kazhdan-Lusztig
ideal, with respect to the positive multigrading from  Proposition~\ref{lemma:funnygrading}.
\end{exercise}
By Theorem~\ref{Hilbtheorem123}, the (graded) Betti numbers are invariants of $S=R/I$ (as an $R$-module).
Their importance can be expressed as follows:

\begin{center}
\fbox{\begin{minipage}{30em}
Many important properties of $R/I$ or $V(I)$ are encoded by the (graded) Betti numbers.
  \end{minipage}}
\end{center}

This principle will become evident in Section~\ref{sec:6}. This motivates the following problem:

\begin{problem}
\label{prob:betti}
Determine the Betti numbers (or better yet) a description of the  minimal free resolution for Schubert determinantal ideals (or more generally, Kazhdan-Lusztig varieties).
\end{problem}

Problem~\ref{prob:betti} seems very difficult in general. The \emph{Betti table} is a standard way of organizing the 
graded betti numbers. For Schubert determinantal ideals we only know the number of rows of the 
Betti table (for the standard grading) \cite{PSW} and (implicitly) the values of the first column of this table \cite{GaoYong}. The latter understanding comes from the fact that
the minimal generators of $I_w$ are determined in \emph{ibid}; see Exercise~\ref{exer:minimal}.
Only the special case of $k\times k$ minors in a $m\times n$ matrix (over characteristic $0$) is entirely solved; see \cite{Lascoux:det, Weyman}.  A first approximation and possible building block is provided by the Schubert complexes of S.~Sam~\cite{Sam.Schubert}.
In our results about  measures of singularities we do not actually appeal to the minimal free resolution or the Betti numbers, although we will use the existence to define the problems.

\excise{
\subsection{Syzygies and (minimal) free resolutions}
An alternative way to compute the $K$-polynomial of a graded ideal comes from the fundamental notions in commutative algebra of \emph{syzygies} and \emph{free resolutions} of a module.  Introduced by D.~Hilbert as part of his solution to the finite generation problem in invariant theory, it is a core way to study the
structure of a module. The main conclusions can be summarized as follows:

\begin{center}
\fbox{\begin{minipage}{30em}
A positively graded $R$ module $S$ has a finite minimal free resolution in terms of direct sums of free modules.
  \end{minipage}}
\end{center}

A textbook on this topic is
is D.~Eisenbud's \cite{eisenbud:gos}; a more recent survey we consulted is \cite{Threethemes}. These cover
the case that the positive grading is in fact the standard grading. For the multigraded extension the reader can
refer to \cite{Miller.Sturmfels}.

One consequence of the existence of finite free resolutions is the following:
\begin{theorem}\label{thm:Hilbertquotient}
If $S$ is a positively multigraded $R$ module then
\begin{equation}\label{eqn:Kpoly222}
{\sf Hilb}(S,t)=\frac{{\mathcal K}_S(t)}{\prod_{i=1}^n 1-t_i^{{\sf deg}(e_i)}},
\end{equation}
where $e_i$ is the $i$-th standard basis vector of ${\mathbb N}^n$, \emph{i.e.}, the degree assigned to $x_i$ under the multigrading.
\end{theorem}

\begin{definition}
The \emph{$K$-polynomial} is ${\mathcal K}_S(t)$ from \eqref{eqn:Kpoly222}. 
\end{definition}

\begin{definition}
The \emph{multidegree} ${\mathcal C}_{S}(t)$
is the sum of the lowest degree terms of
${\mathcal K}_{S}(1-t))$ (that is, we substitute $1-t_k$ for $t_k$ for
all $k$, $1\leq k\leq n$.)
\end{definition}

While the $K$-polynomial is equivalent information to the Hilbert series, much information is already
contained in the multidegree, and the latter is easier to work with.

We illustrate minimal free resolutions for small examples of determinantal ideals.
Let \[R={\mathbb C}[x_{11}, x_{12}, x_{13}, x_{21},x_{22},x_{23}]\] 
and $I$ be the ideal generated by $2\times 2$ minors of a generic $2\times 3$ matrix. That is,
\[I=\langle x_{11}x_{22}-x_{12}x_{21}, x_{11}x_{31}-x_{21}x_{13}, x_{12}x_{23}-x_{22}x_{31}\rangle.\] 
It is easy
to see that the generators are \emph{minimal}  (removing any generator changes the ideal). The generators
\[f_1:=x_{11}x_{22}-x_{12}x_{21}, f_2:=x_{11}x_{31}-x_{21}x_{13}, f_3:=x_{12}x_{23}-x_{22}x_{31}\]
do not form a \emph{basis} of $I$ as an $R$-module since there are algebraic relations between them. Two relations are
\[x_{23}\cdot f_1-x_{22}\cdot f_2+x_{11}\cdot f_3=0,  \ -x_{13}\cdot f_1+x_{12} \cdot f_2 -x_{11}\cdot f_3=0.\]
One can encode these relations as columns of a matrix $\partial_1:=\left[\begin{matrix} x_{23} & -x_{13}\\ -x_{22} & x_{12}\\ x_{11} & -x_{11}\end{matrix}\right]$. If we furthermore let $\varepsilon=\left[\begin{matrix}
f_1 & f_2 & f_3\end{matrix}\right]$ then we have an exact sequence
\[0\stackrel{\partial_2:=0}{\longrightarrow}S(-1)^3\stackrel{\partial_1}{\longrightarrow}S(-1)^2\stackrel{\varepsilon}{\longrightarrow}N\longrightarrow 0.\]
The map $\partial_2=0$ means the columns of $\partial_1$ have no nontrivial relations, \emph{i.e.} ``second order syzygies''. However, in the next smallest example, of $2\times 2$ minors of $\left[\begin{matrix} x_{11}  & x_{12} & x_{13} &x_{14}\\ x_{21} & x_{22} & x_{23} & x_{24}\end{matrix}\right]$, one does see second order syzygies. Using {\tt Macaulay2}:

\begin{verbatim}
R=QQ[x11,x12,x13,x14,x21,x22,x23,x24]
M=matrix{{x11, x12, x13, x14}, {x21,x22,x23,x24}}
rs=res minors(2,M)
rs.dd
     1                                                               6
0 : R  <----------------------------------------------------------- R  : 1
         | x12x21-x11x22 x13x21-x11x23 x14x21-x11x24 ... |      
     6                                                       8
1 : R  <--------------------------------------------------- R  : 2
           {2} | -x13 -x14 0    0    x23  x24  0    0    |
           {2} | x12  0    -x14 0    -x22 0    x24  0    |
           {2} | 0    x12  x13  0    0    -x22 -x23 0    |
           {2} | -x11 0    0    -x14 x21  0    0    x24  |
           {2} | 0    -x11 0    x13  0    x21  0    -x23 |
           {2} | 0    0    -x11 -x12 0    0    x21  x22  |
     8                              3
2 : R  <-------------------------- R  : 3
           {3} | x14  x24  0    |
           {3} | -x13 -x23 0    |
           {3} | x12  x22  0    |
           {3} | -x11 -x21 0    |
           {3} | 0    -x14 -x24 |
           {3} | 0    x13  x23  |
           {3} | 0    -x12 -x22 |
           {3} | 0    x11  x21  |
     3
3 : R  <----- 0 : 4
           0
\end{verbatim}

\begin{exercise}
Verify that the above sequence is exact.
\end{exercise}

In our case, the meaning of a resolution being \emph{minimal} is that none of the matrices have nonzero-scalar
entries. Therefore the above resolution is minimal.

The ranks of the modules, \emph{e.g.,} $(1,6,8,3)$ in the last example, are known as the \emph{Betti numbers}. 
Provided that one choose a minimal list of generators for $I$, followed by a minimal presentation of each
matrix $\partial_i$ (that is, removing any column of $\partial_i$ changes  ${\rm Im}(\partial_i)$), the Betti numbers are
well-defined. One thinks of $\partial_i$ as the $i$-th order syzygies of $I$. Hilbert's syzygy theorem states that for graded modules, any minimal free resolution is finite, that is there is a $d$ such that $\partial_d=0$.  This theorem
generalizes in the ``positive multigraded case'',  which includes all gradings we consider in this chapter.

\begin{exercise}
By {\tt Macaulay2} (or otherwise), determine a (minimal) free resolution for the case of $2\times 2$ minors of
a $3\times 3$ matrix.
\end{exercise}

\begin{problem}
\label{prob:betti}
Determine the Betti numbers (and better yet) a description of the  minimal free resolution for Schubert determinantal ideals (or more generally, Kazhdan-Lusztig varieties).
\end{problem}

The multigraded Hilbert series of the Kazhdan-Lusztig variety encodes alternating sums of the Betti numbers but not the individual numbers themselves. Many measures of Schubert varieties we investigate in Section~\ref{sec:6} can be read off given the Betti numbers. One expects Problem~\ref{prob:betti} to be very difficult in general. This is even the
case for Schubert determinantal ideals, as we only know the number of rows of the ``Betti table'' \cite{PSW} and (implicitly)
the values of the first Betti numbers \cite{GaoYong}. Only the special case of $k\times k$ minors in a $m\times n$ matrix (over characteristic $0$) is entirely solved; see \cite{Lascoux:det, Weyman}.

The next section determines the $K$-polynomial (and hence the multigraded Hilbert series) in the case of Kazhdan-Lusztig
ideals.}

\section{Singularity measures}\label{sec:6}

\begin{definition} A \emph{local ring} is a ring $R$ with a unique maximal ideal ${\mathfrak m}$; its
\emph{residue field} is $\Bbbk=R/{\mathfrak m}$.  We sometimes denote the local ring by $(R,\mathfrak{m},\Bbbk)$ to keep track of all the information in the notation.
\end{definition} 

Let $X$ be a complex variety and $p\in X$. The local ring of $X$ at $p$, denoted ${\mathcal O}_{X,p}$,
consists of the ring of germs
of regular functions defined in some neighborhood of $p$ and regular at $p$, the maximal ideal ${\mathfrak m}={\mathfrak m}_p$ of regular functions vanishing at $p$, and ${\Bbbk}={\mathbb C}$. It captures the local
behavior of $X$ at $p$ and is an isomorphism invariant of $X$ and the point $p$; see \cite[I.3]{Hartshorne} for
precise definitions. For those readers familiar with these concepts, we do define
properties of algebraic varieties in terms of the local ring, appealing to general references such as \cite{eisenbud:view, Bruns-Herzog}. However, we mostly give equivalent
definitions, in the case of Schubert varieties, in terms of Kazhdan-Lusztig ideals. Then we proceed to state 
theorems and conjectures that solve (P1) or (P2) using interval pattern avoidance.

\subsection{Smoothness}
  
\begin{definition}
The \emph{Zariski cotangent
space} at $p$ is ${\mathfrak m}_p/{\mathfrak m}_p^2$.
\end{definition}
The cotangent space is a vector space over ${\Bbbk}$ (which is
${\mathbb C}$ in our case). 
\begin{definition}
The \emph{Zariski tangent space} at $p$ in $X$ is $\left({\mathfrak m}_p/{\mathfrak m}_p^2\right)^*$ (vector space
dual).
\end{definition}
\begin{definition}
$p\in X$ is \emph{smooth} if $\dim_{{\mathbb C}}(\left({\mathfrak m}_p/{\mathfrak m}_p^2\right)^*)=\dim_{{\mathbb C}}X$.
\end{definition}
Since $\dim_{\mathbb C}(X_w)=\ell(w)$, 
$X_w$ is smooth at $p=E^{(v)}_\bullet$ if and only if $\dim_{\mathbb C} \left({\mathfrak m}_{p}/{\mathfrak m}_{p}^2\right)^*=\ell(w)$.

\begin{exercise}
\label{exer:Nsmooth}
Prove that $X_w$ is smooth at $E^{(v)}_\bullet$ if and only if ${\mathcal N}_{v,w}$ is smooth at ${\bf 0}$.
\end{exercise}

We restate the following characterization of V.~Lakshmibai--B.~Sandhya
\cite{Lak} mentioned in the introduction:

\begin{theorem}[\cite{Lak}]
\label{thm:3412-4231}
$X_{w}$ is smooth if and only if $w$ avoids the patterns
$3412$ and $4231$.
\end{theorem}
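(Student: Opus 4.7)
The plan is to reduce global smoothness of $X_w$ to smoothness at the single torus-fixed point $E^{(\mathrm{id})}_\bullet$, compute the Zariski tangent space there from the Kazhdan--Lusztig ideal, and then translate the resulting numerical condition into the pattern-avoidance statement. For the reduction: being singular is an SSP (Definition~\ref{def:SSPname}) whose locus is also $B$-stable, hence a union of Schubert cells detected on the set of $T$-fixed points $\{E^{(v)}_\bullet : v \leq w\}$. By Exercise~\ref{exer:getsworse} applied to singularity, singularity at any $E^{(v)}_\bullet$ implies singularity at the Bruhat-minimum $E^{(\mathrm{id})}_\bullet$. Therefore $X_w$ is smooth if and only if it is smooth at $E^{(\mathrm{id})}_\bullet$, equivalently (by Exercise~\ref{exer:Nsmooth} together with Lemma~\ref{lemma:KL}) if and only if $\dim_{\mathbb{C}} T_0\mathcal{N}_{\mathrm{id},w} = \ell(w)$.

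The tangent-space computation proceeds directly from $I_{\mathrm{id},w}$. By Theorem~\ref{thm:Grobnerthm}, the defining minors of Definition~\ref{def:KLideal} generate the ideal, and $T_0\mathcal{N}_{\mathrm{id},w}$ is cut out by their linear parts. A minor of size $k \geq 2$ in a generic matrix vanishes to order $\geq 2$ at the origin, so only the $1 \times 1$ minor generators---the variables $z_{ij}$ lying inside some southwest block on which $\widetilde{r}^{(w)}_{s,t}=0$---contribute linear equations. Using the $T$-weight decomposition of $T_0\mathcal{N}_{\mathrm{id},w}$ from Proposition~\ref{lemma:funnygrading}, under which each surviving $z_{ij}$ spans a one-dimensional weight space corresponding to a negative root, and hence to a transposition $t_{ab}$, one identifies the surviving weights with
\[
\{t_{ij} : 1 \leq i < j \leq n,\ t_{ij} \leq w \text{ in Bruhat order}\}.
\]
Thus $X_w$ is smooth if and only if $\#\{t_{ij} \leq w\} = \ell(w)$. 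This recovers the classical tangent-space dimension formula (Lakshmibai--Seshadri/Polo).

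It remains to prove the combinatorial equivalence: the identity $\#\{t_{ij} \leq w\} = \ell(w)$ holds if and only if $w$ avoids $3412$ and $4231$. Since $\mathcal{N}_{\mathrm{id},w}$ has dimension $\ell(w)$, the inequality $\#\{t_{ij} \leq w\} \geq \ell(w)$ is automatic; only equality is at issue. Direct calculation at $n=4$ establishes the two minimal bad cases: $\#\{t_{ij} \leq 3412\} = 5 > 4 = \ell(3412)$ and $\#\{t_{ij} \leq 4231\} = 6 > 5 = \ell(4231)$, so both Schubert varieties are singular at $e_{\mathrm{id}}$. If $w$ classically contains $3412$ or $4231$ at positions $(\phi_1,\phi_2,\phi_3,\phi_4)$, the pattern map of Billey--Braden (or, after enlarging $\phi$ to satisfy the length condition, the interval-pattern isomorphism of Exercise~\ref{exa:proof_example}) transports this extra transposition to some $t_{\phi_a,\phi_b} \leq w$ in $S_n$, whence $X_w$ is singular. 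Conversely, assuming $w$ avoids both patterns, one must show $\#\{t_{ij} \leq w\} = \ell(w)$: given any $t_{ij} \leq w$ in excess of the $\ell(w)$ inversion-accounted ones, the Bruhat-covering criterion of Exercise~\ref{exer:Bruhatexer}(b) is used in a case analysis on the intermediate positions $i < k < j$ and the values $w(i),w(k),w(j),w(\ell)$ to produce four indices realizing a $3412$ or $4231$ pattern in $w$, a contradiction. The main obstacle is precisely this converse: the correspondence between $\mathrm{Inv}(w)$ and $\{t_{ij} \leq w\}$ is \emph{not} a bijection (for $w=3412$, $(1,4) \in \mathrm{Inv}(w)$ yet $t_{14} \not\leq w$, while $(1,2) \notin \mathrm{Inv}(w)$ yet $t_{12} \leq w$), so one must analyze the symmetric difference of the two sets and extract a pattern witness whenever the net count disagrees---a combinatorial argument of exactly the kind carried out in the classical proofs of the Lakshmibai--Sandhya theorem.
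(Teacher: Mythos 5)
The paper itself does not prove this theorem (it is quoted from \cite{Lak}), so your proposal has to stand on its own, and it has two genuine gaps. The decisive one is the tangent-space step. You argue that only the $1\times 1$ minor generators of $I_{\mathrm{id},w}$ contribute linear equations at the origin because ``a minor of size $k\geq 2$ in a generic matrix vanishes to order $\geq 2$.'' But $Z^{(\mathrm{id})}$ is not generic: it carries specialized entries equal to $1$, so larger minors expand with nonzero linear terms, and those terms are essential. Concretely, take $w=51234\in S_5$ (or any $w$ with $w(1)=n$): then $\widetilde{r}^{(w)}_{s,t}\geq 1$ for all $s,t\geq 1$, so $I_{\mathrm{id},w}$ has no $1\times 1$ generators at all, and your rule would give $T_0\mathcal{N}_{\mathrm{id},w}=\mathbb{C}^{\binom{5}{2}}$, i.e.\ a singular point --- yet $w$ avoids $3412$ and $4231$, only the four simple transpositions lie below $w$, and $X_w$ is smooth of dimension $4$. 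The missing linear forms come from higher minors through the specialized $1$'s, e.g.\ the $2\times 2$ generator $z_{41}z_{32}-z_{31}$. The statement you actually need, that the tangent space at $E^{(\mathrm{id})}_\bullet$ has dimension $\#\{t_{ij}\leq w\}$ (equivalently the smoothness criterion $\#\{t_{ij}\leq w\}=\ell(w)$), is true, but it is precisely the $v=\mathrm{id}$ case of Theorem~\ref{thm:firstsingcriterion} (Lakshmibai--Seshadri); your derivation of it from the Kazhdan--Lusztig generators does not work as written, so you must either cite that theorem or genuinely analyze the linear parts of all the minors against the specialization.

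The second gap is that the combinatorial heart --- $\#\{t_{ij}\leq w\}=\ell(w)$ if and only if $w$ avoids $3412$ and $4231$ --- is only gestured at. The ``avoidance implies equality'' direction is dismissed as ``a case analysis\dots of exactly the kind carried out in the classical proofs,'' which is an appeal to the literature, not a proof; this is exactly the content of Lakshmibai--Sandhya's argument (or Gasharov's sufficiency proof, to which the published proofs of Theorem~\ref{thm:singularlocus} reduce). In the other direction you lean on the Billey--Braden pattern map, itself a nontrivial cited theorem, or on an interval embedding whose length condition $\ell(v)-\ell(u)=\ell(w)-\ell(x)$ you never verify; a self-contained route would be to show directly that an occurrence of $3412$ or $4231$ forces strictly more than $\ell(w)$ reflections below $w$, which again requires an argument. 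So your skeleton (reduce to $E^{(\mathrm{id})}_\bullet$ via Exercise~\ref{exer:getsworse}, apply the reflection-counting criterion, then translate into pattern avoidance) is the standard and correct route, but both the geometric input and the combinatorial equivalence are asserted rather than proved.
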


We refer to \cite{BL00} for a summary of other criteria for 
determining if $X_w$ is smooth.

The following exercise gives another definition of smoothness of $X_w$ at $E^{(v)}_\bullet$:
\begin{exercise}[Jacobian criterion]\label{Jacxyz}
Given generators $g_1,g_2,\ldots,g_t$ of $I_{v,w}$, let ${\bf J}$ be the
$t\times \ell(w_0v)$-size matrix 
${\bf J}=(\frac{\partial g_i}{\partial x_j})$ where $x_1,x_2,\ldots,x_{\ell(w_0v)}$ is some ordering of the 
indeterminates $z_{ab}$ in ${\bf z}^{(v)}$. Then $X_w$ is smooth at $E^{(v)}_\bullet$ if and only if ${\bf J}$ is full rank when
evaluated at ${\bf 0}$.
\end{exercise}

\begin{exercise}\label{exer:smallcounter}
Apply the Jacobian criterion of Exercise~\ref{Jacxyz} to $I_{1234,3412}$ and $I_{1234,4231}$ and confirm that $X_{3412}$ and $X_{4231}$ are singular.
\end{exercise}

\begin{definition}
The \emph{singular locus} of a variety $X$ is 
\[{\rm sing}(X):=\{p\in X:  \dim(\left({\mathfrak m}_p/{\mathfrak m}_p^2\right)^*)>\dim X\}.\]
\end{definition}
If $Y\subset GL_n/B$ is closed and stable under the left-multiplication action by $B$ then
\[Y=\bigcup_{u\in I} X_{u},\]
for some $I$. In particular, $Y={\rm sing}(X_w)$ is closed and $B$-stable.

The first combinatorial criterion describing ${\rm sing}(X_w)$ was given by V.~Lakshmibai--C.~S.~Seshadri~\cite{LS84}:
\begin{theorem}[\cite{LS84}]
\label{thm:firstsingcriterion}
$X_w$ is smooth at $e_v$ if and only if 
\[{\mathcal R}(v,w):=\{(i,j): v<v \, t_{ij} \leq w\}=\ell(w)-\ell(v).\]
\end{theorem}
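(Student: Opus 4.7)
The plan is to reduce smoothness of $X_w$ at $e_v$ to a tangent-space computation on the Kazhdan-Lusztig variety, then carry that computation out torus-equivariantly. First, by Exercise~\ref{exer:Nsmooth}, $X_w$ is smooth at $e_v$ iff $\mathcal{N}_{v,w}$ is smooth at $\mathbf{0}$; by Lemma~\ref{lemma:KL} together with $\dim X_w=\ell(w)$, the variety $\mathcal{N}_{v,w}$ is irreducible of dimension $\ell(w)-\ell(v)$, so smoothness is equivalent to $\dim T_{\mathbf{0}}\mathcal{N}_{v,w}=\ell(w)-\ell(v)$. The theorem thus reduces to proving
\[\dim T_{\mathbf{0}}\mathcal{N}_{v,w}=\#\mathcal{R}(v,w).\]

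The maximal torus $T$ fixes $\mathbf{0}$ and preserves $\mathcal{N}_{v,w}$, so $T_{\mathbf{0}}\mathcal{N}_{v,w}\subseteq T_{\mathbf{0}}\Omega_v^\circ$ is a $T$-submodule. Under the grading of Proposition~\ref{lemma:funnygrading}, the coordinates $z_{ij}\in\mathbf{z}^{(v)}$ carry pairwise distinct, nonzero $T$-weights: the only way two free positions $(i,j)\neq(i',j')$ could share the weight $e_{v(j)}-e_{n-i+1}$ is via the crossing $v(j)=n-i+1$ or $v(j')=n-i'+1$, either of which forces the corresponding position to be a $1$ of the permutation matrix and hence \emph{not} free. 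Therefore $T_{\mathbf{0}}\Omega_v^\circ=\bigoplus_{(i,j)}\mathbb{C}\,\partial/\partial z_{ij}$ decomposes as a direct sum of one-dimensional $T$-weight spaces, and $T_{\mathbf{0}}\mathcal{N}_{v,w}$ equals the direct sum of those coordinate axes it contains.

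For each free $(i,j)$, consider the one-dimensional $T$-orbit closure
\[L_{ij}=\{z_{ij}=s,\ z_{kl}=0\ \text{for}\ (k,l)\neq(i,j)\}\subseteq \Omega_v^\circ.\]
A direct column-reduction computation with the opposite-cell parametrization identifies the closure of $L_{ij}$ in $GL_n/B$ as a $T$-stable $\mathbb{P}^1$ whose two $T$-fixed points are $e_v$ and $e_{vt_{ab}}$, with $t_{ab}=t_{j,\,v^{-1}(n-i+1)}$; as $(i,j)$ ranges over free positions, $t_{ab}$ bijects onto the transpositions satisfying $vt_{ab}>v$. Because each defining generator of $I_{v,w}$ is a minor of a matrix whose entries are distinct variables and constants, every monomial contains each $z_{ij}$ to degree at most one, so restricting any such generator to $L_{ij}$ yields an affine polynomial $B\cdot s$ (the constant term vanishes since $\mathbf{0}\in\mathcal{N}_{v,w}$). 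Consequently the condition ``$\partial/\partial z_{ij}\in T_{\mathbf{0}}\mathcal{N}_{v,w}$'' (every generator has vanishing linear coefficient in $z_{ij}$) coincides with ``$L_{ij}\subseteq\mathcal{N}_{v,w}$'', which in turn holds precisely when its $\mathbb{P}^1$-closure lies in $X_w$, equivalently $vt_{ab}\le w$. Counting then gives $\dim T_{\mathbf{0}}\mathcal{N}_{v,w}=\#\mathcal{R}(v,w)$, as desired.

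The main obstacle lies in the third paragraph: verifying that the second $T$-fixed point of the closure of $L_{ij}$ is precisely $e_{vt_{j,\,v^{-1}(n-i+1)}}$, and that this assignment $(i,j)\mapsto t_{j,\,v^{-1}(n-i+1)}$ is a bijection between free positions of $\Omega_v^\circ$ and non-inversion transpositions of $v$. Both are combinatorial checks that unpack the pivoting behavior of the opposite-cell matrices of Section~\ref{sec:opschubJan28} as the free parameter $s$ tends to $\infty$.
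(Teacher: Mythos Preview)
The paper does not give its own proof of this theorem; it is simply cited from \cite{LS84}. So there is no direct comparison to make, but your argument deserves evaluation on its own merits.

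Your proof is correct, and the strategy---computing the $T$-weight decomposition of $T_{\mathbf 0}\mathcal N_{v,w}$ and identifying each weight line with a $T$-stable $\mathbb P^1$ joining $e_v$ to $e_{vt_{ab}}$---is the modern ``Bruhat graph'' viewpoint (compare \cite{Carrell.Kuttler}), rather than the standard-monomial-theory arguments of the original \cite{LS84}. Two small comments. First, your sentence about weights coinciding ``via the crossing $v(j)=n-i+1$'' is phrased confusingly: the real point is that for a \emph{free} position one has $v(j)\neq n-i+1$, so each weight $e_{v(j)}-e_{n-i+1}$ is nonzero and therefore determines the ordered pair $(v(j),n-i+1)$ uniquely; distinctness then follows immediately. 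Second, the equivalence ``$\overline{L_{ij}}\subseteq X_w\iff vt_{ab}\le w$'' needs the forward implication you wrote \emph{and} the converse. The cleanest way to get the converse is to note (as your deferred column-reduction shows) that every point of $L_{ij}\setminus\{\mathbf 0\}$ actually lies in the Schubert \emph{cell} $X_{vt_{ab}}^\circ$: for $a\le q<b$ one checks that $\dim(E_p\cap F_q)$ for the flag of $P_v+sE_{v(b),a}$ equals $\dim(E_p\cap E^{(vt_{ab})}_q)$ for all $p$, using $v(a)<v(b)$. Hence $L_{ij}\subseteq X_{vt_{ab}}\subseteq X_w$ whenever $vt_{ab}\le w$. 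With that said, the ``main obstacle'' you flag is indeed a routine unpacking of Exercise~\ref{exer:Jan27abc}, and once done your argument goes through.
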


As mentioned in the Introduction, a pattern avoidance description of 
${\rm sing}(X_w)$ was conjectured by V.~Lakshmibai--B.~Sandhya \cite{Lak}
and proved independently by \cite{billey.warrington,Cortez, KLR,
  manivel1}.  In \cite{WY:governing} we restated the result in terms of interval pattern avoidance. 
  Let ${\mathcal I}_{\mathrm{singular}}$ be the set of pairs $(v,w)$ such that $X_w$ is singular at $e_v$.
  Below, the segment ``$j\cdots i$'' means $j, j-1,
j-2,\ldots, i+1,i$ (if $j<i$ then the segment is
empty).

\begin{theorem}
\label{thm:singularlocus}
The order ideal $\mathcal{I}_{\mathrm{singular}}$ in the poset $({\mathfrak S},\prec_I)$ is minimally
generated by the collection of these families of intervals:
\begin{enumerate}
\item $\big[(a+1)a\cdots 1(a+b+2)\cdots(a+2),\ \ \ (a+b+2)(a+1)a\cdots 2 (a+b+1)\cdots(a+2) 1\big]$ for all integers $a,b>0$.
\item $\big[(a+1)\cdots 1 (a+3) (a+2) (a+b+4)\cdots(a+4),\ \ \ (a+3)(a+1)\cdots 2(a+b+4)1(a+b+3)\cdots(a+4)(a+2)\big]$ for all integers $a,b\geq0$.
\item $\big[1(a+3)\cdots2(a+4), \ \ \  (a+3)(a+4)(a+2)\cdots312 \big]$ for all integers $a>1$.
\end{enumerate}
\end{theorem}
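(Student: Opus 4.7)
The plan is to establish the theorem by three successive steps, using the classical descriptions of the singular locus of Schubert varieties as input and then converting them into interval-pattern language. Since $\mathcal{I}_{\mathrm{singular}}$ is an upper order ideal in $(\mathfrak{S},\prec_I)$ by Theorem~\ref{thm:poset} (applied to the SSP ``$X_v$ is singular at $e_u$''), identifying a minimal generating set amounts to verifying: (a) each interval in the three families is itself singular; (b) every singular pair $[u,v]$ is $\succeq_I$ some member of the families; and (c) the three families form an antichain under $\prec_I$. For (a), the Lakshmibai--Seshadri tangent space criterion (Theorem~\ref{thm:firstsingcriterion}) reduces singularity at $e_u$ to comparing $|\mathcal{R}(u,v)| = |\{(i,j): u<u\,t_{ij}\leq v\}|$ with $\ell(v)-\ell(u)$. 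Each family gives a uniform description of $\mathcal{R}(u,v)$ as a function of the parameters $a,b$, and a direct calculation produces the strict inequality $|\mathcal{R}(u,v)| > \ell(v)-\ell(u)$, confirming singularity.

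For (b), the main substantive step, I would invoke the classical descriptions of the irreducible components of $\mathrm{sing}(X_w)$ due to Cortez, Kassel--Lascoux--Reutenauer, Billey--Warrington, and Manivel. These authors identify explicit permutations $v$ with $X_v$ an irreducible component of $\mathrm{sing}(X_w)$, indexed by combinatorial configurations inside $w$ that embed either a $3412$ or a $4231$ pattern. The translation step is to show that each such configuration in $w$ yields an interval pattern embedding of one of the three families into the Bruhat interval $[v,w]$: Family~1 corresponds to the $4231$-type configurations (with $a,b$ recording the sizes of the descending runs flanking the pattern), while Families~2 and~3 correspond to two subtypes of $3412$-type configurations that must be separated because the associated minimal singular intervals have distinct poset structure. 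Once the bottom generic singular point of each irreducible component is matched with a specific family member, any other singular $[u',w']$ lies above $[v,w]$ under $\prec_I$: the interval-embedding direction is handled by the classical result, while the second generator of $\prec_I$ ($[u,v]\prec_I[u',v]$ when $u'\leq u$) together with Exercise~\ref{exer:getsworse} takes care of moving down Bruhat order at the bottom endpoint.

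For (c), the minimality check is purely combinatorial. The length equality $\ell(v)-\ell(u) = \ell(w)-\ell(x)$ built into the definition of interval pattern embedding is stringent, and combined with the rigid shapes of the one-line notations in each family, one can directly rule out any nontrivial embedding between two distinct members. The parameter constraints $a,b\geq 1$ (Family~1), $a,b\geq 0$ (Family~2), and $a\geq 2$ (Family~3) are chosen precisely to exclude overlap, in particular preventing small Family~3 intervals from being mistakenly viewed as Family~1 or~2. The main obstacle in the argument will be step (b): organizing the known singular-locus data into exactly these three families with no missing cases. Because interval pattern embedding is stricter than ordinary classical pattern containment --- requiring both the length equality and the condition that entries outside the embedding agree --- one must verify that the witnesses naturally arising from each Cortez-type configuration are genuine interval embeddings, not merely classical pattern embeddings. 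Once this verification is in place, the theorem follows by bookkeeping combined with the universal framework of Theorem~\ref{thm:poset}.
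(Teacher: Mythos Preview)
Your proposal is correct and aligns with the paper's treatment. The paper does not supply its own proof; it explains that this theorem is a restatement, in interval pattern avoidance language (carried out in \cite{WY:governing}), of the singular-locus theorems proved independently in \cite{billey.warrington, Cortez, KLR, manivel1}, noting that the first three references argue combinatorially via Theorem~\ref{thm:firstsingcriterion} (or Gasharov's sufficiency result) while \cite{Cortez} constructs partial resolutions---precisely the inputs you plan to invoke in step~(b), with steps~(a) and~(c) being the natural bookkeeping needed to pass to the $\prec_I$ framework.
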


\noindent
\emph{Techniques used in the proofs of Theorem~\ref{thm:singularlocus}:} The proofs by
\cite{billey.warrington, KLR, manivel1} are essentially combinatorial and reduce to Theorem~\ref{thm:firstsingcriterion} or an earlier proof by V.~Gasharov of the sufficiency of the conditions.
The proof in \cite{Cortez} is qualitatively different than the others. It is geometric and proceeds by 
constructing partial resolutions of singularities of the Schubert variety.

\begin{exercise}\label{exer:singlocFeb24abc}
(a) Use Theorem~\ref{thm:singularlocus} to show that
\[{\rm sing}(X_{461253})=X_{142653}\cup X_{241365}\cup
X_{143265}.\] 
Now do the same with Theorem~\ref{thm:firstsingcriterion}; compare and contrast.

(b) Determine ${\rm sing}(X_{523614})$.
\end{exercise}

\begin{exercise}\label{exer:isolatedsing}
Can ${\rm sing}(X_w)=X_{id}$? (That is, can $X_w$ have an isolated singularity?)
\end{exercise}

\begin{hardexercise} \label{exer:proveordisprove}
Prove or disprove: ${\mathcal N}_{v,w}\cong {\mathbb C}^{\ell(w)-\ell(v)}$ if and only if $X_w$ is smooth at
$E^{(v)}_\bullet$. 
\end{hardexercise}

\begin{exercise}
\label{exer:Oct23abc}
Prove that $w$ avoids the families in Theorem~\ref{thm:singularlocus} if and only if $w$ is $3412$ and $4231$
avoiding.\footnote{Theorem~\ref{thm:3412-4231} and Exercise~\ref{exer:Oct23abc} shows that when
${\mathcal P}=$``singular'' something special happens. The
set of permutations appearing as the top element of intervals in
${\mathcal I}_{{\rm singular}}$ is the order ideal generated by $4231$
and $3412$ in the partial order given by \emph{classical} pattern
avoidance, where ``$u$ is smaller than $v$'' if $u$ classically embeds
into $v$. }
\end{exercise}

\subsection{Local complete intersection}

Suppose $S$ is a commutative ring. We refer to \cite{Bruns-Herzog}:
\begin{definition}
A \emph{regular sequence} of $S$ is 
a sequence $s_1,s_2,\ldots,s_d\in S$ such that
$s_i$ is not a zero-divisor on $S/(s_1,\ldots,s_{i-1})$ for $i=1,2,\ldots,d$. 
\end{definition}
\begin{definition}
A local ring $(R,{\mathfrak m}, \Bbbk)$ is a \emph{local complete intersection} (``lci'' for short) if there is a regular local ring $(S,{\mathfrak n})$ (that is,
$\dim_{S/n}{\mathfrak n}/{\mathfrak n}^2$ is the Krull dimension of $S$) and a regular sequence $s_1,\ldots,s_d$ of $S$ such that
$R=S/(s_1,\ldots,s_d)$.
\end{definition}
 
 \begin{definition}
An algebraic variety $X$ is a \emph{local complete intersection} if each local ring ${\mathcal O}_{X,p}$ of $X$ is lci.
\end{definition}
\begin{example}
An affine algebraic variety $X$ of codimension $d$ is a \emph{complete intersection} if it can be cut out by $d$ many equations. Such a variety is also a local complete intersection.
\end{example}

The following exercise provides an alternate definition of lci for our purposes.

\begin{exercise}\label{exer:lcialt}
${\mathcal O}_{X_w,E^{(v)}_\bullet}$ is lci if and only if ${\mathcal N}_{v,w}$ is a complete intersection.
\end{exercise}

H.~Ulfarsson and the first author \cite{lci} have classified which $X_w$ are lci in terms of \emph{classical} pattern avoidance.

\begin{theorem}\label{lcichar}
$X_w$ is lci if and only if $w$ avoids $53241$, $52341$, $52431$, $35142$, $42513$, and $351624$.
\end{theorem}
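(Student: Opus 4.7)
The plan is to reduce to a local question at a single point and then extract pattern conditions. Being lci is a semicontinuously stable property (Definition~\ref{def:SSPname}): lci is an open condition on any variety and is preserved under products with affine space. Combined with Exercise~\ref{exer:getsworse} (applied so that failure of an SSP propagates downward in Bruhat order) and the $B$-action moving points within a Schubert cell, the non-lci locus of $X_w$ is a closed $B$-stable subset, hence a union of Schubert subvarieties $X_{u_i}$. Since every Schubert subvariety contains $E^{(id)}_\bullet$, $X_w$ is globally lci if and only if it is lci at $E^{(id)}_\bullet$. By Exercise~\ref{exer:lcialt}, this is equivalent to the Kazhdan--Lusztig variety $\mathcal{N}_{id,w}$ being a complete intersection in $\Omega^\circ_{id}\cong\mathbb{A}^{\binom{n}{2}}$, i.e., to $I_{id,w}$ being minimally generated by $\binom{n}{2}-\ell(w)$ elements at the maximal ideal of the origin.

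This reduction to the single point $E^{(id)}_\bullet$ is what permits a characterization by \emph{classical} rather than merely interval pattern avoidance. Indeed, if $v\in S_m$ classically embeds in $w\in S_n$ via indices $\Phi$, then $[id_m,v]$ interval-pattern-embeds into $[\Phi(id_m),w]$, and by Exercise~\ref{exa:proof_example}(b) there is an isomorphism of affine varieties $\mathcal{N}_{id_m,v}\cong \mathcal{N}_{\Phi(id_m),w}$. Thus if $X_v$ fails lci at $E^{(id_m)}_\bullet$, then $X_w$ fails lci at $E^{(\Phi(id_m))}_\bullet$, and by the first paragraph $X_w$ fails lci at $E^{(id_n)}_\bullet$ as well. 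Hence classical avoidance of every ``minimal bad'' pattern is necessary; since the minimal bad patterns turn out to form a finite list, this avoidance is also sufficient.

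The only-if direction amounts to checking that the six permutations $53241,52341,52431,35142,42513,351624$ are each minimal bad patterns. These are six finite, explicit computations: for each $v$, one writes down the defining minors of $I_{id,v}$ from Definition~\ref{def:KLideal}, uses the Gr\"obner basis of Theorem~\ref{thm:Grobnerthm} to produce a local minimal generating set, and verifies that its size strictly exceeds $\binom{n}{2}-\ell(v)$; one must also rule out any smaller permutation as an obstruction. These cases are handled directly with Macaulay2. The \emph{if} direction is the main obstacle: given $w$ avoiding all six patterns, one must produce $\binom{n}{2}-\ell(w)$ generators of $I_{id,w}$ forming a regular sequence. The plan is to exploit Fulton's essential set $E(w)$ (Exercise~\ref{exer:rotheess}), whose local shape is heavily constrained by pattern avoidance. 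For each essential box $(s,t)\in E(w)$, one selects a carefully chosen subset of the $(\widetilde r^{(w)}_{s,t}+1)$-size minors of $Z^{(id)}_{s,t}$, and shows by case analysis on the local configuration of $E(w)$ (which is precisely where the six patterns appear as the forbidden configurations) that these minors across all essential boxes jointly form a regular sequence of length exactly $\binom{n}{2}-\ell(w)$. The combinatorial bookkeeping of this case analysis --- showing that the six patterns exactly cover all possible essential-set obstructions --- is the crux of the argument.
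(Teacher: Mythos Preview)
Your first paragraph (reduction to the single point $E^{(id)}_\bullet$ via the SSP property and Exercise~\ref{exer:lcialt}) is correct and matches the paper.  Your plan for the ``if'' direction --- exhibit $\binom{n}{2}-\ell(w)$ carefully chosen minors forming a regular sequence, organized by a case analysis on the essential set --- is also the paper's approach in \cite{lci}, so as an outline it is fine.

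The gap is in the ``only if'' direction.  Your key claim, that a classical embedding of $v$ in $w$ at indices $\Phi$ forces an interval embedding of $[id_m,v]$ into $[\Phi(id_m),w]$, is false.  Sorting the entries of $w$ at the $\Phi$-positions can change the number of \emph{mixed} inversions (those between a $\Phi$-position and a non-$\Phi$-position), so the length condition $\ell(v)=\ell(w)-\ell(\Phi(id_m))$ need not hold.  A counterexample using one of your six patterns: take $v=53241$ (length $8$) and $w=643251$ (length $12$).  The three classical embeddings of $v$ in $w$ are at $\Phi=(1,2,3,5,6)$, $(1,2,4,5,6)$, $(1,3,4,5,6)$, giving $\Phi(id_5)=134256$, $123456$, $142356$ of lengths $2,0,2$ respectively.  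In every case $\ell(w)-\ell(\Phi(id_5))\in\{10,12\}\neq 8$, so no interval embedding of $[id_5,53241]$ exists, and the isomorphism from Exercise~\ref{exa:proof_example}(b) is simply unavailable.  Your sentence ``since the minimal bad patterns turn out to form a finite list, this avoidance is also sufficient'' is also a non-sequitur: finiteness of a list of obstructions says nothing about sufficiency.

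The paper (following \cite{lci}) handles ``only if'' differently.  Rather than trying to transport $E^{(id)}_\bullet$, it identifies a list of intervals $[u,v]$ --- those appearing in Conjecture~\ref{conj:lcilocus}, with $u$ typically \emph{not} the identity --- at which lci provably fails, and then proves the purely combinatorial statement that whenever $w$ classically contains one of the six patterns, $w$ \emph{interval}-contains one of those specific intervals.  Necessity then follows from Theorem~\ref{thm:poset}.  The upshot is that the witness to non-lci is a carefully chosen $E^{(u)}_\bullet$, not $E^{(\Phi(id))}_\bullet$; finding the right $u$ for each pattern is where the actual work lies.
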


To prove ``$\Leftarrow$'' of Theorem~\ref{lcichar}, supposing $w$ avoids the stated patterns,
it then suffices by Exercise~\ref{exer:lcialt} to show that ${\mathcal N}_{id,w}$ is a complete intersection
by describing the ${n\choose 2}-\ell(w)$ of generators of $I_{v,w}$. For the converse, the authors of \cite{lci} show that the 
points $E^{(u)}_\bullet$ in Conjecture~\ref{conj:lcilocus} below are not lci, and $w$ interval contains one of those intervals whenever $w$ contains one of the patterns above.

Determination of the non-lci locus of $X_w$ remains open; this was conjectured in \emph{loc.~cit.}:

\begin{conjecture}[{\cite[Section~7]{lci}}]
\label{conj:lcilocus}
The ideal ${\mathcal I}_{\text{non-lci}}$ in the poset
$({\mathfrak S},\prec_I)$.
 is generated by
\begin{enumerate}
\item $[(a+1)a\cdots 1(a+b+2)\cdots (a+2),(a+b+2)(a+1)a\cdots 2(a+b+1)\cdots (a+2)1]$, where $a,b>0$ and $a>1$ or $b>1$; and
\item $[(a+1)\cdots 1(a+3)(a+2)(a+b+4)\cdots (a+4),
(a+3)(a+1)\cdots 2(a+b+4)1(a+b+3)\cdots (a+4)(a+2)]$, where $a,b\geq 0$ and $a+b\geq 1$.
\end{enumerate}
as well as eleven exceptional cases: 
\[[21354, 52341], [132546, 351624], [421653, 642531], [326154, 635241],\] 
\[[215436, 526314], [215436,524613],  [143265, 364152], [143265, 461352], [215436, 526413],\] 
\[[143265, 463152], [2154376, 5274163].\]
\end{conjecture}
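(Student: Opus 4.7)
The plan is to prove both directions of the conjecture via Kazhdan-Lusztig ideals. By Exercise~\ref{exer:lcialt}, $X_w$ is lci at $E^{(v)}_\bullet$ precisely when $\mathcal{N}_{v,w}$ is a complete intersection, i.e., when $I_{v,w}$ can be generated by $\mathrm{codim}\,\mathcal{N}_{v,w} = \binom{n}{2}-\ell(w)$ elements locally at the origin. Moreover, by Theorem~\ref{thm:poset} applied to the SSP ``lci'', once an interval is shown to be non-lci every interval above it in $\prec_I$ is also non-lci, so it suffices to verify that the listed intervals are non-lci and that every other interval \emph{is} lci.

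First I would handle necessity: show each generating interval in Conjecture~\ref{conj:lcilocus} is indeed non-lci. For the eleven exceptional intervals the verification is finite and can be done in \texttt{Macaulay2} using the Kazhdan-Lusztig setup of Section~\ref{sec:3}: compute $I_{u,v}$, a minimal generating set, and compare its size to $\binom{n}{2}-\ell(v)$. For the two infinite families one hopes for a uniform pattern: the families are combinatorial thickenings of the singularity generators from Theorem~\ref{thm:singularlocus}, and the Rothe-diagram / essential-set analysis used there should allow extraction of a closed-form count of minimal generators of $I_{u,v}$, shown to exceed the codimension by $1$ in each case.

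Second, for sufficiency, assume $[u,v]$ interval-avoids every listed generator; the goal is to exhibit a regular sequence of length $\binom{n}{2}-\ell(v)$ inside $I_{u,v}$ that generates it. The template is \cite{lci}: for $u=\mathrm{id}$ the minimal generators are indexed by a subset of Fulton's essential set $E(v)$ and the avoidance hypothesis is precisely what makes this subset have the right cardinality. For general $u$ I would define an analogue of $E(v)$ adapted to the pinned matrix $Z^{(u)}$, choose one essential minor per box, and verify by induction on $\ell(v)-\ell(u)$ (or on the Bruhat interval structure) that the avoidance hypothesis rules out the configurations that force a second minor at some essential box.

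The hard part will be the sufficiency direction, and more specifically the passage from the $u=\mathrm{id}$ case to arbitrary $u$. In $I_{u,v}$ many defining minors become trivially zero, constant, or coincide after the specialization of the entries of $Z^{(u)}$, so the naive count from $E(v)$ is wrong; one must track which minors remain essential, and simultaneously rule out new essential minors produced by the specialization. Two tools should help: the subword-complex description of $\mathrm{init}_\prec(I_{u,v})$ coming from Theorem~\ref{thm:Grobnerthm}, which reduces the question to a combinatorial count of facets meeting the origin; and the isomorphism $\mathcal{N}_{u,v}\cong \mathcal{N}_{u^{-1},v^{-1}}$ from Exercise~\ref{exer:inverseiso}, which together with Problem~\ref{problem:KLiso}-type reductions may let one normalize $[u,v]$ before the case analysis. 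The expected outcome is that each ``extra'' essential minor beyond the complete-intersection count corresponds to one of the eleven exceptional intervals or to an instance of family (1) or (2), completing the match with the conjectured list.
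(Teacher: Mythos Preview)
This statement is a \emph{conjecture} in the paper, not a theorem: the paper explicitly says ``Determination of the non-lci locus of $X_w$ remains open'' and presents Conjecture~\ref{conj:lcilocus} as unproved except in the special cases where $v$ is $123$-avoiding or $w$ is covexillary (attributed to Gao--Gao). There is therefore no proof in the paper to compare your proposal against.

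As for the proposal itself, it is a reasonable research outline but not a proof, and you acknowledge this yourself with phrases like ``one hopes for'', ``should help'', and ``expected outcome''. Two specific remarks. First, the necessity direction you sketch is not new: the paper already records (just above the conjecture) that \cite{lci} verifies each listed interval is non-lci, so this half is done. Second, your sufficiency strategy---defining an essential-set analogue for $Z^{(u)}$ and showing the avoidance hypothesis forces exactly the right number of minimal generators---is precisely the open problem the paper flags: it says a minimal generating set for general $I_{v,w}$ ``should resolve Conjecture~\ref{conj:lcilocus}'', and this is only known in the two special cases above. The obstacles you identify (specialization of minors in $Z^{(u)}$ creating or destroying essential generators, and the need to control this combinatorially) are exactly why the conjecture is open; neither the subword-complex description nor the $w\mapsto w^{-1}$ symmetry is currently known to close that gap. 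So your plan is sensible as a research direction, but it does not constitute a proof and should not be presented as one.
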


\begin{exercise}\label{exer:minimal}
(a) Prove that the Schubert determinantal ideal $I_w$ is generated by all
$r_{st}^w+1$ minors of $Z_{st}$ where $(s,t)\in E(w)$ (Fulton's essential set, as defined
in Exercise~\ref{exer:rotheess}(a).)

(b) Give an example to show that the generators from (a) are indeed fewer than the full set
of generators from Definition~\ref{def:Schubdetidealabc}.

(c) Show by example that the set of generators of $I_w$ from (a) is not minimal in general, 
i.e., $I_w$ is generated by a strictly smaller subset.
\end{exercise}

The next exercise is the content of \cite{GaoYong}:
 
\begin{hardexercise}\label{exer:hardminimal}
(a) Determine a minimal list of generators for $I_w$.

(b) Use a solution to (a) to classify which matrix Schubert varieties are lci.
\end{hardexercise}

A solution to Exercise~\ref{exer:hardminimal}(a) for the more general
case of Kazhdan-Lusztig ideals should resolve Conjecture~\ref{conj:lcilocus}.

\begin{problem}
Give a minimal list of generators for $I_{v,w}$.
\end{problem}

S.~Gao--Y.~Gao (private communication) have reported solutions to this problem (and proved Conjecture~\ref{conj:lcilocus}) in the special cases where $v$ is $123$-avoiding and where $w$ is covexillary. 

\subsection{Gorensteinness} 

For those readers who have the requisite preparation in commutative algebra, recall:
\begin{definition} A local ring $(R,\mathfrak{m}, \Bbbk)$ is 
\emph{Cohen-Macaulay} if ${\rm Ext}_R^i(\Bbbk, R)=0$ for $i\leq\dim R$.
It is \emph{Gorenstein} if, in addition, $\dim_{\Bbbk}
{\rm Ext}_R^{\dim R}(\Bbbk, R)=1$.
\end{definition} 
\begin{definition}
 A variety is Cohen-Macaulay
(respectively Gorenstein) if the local ring at every point is
Cohen-Macaulay (respectively Gorenstein). 
\end{definition}
A reference for both definitions is  \cite{Bruns-Herzog}.

  All Schubert varieties are Cohen-Macaulay. We can determine (or take as a definition of) Gorensteinness of Schubert varieties using the Kazhdan-Lusztig ideals as follows. 
\begin{proposition}\label{prop:GorFeb11bbb}
$X_w$ is Gorenstein at $E^{(v)}_\bullet$ if the last Betti number of the minimal free resolution of
$R/I_{v,w}$ (computed with respect to the positive grading from the natural $T$-action) is $1$.
\end{proposition}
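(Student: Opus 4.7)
The plan is to reduce the claim to a standard characterization of the Gorenstein property for Cohen-Macaulay graded quotients of polynomial rings, and then to exploit that Schubert varieties are known to be Cohen-Macaulay. First I would invoke Lemma~\ref{lemma:KL} to identify the affine open neighborhood $v\Omega^\circ_{id} \cap X_w$ of $E^{(v)}_\bullet$ with $\mathcal{N}_{v,w} \times \mathbb{A}^{\ell(v)}$. Since being Gorenstein is a local property that is both preserved and detected under products with affine space, this reduces the claim to showing that the local ring of $\mathcal{N}_{v,w} = {\rm Spec}(R/I_{v,w})$ at the origin $\mathbf{0}$ is Gorenstein, where $R = \mathbb{C}[\mathbf{z}^{(v)}]$.

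Next I would bring in the positive multigrading of Proposition~\ref{lemma:funnygrading}. Because the grading is positive, the only degree-zero elements are constants, so the irrelevant (graded maximal) ideal is exactly the defining ideal of $\mathbf{0}$, and the origin is the unique $T$-fixed point of $\mathcal{N}_{v,w}$. Hence ``graded Gorenstein at the irrelevant ideal'' is equivalent to ``Gorenstein at the origin,'' so one may work with the graded data only.

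Now I would use Cohen-Macaulayness: since Schubert varieties are Cohen-Macaulay, so is $\mathcal{N}_{v,w}$ at every point (Cohen-Macaulayness is preserved under affine factors and restriction to open subsets), and in particular $R/I_{v,w}$ is a Cohen-Macaulay graded quotient. The graded Auslander--Buchsbaum formula then equates the length of the minimal free resolution with $c := \mathrm{codim}(I_{v,w})$. Dualizing the minimal resolution via $\mathrm{Hom}_R(-,R)$ identifies the canonical module $\omega_{R/I_{v,w}}$, up to the twist by $-\sum_{ij}\deg z_{ij}$, with $\mathrm{Ext}^c_R(R/I_{v,w}, R)$; by minimality, the number of generators of $\omega_{R/I_{v,w}}$ is exactly the final Betti number $\beta_c$. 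A standard result (see Bruns--Herzog) says that a Cohen-Macaulay positively graded algebra is Gorenstein at its irrelevant ideal precisely when its canonical module is cyclic, that is, free of rank one up to degree shift; under Cohen-Macaulayness this is exactly the condition $\beta_c = 1$. Combining with the first paragraph gives the conclusion.

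The main obstacle I anticipate is the book-keeping: correctly identifying the ``last Betti number'' in the proposition's statement with $\beta_c$ for $c = \mathrm{codim}(I_{v,w})$, and tracking the correct twist that converts $\mathrm{Ext}^c_R(R/I_{v,w}, R)$ into the canonical module in the multigraded setting. Neither is hard, but the grading lives in $\mathbb{Z}^n$ rather than $\mathbb{Z}_{\geq 0}^n$, so some care is required to apply the positive-grading formalism of \cite{Miller.Sturmfels} after a suitable change of variables that certifies positivity.
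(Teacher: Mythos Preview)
Your argument is correct and is exactly the standard one. Note, however, that the paper does not actually supply a proof of this proposition: the surrounding text says one ``can determine (or take as a definition of) Gorensteinness'' via this criterion and points to \cite{Bruns-Herzog} for the underlying commutative algebra. Your sketch is precisely the justification the paper leaves implicit---reduce to $\mathcal{N}_{v,w}$ via Lemma~\ref{lemma:KL}, use that Schubert varieties (hence Kazhdan--Lusztig varieties) are Cohen--Macaulay, and then invoke the standard fact that a Cohen--Macaulay positively graded quotient of a polynomial ring is Gorenstein if and only if the last Betti number in its minimal free resolution equals $1$.
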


\begin{example}[{\cite[Example~2.8]{WY:governing}}]
$X_{42513}\subseteq {\rm Flags}({\mathbb C}^5)$ is not Gorenstein (the reader can verify this, for example,
using Proposition~\ref{prop:GorFeb11bbb} and a solution to Exercise~\ref{macexernnn}).
Now, $42513$ embeds into ${\underline{526}}4{\underline{13}}$ at the indicated positions. Yet
$X_{526413}\subseteq {\rm Flags}({\mathbb C}^6)$ is Gorenstein. The conclusion is that 
it is impossible to characterize Gorenstein Schubert varieties purely using classical pattern avoidance. 
\end{example}

\begin{theorem}[\cite{WY:Goren, WY:governing}]
\label{thm:Gor_char}
The Schubert variety $X_w$ is Gorenstein if and only if $w$
avoids the following intervals
\begin{enumerate}
\item $\big[(a+1)a\cdots 1(a+b+2)\cdots(a+2), \ \ \ (a+b+2)(a+1)a\cdots 2 (a+b+1)\cdots(a+2) 1\big]$ for all integers $a,b>0$ such that $a\neq b$.
\item $\big[(a+1)\cdots 1 (a+3) (a+2) (a+b+4)\cdots(a+4),\ \ \ (a+3)(a+1)\cdots 2(a+b+4)1(a+b+3)\cdots(a+4)(a+2)\big]$ for all integers $a,b\geq0$, with either $a>0$ or $b>0$.
\end{enumerate}
Equivalently, $X_w$ is Gorenstein if and only if the generic points of
its singular locus are.
\end{theorem}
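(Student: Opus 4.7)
The plan is to combine the universal interval-pattern-avoidance framework of Theorem~\ref{thm:poset} with explicit computations of Kazhdan--Lusztig varieties at the generic singular points of $X_w$, together with the classical Gorenstein criterion for cones over Segre varieties. First I would verify that ``$X_w$ is non-Gorenstein at $E^{(v)}_\bullet$'' is an SSP in the sense of Definition~\ref{def:SSPname}: the Gorenstein locus of any Noetherian scheme is open, and a local ring $R$ is Gorenstein iff $R[x]$ is, so non-Gorensteinness is closed and stable under products with affine space. By Theorem~\ref{thm:poset} and Corollary~\ref{cor:brpa}, the set of intervals $\{[u,v]\}$ at which Gorensteinness fails is an upper order ideal $\mathcal{I}_{\mathrm{nGor}}\subseteq\mathfrak{S}$ under $\prec_I$; the task reduces to identifying its minimal elements.

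Since every Schubert variety is Cohen--Macaulay (as recalled in the introduction) and every smooth point is automatically Gorenstein, any minimal $[u,v]\in\mathcal{I}_{\mathrm{nGor}}$ must have $X_v$ singular at $E^{(u)}_\bullet$. By Theorem~\ref{thm:singularlocus} together with the monotonicity of $\prec_I$, every such minimal interval pattern-embeds into one of the three families generating $\mathcal{I}_{\mathrm{singular}}$. Hence it suffices to check, family by family, whether the Kazhdan--Lusztig variety ${\mathcal N}_{v,w}$ attached to each generating interval is Gorenstein, and to verify that the listed intervals in the statement are exactly those for which it fails.

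The computational heart of the argument is as follows. For the interval in family~(1) with parameters $(a,b)$, a direct inspection of the rank matrix $\widetilde{R}_w$ on the submatrix $Z^{(v)}$ picked out by Fulton's essential set shows that $I_{v,w}$ is the ideal of $2\times 2$ minors of a generic $(a{+}1)\times(b{+}1)$ matrix; hence ${\mathcal N}_{v,w}$ is the affine cone over the Segre embedding ${\mathbb P}^a\times {\mathbb P}^b\hookrightarrow {\mathbb P}^{(a+1)(b+1)-1}$, which is a classical Gorenstein variety precisely when $a=b$ (visible from the Eagon--Northcott resolution and the resulting form of the canonical module). This accounts for the exclusion ``$a\ne b$'' in part~(1). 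For family~(2), an analogous but more intricate rank analysis identifies ${\mathcal N}_{v,w}$ with a determinantal variety whose canonical module can be computed either via the minimal free resolution of Theorem~\ref{Hilbtheorem123} or, combinatorially, via the subword complex produced by the Gr\"obner degeneration of Theorem~\ref{thm:Grobnerthm}; the upshot is that it is Gorenstein if and only if $a=b=0$. For family~(3), ${\mathcal N}_{v,w}$ is (after stripping off the affine factor ${\mathbb A}^{\ell(x)}$ of Lemma~\ref{lemma:KL}) a complete intersection—in fact a hypersurface, because the essential set contributes a single minor—hence automatically Gorenstein, explaining why family~(3) does not appear in the statement at all.

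The main obstacle I expect is twofold: first, the canonical module calculation for family~(2), where the absence of a Segre-type symmetry forces one to work directly with the resolution (or with a multigraded Hilbert-series duality via Theorem~\ref{thm:specializationa}); second, the minimality and completeness claim, which requires showing that every proper $\prec_I$-sub-interval of the listed intervals corresponds to a Gorenstein situation. This is handled by combining the interval-embedding isomorphism ${\mathcal N}_{u,v}\cong {\mathcal N}_{\Phi(u),w}$ of Exercise~\ref{exa:proof_example}(b) with the Gorenstein computations for the ``boundary'' cases ($a=b$ in (1), $a=b=0$ in (2), and all of (3)), using the positive multigrading of Proposition~\ref{lemma:funnygrading}. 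The final ``equivalently'' clause is then immediate: the minimal generators of $\mathcal{I}_{\mathrm{nGor}}$ are obtained from those of $\mathcal{I}_{\mathrm{singular}}$ in Theorem~\ref{thm:singularlocus} by deleting exactly the intervals at which the generic singular point of $X_w$ is Gorenstein, so $X_w$ is Gorenstein if and only if Gorensteinness holds at the generic points of its singular locus.
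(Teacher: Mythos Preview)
Your proposal contains a genuine logical gap that makes the argument circular. You correctly observe that a minimal element of $\mathcal{I}_{\mathrm{nGor}}$ lies in $\mathcal{I}_{\mathrm{singular}}$ and hence sits above some generator from Theorem~\ref{thm:singularlocus}. But from this you cannot conclude that the minimal elements of $\mathcal{I}_{\mathrm{nGor}}$ \emph{are} among the generators of $\mathcal{I}_{\mathrm{singular}}$: nothing in the SSP formalism prevents a minimal non-Gorenstein interval from sitting strictly above a family-(3) generator, or above a family-(1) generator with $a=b$. Your final paragraph asserts exactly this (``the minimal generators of $\mathcal{I}_{\mathrm{nGor}}$ are obtained from those of $\mathcal{I}_{\mathrm{singular}}$ by deleting\ldots''), but that statement is the content of Conjecture~\ref{conj:nonGorlocus}, which the paper flags as \emph{open}. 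In other words, you have proved one direction (containing a listed interval $\Rightarrow$ non-Gorenstein) and the easy half of the ``equivalently'' clause, but the hard implication---avoidance of the listed intervals $\Rightarrow$ $X_w$ Gorenstein---is precisely what your reduction does not reach. The ``equivalently'' clause is an input needed to make your strategy work, not an output.

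The route taken in the cited references \cite{WY:Goren, WY:governing} is different in kind: it does not pass through Kazhdan--Lusztig varieties at singular points. Instead one uses that $X_w$ is normal and Cohen--Macaulay, so Gorensteinness is equivalent to the canonical divisor being Cartier; the canonical divisor of $X_w$ is computed explicitly as a combination of boundary Schubert divisors, and the Cartier condition becomes an explicit linear system on the essential set $E(w)$. Solving that system yields a combinatorial criterion on $w$, which is then translated into the interval-avoidance language. That argument proves both directions simultaneously and does not rely on knowing the non-Gorenstein locus. A smaller issue: your claim that family~(3) yields a hypersurface is incorrect already for $a=2$, where $\mathcal{N}_{154326,\,564312}$ has codimension~$2$ in $\Omega_{154326}^\circ$; the identification of the family-(1) singularity with a Segre cone is, however, morally correct (cf.~\cite{Cortez, manivel2}).
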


Observe that Theorem~\ref{thm:Gor_char}(1) is nearly Theorem~\ref{thm:singularlocus}(1) 
 and Theorem~\ref{thm:Gor_char}(2) is nearly Theorem~\ref{thm:singularlocus}(2).
 
\begin{conjecture}
\label{conj:nonGorlocus}
The order ideal $\mathcal{I}_{\mathrm{not \ Gorenstein}}$ in the poset
$({\mathfrak S},\prec_I)$.
 is generated by the families
(1) and (2) from Theorem~\ref{thm:Gor_char}.
\end{conjecture}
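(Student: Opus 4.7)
The plan is to apply Theorem~\ref{thm:poset} to the property $\mathcal{P}=$ ``not Gorenstein.'' This is SSP: the Gorenstein locus of any variety is open \cite{Bruns-Herzog}, and Gorensteinness is preserved under products with affine space, hence so is its failure. Therefore $\mathcal{I}_{\mathrm{not \ Gorenstein}}$ is an upper order ideal in $(\mathfrak{S}, \prec_I)$, and the task reduces to showing its minimal elements are exactly the intervals in families (1) and (2).

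That each $[u, v]$ in families (1) and (2) lies in $\mathcal{I}_{\mathrm{not \ Gorenstein}}$ is essentially extracted from the proof of Theorem~\ref{thm:Gor_char} in \cite{WY:Goren}: both endpoints of each family interval are cograssmannian with a common ascent position, so $\mathcal{N}_{u,v}$ reduces (up to an irrelevant affine factor) to a Schubert variety in a Grassmannian, and the hypotheses $a\neq b$ in family (1) and $a+b\geq 1$ in family (2) are precisely those forcing non-Gorensteinness in that Grassmannian setting, where Gorensteinness is governed by a rectangle-type condition on the associated partition.

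For the converse, suppose $X_w$ is not Gorenstein at $E^{(v)}_\bullet$. The non-Gorenstein locus of $X_w$ is closed and $B$-stable, hence of the form $\bigcup_i X_{u_i}$ for some $u_i\leq w$. Fix $u_*\geq v$ indexing an irreducible component containing $E^{(v)}_\bullet$; then $X_w$ is \emph{generically} non-Gorenstein along $X_{u_*}$. The goal is to show $[u_*, w]$ interval pattern embeds an interval from family (1) or (2). Since $v\leq u_*$, the second clause defining $\prec_I$ gives $[u_*, w] \prec_I [v, w]$, and by transitivity of $\prec_I$ this would show $[v, w]$ lies above the asserted family interval in $\prec_I$, finishing the argument.

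The main obstacle is the final combinatorial step: identifying the minimal patterns at generic non-Gorenstein components as precisely families (1) and (2). One attack is to degenerate $\mathcal{N}_{u_*, w}$ via Theorem~\ref{thm:Grobnerthm} to a Stanley--Reisner scheme and apply Reisner's criterion together with its Gorenstein refinement (via local cohomology of the resulting subword complex) to pin down the obstructing face configurations, matching them to the two families. Alternatively, one could attempt a direct computation of the last graded Betti number of the minimal free resolution of $I_{u_*, w}$, in the spirit of Problem~\ref{prob:betti}, extracting a combinatorial criterion for when it equals one. In either route, the cograssmannian structure of the family intervals is essential, since it reduces the critical combinatorics to the Grassmannian Schubert case already resolved in \cite{WY:Goren}.
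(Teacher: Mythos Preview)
This statement is a \emph{conjecture} in the paper, not a theorem; the paper offers no proof and explicitly notes it is open beyond $n\leq 6$ and the cograssmannian case handled by Perrin. So there is no paper proof to compare against.

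Your proposal is not a proof either, and you say so yourself: the paragraph beginning ``The main obstacle'' concedes that the crucial step---showing that every minimal element of $\mathcal{I}_{\mathrm{not\ Gorenstein}}$ is one of the listed intervals---is unresolved, and then offers two vague strategies without carrying either out. That step is the entire content of the conjecture. Everything preceding it (that $\mathcal{I}_{\mathrm{not\ Gorenstein}}$ is an upper order ideal via Theorem~\ref{thm:poset}, and that the family intervals belong to it) is already in the paper as background motivating the conjecture, not progress toward it.

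There is also a concrete issue with your reduction. You pass to a maximal point $u_*$ of the \emph{non-Gorenstein} locus and then want $[u_*,w]$ to embed a family interval. But the family intervals in Theorem~\ref{thm:Gor_char} are indexed by generic points of the \emph{singular} locus (compare Theorem~\ref{thm:singularlocus}), and there is no a priori reason the irreducible components of the non-Gorenstein locus coincide with components of the singular locus---the non-Gorenstein locus could have components strictly smaller. The paper's Exercise~\ref{exer:usemanivelcortez} records the cleaner equivalent formulation: the conjecture holds if and only if non-Gorensteinness at $E^{(v)}_\bullet$ forces $v\leq v'$ for some $v'$ indexing a component of ${\rm sing}(X_w)$ at which $X_w$ is already non-Gorenstein. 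This is what requires a genuine argument, and neither of your proposed attacks (Stanley--Reisner combinatorics of the subword complex, or direct computation of the last Betti number) is known to do it in general.
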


\begin{hardexercise} \label{exer:usemanivelcortez}
Prove that Conjecture~\ref{conj:nonGorlocus} is equivalent to the following claim: $E^{(v)}_\bullet\in X_w$ is non-Gorenstein
if and only if $v\leq v'$ where $X_{v'}$ is an irreducible component of ${\rm sing}(X_w)$ and $X_w$ is non-Gorenstein at $E^{(v')}_\bullet$. 
\end{hardexercise}

Conjecture~\ref{conj:nonGorlocus} is true for $n\leq 6$. N. Perrin proved~\cite{Perrin} that
Conjecture~\ref{conj:nonGorlocus} holds on the class of Grassmannian
Schubert varieties.  These are Schubert varieties $X_w$ where $w$ is cograssmannian.  His theorem also includes the case of minuscule
Schubert varieties in other types (see discussion of these varieties in Section~\ref{sec:7}).
More recently, work
of S.~Da Silva \cite{DaSilva} describes a ``Gorensteinization process'' (a partial resolution of
singularities) for a Schubert variety that might prove helpful towards Conjecture~\ref{conj:nonGorlocus}.

\begin{exercise}\label{exer:Feb11ppp}
In general, one has the following containments of properties of local rings:
\begin{center}
regular $\subset$ lci $\subset$ Gorenstein $\subset$ Cohen-Macaulay.
\end{center}

(a) Show that Theorem~\ref{thm:singularlocus}, Conjecture~\ref{conj:nonGorlocus}, and 
Conjecture~\ref{conj:lcilocus} are consistent with these containments.

(b) Give infinitely many examples of Gorenstein rings that are not lci.
\end{exercise}

\begin{definition}
The \emph{Cohen-Macaulay type} of $X_w$ at $E^{(v)}_\bullet$ is the Betti number $\beta_n$ for
$R/I_{v,w}$. The \emph{Cohen-Macaulay type} of $X_w$ is its Cohen-Macaulay type at $E^{(id)}_\bullet$.
\end{definition}

\begin{problem}\label{CMtypeprob}
Characterize which $X_w$ has CM-type $\geq k$.
\end{problem}

If the Cohen-Macaulay type is $1$ the Schubert variety is Gorenstein, hence the case $k=2$ in the 
Problem~\ref{CMtypeprob} is asking for a characterization of non-Gorensteinness, which is answered by
Theorem~\ref{thm:Gor_char}.

\begin{problem}
Determine the locus of points in $X_w$ at which the CM-type is $\geq k$.
\end{problem}

Similarly, the case $k=2$ is conjecturally answered by Conjecture~\ref{conj:nonGorlocus}.
\subsection{Factoriality}

\begin{definition}
A variety is \emph{factorial} if the local ring at every
point is a unique factorization domain.  
\end{definition}

Resolving a conjecture from \cite{WY:governing}, M.~Bousquet-M\'elou-S.~Butler \cite{Butler} 
characterized factorial Schubert varieties by the following theorem:
 
\begin{theorem}[\cite{Butler}]
The Schubert variety $X_w$ is factorial if and only if $w$ classically avoids
$4231$ and interval avoids $[3142,3412]$.
\end{theorem}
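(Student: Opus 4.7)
The plan is to apply the interval pattern avoidance framework of Theorem~\ref{thm:poset} and Corollary~\ref{cor:brpa}. First I verify that factoriality is an SSP (Definition~\ref{def:SSPname}): Schubert varieties are normal, so the non-factorial locus is closed in each $X_w$, and $R$ is a UFD if and only if $R[y]$ is (Gauss's lemma). Hence the set of intervals $[u,v]$ for which $X_v$ is non-factorial at $E^{(u)}_\bullet$ is closed downward in $(\mathfrak{S}, \prec_I)$, and by Corollary~\ref{cor:brpa} the permutations $w$ with $X_w$ globally factorial are characterized by avoidance of a set of minimal bad intervals. Via the Billey-Braden pattern map (cited in Section~\ref{sec:5}), classical $4231$-containment in $w$ is equivalent to interval containment of $[1234, 4231]$ in $w$; thus the claim amounts to identifying exactly the two minimal bad intervals $[1234, 4231]$ and $[3142, 3412]$.

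For the necessity direction, I would compute these two base-case Kazhdan-Lusztig varieties directly from Definition~\ref{def:KLideal}. A direct calculation reduces the generators of $I_{id, 4231}$ to a single $2\times 2$ minor of the form $z_{11}z_{22} - z_{12}z_{21}$ in six indeterminates, giving $\mathcal{N}_{id, 4231} \cong V(xy-zw) \times \mathbb{A}^2 \subseteq \mathbb{A}^6$. The factor $V(xy-zw)$ is the ordinary three-fold double point, the archetypal Gorenstein non-UFD local ring (its divisor class group is $\mathbb{Z}$), so $X_{4231}$ is non-factorial at $E^{(id)}_\bullet$. A parallel computation of $I_{3142, 3412}$ yields a singular Kazhdan-Lusztig variety (of expected dimension $\ell(3412)-\ell(3142)=1$) whose local ring at the origin is non-UFD. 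Theorem~\ref{thm:poset} then propagates these base-case non-factorialities to any $w$ interval-containing the relevant pattern.

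The main obstacle is the sufficiency direction: assuming $w$ avoids both patterns, one must verify that $X_w$ is factorial at every point. By Lemma~\ref{lemma:KL} and the reduction to $T$-fixed points (Section~\ref{sec:groupcosetsvvv}), this amounts to showing every $\mathcal{N}_{v,w}$ is factorial at the origin. A natural plan is to identify the irreducible Weil divisors of $X_w$ with the Schubert subvarieties $X_{ws_i}$ for the simple reflections $s_i$ satisfying $ws_i \lessdot w$, and then to show under the avoidance hypothesis that each is Cartier, equivalently that the natural map $\operatorname{Pic}(X_w) \to \operatorname{Cl}(X_w)$ is an isomorphism. The difficulty lies in executing this uniformly: one must show the avoidance conditions rule out not only the two minimal bad intervals but every possible non-factorial Kazhdan-Lusztig variety, which requires a combinatorial classification of the singularity types surviving the hypothesis. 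This global step is the substantive content of Bousquet-M\'elou and Butler's proof.
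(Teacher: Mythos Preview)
Your necessity argument contains a concrete error. You assert that $\mathcal{N}_{3142,3412}$ is a singular one-dimensional Kazhdan--Lusztig variety whose local ring is not a UFD. But $\ell(3412)-\ell(3142)=1$, and Schubert varieties are normal; a normal one-dimensional variety is smooth. Indeed, by Theorem~\ref{thm:singularlocus} (family (2) with $a=b=0$), $\mathrm{sing}(X_{3412})=X_{1324}$, and since $3142\not\leq 1324$, the point $E^{(3142)}_\bullet$ is a \emph{smooth} point of $X_{3412}$. So $\mathcal{N}_{3142,3412}\cong\mathbb{A}^1$, and no non-factoriality can be witnessed there.

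This reflects a deeper misreading of the statement. The interval $[3142,3412]$ is \emph{not} one of the minimal generators of $\mathcal{I}_{\text{not factorial}}$ in the sense of Corollary~\ref{cor:brpa}; Conjecture~\ref{conj:nonfactorialconj} says those generators form two infinite families (essentially families (1) and (2) of Theorem~\ref{thm:singularlocus}). The Bousquet-M\'elou--Butler theorem is a \emph{combinatorial reformulation}: it shows that avoiding those infinite families is equivalent to the finite condition of classically avoiding $4231$ and interval-avoiding $[3142,3412]$. So necessity requires showing that any $w$ containing $4231$ or interval-containing $[3142,3412]$ must contain one of the genuine bad intervals, and then checking non-factoriality at \emph{those} base cases---not at $E^{(3142)}_\bullet\in X_{3412}$. (A smaller slip: classical avoidance of $4231$ is interval avoidance of $[4231,4231]$, not $[1234,4231]$.) Your sufficiency sketch, via $\mathrm{Pic}(X_w)\to\mathrm{Cl}(X_w)$, is in the right spirit; as the paper notes in Section~\ref{sec:7}, the actual argument in \cite{WY:Goren,Butler} reduces this to solving a system of linear equations governing when the boundary divisors are Cartier.
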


The considerations that led to Conjecture~\ref{conj:nonGorlocus} also
lead to the following conjecture.

\begin{conjecture}[{\cite[Conjecture~6.14]{WY:governing}}] \label{conj:nonfactorialconj} The order ideal
$\mathcal{I}_{\mathrm{not \ factorial}}$ in the poset
$({\mathfrak S},\prec_I)$.
 is generated by the following
families: \begin{enumerate} \item $\big[(a+1)a\cdots
1(a+b+2)\cdots(a+2), \ \ \ (a+b+2)(a+1)a\cdots 2 (a+b+1)\cdots(a+2)
1\big]$ for all integers $a,b>0$. \item $\big[(a+1)\cdots 1 (a+3) (a+2)
(a+b+4)\cdots(a+4), \ \ \ (a+3)(a+1)\cdots
2(a+b+4)1(a+b+3)\cdots(a+4)(a+2)\big]$ for all integers $a,b\geq0$.
\end{enumerate} \end{conjecture}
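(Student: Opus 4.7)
The plan is to follow the two-step template used for the Gorenstein classification (Theorem~\ref{thm:Gor_char}). One first verifies that factoriality is a semicontinuously stable property so that Theorem~\ref{thm:poset} applies and $\mathcal{I}_{\mathrm{not\ factorial}}$ is indeed an upper order ideal under $\prec_I$: Schubert varieties are normal and Cohen--Macaulay, the non-factorial locus of such a variety is closed, and factoriality is preserved under products with affine space (since $R[t]$ is a UFD whenever $R$ is). With this in place, two directions must be established: (i) every interval $[u,v_0]$ listed in families (1) and (2) produces non-factoriality, meaning $\mathcal{N}_{u,v_0}$ is not factorial at the origin, and (ii) these are the only minimal generators, i.e., if $[v,w]$ lies above none of these intervals under $\prec_I$, then $E^{(v)}_\bullet \in X_w$ is factorial.

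For step (i), most cases are immediate from Theorem~\ref{thm:Gor_char}: family (1) with $a\neq b$ and family (2) with $a+b>0$ produce non-Gorenstein singularities, and by Murthy's theorem a Cohen--Macaulay factorial local ring is Gorenstein, so these points are non-factorial. The new cases are family (1) with $a=b$ and family (2) with $a=b=0$. The minimal instances are $[2143,4231]$ and $[1324,3412]$, each conjecturally Gorenstein but non-factorial. For these one writes down $I_{u,v_0}$ explicitly (they are small determinantal ideals) and exhibits a codimension-one Weil divisor on $\mathcal{N}_{u,v_0}$ that is not Cartier; in each of these two minimal cases the resulting affine variety is a classical non-factorial singularity (with nontrivial divisor class group). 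For general $a=b$ in family (1), $\mathcal{N}_{u,v_0}$ should be (a cone over) a ladder determinantal variety whose class group is computable by the methods of Bruns--Vetter or Conca, and the task becomes producing an explicit non-principal Weil divisor.

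For step (ii), the hard direction, the plan is to combine Butler's global classification \cite{Butler} with local Kazhdan--Lusztig information. Butler's theorem disposes of the case where $w$ classically avoids $4231$ and interval-avoids $[3142,3412]$: then $X_w$ is everywhere factorial and there is nothing to prove. Otherwise $X_w$ is globally non-factorial but may still be locally factorial at $E^{(v)}_\bullet$, so one must control $\mathrm{Cl}(\mathcal{O}_{X_w, E^{(v)}})$. A first strategy is to use the Gr\"obner degeneration of Theorem~\ref{thm:Grobnerthm} and the subword complex structure to enumerate codimension-one primes on $\mathcal{N}_{v,w}$ and test which are principal. A more geometric alternative is to construct a ``factorializing'' small partial resolution in the spirit of Da Silva's Gorensteinization \cite{DaSilva} or Perrin's approach \cite{Perrin}, arranged so that the exceptional divisors are indexed by interval embeddings of families (1) and (2); this would make the generator list of the conjecture manifest.

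The main obstacle is step (ii): there is no known uniform formula for the local divisor class group of a Schubert variety, and class-group computations for Kazhdan--Lusztig varieties grow in complexity as the parameters $a,b$ increase, since the corresponding singularities have unbounded embedding dimension. Low-rank verification (say $n\le 7$) via \texttt{Macaulay2} should be mechanical, but a general proof almost certainly requires a systematic geometric input such as a semi-small or small resolution of $X_w$ whose exceptional locus is explicitly controlled by the minimal generators in the conjecture.
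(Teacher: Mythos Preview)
The statement you are addressing is labeled a \emph{conjecture} in the paper, and immediately after stating it the authors write: ``We do not have an explicit method to check Conjecture~\ref{conj:nonfactorialconj}.'' There is therefore no proof in the paper to compare your proposal against; this is a genuinely open problem, not a result the paper establishes.

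Your outline is a reasonable sketch of how one might attack the conjecture, and you are appropriately candid that step~(ii) is the real obstacle. A few remarks on the content itself. First, your claim that factoriality is an SSP deserves more care: openness of the factorial locus is not automatic for arbitrary varieties, and while it is expected to hold here (Schubert varieties are normal and excellent, and the authors implicitly treat factoriality within the $\prec_I$ framework), you should either supply a reference or an argument rather than assert it. Second, for step~(i) in the $a=b$ and $a=b=0$ cases, note that the paper's Theorem on factorial Schubert varieties (due to Bousquet-M\'elou--Butler) already tells you these $X_{v_0}$ are globally non-factorial, so what remains is to pin down non-factoriality specifically at the point $E^{(u)}_\bullet$; your plan to compute the local class group of the small Kazhdan--Lusztig varieties directly is sensible for the minimal cases, but you have not actually carried it out. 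Third, your proposed approaches to step~(ii) --- enumerating codimension-one primes via the subword complex, or constructing a factorializing partial resolution --- are both speculative: neither is known to work, and the paper's authors explicitly flag the absence of any computational check as the difficulty.

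In short: this is not a proof, nor does the paper contain one; it is an outline of a program whose hard part remains open.
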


\begin{exercise}
Since regular local rings are unique factorization domains, every smooth variety is factorial.
Furthermore, all unique factorization domains are Gorenstein. Prove Conjecture~\ref{conj:nonfactorialconj}
is consistent with Conjecture~\ref{conj:nonGorlocus}.
\end{exercise}

We do not have an explicit method to check Conjecture~\ref{conj:nonfactorialconj}.

\subsection{Tangent cones and Hilbert-Samuel multiplicity}\label{sec:HSmult}

Informally, the \emph{degree} of a projective variety $X\subseteq {\mathbb P}^n$ is the number of points of
intersection of a ``generic'' plane of dimension equal to the ${\rm codim}(X)$. 

\begin{definition}
The \emph{Hilbert polynomial} $h_S$ of a standard graded ring 
$S={\mathbb C}[x_1,\ldots,x_n]/I(V)$ 
is the unique polynomial such that for $k$ sufficiently large 
\[h_S(k)=\dim_{\mathbb C} S_k\] 
where $S=\bigoplus_k S_k$ is the graded decomposition of $S$.
\end{definition}

The Hilbert polynomial exists, and moreover $\deg h_S=\dim(X)$. Also if one writes
\[h_S=a_d\frac{k^d}{d!}+\text{lower degree terms},\]
the leading coefficient $a_d$ is a positive integer. 

\begin{definition}
The \emph{degree} of $X$ is $a_d$.
\end{definition}

Let $R={\mathbb C}[x_1,\ldots,x_n]$ and let $I\subset R$ be an ideal defining an affine variety $X=V(I)$.

\begin{definition}
The \emph{projectivized tangent cone} $TC_p(X)$ at $p={\bf 0}$ to $X=V(I)$ is the projective variety of ${\mathbb P}^{n-1}$ defined by the (standard) homogeneous ideal $\overline I$
generated by the lowest degree \emph{forms} of every $f\in I$.
\end{definition}

\begin{definition}
The \emph{Hilbert-Samuel multiplicity} of $X=V(I)$ at $p={\bf 0}$ is the degree of $TC_p(X)$ in ${\mathbb P}^{n-1}$. 
\end{definition}

Now suppose $X$ is an arbitrary variety and $p\in X$. We define the projectivized tangent cone and the 
Hilbert-Samuel multiplicity of $X$ at $p$ by first choosing an affine open neighborhood around $p$ with coordinates
such that $p$ becomes ${\bf 0}$ and using the above definitions. If $R$ is the coordinate ring of the ambient affine space that the affine open neighborhood sits in, then we define ${\sf PS}_{p,X}(t)$ to be the graded Hilbert series of
$R/{\overline I}$. It is true that
\begin{equation}
\label{eqn:firstPS}
{\sf PS}_{p,X}(t)=\frac{H_{p,X}(t)}{(1-t)^{\dim(X)}},
\end{equation}
where $H_{p,X}(t)\in {\mathbb Z}[t]$ and moreover, $H_{p,Y}(1)$ is the Hilbert-Samuel multiplicity of $X$ at $p$; see, e.g., \cite[Theorem~5.4.15]{Kreuzer}.

$H_{p,X}(1)=1$ if and only if $p$ is a smooth point of $X$.  Larger values of $H_{p,X}(1)$ measure ``how 
singular'' $X$ is at $p$.

\begin{definition}
The \emph{Kazhdan-Lusztig tangent cone ideal} 
\[{\overline I_{v,w}}\subset R= {\mathbb C}[{\bf z}^{(v)}]\] 
is the ideal generated by all lowest degree terms of any $f\in I_{v,w}$.
\end{definition}

This can be explicitly computed using Gr\"obner bases \cite[Section~6.5]{WY:governing}. In Macaulay2, one may use the
function {\tt TangentCone}. Let $H_{v,w}(t)=H_{E^{(v)}_{\bullet},X_w}(t)$.

\begin{proposition}[{\cite[Section~6.5]{WY:governing}}]
$H_{v,w}(1)=\text{\ the degree of $V({\overline I}_{v,w})$}$.
\end{proposition}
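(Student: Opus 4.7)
The plan is to combine three ingredients: (i) the patch decomposition of Lemma~\ref{lemma:KL}, which reduces the local study of $X_w$ at $E^{(v)}_\bullet$ to the local study of $\mathcal{N}_{v,w}$ at the origin $\mathbf{0}$; (ii) the identification of the tangent cone to $\mathcal{N}_{v,w}$ at $\mathbf{0}$ with $V(\overline{I}_{v,w})$; and (iii) the classical identity that $H_{p,X}(1)$ equals the degree of the projectivized tangent cone (as is already stated just before Definition of $\overline{I}_{v,w}$, following~\cite[Theorem~5.4.15]{Kreuzer}).

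First I would use Lemma~\ref{lemma:KL} to identify the patch $v\Omega_{id}^{\circ}\cap X_w$ with $\mathcal{N}_{v,w}\times\mathbb{A}^{\ell(v)}$, with $E^{(v)}_\bullet$ corresponding to $(\mathbf{0},\mathbf{0})$. Since the tangent cone to a product of affine varieties at a product point is the product of the tangent cones, and a smooth factor $\mathbb{A}^{\ell(v)}$ contributes a polynomial ring tensor factor, the series ${\sf PS}_{p,X_w}(t)$ acquires exactly an extra $(1-t)^{-\ell(v)}$ relative to the corresponding series for $\mathcal{N}_{v,w}$ at $\mathbf{0}$. Combining this with $\dim X_w = \dim\mathcal{N}_{v,w} + \ell(v)$ and the defining formula \eqref{eqn:firstPS}, the numerator polynomial $H_{v,w}(t)$ is unchanged. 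Thus it suffices to prove the equality with $X_w$ replaced by $\mathcal{N}_{v,w}$ at $\mathbf{0}$.

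Next I would observe that since $I_{v,w}$ is radical (Theorem~\ref{thm:Grobnerthm}), $\mathcal{N}_{v,w}$ is cut out scheme-theoretically inside $\mathrm{Spec}\,\mathbb{C}[\mathbf{z}^{(v)}]$ by $I_{v,w}$, with $\mathbf{0}$ corresponding to the ideal generated by the variables $\mathbf{z}^{(v)}$. By the very definition of $\overline{I}_{v,w}$ as the ideal generated by the lowest-degree forms of elements of $I_{v,w}$, the tangent cone of $\mathcal{N}_{v,w}$ at $\mathbf{0}$ is precisely $V(\overline{I}_{v,w})$, and the graded Hilbert series of $R/\overline{I}_{v,w}$ coincides with ${\sf PS}_{\mathbf{0},\mathcal{N}_{v,w}}(t)$.

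Finally, setting $t=1$ in the identity \eqref{eqn:firstPS} applied to $R/\overline{I}_{v,w}$ yields the Hilbert–Samuel multiplicity as $H_{v,w}(1)$, which by the classical degree interpretation equals the degree of the projectivization $\mathrm{Proj}(R/\overline{I}_{v,w})\subseteq \mathbb{P}^{\ell(w_0v)-1}$. Because $\overline{I}_{v,w}$ is homogeneous, $V(\overline{I}_{v,w})$ is an affine cone whose degree agrees with the degree of its projectivization, completing the proof. The main obstacle is the first step: one must verify carefully that passing from $X_w\cap v\Omega_{id}^\circ$ to $\mathcal{N}_{v,w}$ really does leave $H_{v,w}(t)$ invariant — equivalently, that the product with the smooth factor $\mathbb{A}^{\ell(v)}$ enlarges only the denominator $(1-t)^{\dim}$ and not the numerator — but this follows from the compatibility of tangent cones with products and the standard Hilbert series computation for a polynomial ring.
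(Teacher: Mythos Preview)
Your argument is correct and is essentially the natural unwinding of the definitions together with Lemma~\ref{lemma:KL}. The paper itself does not supply a proof of this proposition here---it simply records the statement with a citation to \cite[Section~6.5]{WY:governing}---so there is nothing substantive to compare. The only point worth emphasizing is that your reduction step is really a statement about the intrinsic associated graded ring: $\mathrm{gr}_{\mathfrak m}\mathcal{O}_{X_w,E^{(v)}_\bullet}\cong \mathrm{gr}_{\mathfrak m}\mathcal{O}_{\mathcal{N}_{v,w},\mathbf 0}\otimes_{\mathbb C}\mathbb C[z_1,\ldots,z_{\ell(v)}]$, which follows from Lemma~\ref{lemma:KL}; this makes the ``numerator is unchanged'' claim immediate without having to worry about choices of affine chart.
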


\begin{problem}\label{multprob1ggg}
Give a combinatorial counting rule for $H_{v,w}(1)$.\footnote{A similar sounding, but different, solved problem
is to determine the degree of a Schubert variety in its projective embeddings; see \cite{Postnikov.Stanley} and the references therein. More generally, one can think about the Hilbert polynomial of such an embedding.}
\end{problem}

More generally, L.~Li and the second author conjecture \cite{LiYong1, LiYong2} that:

\begin{conjecture}\label{multprob2hhh}
\label{conj:tangentconeCM}
$R/{\overline I_{v,w}}$ is Cohen-Macaulay. Hence $H_{v,w}(t)\in {\mathbb N}[t]$.
\end{conjecture}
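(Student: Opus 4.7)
The plan is to prove $R/\overline{I_{v,w}}$ is Cohen-Macaulay by exhibiting a flat Gr\"obner degeneration to a Stanley-Reisner ring of a shellable simplicial complex; the Hilbert series assertion $H_{v,w}(t)\in\mathbb{N}[t]$ will then follow from the well-known nonnegativity of the $h$-vector of a Cohen-Macaulay standard graded algebra.

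First I would determine an explicit generating set for $\overline{I_{v,w}}$. Starting from the Gr\"obner basis of Theorem~\ref{thm:Grobnerthm}, the lowest-degree components of the defining minors from Definition~\ref{def:KLideal} supply natural candidate generators, but one must verify that no ``hidden'' lower-degree relations arise from syzygies among inhomogeneous minors such as \eqref{eqn:inhomog}. When $I_{v,w}$ is standard homogeneous (Problem~\ref{prob:standardhomogeneous}), $\overline{I_{v,w}} = I_{v,w}$ and the conjecture reduces to the classical Cohen-Macaulayness of Schubert varieties. In general, I would seek a term order $\prec'$ producing a squarefree monomial ${\rm init}_{\prec'}(\overline{I_{v,w}})$ and identify the associated simplicial complex $\Delta^{\circ}_{v,w}$ via the Stanley-Reisner correspondence, with the expectation that $\Delta^\circ_{v,w}$ is a ``tangent-cone subword complex'' whose facets are indexed by a refinement of the pipe-dream combinatorics on $D(v)$ studied in \cite{Knutson.Miller:annals, WY:Grobner}. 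The covexillary case is a natural testing ground thanks to the antidiagonal Gr\"obner bases of \cite{KMY, Klein.Weigandt} addressing Problem~\ref{prob:diagGrob}, where pipe-dream combinatorics is especially transparent.

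The main obstacle will be proving shellability (or at least vertex decomposability) of $\Delta^{\circ}_{v,w}$, since passage to the tangent cone can disrupt the precise dictionary between facets and reduced expressions underlying classical subword-complex shelling arguments. Should a uniform combinatorial shelling prove elusive, a complementary strategy is to exhibit a system of parameters $f_1,\ldots,f_d$ in the local ring $(R/I_{v,w})_{\mathfrak{m}}$ whose initial forms constitute a regular sequence in $R/\overline{I_{v,w}}$; by a Valabrega-Valla-type criterion, together with the known Cohen-Macaulayness of $R/I_{v,w}$, this would suffice. Natural candidates for the $f_i$ come from ``linear-like'' elements in the weight-space decomposition induced by the $T$-action used in Proposition~\ref{lemma:funnygrading}, whose lowest-degree behavior is controlled. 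Throughout, I would use {\tt Macaulay2} extensions of Exercise~\ref{macexernnn} to verify the conjecture (and calibrate the expected combinatorial structure of $\Delta^\circ_{v,w}$) in the range $n\leq 6$.
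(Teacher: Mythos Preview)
The statement you are addressing is a \emph{conjecture} in the paper, not a theorem; the paper gives no proof. It attributes the conjecture to L.~Li and the second author \cite{LiYong1,LiYong2} and records only that it is resolved when $w$ is covexillary, by relating a Gr\"obner degeneration of $\overline{I_{v,w}}$ (with respect to a nonstandard term order) to initial schemes of matrix Schubert varieties from \cite{KMY}. There is therefore nothing in the paper to compare your argument against.

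Your proposal is not a proof but an outline of a research program, and your own hedging makes clear you recognize this. The obstacles you name are precisely the open problems: an explicit generating set for $\overline{I_{v,w}}$ is not known in general (lowest-degree forms of a Gr\"obner basis for $I_{v,w}$ need not generate $\overline{I_{v,w}}$, as you note); no term order is known to produce a squarefree initial ideal for $\overline{I_{v,w}}$ outside special cases; and the putative complex $\Delta^{\circ}_{v,w}$ is undefined, so its shellability is not even a well-posed question yet. The Valabrega--Valla alternative likewise requires exhibiting explicit elements whose initial forms form a regular sequence, and no such construction is offered. Your instinct that the covexillary case is the right testing ground matches what is actually proved in \cite{LiYong1,LiYong2}, but extending beyond that is genuinely open.
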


\begin{problem}\label{multprob3iii}
Assuming $H_{v,w}(t)\in {\mathbb N}[t]$ (\emph{e.g.}, Conjecture~\ref{conj:tangentconeCM} holds), give
a combinatorial counting rule for the coefficients of $H_{v,w}(q)$.
\end{problem}

\begin{center}
\fbox{\begin{minipage}{30em}
If $I_{v,w}$ is standard homogeneous then
${\overline I_{v,w}}=I_{v,w}$ (taking the tangent cone does nothing).  Therefore the multiplicity
problems when $I_{v,w}$ is standard homogeneous can be deduced easily from results about the Hilbert series of $R/I_{v,w}$ (Section~\ref{sec:4}).
   \end{minipage}}
\end{center}

This observation about multiplicity was pointed out to us by A.~Knutson who noted this is true whenever $v$ is $123$-avoiding.  By Exercise~\ref{exer:rotheess}, this
this includes the cases that $v$ is  cograssmannian. This corresponds to the cases of multiplicities of
Schubert varieties in $Gr_{k}({\mathbb C}^n)$ of for which there is earlier work by V.~Lakshmibai--J.~Weyman~\cite{LW90}, J.~Rosenthal--A.~Zelevinsky~\cite{Rosen.Zel}, V.~Kreiman-V.~Lakshmibai~\cite{Kreiman.Lakshmibai}, and C.~Krattenthaler~\cite{Krat}. 

In \cite{LiYong1,LiYong2}, Problem~\ref{multprob1ggg}, Conjecture~\ref{multprob2hhh}, and
Problem~\ref{multprob3iii} were resolved in the case that $w$ is covexillary. This case is interesting since $I_{v,w}$ is not homogeneous with respect to the standard grading. The argument proceeds by relating the Gr\"obner degeneration of ${\overline I_{v,w}}$ with respect to an unusual term order to the initial scheme of a matrix Schubert variety, as studied in \cite{KMY}.  Recently, D.~Anderson-T.~Ikeda-M.~Jeon-R.~Kawago
\cite{Jeon.mult} gave a new proof of these results.  Their proof geometrically explains why cograssmannian combinatorics
appears in the covexillary formulae of \cite{LiYong1,LiYong2}. 

Let ${\overline {\mathcal N}_{v,w}}={\rm Spec}(R/{ \overline I_{v,w}})$. D.~Fuchs--A.~Kirillov--S.~Morier-Genoud-V.~Ovsienko \cite{Fuchs.Kirillov} raised ($v=id$ case of) the following question:
\begin{problem} 
Which ${\overline {\mathcal N}_{v,w}}$ are isomorphic?
\end{problem}

\begin{exercise}~\label{exer:easyinverse}
Prove that ${\overline{\mathcal N}_{v,w}}\cong {\overline{\mathcal N}_{v^{-1},w^{-1}}}$. 
\end{exercise}

\begin{problem}
\label{problem:multgeqk}
Determine the generators of the ideal ${\mathcal I}_{\text{multiplicity $\geq k$}}$ in the poset
$({\mathfrak S},\prec_I)$.
\end{problem}

For $k=2$, Problem~\ref{problem:multgeqk} is solved by Theorem~\ref{thm:singularlocus} since
$X_w$ is smooth at $E^{(v)}_\bullet$ if and only if $H_{v,w}(1)=1$. A related result is that of  
K.~Meek \cite{Meek} who determines which Schubert varieties $X_w$ have multiplicity $\geq 3$ 
at $E^{(id)}_\bullet$ (and thus globally on $X_w$).

\subsection{Castelnuovo-Mumford regularity}

\emph{Castelnuovo-Mumford regularity} is a measure, in some sense, of the ``complexity'' of a graded
module.\footnote{This can be made precise in the sense that regularity gives bounds on the size of a Gr\"obner basis and hence on the algorithmic time complexity of various decision problems involving a module.} 
Suppose $R={\mathbb C}[x_1,\ldots,x_N]$ and $S=R/I$ for a standard graded homogeneous ideal $I\subseteq R$. As in Section~\ref{sec:4.3}, $S=R/I$ has a minimal free resolution
\[0\to \bigoplus_j R(-j)^{\beta_{i,j}}\to \bigoplus_j R(-j)^{\beta_{i-1,j}}\to \cdots \to \bigoplus_j R(-j)^{\beta_{0,j}}\to S\to 0.\]
Here $i\leq N$ and $R(-j)$ is the free $R$-module where degrees of $R$ are shifted by $j$ (Definition~\ref{def:degshift}).
 
\begin{definition}
${\rm Reg}(S):=\max\{j-i: \beta_{i,j}(S)\neq 0\}$.
\end{definition}
By Exercise~\ref{exer:HilbexerJun26},
\[{\sf PS}_{S}(t)=\frac{{\mathcal K}_{S}(t)}{(1-t)^{N}},\]
where ${\mathcal K}(S,t)\in {\mathbb Z}[t]$. If $S$ is Cohen-Macaulay, then 
\begin{equation}
\label{eqn:easyreg2}
{\rm Reg}(S)=\deg {\mathcal K}(S,t)-{\rm ht}_R(I),
\end{equation}
where ${\rm ht}_R(I)$ is the \emph{height} of $I$ in $R$. If $V(I)$ is equidimensional (which is true for Cohen-Macaulay modules) then 
${\rm ht}_R(I)$ is the codimension of $V(I)\subseteq {\mathbb C}^N$.

Work of J.~Rajchgot, Y.~Ren, C.~Robichaux, A.~St. Dizier, and A.~Weigandt \cite{RRRSW}
initiated the study of regularity of matrix Schubert varieties and linked it to the degrees of Grothendieck
polynomials. The results of \emph{loc.~cit.} determined the regularity for the case $w$ is cograssmannian. J.~Rajchgot-C.~Robichaux-A.~Weigandt \cite{RRW} extended their formula to
covexillary permutations as well as certain Kazhdan-Lusztig ideals $I_{v,w}$ that are homogeneous
with respect to the standard grading. They also correct (and prove) a regularity conjecture of 
M.~Kummini-V.~Lakshmibai-P. Sastry-C.~S.~Seshadri \cite{Kummini}. 
O.~Pechenik-D.~Speyer-A.~Weigandt \cite{PSW} have proved a formula for the regularity for any matrix Schubert variety.

The following problem was formulated in \cite{Y:CM}:
\begin{problem}
\label{regprob}
Determine a combinatorial rule for ${\rm Reg}(R/{\overline I_{v,w}})$. 
\end{problem}

This would determine the regularity of the tangent cone of $X_w$ at $E^{(v)}_\bullet$. A solution to Problem~\ref{regprob} would generalize \cite{RRRSW, RRW, PSW}.

If Conjecture~\ref{conj:tangentconeCM} holds, then one could apply (\ref{eqn:easyreg2}) and 
conclude

\begin{conjecture}
\label{conj:reg=deg}
${\rm Reg}(R/I_{v,w}')=\deg H_{v,w}$.
\end{conjecture}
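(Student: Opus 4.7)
The plan is to derive Conjecture~\ref{conj:reg=deg} as a formal consequence of Conjecture~\ref{conj:tangentconeCM}, the definition of $H_{v,w}$ via~(\ref{eqn:firstPS}), and the Cohen-Macaulay regularity identity~(\ref{eqn:easyreg2}). Set $S := R/\overline{I_{v,w}}$, where $R = \mathbb{C}[{\bf z}^{(v)}]$. Since $\overline{I_{v,w}}$ is generated by lowest-degree forms, it is a standard homogeneous ideal and $S$ is standard graded. Assume henceforth (i.e., grant Conjecture~\ref{conj:tangentconeCM}) that $S$ is Cohen-Macaulay.

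First, I would pin down the relevant numerical invariants. The affine tangent cone $V(\overline{I_{v,w}})$ shares its dimension with ${\mathcal N}_{v,w}$, so the Krull dimension of $S$ is $\ell(w) - \ell(v)$. Since $R$ has $\binom{n}{2} - \ell(v)$ variables, this gives
\[{\rm ht}_R(\overline{I_{v,w}}) = \left(\binom{n}{2} - \ell(v)\right) - (\ell(w) - \ell(v)) = \ell(w_0 w).\]

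Next, I would relate the $K$-polynomial $\mathcal{K}(S;t)$ to $H_{v,w}(t)$. Writing the Hilbert series of $S$ in both its unreduced form and the reduced form of~(\ref{eqn:firstPS}) and equating yields
\[\frac{\mathcal{K}(S;t)}{(1-t)^{\binom{n}{2}-\ell(v)}} \;=\; {\sf Hilb}(S;t) \;=\; \frac{H_{v,w}(t)}{(1-t)^{\ell(w)-\ell(v)}},\]
so $\mathcal{K}(S;t) = H_{v,w}(t)\,(1-t)^{\ell(w_0w)}$. Because $H_{v,w}(1)$ is the Hilbert-Samuel multiplicity of $X_w$ at $E^{(v)}_\bullet$, it is a strictly positive integer; hence $(1-t)$ does not divide $H_{v,w}(t)$, and $\deg \mathcal{K}(S;t) = \deg H_{v,w}(t) + \ell(w_0 w)$. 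Substituting into~(\ref{eqn:easyreg2}) gives
\[{\rm Reg}(S) = \deg \mathcal{K}(S;t) - {\rm ht}_R(\overline{I_{v,w}}) = \deg H_{v,w}(t),\]
as desired.

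The whole derivation is bookkeeping once Cohen-Macaulayness is in hand; the main obstacle is Conjecture~\ref{conj:tangentconeCM} itself, since it is precisely the CM hypothesis that licenses the regularity identity~(\ref{eqn:easyreg2}). This mirrors the strategy of the proofs in the cograssmannian, covexillary, and other solved cases treated in \cite{RRRSW, RRW, LiYong1, LiYong2}: there Cohen-Macaulayness of the tangent cone is established (often by identifying $\overline{I_{v,w}}$ with an initial scheme or Kazhdan-Lusztig ideal already known to be Cohen-Macaulay via \cite{WY:Grobner} or \cite{KMY}), after which the regularity identity drops out of the calculation above. The genuinely hard problem—which is not attempted here—is to prove Cohen-Macaulayness of $R/\overline{I_{v,w}}$ in the full generality of Conjecture~\ref{conj:tangentconeCM}.
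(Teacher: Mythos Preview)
Your proposal is correct and is exactly the derivation the paper has in mind: the text immediately preceding the conjecture says ``If Conjecture~\ref{conj:tangentconeCM} holds, then one could apply~(\ref{eqn:easyreg2}) and conclude [Conjecture~\ref{conj:reg=deg}],'' and you have simply written out that implication in full, correctly computing the height as $\ell(w_0w)$, relating $\mathcal{K}(S;t)$ to $H_{v,w}(t)$, and using $H_{v,w}(1)>0$ to rule out a spurious $(1-t)$ factor. There is nothing to add; the paper offers no independent proof of the conjecture, only this conditional reduction.
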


Using the covexillary tableau formula of \cite{RRW}, together with
work with L.~Li \cite{LiYong1,LiYong2}, Problem~\ref{regprob} and
Conjecture~\ref{conj:reg=deg} are resolved for $w$ covexillary \cite{Y:CM}.

\subsection{Kazhdan-Lusztig polynomials} \label{sec:KL}

Our final measure is of a different flavor than the others we have considered in that it is a topological rather than algebraic measure and cannot be easily calculated from the Kazhdan--Lusztig ideal, although in principle it is determined by it. Yet, it is of such significance that we would be remiss to not discuss it.

 For each pair of permutations 
$v,w\in S_n$ with $v\leq w$ there is the
\emph{Kazhdan-Lusztig polynomial} $P_{v,w}(q)\in {\mathbb
N}[q]$. These polynomials first appeared in representation theory, rather than Schubert geometry, in terms of a
certain deformation of the group algebra of ${\mathfrak S}_n$ (and more generally that of any Coxeter
group). We will follow \cite{Humphreys:Coxeter}; a similar (but not identical) treatment can also be found in \cite{bjorner.brenti}.

\begin{definition}\label{def:Heckedef}
The \emph{Hecke algebra} ${\mathcal H}_{n-1}$ (of type $A_{n-1}$) is a free module over the ring
${\mathbb Z}[q, q^{-1}]$ with basis given by
$\{T_w: w\in W\}$. It has relations
\[T_{s_i}T_w=
\begin{cases}
T_{s_iw} & \text{if $\ell(s_i w)>\ell(w)$}\\
(q-1)T_w+qT_{s_i w} & \text{otherwise.}
\end{cases}\]
\end{definition}

\begin{exercise}\label{exer:Rpoly}
(a) Prove that if $s_{i_1}\cdots s_{i_{\ell}}$ is a reduced word for $w$ (in the sense of Exercise~\ref{exer:additionalexerSymmetric}) then $T_w=T_{s_{i_1}}\cdots T_{s_{i_{\ell}}}$.

(b) Prove $T_{id}$ is the identity "$1$" of ${\mathcal H}_{n-1}$.

(c) Prove that $(T_{s_i})^{-1}=\frac{1}{q}(T_s-(q-1)T_{id})$.

(d) Conclude there exist polynomials $R_{x,w}\in {\mathbb Z}[q]$ of degree $\ell(w)-\ell(x)$ such that
\[(T_{w^{-1}})^{-1}=(-1)^{\ell(w)}q^{\ell(w)}\sum_{x\leq w} R_{x,w}(q)T_x,\]
where ``$\leq$'' refers to Bruhat order. (These are called the \emph{Kazhdan-Lusztig $R$-polynomials}.)
\end{exercise}

Define the \emph{bar map} $\bar{ \cdot }:{\mathcal H}_{n-1}\to {\mathcal H}_{n-1}$ by sending $q\mapsto q^{-1}$ and  sending $T_{s_i}$ to $T_{s_{i}}^{-1}$. Obtain a semilinear map by extending bar additively.
By Exercise~\ref{exer:Rpoly}, $\overline{\overline{T_{s_i}}}=T_{s_i}$. Hence bar is a ring involution given the
following computational exercise (or see \cite[Section~7.7]{Humphreys:Coxeter}):

\begin{exercise}
Show $\overline{T_s T_w}=\overline{T_s}\  \overline{T_w}$.
\end{exercise}

\begin{exercise}\label{exer:expandedring}
(a) By manipulating the expression in Exercise~\ref{exer:Rpoly}(c), can you construct an element $C_{s_i}$ such that $\overline{C_{s_i}}=C_{s_i}$?

(b) Now repeat (a) after replacing ${\mathbb Z}[q,q^{-1}]$ with ${\mathbb Z}[q^{\frac{1}{2}},q^{-\frac{1}{2}}]$.
\end{exercise}

It is because of Exercise~\ref{exer:expandedring} that the Hecke algebra is defined often using ${\mathbb Z}[q^{\frac{1}{2}},q^{-\frac{1}{2}}]$ rather than ${\mathbb Z}[q,q^{-1}]$. Neither choice is ``standard'' and we will
assume the former convention below. Now we arrive at the
following theorem, the heart of Kazhdan-Lusztig theory:

\begin{theorem}[\cite{Kazhdan.Lusztig}]
\label{thm:KLthm}
For each $w\in S_n$ there is a unique element $C_w$ such that $\overline{C_w}=C_w$ and if one writes
\[C_w=(-1)^{\ell(w)}\left(q^{\frac{-1}{2}}\right)^{\ell(w)}\sum_{v\leq w} (-1)^{\ell(x)}q^{-\ell(x)}P_{v,w}(q) T_v\]
then 
\begin{itemize}
\item[(i)] $P_{v,w}(q)\in {\mathbb Z}[q]$
\item[(ii)] $P_{w,w}(q)=1$
\item[(iii)] $\deg P_{v,w}(q)\leq \frac{\ell(w)-\ell(v)-1}{2}$ if $v<w$; and
\item[(iv)] $P_{v,w}(q)=0$ if $v\not\leq w$.
\end{itemize}
\end{theorem}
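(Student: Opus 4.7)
The plan is to proceed by induction on $\ell(w)$, following the original argument of \cite{Kazhdan.Lusztig}. The base case $w=\mathrm{id}$ is immediate: $C_{\mathrm{id}}:=T_{\mathrm{id}}$ is bar-invariant and has the required form with $P_{\mathrm{id},\mathrm{id}}=1$. For the inductive step the two key tools are: (a) the bar map is a ring involution (by the exercise following Definition~\ref{def:Heckedef}), so products of bar-invariant elements are bar-invariant; and (b) the coefficients in the $T$-basis of a bar-invariant element satisfy an explicit recursion driven by the $R$-polynomials of Exercise~\ref{exer:Rpoly}(d), which have degree exactly $\ell(w)-\ell(v)$.

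For \emph{uniqueness}, suppose $C_w$ and $C'_w$ both satisfy (i)--(iv). Their difference $D:=C_w-C'_w$ is bar-invariant; condition (ii) matches at $v=w$, so $D$ is supported on $\{T_v:v<w\}$; and by (iii) each coefficient lies strictly below the maximal allowed degree after clearing the prefactors. Reading the $R$-polynomial recursion from the top down in Bruhat order, this degree gap forces every coefficient of $D$ to vanish, so $D=0$.

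For \emph{existence}, fix a simple reflection $s$ with $sw<w$ and form $h:=C_s\cdot C_{sw}$, where $C_s$ is the (easily constructed) bar-invariant element at length one supplied by Exercise~\ref{exer:expandedring}(b). Then $h$ is bar-invariant by (a), and a direct expansion using Definition~\ref{def:Heckedef} shows that its coefficient of $T_w$ is already correct, while the only potentially offending terms are those $T_v$ with $v<w$ and $sv<v$. We then set
\[C_w:=h-\sum_{\substack{v<w,\; sv<v}}\mu(v,sw)\,C_v,\]
where $\mu(v,sw)$ is the coefficient of the top allowed power of $q$ in $P_{v,sw}$, an integer by induction. Each correction is bar-invariant by the inductive hypothesis, so $C_w$ is bar-invariant; the resulting polynomials $P_{v,w}$ are automatically in $\mathbb{Z}[q]$ and satisfy (ii) and (iv), and property (iii) is the point that requires checking.

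The main obstacle is precisely that verification: one must show that the specific integer multiple $\mu(v,sw)$ simultaneously annihilates the bad coefficients in $h$ and introduces no new violations at smaller $v$. This is the content of the foundational recursion
\[P_{v,w}=q^{1-c}P_{sv,sw}+q^{c}P_{v,sw}-\sum_{\substack{v\le z<sw,\; sz<z}}\mu(z,sw)\,q^{(\ell(w)-\ell(z))/2}P_{v,z},\]
with $c\in\{0,1\}$ according to whether $sv<v$. Establishing the compatibility of this recursion with the degree bound in (iii), via the explicit multiplication table in Definition~\ref{def:Heckedef}, is the technical heart of the argument and simultaneously produces the practical algorithm for computing the $P_{v,w}$.
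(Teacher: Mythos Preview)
Your outline is the standard and correct Kazhdan--Lusztig argument; however, note that the paper does not actually prove this theorem. It is stated with attribution to \cite{Kazhdan.Lusztig}, and the surrounding text merely remarks that ``there is an algorithm for computing the $P_{v,w}$ assuming knowledge of the $R$-polynomials'' and points to the explicit recursion in the original paper. So there is no ``paper's own proof'' to compare against: the paper treats this as background, following \cite{Humphreys:Coxeter}.

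That said, your sketch is faithful to the original proof in \cite{Kazhdan.Lusztig} (and to Humphreys' exposition): induction on $\ell(w)$, uniqueness via the degree gap and the $R$-polynomial identity $\overline{P_{v,w}}=\sum_{v\le x\le w}\overline{R_{v,x}}\,P_{x,w}$ read from the top of the interval down, and existence via $C_sC_{sw}$ corrected by the $\mu$-terms. One small point worth tightening: in the uniqueness step you should make explicit that bar-invariance of $D$ gives, for each maximal $v$ in the support, an equation of the form $q^{\ell(w)-\ell(v)}\overline{d_v(q^{-1})}=d_v(q)$ (since $R_{v,v}=1$), and the strict degree bound from (iii) then forces $d_v=0$; your phrase ``degree gap forces every coefficient of $D$ to vanish'' is correct but elides exactly this. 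Otherwise the recursion you wrote down is the right one and the verification of (iii) is, as you say, the technical heart.
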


\begin{exercise}
(a) Confirm that the $C_{s_i}$ in Theorem~\ref{thm:KLthm} agrees with Exercise~\ref{exer:expandedring}(b).

(b) Prove that $\{C_w:w\in S_n\}$ form a ${\mathbb Z}[q^{\frac{1}{2}},q^{\frac{-1}{2}}]$-basis of ${\mathcal H}_{n-1}$. 

(c) There is a simpler looking formulation of Theorem~\ref{thm:KLthm}. Prove there is a basis $C_w^{\prime}$
of ${\mathcal H}_{n-1}$ (over ${\mathbb Z}[q^{\frac{1}{2}},q^{-\frac{1}{2}}]$) such that
\[C_{w}^\prime=\left(q^{\frac{-1}{2}}\right)^{\ell(w)}\sum_{v\leq w} P_{v,w}(q) T_v.\]
\end{exercise}

\begin{definition}
For $v,w\in S_n$, the \emph{Kazhdan-Lusztig polynomial} is the polynomial $P_{v,w}(q)$ from Theorem~\ref{thm:KLthm}.
\end{definition}
There is an algorithm for computing the $P_{v,w}$ assuming knowledge of the $R$-polynomials. There is also
an explicit recursive definition for $P_{v,w}(q)$ in \cite{Kazhdan.Lusztig}. 

For symmetric groups (or more generally, Weyl groups of finite type), it is 
a surprise from the above presentation that in fact $P_{v,w}\in {\mathbb N}[q]$. Also surprising is that this positivity has a geometric/topological explanation in terms of Schubert varieties. In particular,
 $P_{v,w}(q)$ is the Poincar\'{e} polynomial for the
local intersection cohomology of $X_w$ at $E^{(v)}_\bullet$ \cite{Kazhdan.Lusztig.Schubert}. Thus, $P_{v,w}(q)$ measures the
singularity of $E^{(v)}_\bullet\in X_w$.\footnote{The same proof works for any Weyl group $W$ where one has an underlying
Schubert geometry. However, for general Coxeter groups $W$ this is not true; relatively recently, positivity 
has been established algebraically using \emph{Soergel bimodules} \cite{Elias.Williamson}.} 
 
\begin{definition}\label{rationalsmooth}
$E^{(v)}_\bullet$ is \emph{rationally smooth} in $X_w$ if
$P_{v,w}(q)=1$.
\end{definition} 
Rational smoothness and the ordinary notion of smoothness in algebraic
geometry do not coincide. However they do for the symmetric groups (and for $ADE$ types). 
Thus 
\[P_{v,w}(q)=1 \iff \text{ $X_w$ is smooth at $E^{(v)}_\bullet$.}\]  
Dimensions of local intersection
cohomology groups are not in general (upper or lower) semicontinuous. However,
Irving~\cite{Irving} proved using a representation theoretic interpretation of Kazhdan-Lusztig polynomials that they behave in an upper semicontinuous manner on Schubert
varieties.  (A later more geometric proof appears in~\cite{braden.macpherson}.) 
Therefore, the coefficients of Kazhdan-Lusztig polynomials, can be analyzed
under the rubric of interval pattern avoidance. 

\begin{problem} 
Let $\mathcal{P}_{k,\ell}$ to be the property ``the coefficient of
$q^\ell$ in $P_{u,v}(q)$ is at least $k$'' (or equivalently
``$\dim_{\mathbb C} IH^\ell_{E^{(v)}_\bullet}(X_v)\geq k$'').  Determine
$\mathcal{I}_{\mathcal{P}_{k,\ell}}$ for various values of $k$
and~$\ell$.
\end{problem}

This is an longstanding, well-known open problem:

\begin{problem}
Give a combinatorial counting rule for the coefficients of
$P_{v,w}(q)$.
\end{problem}

To date, such rules are only known in a limited number of cases in type $A$ such as
\cite{BW-321hex, Las-vex}. The latter handles the covexillary case and the subcase where $w$ is Grassmannian.
Outside of type $A$ and similar results
are known for vexillary cases\footnote{In this case there are some analogies between the
Kazhdan-Lusztig polynomials and the $H$-polynomials; see the discussion in \cite{LiYong2}.}
and Schubert varieties of minuscule $G/P$'s. V.~V.~Deodhar's \emph{masks} give
a framework to approach the problem; see \cite{billey.warrington, Jones.Woo}.\footnote{The structure coefficients expanding products of the $C_w'$ basis of ${\mathcal H}_{n-1}$ in the same basis are also positive Laurent polynomials
in $q^{\pm\frac{1}{2}}$ \cite{Springer}.  Another important problem is to find a combinatorial rule that explains this.}

Unlike the $R$-polynomials of Exercise~\ref{exer:Rpoly}(d), the degree of $P_{v,w}(q)$ is not easily determined. It was for some time conjectured that the coefficient of the term of highest \emph{possible}
degree (namely, $\frac{\ell(w)-\ell(v)-1}{2}$) is either $0$ or $1$.  This is the ``0-1 Conjecture''.
\begin{theorem}[\cite{McWa}]\label{the01thing}
The 0-1 Conjecture is true for $n\leq 9$ but false for $n=10$. Specifically the top coefficient when 
$w= 10 \  578293461$ and $v=  54321\ 10\ 98764$ is $4$.
\end{theorem}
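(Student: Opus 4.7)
The plan is to establish both halves computationally, exploiting the recursive definition of Kazhdan--Lusztig polynomials and the interval-pattern invariance from Exercise~\ref{exa:proof_example}. Recall that if $s$ is a simple reflection with $sw<w$, and we set $c=0$ if $sv<v$ and $c=1$ otherwise, then
\[
P_{v,w}(q) = q^{1-c} P_{sv,sw}(q) + q^{c} P_{v,sw}(q) - \sum_{\substack{v \leq z < sw \\ sz<z}} \mu(z,sw)\, q^{(\ell(w)-\ell(z))/2}\, P_{v,z}(q),
\]
where $\mu(z,sw)$ is the coefficient of $q^{(\ell(sw)-\ell(z)-1)/2}$ in $P_{z,sw}$. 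Given $(v,w)$, this reduces the computation of $P_{v,w}$ to the analogous problem on strictly shorter pairs, so in principle it determines every $P_{v,w}(q)$ with $v,w \in S_n$.

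For the positive direction, I would verify by exhaustive computer calculation that for every pair $v \leq w$ in $S_n$ with $n \leq 9$, the coefficient of $q^{(\ell(w)-\ell(v)-1)/2}$ in $P_{v,w}(q)$ lies in $\{0,1\}$. The naive size of the state space is prohibitive, so first I would cut it down using the principle that if $[v,w] \prec_I [v',w']$ is an interval pattern embedding, then $\mathcal{N}_{v,w} \cong \mathcal{N}_{v',w'}$ (Exercise~\ref{exa:proof_example}), which, combined with the geometric interpretation of KL polynomials as local intersection cohomology Poincar\'e polynomials, implies $P_{v,w}(q)=P_{v',w'}(q)$. Thus it suffices to enumerate representatives of interval-isomorphism classes in $S_n$ for $n \leq 9$ and apply the recursion to each. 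One should also short-circuit cases where smoothness criteria (Theorem~\ref{thm:3412-4231}) force $P_{v,w}=1$, and cases where known formulas in covexillary or Grassmannian subclasses already guarantee the $\{0,1\}$ conclusion.

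For the negative direction, I would directly compute $P_{v,w}(q)$ for the specific pair $w=10\,578293461$ and $v=54321\,10\,98764$. Here $\ell(w)-\ell(v)$ will be odd, so there is a genuine top coefficient at degree $(\ell(w)-\ell(v)-1)/2$ to inspect. Applying the recursion above (or the $R$-polynomial formula from Exercise~\ref{exer:Rpoly}(d)) along any chain of simple reflections $s_{i_1},\ldots,s_{i_\ell}$ with each $s_{i_k}w < w$ in succession, together with all intermediate $\mu$-values, outputs the polynomial in finitely many steps. The claim is that this computation produces top coefficient $4$, refuting the conjecture in $S_{10}$.

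The main obstacle is not conceptual but computational: even after quotienting by interval-pattern isomorphism the number of distinct intervals in $S_n$ for $n=8,9$ is substantial, and the recursion for $P_{v,w}$ requires knowledge of every $\mu(z,sw)$ below, which in turn requires the top coefficient of lower polynomials—precisely the data we are studying. So the algorithm has to carry along the full polynomials, not just the top coefficients, and memoization plus careful ordering of the recursion by $\ell(w)-\ell(v)$ is essential to make the $n \leq 9$ verification feasible. Once this infrastructure is in place, the $n=10$ counterexample is a single run of the same program on one input.
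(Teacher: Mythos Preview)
The survey does not prove this statement; it is quoted from McLarnan--Warrington \cite{McWa}, so there is no in-paper argument to compare against. Your plan nonetheless matches the spirit of the original reference: the result there is computational, obtained by implementing the Kazhdan--Lusztig recursion, running it exhaustively over all pairs in $S_n$ for $n\le 9$, and then exhibiting the explicit $S_{10}$ pair.

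Two remarks. First, the interval-pattern optimization is of limited practical value here: the correction sum in the recursion requires the $\mu$-values $\mu(z,sw)$ for \emph{all} relevant $z$ below $sw$, so one must carry the full table of polynomials regardless, and McLarnan--Warrington simply do this directly with memoization. (It is also anachronistic, since \cite{WY:governing} postdates \cite{McWa}.) Second, your convention for $c$ is reversed: the standard recursion (e.g.\ \cite{Humphreys:Coxeter} or \cite{bjorner.brenti}) takes $c=1$ when $sv<v$ and $c=0$ otherwise. With your convention already the base case $P_{e,s}$ evaluates to $q$ rather than $1$, so the formula as written produces incorrect polynomials. This is a bookkeeping slip rather than a conceptual gap, but it must be corrected before any implementation.
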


\begin{center}
\fbox{\begin{minipage}{30em}
Counterexamples about Kazhdan-Lusztig polynomials may only occur for large $n$.
   \end{minipage}}
\end{center}
Recent work of  \cite{Lanini} (see Exercise~\ref{exer:Lanini}) has
strengthened Theorem~\ref{the01thing}, producing infinitely many counterexamples where $v,w$ are in the same Kazhdan-Lusztig
right cell.  

It is always true that $P_{v,w}(0)=1$. P.~Polo has proved a striking negative result:

\begin{theorem}[\cite{Polo}]
\emph{Any} polynomial
$p(q)$ with nonnegative integer  coefficients and coefficient $1$  is the Kazhdan-Lusztig polynomial for some (explicitly constructed) pair
$v,w\in S_{1+\deg(p)+p(1)}$.
\end{theorem}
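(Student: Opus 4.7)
My plan is to construct, for each admissible $p(q)$, an explicit pair $(v,w) \in S_N$ with $N = 1 + \deg p + p(1)$ and to verify $P_{v,w}(q) = p(q)$ using the Kazhdan--Lusztig recursion implicit in Theorem~\ref{thm:KLthm}. I would write $p(q) - 1$ as a multiset of monomials $q^{k_1}, \ldots, q^{k_m}$ (with $m = p(1) - 1$), associate a permutation ``gadget'' to each $q^{k_j}$, and concatenate these gadgets (with a fixed amount of overlap) into a single one-line notation for $w$, taking $v = id$. The overlap is tuned so the total length matches $N = 1 + \deg p + p(1)$: each gadget of ``type $k$'' costs about $k+1$ letters, and a single fixed letter pays for the additive $1$.

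The base cases are $p(q) = 1$ (where $v = w$ is trivial) and $p(q) = 1 + q^k$ for $k \geq 1$. For the latter I would exhibit an explicit family -- the $k=1$ case is realized by $P_{id, 4231}(q) = 1+q$, consistent with Theorem~\ref{thm:3412-4231} and $N = 1+1+2 = 4$ -- and find or construct analogues giving $1 + q^k$ for each $k \geq 1$, likely by a systematic ``stretching'' of $4231$ or by a different family altogether whose local intersection cohomology is supported in the single degree $2k$. These are the building blocks $(id, g_k) \in S_{k+3}$ that seed the induction.

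The inductive step combines a smaller pair with one more gadget. Verification would use the Kazhdan--Lusztig recursion: choose a simple reflection $s$ localized in the ``rightmost'' gadget with $ws < w$, and then
\[
P_{id,w}(q) \;=\; q^{1-c}\,P_{id,ws}(q) \;+\; q^c\,P_{s,ws}(q) \;-\; \sum_{\substack{z < ws \\ zs < z}} \mu(z,ws)\, q^{(\ell(w)-\ell(z))/2}\, P_{id,z}(q),
\]
which (if the construction is right) decomposes $P_{id, w}$ as the KL polynomial of the truncated permutation (known by induction) plus exactly one new term $q^{k_m}$ contributed by the newly attached gadget.

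The hardest step is controlling the $\mu$-coefficient sum: it can introduce unwanted cross-terms between distinct gadgets embedded in $w$, destroying the intended additive decomposition. Making this work requires choosing gadgets so that the Bruhat lower intervals $[id, z]$ appearing in the sum have KL polynomials that either vanish at the relevant $q$-degree or cancel precisely. The rigidity needed -- roughly, that the insertion of each gadget produces a ``funnel'' in Bruhat order whose intermediate lower intervals are smooth or of known type, so $\mu(z,ws)$ is nonzero only for tightly controlled $z$ -- is Polo's key insight, and reverse-engineering the correct gadget shape and insertion site is the bulk of the technical work. Once the gadgets are identified, the verification is essentially bookkeeping.
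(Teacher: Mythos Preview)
The paper does not prove this theorem; it is stated as a cited result of Polo~\cite{Polo} with no argument given. So there is nothing in the paper to compare your proposal against line by line.

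That said, what you have written is a plan, not a proof, and you are candid about this yourself. The essential gap is exactly where you put it: you do not know what the gadgets are, and you do not know how to control the $\mu$-sum in the recursion when gadgets are concatenated. Saying ``once the gadgets are identified, the verification is essentially bookkeeping'' is optimistic --- the entire content of Polo's theorem lies in finding a construction for which the interaction terms provably vanish or cancel, and the recursion alone gives no mechanism for producing such gadgets. Your base case is also incomplete: you exhibit $1+q$ via $4231$, but ``stretching $4231$'' to get $1+q^k$ in $S_{k+3}$ is not obviously possible, and you would need to produce and verify these building blocks before any induction can begin.

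If you want to turn this into an actual argument, you should consult Polo's paper directly. His construction is explicit and does not proceed by guessing gadgets and checking the recursion; rather, it leverages known closed forms for Kazhdan--Lusztig polynomials in special (Grassmannian-type) cases and assembles them in a way that the answer is forced by the structure, not discovered by cancellation.
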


The following is a consequence of 
Theorem~\ref{thm:poset} (or rather the isomorphism of Kazhdan-Lusztig varieties that proves it).  It can also be observed from the method of T.~Braden--R.~Macpherson for calculating Kazhdan--Lusztig polynomials from sheaves on moment graphs~\cite{braden.macpherson}.

\begin{proposition}[{\cite{WY:governing}}]
\label{cor:KL_cor}
Suppose $[u,v]$ and $[x,w]$ are isomorphic because of an interval
pattern embedding.  Then $P_{x,w}(q)=P_{u,v}(q)$.
\end{proposition}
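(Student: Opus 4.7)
The plan is to exploit the isomorphism of Kazhdan--Lusztig varieties provided by Exercise~\ref{exa:proof_example}(b). Specifically, under the hypothesis of the proposition we have an interval pattern embedding witnessed by some $\Phi$ with $\Phi(u) = x$ and $\Phi$ sending $v$ to $w$, and Exercise~\ref{exa:proof_example}(b) gives an isomorphism of algebraic varieties
\[
\mathcal{N}_{u,v} \;\cong\; \mathcal{N}_{x,w}.
\]
First I would combine this with Lemma~\ref{lemma:KL} of Kazhdan--Lusztig to transfer the isomorphism to affine open neighborhoods of the relevant $T$-fixed points: there is an isomorphism of analytic (equivalently étale) neighborhoods
\[
(X_v, E^{(u)}_\bullet) \times \mathbb{A}^{N_1} \;\cong\; (X_w, E^{(x)}_\bullet) \times \mathbb{A}^{N_2}
\]
for appropriate $N_1, N_2$ (coming from the $\mathbb{A}^{\ell(u)}$ and $\mathbb{A}^{\ell(x)}$ factors of Lemma~\ref{lemma:KL}).

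Next I would invoke the interpretation of $P_{u,v}(q)$ as the Poincaré polynomial of the local intersection cohomology of $X_v$ at $E^{(u)}_\bullet$ (and similarly for $P_{x,w}$), due to Kazhdan--Lusztig \cite{Kazhdan.Lusztig.Schubert}. Since local intersection cohomology is an invariant of the analytic-local structure of the variety at the point, and since it is unchanged under taking a product with affine space (the stalk of the IC sheaf is preserved up to the appropriate shift that is absorbed into the normalization), the displayed isomorphism of neighborhoods forces the equality of stalks of IC sheaves, and hence
\[
P_{u,v}(q) \;=\; P_{x,w}(q).
\]

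The main obstacle is not a deep one: it is simply making precise the statement that local intersection cohomology is insensitive to smooth factors and to analytic-local isomorphism. This is standard (see, e.g., the stratification-invariance property of IC sheaves), but one must be careful to normalize the Poincaré polynomial the same way on both sides so that the affine factors $\mathbb{A}^{N_i}$ truly contribute trivially. Once that bookkeeping is in hand, the proof reduces to citing Exercise~\ref{exa:proof_example}(b) and the Kazhdan--Lusztig topological interpretation. Alternatively, as noted in the statement, one can avoid intersection cohomology entirely and appeal to the moment-graph computation of \cite{braden.macpherson}, where the relevant data attached to the interval $[u,v]$ (the sheaf on the moment graph) is literally determined by the poset $[u,v]$ together with the edge labels, which are preserved by the interval pattern embedding.
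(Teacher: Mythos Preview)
Your proposal is correct and follows essentially the same approach the paper indicates: the paper states that the proposition is a consequence of the isomorphism of Kazhdan--Lusztig varieties underlying Theorem~\ref{thm:poset} (i.e., Exercise~\ref{exa:proof_example}(b)), combined with the interpretation of $P_{u,v}(q)$ as the local intersection cohomology Poincar\'e polynomial, and also notes the Braden--MacPherson moment-graph alternative. Your write-up simply fleshes out these two sentences into the expected argument, including the use of Lemma~\ref{lemma:KL} to pass between $\mathcal{N}_{u,v}$ and an affine neighborhood of $E^{(u)}_\bullet$ in $X_v$.
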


Lusztig's \emph{interval conjecture} is a stronger claim:

\begin{conjecture}
\label{conj:Lusztig} 
$P_{a,b}(q)=P_{v,w}(q)$ whenever the Bruhat order intervals $[a,b]$
and $[v,w]$ are isomorphic as posets.
\end{conjecture}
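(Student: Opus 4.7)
The plan is to deduce Conjecture~\ref{conj:Lusztig} from an affirmative answer to Problem~\ref{problem:KLiso}. By the Kazhdan--Lusztig theorem \cite{Kazhdan.Lusztig.Schubert}, $P_{v,w}(q)$ is the Poincar\'e polynomial of the local intersection cohomology of $X_w$ at $E^{(v)}_\bullet$; combining this with Lemma~\ref{lemma:KL} and the stability of local IH stalks under product with an affine factor, $P_{v,w}(q)$ depends only on the (pointed) isomorphism type of the Kazhdan--Lusztig variety ${\mathcal N}_{v,w}$ at $\mathbf{0}$. Hence if one could show that ${\mathcal N}_{v,w}\cong{\mathcal N}_{a,b}$ whenever $[v,w]\cong[a,b]$ as posets, which is exactly Problem~\ref{problem:KLiso}, the conjecture would follow immediately.

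My strategy for Problem~\ref{problem:KLiso} itself would proceed in two steps. First, I would reprove the pattern-embedding case (Proposition~\ref{cor:KL_cor}) in a way that makes maximally explicit how the combinatorial embedding data $\Phi$ produces the algebraic isomorphism $\Psi$ of Exercise~\ref{exa:proof_example}: the projection $\Psi$ is built from simply forgetting certain rows and columns of $Z^{(v')}$, and its construction depends on $\Phi$ only through the set of non-embedding indices, so the resulting isomorphism is essentially a projection dictated by the poset match on $[v,w]$. Second, given an abstract poset isomorphism $\varphi\colon[v,w]\to[a,b]$, I would try to decompose $\varphi$ into a composition of such pattern embeddings (and their inverses), appealing to the Dyer finiteness theorem cited in the footnote to Problem~\ref{problem:KLiso} to parameterize, for each height $k$, the finitely many isomorphism types of intervals of that height and reduce to a case-by-case verification.

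The main obstacle is that a poset isomorphism of Bruhat intervals carries strictly less information than either the $T$-equivariant structure of ${\mathcal N}_{v,w}$ or the generators of $I_{v,w}$ from Definition~\ref{def:KLideal}: the labels of the moment graph (i.e., the $T$-characters on tangent weights) can differ between isomorphic intervals, and the Gr\"obner basis description from Theorem~\ref{thm:Grobnerthm} is sensitive to the actual permutations and not just their interval. Any proof must therefore either produce an isomorphism of \emph{non-equivariant} varieties that forgets this extra data, or show that the differing $T$-characters are absorbed into the affine factor from Lemma~\ref{lemma:KL}; identifying a canonical mechanism to do so is the essential new difficulty. I expect the conjecture is very deep in full generality---the existing counterexamples to related ``0-1'' type statements in Theorem~\ref{the01thing} show that the combinatorics of Bruhat intervals can behave surprisingly for large $n$---so a realistic first target is to settle Problem~\ref{problem:KLiso} for intervals of bounded height using Dyer's theorem as a finite reduction, and to verify the conjecture in the covexillary and cograssmannian cases where the Kazhdan--Lusztig ideals are better understood via \cite{KMY, LiYong1, LiYong2}.
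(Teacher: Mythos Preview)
The statement you are attempting to prove is a \emph{conjecture}; the paper does not prove it, and indeed it remains open. What the paper says is exactly your first paragraph: Conjecture~\ref{conj:Lusztig} would follow from an affirmative answer to Problem~\ref{problem:KLiso}, via the fact that $P_{v,w}(q)$ is the local intersection cohomology Poincar\'e polynomial and hence an invariant of $\mathcal{N}_{v,w}$ at $\mathbf{0}$. So your reduction is correct and matches the paper's remark, but neither you nor the paper has a proof.

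Your proposed attack on Problem~\ref{problem:KLiso} has a genuine gap. The idea of decomposing an abstract poset isomorphism $\varphi\colon [v,w]\to [a,b]$ into a chain of interval pattern embeddings and their inverses has no supporting mechanism: interval pattern embeddings are very special poset isomorphisms (they come from literally deleting matching rows and columns of the permutation matrices), and there is no reason a general poset isomorphism of Bruhat intervals should factor through such maps. Dyer's theorem only tells you there are finitely many isomorphism types of intervals of each height; it is a counting/finiteness statement and gives you no way to \emph{construct} an isomorphism of Kazhdan--Lusztig varieties from a poset isomorphism. A ``case-by-case verification'' for each height is not a finite check in any effective sense, since the isomorphism types are not explicitly enumerated and the varieties $\mathcal{N}_{v,w}$ realizing a given type live in symmetric groups of unbounded rank. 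You yourself identify the real obstruction in your third paragraph: the moment-graph labels (equivalently the $T$-weights, or the explicit generators of $I_{v,w}$) are not poset invariants, and nothing in your plan explains how to strip them away. What you have written is a reasonable research outline, not a proof; the honest status is that Conjecture~\ref{conj:Lusztig} is open.
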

Conjecture~\ref{conj:Lusztig} is discussed
with further references in~\cite{brenti, bjorner.brenti}.  The conjecture would follow
from an affirmative answer to Problem~\ref{problem:KLiso}. In a recent development,
Artificial Intelligence has been employed to attack the conjecture; see \cite{Deepmind1, Deepmind2}.

Since Kazhdan--Lusztig polynomials are the local intersection cohomology Poincar\'e polynomials of Schubert varieties, there is a precise relationship between Kazhdan--Lusztig elements, considered as elements of the Hecke algebra, and the intersection cohomology sheaves of Schubert varieties, considered as elements of the category of perverse sheaves.  This is far beyond the scope of our survey; see, e.g., \cite{Springer, Rietsch} for further reading.

\section{Analogues for other Lie types} \label{sec:7}

\subsection{Background} Rather than working with the flag variety $GL_n/B$, we can replace the group $GL_n$ by an arbitrary complex semisimple Lie group (or indeed an arbitrary semisimple (affine) algebraic group) $G$.  While much of the general background applies, much less is known about the singularities of Schubert varieties in this more general setting.  Of particular interest are the other families of classical groups, which are $G=SO_{2n+1}$ (Type $B_n$), $G=Sp_{2n}$ (Type $C_n$), and $G=SO_{2n}$  (Type $D_n$).\footnote{The ``types'' refer to the Cartan-Killing classification of complex semisimple Lie algebras.}  (The group $SO_N$ behaves quite differently if $N$ is odd or even, so those cases are split up.)  These groups are commonly realized as subgroups of $GL_N$ (where $N=2n+1$ for type $B_n$ and $N=2n$ for types $C_n$ and $D_n$), but there are many possible choices.  We use a choice that has been standard in work on Schubert varieties since at least the work of S.~Billey--M.~Haiman \cite{Billey.Haiman}
and W.~Fulton--P.~Pragacz \cite{Fulton.Pragacz}, which have certain advantages that will be outlined below.

Rather than give general abstract definitions, we will give concrete definitions specifically for each of these families.  However, it helps to have an overall picture of the terminology to start; we refer to 
\cite[Chapter~2]{BL00}  for a summary of the generalities together with references. Given a group $G$, there is a Borel subgroup $B$ and opposite Borel subgroup $B_{-}$.  Our choice of how to realize these subgroups has the advantage that $B$ and $B_{-}$ are respectively the subgroups of upper and lower triangular matrices in $G$.  The \emph{generalized flag variety} is $G/B$.   For each group $G$, there is a finite group $W\subseteq G$ called the \emph{Weyl group} that plays the role of $S_n$ in the case $G=GL_n$.  Our choice of how to present $G$ also has the advantage that $W$ will be a subgroup of $S_N$, realized as permutation matrices. 

\begin{definition}
For each element $w\in W$, the \emph{Schubert cell} is
$X^\circ_w:=BwB/B$.
The \emph{opposite Schubert cell} is
$\Omega^\circ_w:=B_{-}wB/B$.
The \emph{Schubert variety} is
$X_w:=\overline X^\circ_w$,
and the \emph{Kazhdan--Lusztig variety} is
$\mathcal{N}_{v,w}:=X_w\cap \Omega^\circ_v$.
\end{definition}

The opposite big cell $\Omega_{id}^{\circ}$ is also an affine open neighborhood
of $G/B$. Hence Definition~\ref{defn:patch} makes sense.  Lemma~\ref{lemma:KL} also holds in this general setting.\footnote{There is no proof in \cite{Kazhdan.Lusztig}, but it follows easily from \cite[Sec. 28.1]{Humphreys:linearalggroups}.}  Therefore, as in the case $G=GL_n$, singularities of Schubert varieties can be studied by studying Kazhdan--Lusztig varieties and Kazhdan--Lusztig ideals.

\subsection{Kazhdan-Lusztig ideals for the classical groups}

\subsubsection{Type $B_n$} Here the group is $G=SO_{2n+1}$.  In general, one picks a nondegenerate symmetric bilinear form $Q:\mathbb{C}^{2n+1}\times\mathbb{C}^{2n+1}\rightarrow\mathbb{C}$.  
\begin{definition}
 $G=SO_{2n+1}\subseteq SL_{2n+1}$ is the group of linear transformations $M$ such that $Q(Mv,Mw)=Q(v,w)$ for all $v,w\in\mathbb{C}^{2n+1}$.  
 \end{definition}
 Different choices of $Q$ give conjugate subgroups. The standard choice for $Q$ is given as follows, where $e_i$ denotes the $i$-th basis vector:
$$Q(e_i, e_j)=\begin{cases} 1& i+j=2n+2 \\ 0 & i+j\neq 2n+2\end{cases}.$$
Let $J=[j_{ab}]$ be the matrix with $j_{ab}=1$ if $a+b=2n+2$ and $j_{ab}=0$ otherwise.  (Pictorially, this means $J$ has $1$'s on the main {\em antidiagonal} and $0$'s everywhere else.)  Then, for our specific choice of subgroup $SO_{2n+1}$, we have that a matrix $M\in SO_{2n+1}$ if and only if $M^TJM=J$.

With this choice of $SO_{2n+1}$, one identifies the Weyl group $W_{B_n}$ as the permutation group
$$W_{B_n}=\{w\in S_{2n+1}\mid w_0ww_0=w\},$$
where $w_0$ is the permutation $w_0=(2n+1)(2n)\cdots1$ of maximal length in
$S_{2n+1}$.  Equivalently, $w\in W_{B_n}$ if $w(i)+w(2n+2-i)=2n+2$ for all $i$, $1\leq i\leq n+1$.  In particular, $w(n+1)=n+1$.

The following is an easy combinatorial exercise about elements of $W_{B_n}$, considered as permutations:
\begin{exercise}\label{exer:Feb1abc}
(a) Given $1\leq i<j\leq 2n+1$ we have $w(i)>w(j)$ if and only if $w(2n+2-j)>w(2n+2-i)$.

(b) Given $1\leq i<j\leq 2n+1$ with $i+j=2n+2$, we have $w(i)>w(j)$ if and only if $w(i)>w(n+1)>w(j)$.
\end{exercise}
Exercise~\ref{exer:Feb1abc} gives an intuitive justification for the following definition.

\begin{definition}
The \emph{length} of $w\in W_{B_n}$ is
\[\ell_B(w):=\frac{\#\{1\leq i<j\leq 2n+1: i+j\neq 2n+2, w(i)>w(j)\}}{2}.\]  
\end{definition}

The actual justification for this definition is:
\begin{exercise}
Prove that $\ell_B(w)=\dim_{\mathbb C}(X_w)$ for the
Schubert variety $X_w$ in $SO_{2n+1}/B$.
\end{exercise}

We will use the notation $\ell_A(w)$ for the length of $w$ considered as a permutation in $S_{2n+1}$.

Since Lemma 3.2 holds in general, $B$ and $B_-$ are still the subgroups of upper and lower triangular matrices, and $W_{B_n}$ is an explicit set of permutation matrices, we can give coordinates for opposite Schubert cells as before, though we can only give set-theoretic equations for Kazhdan--Lusztig varieties in general.

Given $v\in W_{B_n}$, one identifies the opposite Schubert cell $\Omega^\circ_v$ with a subset of ${\sf Mat}_{2n+1\times 2n+1}$ as follows. 
View $\Omega_v^{A\circ}$ (formerly called $\Omega_v^\circ$ in Section~\ref{sec:opschubJan28}) as the affine subspace consisting of matrices $Z^{(v,A)}$ where $z_{n-v(i)+1,i}=1$, and $z_{n-v(i)+1,s}=0, z_{t,i}=0$ for $s>i$ and $t>n-v(i)+1$. Let ${\bf z}^{(v,A)}\subseteq {\bf z}$ be the unspecialized variables.  Now let $K$ be the ideal generated by the $(2n+1)^2$ entries of $(Z^{(v,A)})^TJ(Z_A^{(v,A)})-J$.

Now let $Z_{st}^{(v,A)}$ (formerly $Z_{st}^{(v)}$) be the southwest $s\times t$ submatrix of $Z^{(v,A)}$.  We can define $I'_{v,w}$ as the ideal of ${\mathbb C}[z^{(v,A)}]$ generated by all $r_{st}^w+1$ minors of $Z_{st}^{(v,A)}$ where $1\leq s,t\leq n$ and
$r_{st}^w$ is defined in Definition~\ref{Woosrank}.  Now we have the following.
\begin{definition}
The \emph{large set-theoretic type $B$ Kazhdan-Lusztig ideal} is
$I_{v,w}=I'_{v,w}+K$.\end{definition}

Actually, one can define an ideal in a smaller set of variables instead:
\begin{exercise}\label{exer:Feb1cde}
If $z_{ij}\in{\bf z}^{(v,A)}$, then $z_{v(j),v^{-1}(i)}\in{\bf z}^{(v,A)}$, and there exists a generator $f$ of $K$ that has $z_{ij}$ and $z_{v(j),v^{-1}(i)}$ as its only linear terms.  Without loss of generality assume that $j\leq v^{-1}(i)$, and let $f_{ij}$ denote this generator.  Then all variables $z_{i'j'}$ showing up in $f_{ij}$ have $j'>j$ or both $j'=j$ and $i'\geq i$.
\end{exercise}

In light of Exercise~\ref{exer:Feb1cde}, we let ${\bf z}^{(v)}\subseteq {\bf z}^{(v,A)}$ to be the set of unspecialized variables $z_{ij}$ with $j>v^{-1}(i)$.  Then we let
$Z^{(v)}$ be the matrix constructed from $Z^{(v,A)}$ by recursively substituting $f_{ij}-z_{ij}$ (or $f_{ij}/2 - z_{ij}$ if $i=v(j)$) for $z_{ij}$ whenever $j\leq v^{-1}(i)$, starting from the southwest corner.  Now we let $Z^{(v)}_{st}$ be the southwest $s\times t$ submatrix of $Z^{(v)}$ (which is $Z^{(v,A)}_{st}$ with the same substitutions).  Then we define the following:
\begin{definition}
The \emph{small set-theoretic type $B$ Kazhdan-Lusztig ideal} is the ideal $\tilde{I}_{v,w}$ of $\mathbb{C}[{\bf z}^{(v)}]$ generated by all $r_{st}^w+1$ minors of $Z_{st}^{(v)}$.
\end{definition}
Now we have the following isomorphism:
$$\mathbb{C}[z^{(v,A)}]/I_{v,w}\cong\mathbb{C}[{\bf z}^{(v)}]/\tilde{I}_{v,w}.$$

Furthermore, the following is not too difficult, given that $B$ and $B_-$ can in fact be identified with the subsets of upper and lower triangular matrices in $SO_{2n+1}$.
\begin{exercise}
Show that $\mathcal{N}_{v,w}$ is set theoretically cut out by $I'_{v,w}$.
\end{exercise}

\begin{exercise}
Give examples to show $I'_{v,w}$ and $\tilde{I}_{v,w}$ are not always radical ideals. 
\end{exercise} 

\begin{problem}
Find a set of generators for $\sqrt{I'_{v,w}}$, or a set of generators for $\sqrt{\tilde{I}_{v,w}}$.
\end{problem}

A.~Knutson's \cite{Knutson:patches} describes Bott-Samelson coordinates for which, under a given term order, explicitly describes the initial ideal, but does not give the Gr\"obner basis itself.  

\subsubsection{Type $C_n$} Now the group is $G=Sp_{2n}$.  We pick a nondegenerate antisymmetric bilinear form $Q$.
\begin{definition}
$G=Sp_{2n}\subset SL_{2n}$ is the group of linear transformations $M$ satisfying $Q(Mv, Mw)=Q(v,w)$ for all $v,w\in\mathbb{C}^{2n}$.
\end{definition}

The standard choice is given by
$$Q(e_i,e_j)=\begin{cases} 1 & i<j, i+j=2n+1 \\ -1 & i>j, i+j=2n+1 \\ 0 & i+j\neq 2n+1\end{cases}.$$  Define a matrix $J=[j_{ab}]$ by $j_{ab}=1$ if $a\leq n$ and $a+b=2n+1$, $j_{ab}=-1$ $a>n$ and $a+b=2n+1$, and $j_{ab}=0$ if $a+b\neq 2n+1$.  (Pictorially, $J$ has $1$'s on the top half of the main {\it antidiagonal}, $-1$'s on the bottom half of the main antidiagonal, and $0$'s elsewhere.  As in type B, $M\in Sp_{2n}$ if and only if $M^TJM-J=0$.

The Weyl group is the permutation group
$$W_{C_n}=\{w\in S_{2n}\mid w_0ww_0=w\},$$
where $w_0$ is the permutation $w_0=(2n)(2n)\cdots1$ of maximal length in
$S_{2n}$.  Equivalently, $w\in W_{C_n}$ if $w(i)+w(2n+1-i)=2n+1$ for all $i$, $1\leq i\leq n$.  
\begin{definition}
The \emph{length} for $w\in W_{C_n}$ is
\begin{multline}\nonumber
\ell_C(w):=\frac{\#\{1\leq i<j\leq 2n: i+j\neq 2n+1, w(i)>w(j)\}}{2}\\  \nonumber
+\#\{1\leq i\leq n: w(i)<w(2n+1-i)\}.
\end{multline}
\end{definition}
We will use the notation $\ell_A(w)$ for the length of $w$ considered as a permutation in $S_{2n}$.

Since Lemma~\ref{lemma:KL}  holds in general, $B$ and $B_-$ are still the subgroups of upper and lower triangular matrices, and $W_{C_n}$ is an explicit set of permutation matrices, we again give coordinates for opposite Schubert cells as before.

Given $v\in W_{C_n}$, we identify the opposite Schubert cell $\Omega^\circ_v$ with a subset of ${\sf Mat}_{2n\times 2n}$.  We define $\Omega_v^{A\circ}$ (formerly called $\Omega_v^\circ$ in Section~\ref{sec:opschubJan28}) and ${\bf z}^{(v,A)}\subseteq {\bf z}$ as before, and again let $K$ be the ideal generated by the $(2n)^2$ entries of $(Z^{(v,A)})^TJ(Z_A^{(v,A)})-J$.

Now let $Z_{st}^{(v,A)}$ (formerly $Z_{st}^{(v)}$) be the southwest $s\times t$ submatrix of $Z^{(v,A)}$.  We can define $I'_{v,w}$ as the ideal of ${\mathbb C}[z^{(v,A)}]$ generated by all $r_{st}^w+1$ minors of $Z_{st}^{(v,A)}$ where $1\leq s,t\leq n$ and
$r_{st}^w$ is defined in Definition~\ref{Woosrank}.  Now we have the following.
\begin{definition}
The \emph{large set-theoretic type $C$ Kazhdan-Lusztig ideal} is
$I_{v,w}=I'_{v,w}+K$.
\end{definition}

Exercise~\ref{exer:Feb1cde} also holds in this situation (though the polynomials $f_{ij}$ are different).  Hence, we let ${\bf z}^{(v)}\subseteq {\bf z}^{(v,A)}$ to be the set of unspecialized variables $z_{ij}$ with $j>v^{-1}(i)$.  Then we let
$Z^{(v)}$ be the matrix constructed from $Z^{(v,A)}$ by recursively substituting $f_{ij}-z_{ij}$ (or $f_{ij}/2 - z_{ij}$ if $i=v(j)$) for $z_{ij}$ whenever $j\leq v^{-1}(i)$, starting from the southwest corner.  Finally we let $Z^{(v)}_{st}$ be the southwest $s\times t$ submatrix of $Z^{(v)}$ (which is $Z^{(v,A)}_{st}$ with the same substitutions).  Then we define the following:
\begin{definition}\label{def:smallC}
The \emph{small set-theoretic type $C$ Kazhdan-Lusztig ideal} is the ideal $\tilde{I}_{v,w}$ of $\mathbb{C}[{\bf z}^{(v)}]$ generated by all $r_{st}^w+1$ minors of $Z_{st}^{(v)}$.
\end{definition}
Now we have the following isomorphism:
$$\mathbb{C}[z^{(v,A)}]/I_{v,w}\cong\mathbb{C}[{\bf z}^{(v)}]/\tilde{I}_{v,w}.$$

For type $C_n$ it follows from~\cite[Prop. 6.1.1.2]{SMT} that the Kazhdan--Lusztig ideal is indeed radical.  (See also~\cite[Prop. 4.12]{EFRW}.)  In the case where $v$ is $123$-avoiding, it is shown in~\cite{EFRW} that $Z^{(v)}$ is a symmetric matrix (in fact a generic symmetric matrix with certain entries set to 0) after certain rows and columns consisting of only $1$'s and $0$'s are deleted.  Furthermore, they show the following.

\begin{theorem}[\cite{EFRW}]
In the case $v$ is $123$-avoiding, the defining minors in Definition~\ref{def:smallC} form a Gr\"obner basis for $\tilde{I}_{v,w}$ under an appropriate (specified) term order.
\end{theorem}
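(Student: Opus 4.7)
\proofof{Theorem}
My plan is to leverage the structural result stated just before the theorem --- that when $v$ is $123$-avoiding, $Z^{(v)}$ becomes (after deleting the rows and columns consisting only of $0$'s and $1$'s) a generic symmetric matrix with a prescribed pattern of zeros --- and then adapt classical Gr\"obner basis results for ideals of minors of symmetric matrices (in the spirit of Conca, De~Concini--Procesi) to this partially-specialized setting.

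The first step is to fix the term order. The natural candidate is an analog of the lexicographic order in \eqref{eqn:diagterm}: $z_{ij} \succ z_{kl}$ if $j>l$, or if $j=l$ and $i<k$, on the variables of ${\bf z}^{(v)}$, where the identifications $z_{ij}=\pm z_{v(j),v^{-1}(i)}$ coming from the symmetry are respected. Under this order, the leading term of any minor of $Z^{(v)}$ --- viewed as a polynomial in ${\bf z}^{(v)}$ --- is still an antidiagonal-type monomial (possibly with a sign), except that some antidiagonal products may vanish because they run through a prescribed zero. The $123$-avoiding hypothesis on $v$ is precisely what should ensure that the zero pattern avoids blocking the antidiagonals of the defining minors in a pathological way.

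The next step is to verify Buchberger's criterion: for each pair of defining minors $\Delta_1,\Delta_2$ of $\tilde{I}_{v,w}$, the $S$-polynomial $S(\Delta_1,\Delta_2)$ reduces to zero modulo the defining minors. I would carry this out by mimicking the strategy behind Theorem~\ref{thm:Grobnerthm}: either a direct straightening-style reduction (using the symmetric-matrix analog of the Pl\"ucker/bitableau straightening law) or, preferably, a Hilbert-series comparison using Theorem~\ref{thm:preserve}. For the latter, one shows that the quotient of ${\mathbb C}[{\bf z}^{(v)}]$ by the monomial ideal generated by the lead terms of the defining minors has the same multigraded Hilbert series as ${\mathbb C}[{\bf z}^{(v)}]/\tilde{I}_{v,w}$. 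The right-hand side is known to be the appropriate Kazhdan-Lusztig/Schubert series by work on symplectic Schubert calculus, while the left-hand side can be computed combinatorially from the antidiagonal lead terms (yielding a subword-complex or symmetric-pipe-dream count).

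The main obstacle will be handling the prescribed zero pattern. Classical theorems on Gr\"obner bases for minors of a symmetric matrix apply to the \emph{generic} symmetric matrix; when entries are specialized to zero, the minors can fail to form a Gr\"obner basis, and even radicality of the ideal of minors can be lost. The $123$-avoidance of $v$ must be used in an essential way: combinatorially, it restricts the essential set of $v$ (Exercise~\ref{exer:rotheess}) to a shape that makes the southwest rank conditions ${\widetilde r}_{st}^{(w)}$ propagate consistently through the symmetrized matrix. I expect the technical heart of the argument to be a careful case analysis, organized by the essential set of $v$, showing that the antidiagonal lead terms of the defining minors generate an initial ideal of the correct codimension --- equivalently, that no ``extra'' lead terms arise from hidden linear combinations forced by the symmetry and zero conditions. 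Once this is in hand, radicality of the resulting squarefree monomial initial ideal gives the Gr\"obner basis conclusion exactly as in Theorem~\ref{thm:Grobnerthm}.
\endproofof
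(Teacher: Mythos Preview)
The survey paper you are working from does not actually prove this theorem; it is quoted from \cite{EFRW} with only the preceding sentence of context (that for $123$-avoiding $v$ the matrix $Z^{(v)}$ is, after deleting trivial rows and columns, a generic symmetric matrix with certain entries set to zero). So there is no ``paper's own proof'' to compare against here beyond that structural observation, which you have correctly absorbed.

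That said, what you have written is a plan, not a proof, and you acknowledge as much (``I would carry this out by\ldots'', ``I expect the technical heart\ldots''). The overall architecture --- reduce to a symmetric-matrix minors ideal and then run a Gr\"obner argument in the style of Theorem~\ref{thm:Grobnerthm}, either by Buchberger reductions or by a Hilbert-series comparison --- is indeed the shape of the argument in \cite{EFRW}. But the substance is entirely in the parts you defer. Two specific points:
\begin{itemize}
\item Your claim that ``the $123$-avoiding hypothesis on $v$ is precisely what should ensure that the zero pattern avoids blocking the antidiagonals'' is the whole theorem in miniature; you have not said anything about \emph{why} $123$-avoidance has this effect. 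In \cite{EFRW} this is handled by identifying the zero pattern as a \emph{ladder} (one-sided or symmetric), which is exactly the class of specializations for which minors of symmetric matrices are known to behave well under antidiagonal term orders. Without naming this structure you cannot invoke the Conca/De~Concini--Procesi machinery, and a bare ``case analysis organized by the essential set of $v$'' is not a substitute.
\item For the Hilbert-series route, you assert that the series of ${\mathbb C}[{\bf z}^{(v)}]/\tilde I_{v,w}$ ``is known'' from symplectic Schubert calculus. This is circular in the present context: the point of \cite{EFRW} is precisely to give a commutative-algebra proof of that equality, so you cannot assume it. One needs an independent computation on the monomial side (via a type~$C$ pipe-dream or subword-complex model) and a matching geometric input (e.g.\ that $\tilde I_{v,w}$ is radical of the correct dimension, which here comes from \cite{SMT}).
\end{itemize}
In short: right direction, but the proposal does not yet engage with either of the two genuine difficulties.
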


One obtains as a consequence a combinatorial commutative algebra proof of the analogue of \cite[Theorem~4.5]{WY:Grobner} in this case (see the comments after Theorem~\ref{thm:specializationa}).  Just as Theorem~\ref{thm:specializationa} is related to formulas for (double) Schubert and (double) Grothendieck polynomials for $GL_n$, it is related to analogous polynomials for all the classical groups \cite{Ikeda.Mihalcea.Naruse, Kirillov.Naruse}.

\subsubsection{Type $D_n$}  The group here is $G=SO_{2n}$.  The standard choice of nondegenerate symmetric form is given by
$$Q(e_i,e_j)=\begin{cases} 1& i+j=2n+1 \\ 0 & i+j\neq 2n+1\end{cases}.$$  The Weyl group is
$$W_{D_n}=\{w\in S_{2n}\mid w_0ww_0=w, \#(\{w(1),\ldots,w(n)\}\cap\{1,\ldots,n\})\equiv 0\pmod{2}\}.$$

One can work as in types $B_n$ and $C_n$ to obtain a Kazhdan--Lusztig ideal, but in this case the naive choice is neither radical nor even set-theoretically correct!  In part, this is because Bruhat order on $W_{D_n}$ is not the restriction of Bruhat order on permutations. It seems that the kind of results and problems we have discussed for other types  are farther off into the horizon in
type $D_n$.

\subsection{Billey--Postnikov pattern avoidance}
S.~Billey--A.~Postnikov \cite{Billey.Postnikov} define a notion of pattern avoidance based on root subsystems which is now commonly called \emph{Billey--Postnikov avoidance}. Their definition is in terms of \emph{crystallographic root systems}. Rather than defining root systems here, we instead summarize what their notion says for the classical 
groups only.  Most of the details translating the general definition in terms of root systems to the concrete definitions of pattern embeddings below can be found in an unpublished research report by K.~Haenni~\cite{Haenni}. The details for type $A$ are also in the original paper of  Billey and Postnikov~\cite{Billey.Postnikov}, and details in type $B$ can be found in \cite{Woo:hultman}.

Below, it is important to consider the group the element sits in, not just the element as a permutation.  For example, a permutation $w\in W_{C_n}$  is distinct from the same permutation $w$ considered as an element of $S_n=W_{A_{n-1}}$.  Thus, in the definition below, we write a Weyl group element as $(w, W)$, where $W$ indicates the Weyl group we are considering $w$ to belong to.  For simplicity, we call our Weyl groups $A_{n-1}$, $B_n$, $C_n$, and $D_n$ in the definition.

\begin{definition}
Given a classical groups $V,W$ and elements $v\in V$ and $w\in W$, we say {\em $(v,V)$ (Billey--Postnikov) embeds in $(w,W)$} if $m\leq n$ and there exist indices $1\leq \phi_1<\phi_2<\ldots < \phi_m$ such that any of the following hold:
\begin{enumerate}
\item $V=A_{m-1}$, $W=A_{n-1}$, $\phi_m\leq n$ and $w(\phi_1),\ldots,w(\phi_m)$ are in the same relative order as $v(1),\ldots,v(m)$.\footnote{This is the same as Definition~\ref{def:basicpattern}.} 
\item $V=A_{m-1}$, $W=A_{n-1}$, $\phi_m\leq n$ and $w(\phi_1),\ldots,w(\phi_m)$ such that $w(\phi_1),\ldots,w(\phi_m)$ are in the {\em reverse} relative order as  $v(1),\ldots,v(m)$.\footnote{This is equivalent to $w_0vw_0$ embedding in $w$ according to Definition~\ref{def:basicpattern}.}
\item $V=A_{m-1}$, $W=B_n$, $\phi_m\leq 2n+1$ and $\phi_i+\phi_j\neq 2n+2$ for any $i, j$, where $w(\phi_1),\ldots,w(\phi_m)$ are in the same relative order as $v(1),\ldots,v(m)$.  (In particular, since $\phi_i+\phi_j\neq 2n+2$, we cannot have $\phi_i=n+1$ for any $i$.)
\item $V=A_{m-1}$, $W=C_n$, $\phi_m\leq 2n$ and $\phi_i+\phi_j\neq 2n+1$ for any $i, j$, where $w(\phi_1),\ldots,w(\phi_m)$ are in the same relative order as $v(1),\ldots,v(m)$.
\item \label{AtoD} $V=A_{m-1}$, $W=D_n$, $\phi_m\leq 2n$ and with $\phi_i+\phi_j\neq 2n+1$ for any $i, j$, where
 $w(\phi_1),\ldots,w(\phi_m)$ are in the same relative order as $v(1),\ldots,v(m)$.
\item $V=B_m$, $W=B_n$, $\phi_m\leq n$ and $w(\phi_1),\ldots,w(\phi_m),w(n+1),w(2n+2-\phi_m),\ldots,w(2n+2-\phi_1)$ are in the same relative order as $v(1),\ldots,v(2m+1)$.
\item $V=C_m$, $W=C_n$, $\phi_m\leq n$ and $w(\phi_1),\ldots,w(\phi_m),w(2n+1-\phi_m),\ldots,w(2n+1-\phi_1)$ are in the same relative order as $v(1),\ldots,v(2m)$.
\item \label{DtoD} $V=D_m$, $W=D_n$, $\phi_m\leq n$ and $w(\phi_1),\ldots,w(\phi_m),w(2n+1-\phi_m),\ldots,w(2n+1-\phi_1)$ are in the same relative order as $v(1),\ldots,v(2m)$, except that we allow either or both
\begin{enumerate}
\item $w(\phi_m)$ and $w(2n+1-\phi_m)$ to be in a different order than $v(m)$ and $v(m+1)$, or
\item $w(\phi_a)$ and $w(2n+1-\phi_a)$ to be in a different order than $v(a)$ and $v(2m+1-a)$, where $a$ is whichever of $v^{-1}(m)$ and $v^{-1}(m+1)$ that is less than or equal to $m$.\footnote{For the reader trying to derive this definition from the original definition of Billey and Postnikov, note that allowing these two cases actually conflates two issues, the Dynkin diagram automorphism of $D_n$, and the fact that there is no inversion in $D_n$ between the middle entries.  Billey--Postnikov avoidance comes in two versions, left and right, and how one accounts for the ``missing'' inversion in $D_n$ differs between these two versions, but the Dynkin diagram automorphism saves us from having to figure out which is which.}
\end{enumerate}
\item There is a Weyl group isomorphism between $\phi: W_{A_3}=S_4\rightarrow W_{D_3}$, so when considering embeddings of $(v,A_3)$ to $(w, D_n)$, one must consider both embeddings of $(v,A_3)$ according to (\ref{AtoD}) and embeddings of $(\phi(v),D_3)$ according to (\ref{DtoD}).
\item There are additional Weyl group automorphisms $\phi_1,\phi_2: W_{D_4}\rightarrow W_{D_4}$, so when considering embeddings of $(v,D_4)$ to $(w, D_n)$, one must consider embeddings of $v$, $\phi_1(v)$, and $\phi_2(v)$ according to (\ref{DtoD}).
\end{enumerate}
\end{definition}

Billey--Postnikov avoidance has appeared recently in some purely combinatorial contexts, for example in work of C.~Gaetz--Y.~Gao~\cite{Gaetz.Gao.separable} and of the first author~\cite{Woo:hultman}.

\subsection{Interval pattern avoidance}
Using Billey--Postnikov pattern avoidance, the first author \cite{Woo:general} has extended the results on interval pattern avoidance to arbitrary Lie type.  The proof uses the pattern map of Billey--Braden~\cite{Billey.Braden}
Let $[u,v]$ be a Bruhat interval in some Weyl group $V$ and $[x,w]$ a Bruhat interval in $W$.

\begin{definition}
$[u,v]$ \emph{interval pattern embeds in} $[x,w]$ if there is a
common embedding $\Phi=(\phi_1,\ldots,\phi_m)$ of $u$ into $x$ and $v$
into $w$, where the entries of $x$ and $w$ outside of $\Phi$ agree,
and, furthermore, $\ell(v)-\ell(u)=\ell(w)-\ell(x)$.  (In addition, if $V=D_m$ and $W=D_n$, the ways in which the relative orders of $v$ and its embedding in $w$ fail to agree must match the ways in which the relative orders of $u$ and its embedding in $x$ fail to agree.)
\end{definition}

The generalization of Exercise~\ref{exer:determined} holds and hence one can say the following:

\begin{definition}
$[u,v]$ \emph{interval pattern embeds} in $w$ if $[u,v]$ interval pattern embeds in $[\Phi(u),w]$.
\end{definition}

Definitions 6.13 and 6.14 can be made verbatim (substituting the set of all $(w,W)$ where $W$ is a (classical) Weyl group and $w\in W$ for $\mathfrak{S}$), and the analogue of Theorem 6.16, and hence Corollary 6.18, is proved in 
\cite{Woo:general}.

\subsection{Singularity classification problems}
The question of classifying smooth Schubert varieties using pattern avoidance (as is done in 
Theorem~\ref{thm:3412-4231} for type $A$) is solved in S.~Billey--A.~Postnikov's \cite{Billey.Postnikov} (most of the work amounts to restating earlier work of Billey \cite{B98} in terms of Billey--Postnikov avoidance). S.~Kumar \cite{Kumar:nil} has given a general type algebraic characterization of which points are
singular in terms of the \emph{nil-Hecke ring}.
 
\begin{problem}
\label{prob:BCDsing}
Determine a combinatorial description of the singular locus of each Schubert variety in $SO_{2n+1}/B, Sp_{2n}/B$, and 
$SO_{2n}/B$.
\end{problem}

One defines the \emph{Hecke algebra} for other types by replacing the role of the symmetric group $S_n$
in Definition~\ref{def:Heckedef} with the Weyl group $W$ associated to $G$. Similarly one defines Kazhdan-Lusztig
polynomials $P_{v,w}(q)$ for Weyl group elements $v,w\in W$ satisfying $v\leq w$ in Bruhat order for $W$.  By definition, $X_w$ is rationally smooth at $e_v$ if $P_{v,w}(1)=1$. A theorem of D.~Peterson is that smoothness
and rational smoothness agree in types $ADE$. It is known that in types $B_n$ and $C_n$, the rational singular locus and the singular locus
differ.  While the singular loci differ between types $B_n$ and $C_n$, their rational singular loci agree, since the Kazhdan--Lusztig polynomial only depends on the Coxeter group and not the lengths of the roots. See \cite{BL00}.

\begin{problem}
\label{prob:BCrsing}
Determine a combinatorial description of the  rational singular locus of each Schubert variety in $SO_{2n+1}/B$ (or equivalently) $Sp_{2n}/B$.
\end{problem}

\begin{hardexercise}  (cf.~Exercise~\ref{exer:isolatedsing})
Can the (rational) singular locus of $X_w$ be $X_{id}$? That is, can $X_w$ have an isolated singularity?
\end{hardexercise}

Even the following special cases are open in general:

\begin{problem}\label{prob:BCDabc}
Solve Problems~\ref{prob:BCDsing} and~\ref{prob:BCrsing} for the case where $w$ 
has only one ascent\footnote{These are the
maximal length coset representatives for a maximal standard parabolic subgroup of $W$.} (where, in $W_{B_n}$, ascents at $i$ and $2n+2-i$ (for $W_{B_n}$) count only once, and where in $W_{C_n}$ and $W_{D_n}$, ascents at $i$ and $2n+1-i$ also count only once).
\end{problem}

For Grassmannians, Problem~\ref{prob:BCDabc} is implicitly solved by A.~Zelevinsky \cite{Z83}. For 
minuscule parabolic see V.~Lakshmibai-J.~Weyman \cite{LW90} and M.~Brion-P.~Polo \cite{Brion.Polo}.

Insofar as the more general problems (P1) and (P2) are concerned, substantially less is known about the
measures discussed in this paper outside of type $A$. In most cases, there are not even conjectures. For example:
 
\begin{problem}\label{problem:Gorfactbbb}
Determine which Schubert varieties in $SO_{2n+1}/B, Sp_{2n}/B$, and 
$SO_{2n}/B$ are Gorenstein and/or factorial.
\end{problem}

The argument used in \cite{WY:Goren, Butler} to characterize Gorenstein or factorial Schubert varieties in type $A$ begins by reducing the problem to finding
solutions for a system of linear equations.  This part of the argument extends to all types, but a specific
combinatorial conjecture has eluded us.  While in type $A$, all solutions to these linear systems are integral, there
are non-integral solutions in type $C$, so one might separately characterize the Schubert varieties that are $\mathbb{Q}$-Gorenstein or $\mathbb{Q}$-factorial.
Now, one would also like to describe the non-Gorenstein locus and give a uniform answer for all Lie types. Since interval pattern avoidance was useful to give answers to 
Problem~\ref{problem:Gorfactbbb} in type $A$, one pursues similar answers in the other types using the generalized 
notions of this section.

Since the Kazhdan-Lusztig ideals are hard to get a handle on in types $B,C,D$, the tangent cones
are even more difficult to handle. In Section~\ref{sec:HSmult} we stated a principle that, in the good cases where the Kazhdan-Lusztig
ideal is standard homogeneous it already defines the tangent cone. As noted to us by A.~Knutson, 
this is true of any (co)minuscule $G/P$. This includes ordinary Grassmannians, as well as all maximal orthogonal
and Lagrangian Grassmannians. Hence in all such cases, the Hilbert-Samuel multiplicity at a $T$-fixed
point can be determined by the $K$-polynomial associated to that point; see work of W.~Graham-V.~Kreiman
\cite{Graham.Kreiman:TAMS, Graham.Kreiman:Fourier} and the references therein. That said, by analogy
with \cite{RRRSW} it would be interesting to solve:

\begin{problem}
Determine an explicit, root-system uniform combinatorial rule for the regularity of a $T$-fixed point in a 
(co)minuscule Schubert variety  (generalizing the rule of \cite{RRRSW}).
\end{problem}

In the classical types, the Hilbert-Samuel multiplicities~\cite{Jeon.mult} and Kazhdan--Lusztig polynomials~\cite{Jeon.KL}
for covexillary Schubert varieties have been determined, giving analogues of the results of
\cite{LiYong1,LiYong2} and \cite{Las-vex} respectively.  Here, covexillary has a definition in terms of an analogue of the essential set~\cite{AndFul.pfaff}, but it is equivalent to $w$ avoiding $3412$
as a permutation~\cite{AndFul.vex}.  (This can be rephrased in terms of Billey--Postnikov avoidance, but at the cost of requiring more elements.)

One can go beyond the classical types and study similar questions for exceptional types and even for infinite dimensional Kac-Moody groups.  There has been some significant work particularly in affine type $A$.  E.~Richmond--W.~Slofstra~\cite{Richmond.Slofstra.affine} have characterized the smooth Schubert varieties in affine type $A$, following a characterization of the rationally smooth Schubert varieties by S.~Billey--A.~Crites~\cite{Billey.Crites}.  B.~Elek--D.~Huang
\cite{Elek.Huang} have generalized Theorem~\ref{thm:Grobnerthm} to the affine type A flag variety.

\section{Remarks about other varieties}\label{sec:8}

\begin{center}
\fbox{\begin{minipage}{30em}
One can also use analogues of patches to study other subvarieties of the flag manifold.
   \end{minipage}}
\end{center}

\subsection{Richardson varieties}
In what follows, one may assume $G=GL_n$, however the results hold for (partial) flag varieties associated to any complex semisimple Lie group $G$ (see Section~\ref{sec:7}).

\begin{definition}
The \emph{opposite Schubert variety} is $X^w:=\overline{\Omega^\circ_w}$.
\end{definition}
\begin{definition}
The \emph{Richardson variety}
$X_v^w$ is $X_v\cap X^w$.
\end{definition}
 It is nonempty provided $w\leq v$, and in that case it is an irreducible variety of dimension $\ell(v)-\ell(w)$. It is known to be normal and Cohen-Macaulay. When
$w=w_0$, $X_v^w=X_v$. Thus Richardson varieties are generalizations of Schubert varieties. Therefore
most questions about singularities of Schubert varieties can be asked of the Richardson varieties.\footnote{The expansion of the cohomology class of the Richardson into the Schubert basis, i.e.,
$[X_{v}^w]=\sum_{u\in W} C_{v,u}^w[X_u]$ is precisely the topic of \emph{Schubert calculus}. The coefficients
$C_{v,u}^w$ are nonnegative integers and it is an open problem for most $G/P$ to give a combinatorial counting rule for them.}

The following result of A.~Knutson and the authors \cite{KWY} shows that many of the problems in fact reduce to
the Schubert case. That is:

\begin{center}
\fbox{\begin{minipage}{30em}
The patch of  $X_v^w$ at a point $p$ is the Cartesian product of Kazhdan-Lusztig
varieties for $X_v$ and $X_w$ at $p$.
   \end{minipage}}
\end{center}

The uniform proof of this result is a generalization of Lemma~\ref{lemma:KL}.
The result has a number of immediate consequences. For example, it proves that Richardson
varieties are normal, Cohen-Macaulay, and have rational singularities since these properties are known of
the Schubert varieties. It also implies 

\begin{corollary}
\label{cor:Pissmooth}
${\rm Singlocus}(X_{w}^v)=({\rm Singlocus}(X_w)\cap X^v)\cup (X_w\cap {\rm Singlocus}(X^v))$.
\end{corollary}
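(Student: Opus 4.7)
The plan is to derive this corollary as a direct consequence of the boxed principle immediately preceding it, namely that a patch of $X_w^v$ at any point $p$ is (up to an irrelevant affine factor) isomorphic to a Cartesian product of a Kazhdan-Lusztig–type patch coming from $X_w$ and one coming from $X^v$. Smoothness is a local property, so to describe ${\rm Singlocus}(X_w^v)$ it suffices to decide, for each $p\in X_w^v$, whether $p$ is a smooth point, which we may test on any affine open neighborhood of $p$ in $X_w^v$.

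First I would set up the local model. Fix $p\in X_w^v=X_w\cap X^v$; such a point lies in some $B$-orbit $X_x^\circ$ and in some $B_-$-orbit $\Omega_y^\circ$, so by the Bruhat decomposition and its opposite analogue (Section~\ref{sec:opschubJan28}) we have $x\leq w$ and $y\geq v$. Applying Lemma~\ref{lemma:KL} and its opposite-side twin (together with the $B$-equivariance principle that reduces local study to $T$-fixed points), one obtains that an affine open neighborhood of $p$ in $X_w$ is, up to an affine factor, a Kazhdan-Lusztig variety $\mathcal{N}_{x,w}$, while an affine open neighborhood of $p$ in $X^v$ is, up to another affine factor, the analogous ``opposite'' Kazhdan-Lusztig variety for $X^v$ at $p$. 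The KWY principle boxed above then identifies the patch of the intersection $X_w^v$ at $p$ with the product of these two local models (times an affine factor), because taking intersections of $X_w$ with $X^v$ inside $G/B$ corresponds at the level of patches to taking the fiber product of the two local models over the ambient affine chart, which factors as the claimed product.

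Next I would invoke the elementary fact that a product $A\times B$ is smooth at $(a,b)$ if and only if $A$ is smooth at $a$ and $B$ is smooth at $b$; equivalently, $(a,b)$ is a singular point of $A\times B$ iff $a$ is a singular point of $A$ or $b$ is a singular point of $B$. Combined with the product description of the patch, and the fact that an affine factor contributes nothing to singularity (this is part of being an SSP in Definition~\ref{def:SSPname}), this yields
\[
p\in {\rm Singlocus}(X_w^v)\iff p\in {\rm Singlocus}(X_w)\ \text{or}\ p\in {\rm Singlocus}(X^v).
\]
Since $p\in X_w^v=X_w\cap X^v$ throughout, the first alternative is equivalent to $p\in {\rm Singlocus}(X_w)\cap X^v$ and the second to $p\in X_w\cap {\rm Singlocus}(X^v)$, giving the claimed set equality.

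The main obstacle I anticipate is verifying the product structure of the patch carefully: one needs to check that the two ``sides'' (the $B$-side from $X_w$ and the $B_-$-side from $X^v$) really decouple in local coordinates, i.e., that the defining equations for $X_w$ in a neighborhood of $p$ involve only the ``upper'' coordinates and those for $X^v$ only the ``lower'' coordinates. This is exactly the content of the KWY patch theorem cited above; once that is invoked, the smoothness argument is immediate, but without it the factorization is not obvious since $X_w$ and $X^v$ are defined by different rank conditions that a priori share variables.
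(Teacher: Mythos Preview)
Your proposal is correct and matches the paper's approach exactly: the paper does not give a proof beyond the phrase ``It also implies,'' deriving the corollary as an immediate consequence of the KWY product decomposition of the patch together with the elementary fact that a product is smooth iff both factors are. Your write-up simply supplies the standard details that the paper leaves implicit.
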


One also sees that the Hilbert-Samuel multiplicities for Richardson varieties factor:

    \begin{corollary}
     \label{Cor:Hfactors}
     Let $xB\in X_{w}^v$.\footnote{$xB$ need not be a $T$-fixed point.} Then 
     ${\rm mult}(xB, X_w^v)={\rm mult}(xB,X_w)\cdot {\rm mult}(xB,X^v)$.
     \end{corollary}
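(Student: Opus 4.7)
The plan is to apply the patch factorization principle stated in the framed box just above (the main result of \cite{KWY}) together with Lemma~\ref{lemma:KL}, reducing the multiplicative identity to the fact that Hilbert–Samuel multiplicity is multiplicative under Cartesian products of affine varieties.

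First, I would observe that the Hilbert–Samuel multiplicity is a local invariant, so each of the three numbers ${\rm mult}(xB, X_w^v)$, ${\rm mult}(xB,X_w)$, ${\rm mult}(xB,X^v)$ can be computed inside the affine open neighborhood $x\Omega_{id}^\circ$ of $xB$. Thus it suffices to prove the analogous factorization for the three patches $X_w^v \cap x\Omega_{id}^\circ$, $X_w \cap x\Omega_{id}^\circ$, and $X^v \cap x\Omega_{id}^\circ$ at $xB$. By Lemma~\ref{lemma:KL}, $X_w \cap x\Omega_{id}^\circ \cong {\mathcal N}_{x,w} \times {\mathbb A}^{\ell(x)}$, and the analogous statement for opposite Schubert varieties gives $X^v \cap x\Omega_{id}^\circ \cong {\mathcal N}^v_x \times {\mathbb A}^{\binom{n}{2}-\ell(x)}$, where ${\mathcal N}^v_x := X^v \cap \Omega_x^\circ$. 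The framed principle asserts that the patch of the Richardson variety at $xB$ is (up to an auxiliary affine factor that plays no role for multiplicity) the Cartesian product ${\mathcal N}_{x,w}\times {\mathcal N}^v_x$.

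Next, I would invoke two standard properties of Hilbert–Samuel multiplicity: (i) multiplicativity under Cartesian products, meaning ${\rm mult}_{(p,q)}(Y\times Z)={\rm mult}_p(Y)\cdot {\rm mult}_q(Z)$ (which follows from the fact that the projectivized tangent cone of a product is a join and degrees are multiplicative, or equivalently that Hilbert series of $R/I\otimes S/J$ factor); and (ii) invariance under products with affine space at a smooth point, since ${\rm mult}_{\bf 0}({\mathbb A}^k)=1$. Applying these to the displayed patch isomorphisms yields
\[{\rm mult}(xB,X_w)={\rm mult}({\bf 0},{\mathcal N}_{x,w}), \quad {\rm mult}(xB, X^v)={\rm mult}({\bf 0},{\mathcal N}^v_x),\]
\[{\rm mult}(xB, X_w^v)={\rm mult}({\bf 0},{\mathcal N}_{x,w})\cdot {\rm mult}({\bf 0}, {\mathcal N}^v_x),\]
and the corollary then follows by combining these three equalities.

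The main obstacle is checking that the patch factorization of the boxed principle, and hence the argument above, is valid at arbitrary points $xB$ and not only at $T$-fixed points. Schubert varieties are $B$-stable, so one usually invokes the principle that local properties can be translated along $B$-orbits into $T$-fixed points; but a Richardson variety is only $T$-stable, so this shortcut is unavailable. The way around this is to work directly with $x\Omega_{id}^\circ$, which is an affine open neighborhood of $xB$ for every $x\in GL_n$ (not merely $T$-fixed ones), and to exhibit the product decomposition of $X_w^v\cap x\Omega_{id}^\circ$ in explicit coordinates — this is precisely what is done in \cite{KWY} and is the main technical input of the proof.
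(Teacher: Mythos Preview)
Your proposal is correct and follows exactly the route the paper intends: the corollary is listed without proof as an immediate consequence of the boxed \cite{KWY} principle, and your argument spells out the two missing standard facts (multiplicativity of Hilbert--Samuel multiplicity under Cartesian products, and triviality of the affine factors) that turn that principle into the stated identity. Your closing paragraph correctly identifies the only real content, namely that the patch factorization from \cite{KWY} is proved at arbitrary points $xB$ rather than only at $T$-fixed points; one small notational caveat is that for general $x\in GL_n$ the symbols ${\mathcal N}_{x,w}$ and $\ell(x)$ are not literally defined in this chapter, so it would be cleaner to phrase the product decomposition directly as in \cite{KWY} rather than via Lemma~\ref{lemma:KL}.
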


However, problems about Richardson varieties remain. For example:

\begin{problem}
\label{prob:richgor}
Determine which Richardson varieties in $G/B$ are Gorenstein?
\end{problem}    

Problem~\ref{prob:richgor} is open for $GL_n/B$, though a solution would follow from a solution to Conjecture~\ref{conj:nonGorlocus}. For Grassmannians (and minuscule $G/P$'s), it follows from \cite{Perrin} combined with
\cite{KWY}.

Rather than taking intersections of two Schubert varieties with respect to opposite flags, one can do the same for a collection of Schubert varieties with respect to a ``cyclic permutation'' of a reference flag. This is the \emph{positroid variety}. Recent work of S.~Billey-J.~Weaver \cite{Billey.Weaver} gives
a pattern avoidance criterion for smoothness of these varieties in the Grassmannian. More finely,
one can take the common refinement of $n!$ Bruhat decompositions with respect to all permutations of a reference flag. This is the \emph{matroid stratification} \cite{GGMS}. However, Mn\"ev's Universality theorem implies that for the 
Grassmannian, these strata can contain essentially \emph{any} singularity \cite{Mnev}.

\subsection{Peterson and Hessenberg varieties}

\begin{definition} The \emph{Peterson variety} is 
\[{\sf Pet}_n:=\{F_{\bullet}\in {\rm Flags}({\mathbb C}^n): N\cdot F_n\subset F_{i+1}\},\]
where $N$ is the regular, nilpotent $n\times n$ matrix consisting of a single Jordan block.
\end{definition}

D.~Peterson introduced ${\sf Pet}_n$ in connection to his study of quantum cohomology of ${\rm Flags}({\mathbb C}^n)$. The content of \cite{IY} is the following:

\begin{center}
\fbox{\begin{minipage}{30em}
Using patches one proves a combinatorial description of the singular locus of ${\sf Pet}_n$, 
and that ${\sf Pet}_n$ is a local complete intersection.
   \end{minipage}}
\end{center}

Even for Peterson varieties, singularity problems remain. For example:
\begin{problem}[{\cite[Section~6]{IY}}] Determine a combinatorial formula for the Hilbert-Samuel multliplicities of ${\sf Pet}_n$.
\end{problem}

We refer to \emph{ibid} for further details and references.

Peterson varieties are special cases of \emph{Hessenberg varieties}. These varieties come in various
generalities, we follow the definition of
F.~de Mari--C.~Procesi--M.~A.~Shayman \cite{Hessdef} for ${\rm Flags}({\mathbb C}^n)$. Let $M$ be any linear
operator on ${\mathbb C}^n$. Fix a non-decreasing function $h:[n]\to [n]$ such that $h(i)\geq i$ for each $i\in [n]$; this is the \emph{Hessenberg function}.

\begin{definition}
The \emph{Hessenberg variety} is
\[{\rm Hess}(M,h)=\{F_{\bullet}\in GL_n/B: M\cdot F_i\subseteq F_{h(i)}, \ \forall i\in [n]\}.\]
\end{definition} 

\begin{example}
If $M$ is the identity matrix and $h(i)=i$ then ${\rm Hess}(M,h)={\rm Flags}({\mathbb C}^n)$. \qed
\end{example}

\begin{example}
If $M=N$ is regular nilpotent and $h(i)=i+1$, ${\rm Hess}(M,h)={\rm Pet}_n$.\qed
\end{example}

\begin{example}
If $N$ is a nilpotent matrix and $h(i)=i$ then ${\rm Hess}(M,h)$ is a \emph{Springer fiber}, an object of
significance in geometric representation theory of the symmetric group.
\end{example}

J.~Tymoczko \cite{Tymoczko} proves that the Hessenberg variety is ``paved by affines'',
a consequence of which is a combinatorial formula for the topological Betti numbers of ${\rm Hess}(M,h)$. 
The question of the singularity structure of ${\rm Hess}(M,h)$ was raised in \cite[Section~7]{IY}. 
Using patch ideals, some initial exploration was done in \emph{ibid.}; see, e.g., later work of 
H.~Abe--L.~Dedieu--F.~Galetto--M.~Harada \cite{Harada} and 
E.~Insko--M.~Precup
\cite{IP}.  L.~Escobar-M.~Precup-J.~Shareshian \cite{EPS} classify Hessenberg varieties that
are Schubert varieties. See that paper for more discussion/references on Hessenberg varieties.

\subsection{Spherical symmetric orbit closures}\label{sec:spherical}
\begin{definition}
A subgroup $K$ of $G$ is \emph{symmetric} if $K=G^\theta$ is the fixed point subgroup of an involutive
automorphism $\theta$ of $G$. In addition, such a $K$ is \emph{spherical} if the action of $K$ on 
$G/B$ by left-multiplication has finitely many orbits. Such $(G,K)$ are called \emph{spherical symmetric
pairs}.
\end{definition}

\begin{example}
For $G=GL_n$ there are three such spherical symmetric
subgroups $K$, namely, $K=O_n$ (the orthogonal group), $K=Sp_{n}$ (the symplectic group, assuming
$n$ is even), and $K=GL_p\times GL_q$ (invertible $p+q=n$ block matrices). 
\end{example}

We are interested in the
singularities of the (finitely many) $K$-orbit closures. 
Once again, many of the problems that we considered for Schubert varieties are valid for $K$-orbit closures.
For instance, the following problem is open:
\begin{problem}
Determine the singular locus of the orbit closures for the three spherical symmetric pairs
$(GL_n,O_n), (GL_{2n}, Sp_{2n}), (GL_{n}, GL_p\times GL_q)$. 
\end{problem}

The content of \cite{WWY} is: 
\begin{center}
\fbox{\begin{minipage}{30em}
It is equivalent to study $B$-orbits on $G/K$. On the latter, 
there is an analogue of the Kazhdan-Lusztig varieties for the closures in the
case of the pair $(GL_{n}, GL_p\times GL_q)$. The analogue consists of the \emph{Mars-Springer varieties}. In addition,
one finds an analogue of interval pattern avoidance in this context. 
   \end{minipage}}
\end{center}
Very few of the results available for
Kazhdan-Lusztig varieties are known. We do not know a Gr\"obner basis for the
Mars-Springer ideals except in some cases for $K=Sp_{n}$~\cite{Marberg.Pawlowski}.

\subsection{Quiver loci}
Our last example does not live in a flag variety, but is nonetheless closely related to the study of Schubert varieties.

\begin{definition}
A \emph{quiver} $Q$ is a directed graph. We say that $Q$ is \emph{equioriented of type $A_n$} if $Q$
is a directed path $\bullet \rightarrow \bullet \rightarrow \bullet \rightarrow \cdots
\rightarrow \bullet$ with $n$ vertices.
\end{definition}

\begin{definition}
A \emph{representation} of a quiver $Q$ with dimension vector $d=(d_1,d_2,\ldots,d_n)$ is
of the form $V_1\stackrel{M_1}{\longrightarrow} V_2 \stackrel{M_2}{\longrightarrow} \cdots 
\stackrel{M_{n-1}}{\longrightarrow} V_n$ where $V_i$ is a vector space over ${\mathbb C}$ of dimension $d_i$
and $M_i:V_{i}\to V_{i+1}$ is a linear transformation.
\end{definition}

\begin{definition}
The \emph{representation space} ${\sf Rep}_Q(d)$ of a dimension vector $d=(d_1,d_2,\ldots,d_n)$ is
\[{\sf Rep}_Q(d):=\prod_{a\in {\sf arc}(Q)} {\sf Mat}_{d(ha),d(ta)}({\mathbb C})\]
where ${\sf arc}(Q)$ is the set of arcs $a=ha\to ta$ of $Q$.
\end{definition} 

\begin{definition}
The \emph{base change group} $GL(d)$ of a dimension vector $d=(d_1,d_2,\ldots,d_n)$ is
\[GL(d):=\prod_{v\in {\sf vertices}(Q)} GL_{d(v)}({\mathbb C}).\]
\end{definition}

$GL(d)$ acts on ${\sf Rep}_Q(d)$ as follows: Suppose $(M_1,\ldots,M_n)\in {\sf Rep}_Q(d)$ and $g=(g_1,\ldots,g_n)\in GL(d)$ then
$g\cdot (M_1,\ldots,M_n)=(g_{ha}V_a g_{ta}^{-1})_{a\in {\sf arc}(Q)}$.

\begin{definition}
A \emph{quiver loci} is one of the (finitely many)
 $GL(d)$-orbit closures in ${\sf Rep}_Q(d)$.
 \end{definition} 
There is a connection to Schubert varieties.
A.~Zelevinsky \cite{Zelev} showed that the quiver loci for equioriented type $A_n$ quivers are set-theoretically in bijection with certain open subsets of a Schubert variety. Lakshmibai-Magyar \cite{Lak.Mag} proved this map is a scheme-theoretic isomorphism. 

R.~Kinser-J.~Rajchgot \cite{Kinser.Rajchgot1} prove that:

\begin{center}
\fbox{\begin{minipage}{30em}
 For any orientation of a type $A_n$ quiver,   the quiver loci
are isomorphic to a Kazhdan-Lusztig variety in a partial flag manifold, up to an explicit smooth factor.
   \end{minipage}}
\end{center}

In R.~Kinser-A.~Knutson-J.~Rajchgot \cite{KKR} by using the above relationship with
Kazhdan-Lusztig varieties together with the Hilbert series theorem of \cite{WY:Grobner} to give a formula for the Hilbert series of quiver loci. It would therefore be interesting to develop in detail the singularities of quiver loci by reduction to the Schubert variety case. 

One can ask similar questions for quivers of other Dynkin types; we point to, e.g., \cite{KR:D}
and the references therein. There, quiver loci are not related to Kazhdan-Lusztig varieties but an 
analogue for symmetric varieties $GL_{p+q}/GL_p\times GL_q$ (see Section~\ref{sec:spherical}).

\section{Hints, notes, and references for selected exercises}\label{sec:hints}

\noindent
\emph{Exercise~\ref{exer:24decomp}}:
In the $k=2,n=4$ case, the rows of the $1$'s give the ``$I$''.

\medskip
\noindent
\emph{Exercise~\ref{exer:Borels}} For (b), the answer is $gBg^{-1}$. There is a bijection between 
points in ${\rm Flags}({\mathbb C}^n)$ and their stabilizer ${\mathcal B}=\{gBg^{-1}:g\in GL_n\}$. All Borel subgroups (in the general sense) are $G$-conjugate. Hence ${\rm Flags}({\mathbb C}^n)$ may be identified with the set of all Borel subgroups of $GL_n$, not privileging one Borel over another in the description.

\medskip
\noindent
\emph{Exercise~\ref{exer:Bruhatexer}}: For a solution see
\cite[p.63--64]{Manivel}.

\medskip
\noindent
\emph{Exercise~\ref{exer:additionalexerSymmetric}}: If $S_n$ is described as the set of permutations of $[n]$ then
it is generated by the simple transpositions $s_i:=(i \ i+1)$. Thus the map that sends $\sigma_i\to s_i$ is surjective. 
One can write down a ``lexicographically smallest'' factorization $F$ of $w$ which has length $\ell(w)$. Parts (b) and (c) ask to show that
any reduced word can be ``moved'' to $F$ by the relations.

\medskip
\noindent
\emph{Exercise~\ref{exer:upandright}}: Multiplying by $B$ on the left is an upward row operation and doing so on the right
is a rightward column operation.

\medskip
\noindent
\emph{Exercise~\ref{exer:KLexer}}: If $x=3421$, (a) is saying
\[\left[\begin{matrix} 0 & 0 & 0 & 1 \\ 0 & 0 & 1 & 0\\ 1 & 0 & 0 & 0 \\ * & 1 & 0 & 0\end{matrix}\right]
\left[\begin{matrix} 1 & 0 & 0 & 0 \\ 0 & 1 & 0 & 0 \\ * & * & 1 & 0 \\ * & * & * & 1\end{matrix}\right]
=\left[\begin{matrix} * & * & * & 1 \\ * & * & 1 & 0 \\ 1 & 0 & 0 & 0 \\ * & 1 & 0 & 0\end{matrix}\right].\]

\medskip
\noindent
\emph{Exercise~\ref{exer:theeqnsJan30}}: The condition $\dim(F_j\cap E_i)\geq k$ is equivalent to 
$\dim(\pi_i(F_j))\leq n-k$ where $\pi_i$ is the projection onto all but the first $i$ coordinates.

\medskip
\noindent
\emph{Exercise~\ref{exer:inhomgex}:} $I_{1324,3412}$.

\medskip
\noindent
\emph{Exercise~\ref{exer:inverseiso}}: For the first part, let $\mathcal{O}_w$ be the orbit of $(E^{(w)}_\bullet, E_\bullet)\in G/B \times G/B$ under the diagonal action of $G$, and let $\overline{\mathcal{O}_w}$ be its closure.  Note that projection on to the first and second factors gives fiber bundles with fibers $X_{w^{-1}}$ and $X_w$ respectively.  Now consider an affine neighborhood of the point $(E^{(v)}_\bullet,E_\bullet)\in\overline{\mathcal{O}_w}$.  For the second part,
$X_w\cong X_{w^{-1}}$ need not be true. E.~Richmond--W.~Slofstra \cite{Richmond.Slofstra.iso} have 
classified Schubert varieties up to isomorphism.

\medskip
\noindent
\emph{Exercise~\ref{exer:mono}}:
By Exercises~\ref{exer:Jan28.23qs} and~\ref{exer:specialcase333} 
combined, $I_w$ is prime. Now use \cite[Proposition~1.2]{Mono}.

\medskip
\noindent
\emph{Exercise~\ref{exer:rotheess}:} (a) In the cograssmannian case, the essential set boxes all lie in the same
column. (b) follows from (a).

\medskip
\noindent
\emph{Exercise~\ref{lemma:embed_iso}}: Use Exercise~\ref{exer:Bruhatexer}(b).

\medskip
\noindent
\emph{Exercise~\ref{exer:Feb11uuu}:} One example is $w=413625$.

\medskip
\noindent
\emph{Exercise~\ref{exa:proof_example}:} See \cite[Theorem~4.2]{WY:governing}.

\medskip
\noindent
\emph{Exercise~\ref{exer:Lanini}:} This is \cite[Theorem~2.4]{Lanini}.

\medskip
\noindent
\emph{Exercise~\ref{runningisspecialcase}:} Think about $w=52143$.

\medskip
\noindent
\emph{Exercise~\ref{exer:Buchberger}}: See how Buchberger's
algorithm works. One step of which is to confirm that the ``$S$-pair'' $S\left(\left|\begin{matrix} x_{11} & x_{12} \\ x_{21} & x_{22}\end{matrix}\right|, \left|\begin{matrix} x_{11} & x_{13} \\ x_{21} & x_{23}\end{matrix}\right|\right)$
is in the ideal generated by the nine $2\times 2$ minors (which is true as it is equal to $-x_{21}\left|\begin{matrix} x_{12} & x_{13} \\ x_{22} & x_{23}\end{matrix}\right|$). The reader can check the same for
 $S\left(\left|\begin{matrix} x_{11} & x_{12} \\ x_{21} & x_{22}\end{matrix}\right|, \left|\begin{matrix} x_{12} & x_{13} \\ x_{22} & x_{23}\end{matrix}\right|\right)$ and perhaps guess what the $S$-pair test in Buchberger's algorithm is from these examples, if they do not already know it.

\medskip
\noindent
\emph{Exercise~\ref{exer:KMgrob}}: Use Exercise~\ref{exer:specialcase333}.

\medskip	
\noindent
\emph{Exercise~\ref{exer:Jan28.23qs}}: By Theorem~\ref{thm:Grobnerthm}, $I_{v,w}$ is radical. Now use
the fact that $X_w$ is irreducible (why is \emph{that} true?) and ${\mathcal N}_{v,w}$ is (essentially) an affine open neighborhood
of $X_w$.

\medskip
\noindent
\emph{Exercise~\ref{exer:coincidence}}: For example, 
\[\left[\begin{matrix} \cdot & \cdot & \cdot \\ 
+ & + & \cdot \\
+ & + & \cdot 
\end{matrix}\right]\leftrightarrow \tableau{1&1\\2 &2}
\text{\ \ and \ } \left[\begin{matrix} \cdot & \cdot & + \\ 
+ & \cdot & +\\
+ & \cdot & \cdot
\end{matrix}\right]\leftrightarrow \tableau{1&2\\2 &3}.\] 

\medskip
\noindent
\emph{Exercise~\ref{runningexfinale}:} For (c) see \cite[Section~5]{KMY} and specifically Theorem~5.8 of that paper.

\medskip
\noindent
\emph{Exercise~\ref{exer:computetwogroth}}:
${\mathfrak G}_{312}=\frac{(x_1-y_1)(x_1-y_2)}{y_1 y_2}$ and ${\mathfrak G}_{132}=-\frac{x_1 x_2 - y_1 y_2}{y_1 y_2}$.

\medskip
\noindent
\emph{Exercise~\ref{sec5final}:} ${\mathcal K}(R/I_{132,132};t_1,t_2,t_3)=\frac{(t_1-t_3)(t_1-t_2)}{t_2 t_3}$.

\medskip
\noindent
\emph{Exercise~\ref{macexernnn}:} See the {\tt Schubsingular} package available at the authors' websites.

\medskip
\noindent
\emph{Exercise~\ref{exer:singlocFeb24abc}:} (a) See \cite[Example~6.2]{WY:governing}. (b)
${\rm sing}(X_{523614})=X_{215634}\cup X_{321546}$. 

\medskip
\noindent
\emph{Exercise~\ref{exer:isolatedsing}}: No. One can argue this using Theorem~\ref{thm:singularlocus}.
Another argument uses ``parabolic moving''; see \cite[Section~5.1]{WY:Grobner}. 

\medskip
\noindent
\emph{Exercise~\ref{exer:minimal}:} (a) See \cite[Lemma~3.10]{Fulton:duke}. (b) and (c): $w=3142$.

\medskip
\noindent
\emph{Exercise~\ref{exer:usemanivelcortez}}: This requires knowing the Gorensteinness of the points in the maximal singular locus; see results of L.~Manivel \cite{manivel2} or A.~Cortez \cite{Cortez}.

\medskip
\noindent
\emph{Exercise~\ref{exer:Feb11ppp}:} Using Exercise~\ref{exer:getsworse}, $X_w$ is Gorenstein 
(respectively, lci) if and only if ${\mathcal N}_{id,w}$ is Gorenstein (respectively, lci). Now compare the patterns of Theorems~\ref{lcichar} and~\ref{thm:Gor_char}.

\medskip
\noindent
\emph{Exercise~\ref{exer:easyinverse}}:
Use Exercise~\ref{exer:inverseiso}. The assertion, for $v=id$, was conjectured by  D. Eliseev-A. Panov \cite{Eliseev.Panov} and given a proof
in \cite[Section~5.7]{Fuchs.Kirillov}.

\section*{Acknowledgements}
We thank the many mathematicians, including Allen Knutson, Ezra Miller, and William Fulton, who have shaped our view of this subject over the past two decades, and
whose thoughts are directly or indirectly represented here. With regards to this specific document, we
thank Casey Appleton, Justin Chen, Shiliang Gao, Abigail Price, Ryan Roach, Ada Stelzer, John Stembridge,  Avery St.~Dizier, and Zhuo Zhang for helpful communications.  We also thank the participants of the 2022 ICLUE summer program on representation theory and combinatorics at UIUC for providing a stimulating environment that also influenced the exposition. We are grateful to Erik Insko, Martha Precup, and Edward
Richmond for instigating the authors to write this chapter. Our exposition of some basic material
was influenced by Richard Borcherds' online lecture series on algebraic geometry. AW was partially
supported by a Simons Collaboration Grant. AY was partially supported by a Simons Collaboration Grant, an NSF RTG grant, and an appointment at the UIUC Center for Advanced Study.


\begin{thebibliography}{9999999999}

\bibitem[AB12]{Billey}
H.~Abe and S.~Billey, \emph{Consequences of the Lakshmibai-Sandhya theorem: the ubiquity of permutation patterns in Schubert calculus and related geometry.} Schubert calculus--Osaka 2012, 1--52, Adv. Stud. Pure Math., 71, Math. Soc. Japan, [Tokyo], 2016.

\bibitem[ADGH16]{Harada} H.~Abe, L.~Dedieu, F.~Galetto, and M.~Harada, \emph{Geometry of Hessenberg
varieties with applications to Newton-Okounkov bodies}, preprint 2016. \textsf{arXiv:1612.08831}

\bibitem[AJS94]{AJS} H. H. Andersen, J. C. Jantzen, and W. Soergel, \emph{Representations of quantum groups at a pth root of unity and of
semisimple groups in characteristic $p$: independence of $p$.}  Ast\'erisque, (220):321, 1994.

\bibitem[AF18]{AndFul.pfaff} D.~Anderson and W.~Fulton, \emph{Chern class formulas for classical-type degeneracy loci.} Compositio Math. 154 (2018), 1746-1774.

\bibitem[AF20]{AndFul.vex} D.~Anderson and W.~Fulton, \emph{Vexillary signed permutations permutations revisited.} Algebraic Combinatorics 3 (2020), 1041-1057.

\bibitem[AIJK21]{Jeon.mult} D.~Anderson, T.~Ikeda, M.~Jeon, and R.~Kawago, \emph{The multiplicity
of a singularity in a vexillary Schubert variety}, preprint 2021. \textsf{arXiv:2112.07375}

\bibitem[A69]{Atiyah}
M.~F.~Atiyah and I.~G.~Macdonald, \emph{Introduction to commutative algebra.} Addison-Wesley Publishing Co., Reading, Mass.-London-Don Mills, Ont. 1969 {\rm ix}+128 pp.

\bibitem[BS98]{Bergeron.Sottile}
N.~Bergeron and F.~Sottile, \emph{Schubert polynomials, the Bruhat order, and the geometry of flag manifolds.}
Duke Math. J. 95 (1998), 373–423.

\bibitem[B96]{Billey.formula}
S.~Billey, \emph{Kostant polynomials and the cohomology ring for G/B.}  Duke Math. J., 96(1):205–224, 1999.

\bibitem[B98]{B98}
S.~Billey,
\emph{Pattern Avoidance and Rational Smoothness of Schubert varieties.} Advances in Math, vol. 139 (1998) pp. 141--156.

\bibitem[BB03]{Billey.Braden}
S.~Billey and T.~Braden, \emph{Lower bounds for Kazhdan-Lusztig polynomials from patterns.} 
Transform. Groups 8 (2003), no. 4, 321--332. 

\bibitem[BC10]{Billey.Crites}
S.~Billey and A.~Crites, \emph{Pattern characterization of rationally smooth affine Schubert varities of type A.}  J. Algebra {\bf 361} (2012), 107--133.

\bibitem[BH95]{Billey.Haiman} S.~Billey and M.~Haiman, \emph{Schubert polynomials for the classical groups.} J. Amer. Math. Soc. 8 (1995), no. 2, 443--482. 

\bibitem[BL00]{BL00} S.~Billey and V.~Lakshmibai.
\emph{Singular loci of {S}chubert varieties}, volume 182 of {\em Progress in Mathematics}. Birkh\"auser Boston, Inc., Boston, MA, 2000.


\bibitem[BP05]{Billey.Postnikov} S.~Billey and A.~Postnikov, \emph{Smoothness of Schubert varieties via patterns in root subsystems.} Adv. in Appl. Math. 34 (2005), no. 3, 447--466.

\bibitem[BW01]{BW-321hex} S.~Billey and G.~Warrington, \emph{Kazhdan-Lusztig Polynomials for 321-hexagon-avoiding permutations}, J.~Algebraic~Combin., {\bf 13} (2001), 111--136. \MR{2002f:05161}

\bibitem[BW03]{billey.warrington} S.~Billey and G.~Warrington, \emph{Maximal singular
loci of Schubert varieties in ${\rm SL}(n)/B$}, Trans.~Amer.~Math.~Soc.
{\bf 335} (2003), 3915--3945.

\bibitem[BW22]{Billey.Weaver} S.~Billey and J.~Weaver, \emph{A Pattern Avoidance Characterization for Smoothness of Positroid Varieties}, preprint, 2022. \textsf{arXiv:2204.09013}

\bibitem[BB05]{bjorner.brenti} A. Bj\"{o}rner and F.~Brenti,
\emph{Combinatorics of Coxeter groups}, Graduate Texts in Mathematics,
No.~231. Springer-Verlag, New York-Heidelberg, 2005. 

\bibitem[BBDVW21]{Deepmind2} C.~Blundell, L.~Biuesing, A.~Davies, P.~Veli\v{c}kovi\'c, and
Geordie Williamson, \emph{Towards combinatorial invariance for Kazhdan-Lusztig polynomials},
preprint 2021. \textsf{arXiv:2111.15161}

\bibitem[BS58]{Bott.Samelson} R.~Bott and H.~Samelson, 
\emph{Applications of the theory of Morse to symmetric spaces.} Amer. J. Math. 80 (1958), 964--1029.

\bibitem[B-MB07]{Butler} M.~Bousquet-M\'elou and S.~Butler, \emph{Forest-like permutations.} Ann. Comb. 11 (2007), no. 3-4, 335--354. 


\bibitem[BM03]{braden.macpherson} T.~Braden and R.~Macpherson, 
\emph{From moment graphs to intersection cohomology},
Math.~Ann., {\bf 321} (2001), 533--551. \MR{2003g:14030}

\bibitem[B03]{brenti}
F.~Brenti, \emph{Kazhdan-Lusztig polynomials: History, Problems,
and Combinatorics}, S\'{e}m.~Lothar.~Combin., {\bf 49} (2003),
613--627. \MR{2004g:05152}


\bibitem[B05]{Brion.Lectures} M.~Brion. Lectures on the geometry of flag varieties.\emph{Topics in cohomological studies of algebraic varieties}, Trends Math., pages 33-85. Birkhauser, Basel, 2005.

\bibitem[BK05]{Brion.Kumar} M.~Brion and S.~Kumar, \emph{Frobenius splitting methods in geometry and representation theory.} Progress in Mathematics, 231. Birkh\"auser Boston, Inc., Boston, MA, 2005. x+250 pp. 

\bibitem[BP99]{Brion.Polo} M.~Brion and P.~Polo, \emph{Generic singularities of certain Schubert varieties.} 
Math. Z. 231 (1999), no. 2, 301--324.

\bibitem[B13]{Brod}
M.~P.~Brodmann and R.~Y.~Sharp, \emph{Local cohomology. An algebraic introduction with geometric applications.} Second edition. Cambridge Studies in Advanced Mathematics, 136. Cambridge University Press, Cambridge, 2013. xxii+491 pp. 

\bibitem[BH93]{Bruns-Herzog} W.~Bruns and J.~Herzog, \emph{Cohen-Macaulay rings},
Cambridge Studies in Advanced Mathematics, 39. 
Cambridge University Press, Cambridge, 1993. 


\bibitem[CK06]{Carrell.Kuttler} J.~B.~Carrell and J.~Kuttler, \emph{Singularities of Schubert varieties, tangent cones and Bruhat graphs.} Amer. J. Math. 128 (2006), no. 1, 121--138.

\bibitem[C17]{Mono} J.~Chen, \emph{Mono: an algebraic study of torus closures}, preprint, 2017.
\textsf{arXiv:1710.04614}


\bibitem[C94]{Chevalley} C.~Chevalley, \emph{Sur les d\'ecompositions cellulaires des espaces $G/B$.} (French) [[On the cellular decompositions of the spaces $G/B$]] With a foreword by Armand Borel. Proc. Sympos. Pure Math., 56, Part 1, Algebraic groups and their generalizations: classical methods (University Park, PA, 1991), 1--23, Amer. Math. Soc., Providence, RI, 1994.


\bibitem[C03]{Cortez} A.~Cortez, \emph{Singularit\'{e}s g\'{e}n\'{e}riques
et quasi-r\'{e}solutions des vari\'{e}t\'{e}s de Schubert pour le
groupe lin\'{e}aire}, Adv.~Math.~{\bf 178} (2003), 396--445. 


\bibitem[CLO]{CLO} D.~A.~Cox, J.~Little, and D.~O'Shea, \emph{Ideals, varieties, and algorithms. An introduction to computational algebraic geometry and commutative algebra.} Fourth edition. Undergraduate Texts in Mathematics. Springer, Cham, 2015. 


\bibitem[D18]{DaSilva} S.~Da Silva, \emph{On the Gorensteinization of Schubert varieties via boundary divisors},
preprint, 2018. \textsf{arXiv:1807.05416v2}

\bibitem[DVB+21]{Deepmind1} A.~Davies, P.~Veli\v{c}kovi\'c, L.~Buesing, \emph{et al}. 
\emph{Advancing mathematics by guiding human intuition with AI.} Nature 600, 70?74 (2021). 


\bibitem[DL81]{DeCon-Lak} C.~DeConcini and V.~Lakshmibai, \emph{Arithmetic
Cohen-Macaulayness and arithmetic normality for Schubert varieties},
Am. J.~Math.~{\bf 103} (1981), 835--850. \MR{83e:14035}

\bibitem[D90]{Deodhar}
V.~V.~Deodhar, \emph{A combinatorial setting for questions in Kazhdan-Lusztig theory}, Geom. Dedicata 36 (1990), no. 1, 95--119. 

\bibitem[dMPS92]{Hessdef} F. de Mari, C. Procesi, and M. A. Shayman, \emph{Hessenberg varieties}, Trans. Amer. Math.
Soc. {\bf 332} 1992, 529--534.

\bibitem[D74]{Demazure} M.~Demazure, \emph{D\'esingularisation des vari\'et\'es de Schubert g\'en\'eralis\'ees.} 
Ann. Sci. \'Ecole Norm. Sup. (4) 7 (1974), 53--88. 

\bibitem[D91]{Dyer} M.~Dyer, \emph{On the "Bruhat graph'' of a Coxeter system.} 
Compositio Math. 78 (1991), no. 2, 185--191.

\bibitem[E96]{eisenbud:view} D.~Eisenbud, Commutative algebra. With a view toward algebraic geometry. Graduate Texts in Mathematics, 150. Springer-Verlag, New York, 1995. 

\bibitem[E05]{eisenbud:gos} D.~Eisenbud, \emph{The geometry of syzygies. A second course in commutative algebra and algebraic geometry.} Graduate Texts in Mathematics, 229. Springer-Verlag, New York, 2005. xvi+243 pp.

\bibitem[EH22]{Elek.Huang} B.~Elek and D.~Huang, \emph{A Gr\"obner basis for Kazhdan-Lusztig ideals
of the flag variety of affine type A}, preprint, 2022. \textsf{arXiv:1911.07760v3}

\bibitem[EW14]{Elias.Williamson} B.~Elias and G.~Williamson, Geordie. 
\emph{The Hodge theory of Soergel bimodules.} Ann. of Math. (2) 180 (2014), no. 3, 1089--1136. 

\bibitem[EP13]{Eliseev.Panov} D.~Y.~Eliseev, and A.~N.~Panov, 
\emph{Tangent cones of Schubert varieties for $A_n$ of low rank}, J. Math. Sci. (N.Y.) 188 (2013), no. 5, 596--600.

\bibitem[EFRW21]{EFRW} L.~Escobar, A.~Fink, J.~Rajchgot, and A.~Woo, 
\emph{Gr\"obner bases, symmetric matrices, and type $C$ Kazhdan-Lusztig varieties}, preprint, 2021.
\textsf{arXiv:2104.09589}


\bibitem[EPS21]{EPS} L.~Escobar, M.~Precup, and J.~Shareshian, \emph{Which Schubert varieties
are Hessenberg varieties?}, preprint, 2021. \textsf{arXiv:2107.07929}

\bibitem[FMP16]{Threethemes} G.~Fl\o ystad, J.~McCullough, and I.~Peeva, \emph{Three themes of syzygies.} Bull. Amer. Math. Soc. (N.S.) 53 (2016), no. 3, 415--435. 

\bibitem[FKMO17]{Fuchs.Kirillov} D.~Fuchs, A.~Kirillov, S.~Morier-Genoud, and V.~Ovsienko. \emph{On tangent cones of Schubert varieties.} Arnold Math. J. 3 (2017), no. 4, 451--482.

\bibitem[F92]{Fulton:duke} W.~Fulton, \emph{Flags, Schubert polynomials, degeneracy loci, and determinantal formulas.} Duke Math. J. 65 (1992), no. 3, 381--420. 

\bibitem[F97]{Fulton} W.~Fulton, \emph{Young tableaux. With applications to representation theory and geometry.} London Mathematical Society Student Texts, 35. Cambridge University Press, Cambridge, 1997. 

\bibitem[FH91]{Fulton.Harris}
W.~Fulton and J.~Harris, Representation theory. A first course. Graduate Texts in Mathematics, 129. Readings in Mathematics. Springer-Verlag, New York, 1991. {\rm xvi}+551 pp.

\bibitem[FP98]{Fulton.Pragacz} W.~Fulton and P.~Pragacz, Schubert varieties and degeneracy loci. Appendix J by the authors in collaboration with I. Ciocan-Fontanine. Lecture Notes in Mathematics, 1689. Springer-Verlag, Berlin, 1998. xii+148 pp.


\bibitem[GG20]{Gaetz.Gao.separable} C.~Gaetz and Y.~Gao, \emph{Separable elements in Weyl groups.} Adv. in Appl. Math. 113 (2020) Paper 101974.




\bibitem[GY22]{GaoYong}
S.~Gao and A.~Yong, \emph{Minimal equations for matrix Schubert varieties}, preprint, 2022. \textsf{arXiv:2201.06522}

\bibitem[G01]{Gasharov}
V.~Gasharov, \emph{Sufficiency of Lakshmibai--Sandhya singularity 
conditions for Schubert varieties}, Compositio Math.~{\bf 126} 
(2001), 47--56. \MR{2002d:14078}

\bibitem[GGMS87]{GGMS} 
I.~M.~Gelfand, R.~M.~Goresky, R.~D.~MacPherson, and V.~V.~Serganova, 
\emph{Combinatorial geometries, convex polyhedra, and Schubert cells.} Adv. in Math. 63 (1987), no. 3, 301--316. 

\bibitem[G19]{Gillespie}
M.~Gillespie, \emph{Variations on a theme of Schubert calculus.} Recent trends in algebraic combinatorics, 115--158, Assoc. Women Math. Ser., 16, Springer, Cham, 2019. 

\bibitem[G02]{Graham.KBilley}
W.~Graham, \emph{Equivariant $K$-theory and Schubert varieties.}  2002.  Unpublished.

\bibitem[GK15]{Graham.Kreiman:TAMS}
W.~Graham and V.~Kreiman, \emph{Excited Young diagrams, equivariant K-theory, and Schubert varieties}, 
Trans. Amer. Math. Soc. 367 (2015), no. 9, 6597--6645.

\bibitem[GK17]{Graham.Kreiman:Fourier}
W.~Graham and V.~Kreiman, \emph{Cominuscule points and Schubert varieties}, preprint
to appear in Annales de l'Institut Fourier, 2017. \textsf{arXiv:1701.05956}

\bibitem[H19]{Haenni} K.~Haenni, \emph{Wilf equivalence in Weyl groups and signed permutations.}  SPUR final paper, 2019. https://math.mit.edu/research/undergraduate/spur/documents/2019Haenni.pdf



\bibitem[HPW22]{Hamaker.Pechenik.Weigandt}
Z.~Hamaker, O.~Pechenik, and A.~Weigandt, \emph{Gr\"obner geometry of 
Schubert polynomials through ice}, Adv. Math. 398 (2022), Paper No. 108228, 29 pp. 


\bibitem[H73]{Hansen} H.C.~Hansen, \emph{On cycles in flag manifolds.} Math. Scand. 33 (1973), 269--274 (1974). 

\bibitem[H95]{Harris} J.~Harris, \emph{Algebraic geometry. A first course.} Corrected reprint of the 1992 original. Graduate Texts in Mathematics, 133. Springer-Verlag, New York, 1995. {\rm xx}+328 pp.

\bibitem[H77]{Hartshorne} R.~Hartshorne, \emph{Algebraic geometry.} Graduate Texts in Mathematics, No. 52. Springer-Verlag, New York-Heidelberg, 1977. {\rm xvi}+496 pp.

\bibitem[H90]{Hilbert.syzygy} D.~Hilbert, \emph{\"Uber die Theorie der algebraischen Formen}, Math. Annalen {\bf 36} (1890), 473--530.



\bibitem[H75]{Humphreys:linearalggroups}
J.~E.~Humphreys, Linear algebraic groups. Graduate Texts in Mathematics, No. 21. Springer-Verlag, New York-Heidelberg, 1975. {\rm xiv}+247 pp.

\bibitem[H90]{Humphreys:Coxeter}
J.~E.~Humphreys, Reflection groups and Coxeter groups. Cambridge Studies in Advanced Mathematics, 29. Cambridge University Press, Cambridge, 1990. {\rm xii}+204 pp.

\bibitem[IMN11]{Ikeda.Mihalcea.Naruse}
T.~Ikeda, L.~Mihalcea, and H.~Naruse, \emph{Double Schubert polynomials for the classical groups.} Adv.
Math., 226(1):840--886, 2011.

\bibitem[IP19]{IP} E.~Insko and M.~Precup, \emph{The singular locus of semisimple Hessenberg varieties.} 
J. Algebra 521 (2019), 65--96. 

\bibitem[IY12]{IY} E.~Insko, and A.~Yong, \emph{Patch ideals and Peterson varieties.} 
Transform. Groups 17 (2012), no. 4, 1011--1036. 

\bibitem[I88]{Irving} R.~Irving, \emph{The socle filtration of a Verma module,}
Ann.~Sci.~\'{E}cole.~Norm.~Sup.~series~4 {\bf 21} (1988), no.~1, 47--65. 

\bibitem[J21]{Jeon.KL} M.~Jeon, \emph{Covexillary Schubert varieties and Kazhdan--Lusztig polynomials}, preprint 2021. \textsf{arXiv:2112.06337}

\bibitem[JW13]{Jones.Woo} B.~Jones and A.~Woo, \emph{Mask formulas for cograssmannian Kazhdan-Lusztig polynomials}, Ann. Comb. 17 (2013), no. 1, 151--203. 

\bibitem[KLR03]{KLR} C.~Kassel, A.~Lascoux and C.~Reutenauer, \emph{The singular
locus of a Schubert variety}, J.~Algebra {\bf 269} (2003), 74--108. 

\bibitem[KL79]{Kazhdan.Lusztig} D.~Kazhdan and G.~Lusztig, \emph{Representations of Coxeter groups and Hecke algebras.} Invent. Math. 53 (1979), no. 2, 165--184.

\bibitem[KL80]{Kazhdan.Lusztig.Schubert} D.~Kazhdan and G.~Lusztig, \emph{Schubert varieties and Poincar\'e duality.}  In \emph{Geometry of the Lapalce
operator (Proc. Sympos. Pure Math., Univ. Hawaii, Honolulu, Hawaii, 1979)}, Proc. Sympos. Pure
Math., {\bf 36}, Amer. Math. Soc., Providence, RI (1980), 185–203.


\bibitem[KKR19]{KKR} R.~Kinser, A.~Knutson, and J.~Rajchgot, \emph{Three combinatorial formulas for type $A$ quiver polynomials and $K$-polynomials.} Duke Math. J. 168 (2019), no. 4, 505--551. 



\bibitem[KR15]{Kinser.Rajchgot1} R.~Kinser and J.~Rajchgot, \emph{Type $A$ quiver loci and Schubert varieties.} J. Commut. Algebra 7 (2015), no. 2, 265--301.

\bibitem[KR21]{KR:D}
R.~Kinser and J.~Rajchgot, \emph{Type $D$ quiver representation varieties, double Grassmannians, and symmetric varieties.} Adv. Math. 376 (2021), Paper No. 107454, 44 pp.

\bibitem[K16]{Kirillov}
A.~N.~Kirillov, \emph{Notes on Schubert, Grothendieck and key polynomials.} SIGMA Symmetry Integrability Geom. Methods Appl. 12 (2016), Paper No. 034, 56 pp.

\bibitem[KN17]{Kirillov.Naruse}
A.~N.~Kirillov and H.~Naruse, \emph{Construction of double Grothendieck polynomials of classical types using
idCoxeter algebras.} Tokyo J. Math., 39(3):695–728, 2017.

\bibitem[KL72]{Kleiman.Laksov} S. L. Kleiman and Dan Laksov, \emph{Schubert calculus},
The American Mathematical Monthly, Vol. 79, No. 10, (Dec., 1972), pp. 1061--1082.

\bibitem[K20]{Klein} P.~Klein, \emph{Diagonal degenerations of matrix Schubert varieties}, preprint, 2020.
\textsf{arXiv:2008.01717}

\bibitem[KW21]{Klein.Weigandt} P.~Klein and A.~Weigandt, \emph{Bumpless pipe dreams encode Gr\"obner geometry of Schubert polynomials}, preprint, 2021. \textsf{arXiv:2108.08370}

\bibitem[K08]{Knutson:patches} A.~Knutson, \emph{Schubert patches degenerate to subword complexes}, Transform. Groups 13 (2008), no. 3-4, 715--726. 

\bibitem[K22]{Knutson:survey} A.~Knutson, \emph{Schubert calculus and quiver varieties}, to appear in Proceedings of the International Congress of Mathematicians, 2022. 

\bibitem[KM04]{subword} A.~Knutson and E.~Miller, \emph{Subword complexes in Coxeter groups.} Adv. Math. 184 (2004), no. 1, 161--176.

\bibitem[KM05]{Knutson.Miller:annals}
A.~Knutson and E.~Miller, \emph{Gr\"obner geometry of Schubert polynomials.} Ann. of Math. (2) 161 (2005), no. 3, 1245--1318.

\bibitem[KMY08]{KMY:tableaux} A.~Knutson, E.~Miller, and A.~Yong,
\emph{Tableau complexes.} Israel J. Math. 163 (2008), 317--343.

\bibitem[KMY09]{KMY}  A.~Knutson, E.~Miller, and A.~Yong, \emph{Gr\"obner geometry of vertex decompositions and of flagged tableaux.} J. Reine Angew. Math. 630 (2009), 1--31. 

\bibitem[KWY13]{KWY} A.~Knutson, A.~Woo, and A.~Yong, \emph{Singularities of Richardson varieties.} Math. Res. Lett. 20 (2013), no. 2, 391--400. 

\bibitem[K05]{Krat} C.~Krattenthaler, \emph{On multiplicities of points on Schubert varieties in Grassmanians II}, J. Algebraic Combin. {\bf 22} (2005), 273--288.

\bibitem[KR04]{Kreiman.Lakshmibai} V.~Kreiman and V.~Lakshmibai, \emph{Multiplicities of singular points in Schubert varieties of Grassmannians.} Algebra, arithmetic and geometry with applications (West Lafayette, IN, 2000), 553--563, Springer, Berlin, 2004.

\bibitem[KR05]{Kreuzer} M.~Kreuzer and L.~Robbiano, \emph{Computational commutative algebra. 2.} 
Springer-Verlag, Berlin, 2005. x+586 pp.

\bibitem[K96]{Kumar:nil} S.~Kumar, \emph{The nil Hecke ring and singularity of Schubert varieties.} Invent. Math. 123 (1996), no. 3, 471--506. 


\bibitem[KLSS15]{Kummini} M. Kummini, V. Lakshmibai, P. Sastry, and C. S. Seshadri. 
\emph{Free resolutions of some Schubert singularities.} Pacific J. Math., 279(1-2):299--328, 2015.

\bibitem[LM98]{Lak.Mag} V.~Lakshmibai and P.~Magyar, \emph{Degeneracy schemes, quiver schemes, and Schubert varieties.} Internat. Math. Res. Notices 1998, no. 12, 627--640.

\bibitem[LR08]{SMT} V.~Lakshmibai and K.~N.~Raghavan, \emph{Standard monomial theory. Invariant theoretic approach.} Encyclopaedia of Mathematical Sciences, 137. Invariant Theory and Algebraic Transformation Groups, 8. Springer-Verlag, Berlin, 2008. xiv+265 pp.

\bibitem[LS90]{Lak} V.~Lakshmibai and B.~Sandhya, \emph{Criterion for smoothness of Schubert varieties in ${\rm Sl}(n)/B$.} Proc. Indian Acad. Sci. Math. Sci. 100 (1990), no. 1, 45--52.

\bibitem[LS84]{LS84} V.~Lakshmibai and C.~S.~Seshadri, \emph{Singular locus
of a Schubert variety}, Bull.~Amer.~Math.~Soc. (N.S.) {\bf 11}(1984), 363--366.
\MR{85j:14095}

\bibitem[LW90]{LW90} V.~Lakshmibai and J.~Weyman, \emph{Multiplicities of points on a Schubert variety in a minuscule $G/P$.} Adv. Math. 84 (1990), no. 2, 179--208. 

\bibitem[LM21]{Lanini} M.~Lanini and P.~J.~McNamara, \emph{Singularities of Schubert varieties within a right cell}, SIGMA Symmetry Integrability Geom. Methods Appl. 17 (2021), Paper No. 070, 9 pp.


\bibitem[L78]{Lascoux:det} A.~Lascoux, \emph{Syzygies des vari\'et\'es d\'eterminantales},
Adv. in Math. 30 (1978), no. 3, 202--237. 

\bibitem[L95]{Las-vex} A.~Lascoux, \emph{Polyn\^omes de Kazhdan-Lusztig
pour les vari\'et\'es de Schubert vexillaires},
C.~R.~Acad.~Sci.~Paris~S\'er.~I~Math. {\bf 321} (1995), 667--670. \MR{96g:05144}

\bibitem[LY11]{LiYong1}
L.~Li and A.~Yong, \emph{Kazhdan-Lusztig polynomials and drift configurations.} Algebra Number Theory 5 (2011), no. 5, 595--626. 

\bibitem[LY12]{LiYong2}
L.~Li and A.~Yong, \emph{Some degenerations of Kazhdan-Lusztig ideals and multiplicities of Schubert varieties.} 
Adv. Math. 229 (2012), no. 1, 633--667.

\bibitem[M01a]{manivel1} L.~Manivel, \emph{Le lieu singulier des vari\'{e}t\'{e}s
de Schubert}, Internat.~Math.~Res.~Notices {\bf 16} (2001), 849--871. 

\bibitem[M01b]{manivel2} L.~Manivel, \emph{Generic singularities of Schubert varieties},
arXiv:math.AG/0105239.

\bibitem[M01c]{Manivel}
 L. Manivel, \emph{Symmetric functions, Schubert polynomials and degeneracy loci}. Translated from the
1998 French original by John R. Swallow. SMF/AMS Texts and Monographs, American Mathematical
Society, Providence, 2001.

\bibitem[MP22]{Marberg.Pawlowski} E.~Marberg and B.~Pawlowski, \emph{Gr\"obner geometry for skew-symmetric matrix Schubert varieties.}
Adv. Math. {\bf 405} (2022), 108488.

\bibitem[MW03]{McWa} T.~McLarnan and G.~Warrington, \emph{Counterexamples to the 0-1 conjecture.}Represent. Theory 7 (2003), 181--195.

\bibitem[M20]{Meek}
K.~Meek, \emph{Schubert Varieties in the Flag Variety of Hilbert-Samuel Multiplicity Two},
Ph.D thesis, U.~Idaho, 2020.

\bibitem[MS05]{Miller.Sturmfels}
E.~Miller and B.~Sturmfels, \emph{Combinatorial commutative algebra.} Graduate Texts in Mathematics, 227. Springer-Verlag, New York, 2005. xiv+417 pp.

\bibitem[M88]{Mnev} 
N.~E.~Mn\"ev, \emph{The universality theorems on the classification problem of configuration varieties and convex polytopes varieties}. Topology and geometry?Rohlin Seminar, 527--543, Lecture Notes in Math., 1346, Springer, Berlin, 1988.

\bibitem[N21]{Neye}
E.~Neye, \emph{A Gr\"obner basis for Schubert patch ideals}, preprint 2021. \textsf{arXiv:2111.13778}

\bibitem[PSW21]{PSW} O.~Pechenik, D.~Speyer, and A.~Weigandt, \emph{Castelnuovo-Mumford regularity of matrix Schubert varieties}, preprint 2021. \textsf{arXiv:2111.10681}

\bibitem[P07]{Perrin} N.~Perrin, \emph{Gorenstein locus of minuscule Schubert varieties}, 
Adv.~Math. 220 (2009), no. 2, 505--522.

\bibitem[P99]{Polo} P.~Polo, \emph{Construction of arbitrary
Kazhdan-Lusztig polynomials in symmetric groups}, Represent. Theory
{\bf 3} (1999) 90--104 (electronic). 

\bibitem[PS09]{Postnikov.Stanley} A.~Postnikov and R.~P.~Stanley, \emph{Chains in the Bruhat order.} J. Algebraic Combin. 29 (2009), no. 2, 133--174.

\bibitem[RRRSW21]{RRRSW}
J.~Rajchgot, Y.~Ren, C.~Robichaux, A.~St. Dizier, and A.~Weigandt, 
\emph{Degrees of symmetric Grothendieck polynomials and Castelnuovo-Mumford regularity.} 
Proc. Amer. Math. Soc. 149 (2021), no. 4, 1405--1416.

\bibitem[RRW22]{RRW}
J.~Rajchgot, C.~Robichaux, and A.~Weigandt, 
\emph{Castelnuovo-Mumford regularity of ladder determinantal varieties and patches of Grassmannian Schubert varieties}, preprint 2022. \textsf{arXiv:2202.03995}

\bibitem[RR85]{RR} S.~Ramanan and A.~Ramanathan, {\em Projective normality of
flag varieties and Schubert varieties}, Invent.~Math.~{\bf 79} (1985), no.~2,
217--224. \MR{86j:14051}

\bibitem[R85]{Ram85} A.~Ramanathan, \emph{Schubert varieties are
arithmetically Cohen-Macaulay}, Invent.~Math., {\bf 80} (1985),
283--294. \MR{87d:14044}

\bibitem[RS16]{Richmond.Slofstra}
E.~Richmond and W.~Slofstra, \emph{Billey--Postnikov decompositions and the fibre bundle structure of Schubert varieties.} Math. Ann. 366 (2016), no. 1--2, 31--55. 

\bibitem[RS18]{Richmond.Slofstra.affine}
E.~Richmond and W.~Slofstra, \emph{Smooth Schubert varieties in the affine flag variety of type A.}  European J. Combin. {\bf 71} (2018), 125--138.

\bibitem[RS21]{Richmond.Slofstra.iso} E.~Richmond and W.~Slofstra, \emph{The isomorphism problem for Schubert varieties}, preprint 2021. \textsf{arXiv:2103.08114}


\bibitem[R04]{Rietsch}
K.~Rietsch, \emph{An introduction to perverse sheaves.} Representations of finite dimensional algebras and related topics in Lie theory and geometry, 391--429, Fields Inst. Commun., 40, Amer. Math. Soc., Providence, RI, 2004.

\bibitem[RYY22]{RYY}
C.~Robichaux, H.~Yadav, and A.~Yong, \emph{Equivariant cohomology, Schubert calculus, and edge labeled tableaux. Facets of algebraic geometry.} Vol. II, 284--335, London Math. Soc. Lecture Note Ser., 473, Cambridge Univ. Press, Cambridge, 2022.

\bibitem[RZ01]{Rosen.Zel} J.~Rosenthal and A.~Zelevinsky, \emph{Multiplicities of points on Schubert varieties in Grassmannians.}  J. Algebraic Combin. {\bf 13} (2001), 213--218.

\bibitem[R87]{Ryan} K.~Ryan, \emph{On Schubert varieties in the flag manifold of $\mathrm{Sl}(n,\mathbb{C})$}, Math. Ann., {\bf 276} (1987), 205--224.

\bibitem[S11]{Sam.Schubert} S.~Sam, \emph{Schubert complexes and degeneracy loci.}  J. Algebra, {\bf 337} (2011), 103--125.

\bibitem[S79]{Schubert} H.~Schubert, Kalk\"ul derabz\"ahlenden Geometrie. reprint of the 1879 original.  With an introduction by Steven L. Kleiman.  Springer--Verlag, Berlin, Heidelberg, New York, 1979.

\bibitem[S82]{Springer}
T.~A.~Springer, \emph{Quelques applications de la cohomologie d'intersection.}
S\'eminaire Bourbaki: volume 1981/82, expos\'es 579--596, Ast\'erisque, no. 92--93 (1982), Talk no. 589, 25 p.

\bibitem[S99]{ECII} R.~P.~Stanley, Enumerative combinatorics. Vol. 2. With a foreword by Gian-Carlo Rota and appendix 1 by Sergey Fomin. Cambridge Studies in Advanced Mathematics, 62. Cambridge University Press, Cambridge, 1999. xii+581 pp. 

\bibitem[T06]{Tymoczko}  J.~S.~Tymoczko, \emph{Linear conditions imposed on flag varieties.} Amer. J. Math. 128 (2006), no. 6, 1587--1604.

\bibitem[WU13]{lci} H.~Ulfarsson and A.~Woo, \emph{Which Schubert varieties are local complete intersections?} 
Proc. Lond. Math. Soc. (3) 107 (2013), no. 5, 1004--1052. 

\bibitem[W03]{Weyman} J.~Weyman, Cohomology of vector bundles and syzygies. Cambridge Tracts in Mathematics, 149. Cambridge University Press, Cambridge, 2003. xiv+371 

\bibitem[W04]{Willems.KBilley}
M.~Willems, \emph{Cohomologie et K-th\'eorie \'equivariantes des vari\'et\'es de Bott-Samelson et des vari\'et\'es de drapeaux.}
Bull. Soc. Math. France, 132(4):569--589, 2004.




\bibitem[W89]{Wolper} J.~Wolper, \emph{A combinatorial approach to the singularities of Schubert varieties}, Adv. Math. {\bf 76} (1989), 184--193. 



\bibitem[W10]{Woo:general} A.~Woo, \emph{Interval pattern avoidance for arbitrary root systems.} 
Canad. Math. Bull. 53 (2010), no. 4, 757--762. 

\bibitem[W18]{Woo:hultman} A.~Woo, \emph{Hultman elements for the hyperoctahedral groups.}
Elect. J. Combinatorics {\bf 25}.  Paper P2.41.  25 pp.

\bibitem[WWY18]{WWY} A.~Woo, B.~Wyser, and A.~Yong, \emph{Governing singularities of symmetric orbit closures.} Algebra Number Theory 12 (2018), no. 1, 173--225.




\bibitem[WY06]{WY:Goren} A.~Woo and A.~Yong, \emph{When is a Schubert variety Gorenstein?} Adv. Math. 207 (2006), no. 1, 205--220.

\bibitem[WY08]{WY:governing}
A.~Woo and A.~Yong, \emph{Governing singularities of Schubert varieties.} J. Algebra 320 (2008), no. 2, 495--520. 

\bibitem[WY12]{WY:Grobner}
A.~Woo and A.~Yong, \emph{A Gr\"{o}bner basis for Kazhdan-Lusztig ideals.} Amer. J. Math. 134 (2012), no. 4, 1089--1137. 

\bibitem[Y21]{Y:CM}
A.~Yong, \emph{Castelnuovo-Mumford regularity and Schubert geometry}, preprint, 2021.

\bibitem[Z83]{Z83} A.~V.~Zelevinski\u{i}, \emph{Small resolutions of singularities of Schubert varieties.} (Russian) Funktsional. Anal. i Prilozhen. 17 (1983), no. 2, 75--77. 

\bibitem[Z85]{Zelev} A.~V.~Zelevinski\u{i}, \emph{Two remarks on graded nilpotent classes.} 
Uspekhi Mat. Nauk 40 (1985), no. 1(241), 199--200. 


\end{thebibliography}
\end{document}